\documentclass[12pt]{article}
\usepackage{amsmath,amssymb,amsfonts,amsthm}
\usepackage{yhmath,relsize}
\usepackage{eucal,oldgerm}
\usepackage[dvips]{graphics}
\usepackage{pstricks,pst-plot,pst-node,pst-coil}
\newpsobject{showgrid}{psgrid}{subgriddiv=1,griddots=5,gridlabels=0pt}
\usepackage[arrow,matrix,tips,2cell]{xy}

\newcommand{\rang}{\right\rangle}

\newcommand{\zz}{{\mathfrak{z}}}

\newcommand{\com}{{\mathbb C}}

\newcommand{\proj}{\mathbf{P}}

\newcommand{\bW}{\mathsf{W}}

\newcommand{\PP}{{\mathbf{P}}}

\newcommand\FF{\mathbb F}
\newcommand\ZZ{\mathsf Z}
\newcommand{\oh}{{\mathcal O}}
\newcommand{\T}{{\mathbf{T}}}
\newcommand{\OO}{{\mathcal{O}}}
\newcommand{\w}{{\mathsf{w}}}

\newcommand{\LL}{{\mathbb{L}}}

\newcommand{\C}{\mathbb{C}}
\newcommand{\Q}{\mathbb{Q}}
\newcommand{\Z}{\mathbb{Z}}

\newcommand{\cO}{\mathcal{O}}

\newcommand{\Pp}{{\mathbf{P}^1}}

\newcommand{\rarr}{\rightarrow}

\newcommand{\bZ}{\mathsf{Z}}
\newcommand{\oM}{\overline{M}}

\DeclareMathOperator{\Hilb}{Hilb}

\newtheorem{Theorem}{Theorem}
\newtheorem{Lemma}{Lemma}
\newtheorem{Corollary}{Corollary}

\newtheorem{Proposition}[Lemma]{Proposition}
\newtheorem{Conjecture}{Conjecture}

\begin{document}
\title{Gromov-Witten/Pairs correspondence
for the quintic
3-fold}
\author{R. Pandharipande and A. Pixton}
\date{January 2016}
\maketitle

\begin{abstract}
We use the Gromov-Witten/Pairs descendent
correspondence for toric 3-folds and 
degeneration arguments to establish the GW/P
correspondence for several compact Calabi-Yau
3-folds (including all 
CY complete intersections in products of projective
spaces). A crucial aspect of the proof is the study of
the GW/P correspondence for descendents in relative
geometries. 
Projective bundles
over surfaces relative to a section play a special role.

The GW/P correspondence for Calabi-Yau
complete intersections provides a  
structure result for the Gromov-Witten invariants
in a fixed curve class.
After change of variables, the Gromov-Witten series is a
rational function in the variable $-q=e^{iu}$ invariant
under $q \leftrightarrow q^{-1}$.
\end{abstract}

\maketitle

\setcounter{tocdepth}{2} 
\tableofcontents


\setcounter{section}{-1}
\section{Introduction}

\subsection{Overview}
The main result of the paper is a proof of the Gromov-Witten/Pairs
correspondence for several compact Calabi-Yau 3-folds (including
all Calabi-Yau complete intersections in products of projective
spaces). The Gromov-Witten/Pairs correspondence was first
stated in terms of the Donaldson-Thomas theory of ideal sheaves in
\cite{MNOP1,MNOP2} and is often referred to as the {\em MNOP
conjecture}. 
\begin{enumerate}
\item[(i)]
Via the Gromov-Witten theory
of the moduli of stable maps to a 3-fold $X$,
the generating series of curve counts is defined with a 
genus parameter $u$. 
\item[(ii)]
Via the Donaldson-Thomas theory
of the moduli of ideal sheaves on $X$, the generating series of 
sheaf counts is defined with an Euler characteristic parameter $q$.
\end{enumerate}
The MNOP conjecture equates the  generating series (i) and (ii) after
the non-trivial change of variables
$$-q = e^{iu}\, .$$
For Calabi-Yau 3-folds, the formulation
in terms of stable pairs \cite{pt} was proven to be equivalent in 
\cite{Bridge,Toda}.

Our proof here uses much of the development of the Donaldson-Thomas theory
of 3-folds in  the past decade. The first input is
a series of papers culminating in \cite{moop} which establish the
MNOP conjecture for nonsingular, quasi-projective, {\em toric}
3-folds. The results essentially concern the 3-fold
toric vertex (first studied in the Calabi-Yau case in \cite{AKMV}).
The argument of \cite{moop} uses
\begin{enumerate}
\item[$\bullet$] the proof of the MNOP conjecture for
local curves established in the papers \cite{BP,hilb1,lcdt}, 
\item[$\bullet$] the proof of the MNOP conjecture for
3-folds $A_n\times \mathbf{P}^1$, where $A_n$ is the
the holomorphic symplectic resolution of the standard $A_n$ - surface
singularity, in the papers \cite{DM,mo1,mo2}.
\end{enumerate}
A basic idea introduced in \cite{moop} is the notion of 
the {\em capped} 3-fold vertex -- a reorganization of the
standard localization formula which respects the MNOP conjecture.

The second input is the foundation of the theory of stable
pairs developed in \cite{pt,pt2,pt3}. Stable pairs are much better
behaved than the Donaldson-Thomas theory of ideal sheaves
since there are no floating points. Real differences
between stable pairs and ideal sheaves appear in the study of
{\em descendent} invariants involving the integration
of the slant products of the Chern characters of the tautological
sheaves over the moduli space. The generating functions
of descendent invariants for stable pairs are conjectured to
be rational functions (while the  parallel generating functions
for ideal sheaves are known  to be irrational).

The third input is the study of descendent invariants
for the stable pairs theory of 3-folds:
\begin{enumerate}
\item[$\bullet$] the proof  of the rationality of the generating series
for {\em toric} 3-folds in the papers \cite{part1, PP2},
\item[$\bullet$] the formulation (and proof in the toric case) of
a GW/Pairs correspondence for descendents in \cite{PPDC}.
\end{enumerate}
For the toric arguments in \cite{PPDC},   
a {\em capped} 3-fold descendent vertex is introduced.

Given a 3-fold $X$ and a nonsingular divisor $D\subset X$, there
are {\em relative} stable pairs and Gromov-Witten theories \cite{L,LW,MNOP2,part1}.
The interaction of the descendent theory with the relative
theory plays a crucial role. 
We approach compact
Calabi-Yau 3-fold via degeneration to toric geometries. In
order to prove the GW/Pairs correspondence, we prove appropriate
GW/Pairs correspondence for {\em all} the simpler descendent and
relative geometries which arise in the degeneration process.

A crucial case concerns the geometry of a $\mathbf{P}^1$-bundle,
$$\pi: \mathbf{P}_S \rightarrow S\, ,$$
over a surface $S$ relative to a section of $\pi$.
We prove GW/Pairs correspondences in case $S$ is 
a toric surface, a $K3$ surface, or a projective bundle over a higher genus curve $C$.
The proofs systematically use the descendent theory of
3-folds. Once the GW/Pairs correspondences for these 
special relative geometries are established, then the
degeneration scheme of \cite{mptop} can be used
to prove the GW/Pairs correspondence for any compact
Calabi-Yau 3-fold which admits a good degeneration.

\subsection{Descendents in Gromov-Witten theory} \label{gwdess}
Let $X$ be a nonsingular projective 3-fold.
Gromov-Witten theory is defined via integration over the moduli
space of stable maps.
Let
 $\overline{M}_{g,r}(X,\beta)$ denote the moduli space of
$r$-pointed stable maps from connected genus $g$ curves to $X$ representing the
class $\beta\in H_2(X, \Z)$. Let 
$$\text{ev}_i: \overline{M}_{g,r}(X,\beta) \rarr X,$$
$$ \LL_i \rarr \overline{M}_{g,r}(X,\beta)$$
denote the evaluation maps and the cotangent line bundles associated to
the marked points.
Let $\gamma_1, \ldots, \gamma_r\in H^*(X,{\mathbb{Q}})$, and
let $$\psi_i = c_1(\LL_i) \in H^2(\overline{M}_{g,n}(X,\beta),\mathbb{Q}).$$
The {\em descendent} fields, denoted by $\tau_k(\gamma)$, correspond 
to the classes $\psi_i^k \text{ev}_i^*(\gamma)$ on the moduli space
of maps. 
Let
$$\Big\langle \tau_{k_1}(\gamma_{1}) \cdots
\tau_{k_r}(\gamma_{r})\Big\rangle_{g,\beta} = \int_{[\overline{M}_{g,r}(X,\beta)]^{vir}} 
\prod_{i=1}^r \psi_i^{k_i} \text{ev}_i^*(\gamma_{_i})$$
denote the descendent
Gromov-Witten invariants. Foundational aspects of the theory
are treated, for example, in \cite{Beh, BehFan, LiTian}.

Let $C$ be a possibly disconnected curve with at worst nodal singularities.
The genus of $C$ is defined by $1-\chi(\oh_C)$. 
Let $\overline{M}'_{g,r}(X,\beta)$ denote the moduli space of maps
with possibly {disconnected} domain
curves $C$ of genus $g$ with {\em no} collapsed connected components.
The latter condition requires 
 each connected component of $C$ to represent
a nonzero class in $H_2(X,{\mathbb Z})$. In particular, 
$C$ must represent a {nonzero} class $\beta$.

We define the descendent invariants in the disconnected 
case by
$$\Big\langle \tau_{k_1}(\gamma_{1}) \cdots
\tau_{k_r}(\gamma_{r})\Big\rangle'_{g,\beta} = \int_{[\overline{M}'_{g,r}(X,\beta)]^{vir}} 
\prod_{i=1}^r \psi_i^{k_i} \text{ev}_i^*(\gamma_{i}).$$
The associated partition function is defined 
by{\footnote{Our 
notation follows \cite{moop,MNOP2} and emphasizes the
role of the moduli space $\overline{M}'_{g,r}(X,\beta)$. 
The degree 0 collapsed contributions
will not appear anywhere in our paper.}} 
\begin{equation}
\label{abc}
\bZ'_{\mathsf{GW}}\Big(X;u\ \Big|\ \prod_{i=1}^r \tau_{k_i}(\gamma_{i})\Big)_\beta = 
\sum_{g\in{\mathbb Z}} \Big \langle \prod_{i=1}^r
\tau_{k_i}(\gamma_{i}) \Big \rangle'_{g,\beta} \ u^{2g-2}.
\end{equation}
Since the domain components must map nontrivially, an elementary
argument shows the genus $g$ in the  sum \eqref{abc} is bounded from below. 
The descendent insertions in \eqref{abc} should
match  the  (genus independent) virtual dimension,
\begin{equation}\label{k345}
\text{dim} \ [\overline{M}'_{g,r}(X,\beta)]^{vir} = \int_\beta c_1(T_X) + r.
\end{equation}

If $X$ is a nonsingular projective toric 3-fold, then the descendent
invariants can be lifted to equivariant cohomology.
Let 
$$\T=(\com^*)^3$$ be the 3-dimensional algebraic torus acting on $X$.
Let $s_1,s_2,s_3$ be the equivariant first Chern classes
of the standard representations of the three factors of $\T$. The
equivariant cohomology of the point is well-known to be
$$H^*_{\T}(\bullet) = \mathbb{Q}[s_1,s_2,s_3]\ .$$ 
For equivariant classes $\gamma_{i}\in H^*_{\T}(X,\mathbb{Q})$, the
descendent invariants
$$\Big\langle \tau_{k_1}(\gamma_{1}) \cdots
\tau_{k_r}(\gamma_{r})\Big\rangle'_{g,\beta} = \int_{[\overline{M}'_{g,r}(X,\beta)]^{vir}} 
\prod_{i=1}^r \psi_i^{k_i} \text{ev}_i^*(\gamma_{i})\ \in H^*_{\T}(\bullet)$$
are well-defined. In the equivariant setting, the descendent
insertions may exceed the virtual dimension \eqref{k345}.
The equivariant partition function
$$\bZ'_{\mathsf{GW}}\Big(X;u\ \Big|\ \prod_{i=1}^r \tau_{k_i}(\gamma_{i})\Big)^\T_\beta \in \mathbb{Q}[s_1,s_2,s_3]((u))$$ 
is a Laurent series in $u$ with coefficients in $H^*_\T(\bullet)$.

If $X$ is a nonsingular quasi-projective toric 3-fold, the 
equivariant Gromov-Witten invariants of $X$ are still
well-defined{\footnote{A quasi-projective toric
variety $X$ has a finite skeleton of $1$-dimensional
projective torus orbits. For a stable map to $X$
to be torus fixed, the image must lie in the $1$-dimensional
skeleton. Hence, the torus fixed locus of the moduli
space of stable maps is compact.}} by localization residues \cite{BP}. In
the quasi-projective case,
$$\bZ'_{\mathsf{GW}}\Big(X;u\ \Big|\ \prod_{i=1}^r \tau_{k_i}(\gamma_{i})\Big)^\T_\beta \in \mathbb{Q}(s_1,s_2,s_3)((u))\ . $$ 
For the study of the Gromov-Witten theory of toric 3-folds,
the open geometries play an important role.

\subsection{Descendents in the theory of stable pairs}\label{dess}
Let $X$ be a nonsingular projective 3-fold, and let
$\beta \in H_2(X,\mathbb{Z})$ be a nonzero class. We consider next the
moduli space of stable pairs{\footnote{See \cite{pt} for a foundational
development. An introduction to the subject of stable pairs can be found in \cite{rp13}.}}
$$[\OO_X \stackrel{s}{\rightarrow} F] \in P_n(X,\beta)$$
where $F$ is a pure sheaf supported on a Cohen-Macaulay subcurve of $X$, 
$s$ is a morphism with 0-dimensional cokernel, and
$$\chi(F)=n, \  \  \ [F]=\beta.$$
The space $P_n(X,\beta)$
carries a virtual fundamental class obtained from the 
deformation theory of complexes in
the derived category \cite{pt}.

Since $P_n(X,\beta)$ is a fine moduli space, there exists a universal sheaf
$$\FF \rightarrow X\times P_{n}(X,\beta),$$
see Section 2.3 of \cite{pt}.
For a stable pair $[\OO_X\to F]\in P_{n}(X,\beta)$, the restriction of
$\FF$
to the fiber
 $$X \times [\OO_X \to F] \ \subset\  
X\times P_{n}(X,\beta)
$$
is canonically isomorphic to $F$.
Let
$$\pi_X\colon X\times P_{n}(X,\beta)\to X,$$
$$\pi_P\colon X\times P_{n}(X,\beta)
\to P_{n}(X,\beta)$$
 be the projections onto the first and second factors.
Since $X$ is nonsingular
and
$\FF$ is $\pi_P$-flat, $\FF$ has a finite resolution 
by locally free sheaves.
Hence, the Chern character of the universal sheaf $\FF$ on 
$X \times P_n(X,\beta)$ is well-defined.
By definition, the operation
$$
\pi_{P*}\big(\pi_X^*(\gamma)\cdot \text{ch}_{2+i}(\FF)
\cap \pi_P^*(\ \cdot\ )\big)\colon 
H_*(P_{n}(X,\beta))\to H_*(P_{n}(X,\beta))
$$
is the action of the descendent $\tau_i(\gamma)$, where
$\gamma \in H^*(X,\Z)$.

For nonzero $\beta\in H_2(X,\Z)$ and arbitrary $\gamma_i\in H^*(X,\Q)$,
define the stable pairs invariant with descendent insertions by
\begin{eqnarray*}
\Big\langle \tau_{k_1}(\gamma_1)\ldots \tau_{k_r}(\gamma_r)
\Big\rangle_{\!n,\beta}&  = &
\int_{[P_{n}(X,\beta)]^{vir}}
\prod_{i=1}^r \tau_{k_i}(\gamma_i) \\
& = & 
\int_{P_n(X,\beta)} \prod_{i=1}^r \tau_{k_i}(\gamma_{i})
\Big( [P_{n}(X,\beta)]^{vir}\Big).
\end{eqnarray*}
The partition function is 
$$
\ZZ_{\mathsf{P}}\Big(X;q\ \Big|   \prod_{i=1}^r \tau_{k_i}(\gamma_{i})
\Big)_\beta
=\sum_{n} 
\Big\langle \prod_{i=1}^r \tau_{k_i}(\gamma_{i}) 
\Big\rangle_{\!n,\beta} q^n.
$$

Since $P_n(X,\beta)$ is empty for sufficiently negative
$n$, the partition function 
is a Laurent series in $q$. The following conjecture was made in 
\cite{pt2}.

\begin{Conjecture}
\label{111} 
The partition function
$\ZZ_{\mathsf{P}}\big(X;q\ |   \prod_{i=1}^r \tau_{k_i}(\gamma_{i})
\big)_\beta$ is the 
Laurent expansion of a rational function in $q$.
\end{Conjecture}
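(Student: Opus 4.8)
The plan is to reduce the general statement to a computation on nonsingular projective toric 3-folds via degeneration, and there to prove rationality by establishing rationality of a universal local building block. I would begin with the toric case, where the algebraic torus $\T$ acts on $P_n(X,\beta)$. The $\T$-fixed loci are supported on the toric $1$-skeleton and are indexed by combinatorial data: partitions along the torus-fixed $\Pp$'s together with box configurations at the torus-fixed points. By the virtual localization formula, each integral $\int_{[P_n(X,\beta)]^{vir}}\prod_i \tau_{k_i}(\gamma_i)$ becomes a sum of these fixed-point contributions, and the descendent insertions, defined through $\ch_{2+i}(\FF)$, localize to equivariant characteristic classes of the universal sheaf on each fixed locus. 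Summing over $n$, the series assembles, in the topological-vertex formalism, as a gluing of local vertex contributions (one per torus-fixed point, modeled on $\C^3$) and edge contributions (one per torus-fixed $\Pp$).

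The core of the argument is the rationality of the capped descendent vertex, the local contribution of $\C^3$ whose legs are capped by the relative toric geometry $\Pp\times\C^2$ relative to the fiber $\{\infty\}\times\C^2$. I would first treat the one-leg capped descendent vertex by identifying it, up to explicit rational edge factors, with the stable-pairs descendent series of this relative cap. That series is rational because the parallel Gromov--Witten descendent series for local curves is a rational function of $q$ after the change of variables $-q=e^{iu}$, a consequence of the TQFT structure of local-curve invariants, and because the GW/P descendent correspondence for the relative toric geometry transports this rationality to the pairs side. Rationality of the two- and three-leg capped vertices then follows from the gluing formalism: each gluing along a distinguished edge is a pairing against a matrix of rational series in the partition-indexed basis, and such finite linear-algebra operations preserve rationality.

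With all capped vertices rational, the full toric series $\ZZ_{\mathsf{P}}\big(X;q\,|\,\prod_i\tau_{k_i}(\gamma_i)\big)_\beta$ is a finite gluing of rational building blocks over the toric graph, hence rational. For a general $X$, and in particular for a Calabi--Yau complete intersection such as the quintic, I would degenerate $X$ into a union of toric or relative toric pieces and apply the degeneration formula for stable-pairs descendents. In each fixed curve class the series for $X$ is then a finite sum, over matchings of cohomology-weighted partitions along the degeneration divisor, of products of relative descendent series of the pieces. Since each piece is toric and therefore rational by the previous step, and the degeneration sum is finite, rationality is inherited by $X$.

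The main obstacle is the rationality of the capped descendent vertex itself. The descendent combinatorics at a vertex is unbounded: the classes $\ch_{2+i}(\FF)$ may carry arbitrarily high degree, and the sum over $n$ ranges over all box configurations along the legs, so no naive finiteness argument applies. The essential input is to control these descendents uniformly by matching them, through the relative (rubber) calculus together with the GW/P descendent correspondence, to the Gromov--Witten descendent series of local curves, where rationality is available. Making this matching precise enough to preserve rationality, uniformly across all descendent insertions, is the crux of the proof.
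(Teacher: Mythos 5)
You are proving Conjecture \ref{111}, which asserts rationality for an \emph{arbitrary} nonsingular projective 3-fold $X$. The paper does not prove this statement; it remains a conjecture. What is established is the toric case (quoted from \cite{part1,PP2}) and, in the present paper, the case of Fano and Calabi--Yau complete intersections in products of projective spaces (Theorem \ref{qqq111}). Your final reduction step --- ``for a general $X$\ldots degenerate $X$ into a union of toric or relative toric pieces'' --- is exactly where the argument cannot be completed: a general nonsingular projective 3-fold admits no such degeneration, and the class of 3-folds accessible this way is essentially the complete intersections treated in Section \ref{laman}. So at best your outline recovers the known special cases, not the conjecture.

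There is a second, more local gap inside the toric step. You propose to obtain rationality of the capped descendent vertex by transporting it from the Gromov--Witten descendent theory of local curves through the GW/P correspondence, on the grounds that ``rationality is available'' on the Gromov--Witten side. This is backwards and, as stated, false: the paper records explicitly that the \emph{equivariant} Gromov--Witten descendent partition function is expected (from calculational evidence) \emph{not} to be the expansion of a rational function, so there is nothing on the Gromov--Witten side to transport. Moreover, the descendent correspondence is only formulated once rationality of the stable pairs side is known, since the change of variables $-q=e^{iu}$ requires a rational function to substitute into; invoking the correspondence to prove that rationality is circular. In \cite{part1,PP2} the rationality of the capped descendent vertex is proven directly on the stable pairs side (this is the actual content of ``Toric rationality''). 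Granting that input, your gluing over the toric 1-skeleton and the degeneration argument for complete intersections are sound and coincide with the paper's route to the rationality assertion in Theorem \ref{qqq111} --- but they do not and cannot yield Conjecture \ref{111} in the stated generality.
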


Let $X$ be a nonsingular quasi-projective toric 3-fold. 
The stable pairs descendent
invariants can be lifted to equivariant cohomology (and
defined by residues in the open case). For
equivariant classes $\gamma_i \in H^*_{\T}(X,\mathbb{Q})$, we see
$$\bZ_{\mathsf{P}}\Big(X;q\ \Big|\ \prod_{i=1}^r \tau_{k_i}(\gamma_{i})\Big)^\T_\beta \in \mathbb{Q}(s_1,s_2,s_3)((q))$$ 
is a Laurent series in $q$ with coefficients in $H^*_\T(\bullet)$.
A central result of \cite{part1,PP2} is the following rationality
property.

\vspace{10pt}
\noindent{\bf Toric rationality.} {\em
Let 
$X$ be a nonsingular quasi-projective toric 3-fold. The partition function
$$\ZZ_{P}\Big(X;q\ \Big|   \prod_{i=1}^r \tau_{k_i}(\gamma_i)
\Big)^\T_\beta$$ is the 
Laurent expansion in $q$ of a rational function in the field 
$\mathbb{Q}(q,s_1,s_2,s_3)$.}

\vspace{10pt}

The above rationality result implies Conjecture 1 when $X$ is
a nonsingular projective toric 3-fold. 
The corresponding statement for the equivariant
Gromov-Witten descendent partition function is expected
(from calculational evidence) to be false.

\subsection{Descendent correspondence}
%
Let $X$ be a nonsingular projective 3-fold.
Let $\widehat{\alpha}=(\widehat{\alpha}_1,\ldots,
\widehat{\alpha}_{\widehat{\ell}})$ be a partition
of length $\widehat{\ell}$.
Let 
$$\iota_\Delta:\Delta\rightarrow X^{\widehat{\ell}}$$
 be the inclusion of the
small diagonal in the product $X^{\widehat{\ell}}$.
For $\gamma\in H^*(X,\mathbb{Q})$, 
we write $$\gamma\cdot \Delta =\iota_{\Delta*}(\gamma) \in H^*(X^{\widehat{\ell}}, \mathbb{Q})\, .$$ 
By K\"unneth decomposition, we have
$$\gamma\cdot \Delta=
\sum_{{j_1, \ldots, j_{\hat{\ell}}}} c^\gamma_{j_1,\ldots, j_{\hat{\ell}}}\, 
\theta_{j_1} \otimes
\ldots\otimes \theta_{j_{\hat{\ell}}}\, ,$$
where $\{\theta_j\}$ is a basis of $H^*(X,\mathbb{Q})$.
We define the descendent insertion $\tau_{\widehat{\alpha}}(\gamma)$ by
\begin{equation}\label{j77833}
\tau_{\widehat{\alpha}}(\gamma)= 
\sum_{j_1,\ldots,j_{\hat{\ell}}} c^\gamma_{j_1,\ldots, j_{\hat{\ell}}}\,
\tau_{\widehat{\alpha}_1-1}(\theta_{j_1})
\cdots\tau_{\widehat{\alpha}_{\hat{\ell}}-1}(\theta_{j_{\hat{\ell}}})\ .
\end{equation}
Three basic examples are:
\begin{enumerate}
\item[$\bullet$] If $\widehat{\alpha}=(\widehat{a}_1)$, then 
$$\tau_{(\, \widehat{a}_1\,)}(\gamma)= \tau_{\widehat{a}_1-1}(\gamma)\, .$$
The convention of shifting
the descendent by $1$  allows us to index descendent insertions by
standard partitions $\widehat{\alpha}$. The shift by $1$ 
is natural from the point of view of 
relative/descendent correspondences and follows the notation of \cite{PPDC}.

\item[$\bullet$] If $\widehat{\alpha}=(\widehat{a}_1,\widehat{a}_2)$ 
and $\gamma=1$ is the identity class, then
$$\tau_{(\, \widehat{a}_1,\, \widehat{a}_2\, )}(1)= \sum_{j_1,j_2} c^1_{j_1,j_2} 
\tau_{\widehat{a}_1-1}(\theta_{j_1})\, \tau_{\widehat{a}_2-1}(\theta_{j_2})\, ,$$
where $\Delta = \sum_{j_1,j_2} c^1_{j_1,j_2}\, \theta_{j_1} \otimes \theta_{j_2}$
is the standard K\"unneth decomposition of the diagonal in $X^2$.
\item[$\bullet$] If 
$\gamma$ is the class of a point, then 
\[
\tau_{\widehat{\alpha}}(\mathsf{p})=
\tau_{\widehat{\alpha}_1-1}(\mathsf{p})\cdots\tau_{\widehat{\alpha}_{\hat{\ell}}-1}(\mathsf{p}).
\]
\end{enumerate}
By the multilinearity of descendent insertions, 
formula  \eqref{j77833} does not depend upon the
basis choice $\{\theta_j\}$.


A central result of \cite{PPDC} is
the construction of
a universal correspondence matrix $\widetilde{\mathsf{K}}$ 
indexed by partitions
$\alpha$ and $\widehat{\alpha}$ of positive size with{\footnote{Here, $i^2=-1$.}}
$$\widetilde{\mathsf{K}}_{\alpha,\widehat{\alpha}}\in 
\mathbb{Q}[i,c_1,c_2,c_3]((u))\ $$
and $\widetilde{\mathsf{K}}_{\alpha,\widehat{\alpha}}=0$ 
unless $|\alpha|\geq |\widehat{\alpha}|$.
Via the substitution
\begin{equation} \label{h3492}
c_i=c_i(T_X),
\end{equation}
the elements of $\widetilde{\mathsf{K}}$
act by cup product on the cohomology 
of $X$ with $\mathbb{Q}[i]((u))$-coefficients.
The coefficients $\widetilde{\mathsf{K}}_{\alpha,\widehat{\alpha}}$
are constructed from the capped descendent vertex \cite{PPDC}.

The matrix $\widetilde{\mathsf{K}}$ is 
used to define a correspondence
rule
\begin{equation}\label{pddff}
{\tau_{\alpha_1-1}(\gamma_1)\cdots
\tau_{\alpha_{\ell}-1}(\gamma_{\ell})}\ \  \mapsto\ \ 
\overline{\tau_{\alpha_1-1}(\gamma_1)\cdots
\tau_{\alpha_{\ell}-1}(\gamma_{\ell})}\ .
\end{equation}
The formula for the right side
of \eqref{pddff} requires a sum over all set
partitions $P$ of $\{ 1,\ldots, \ell \}$.
 For such a  set partition
$P$, each element $S\in P$
is a subset of $\{1,\ldots, \ell\}$.
Let $\alpha_S$ be the associated subpartition of
$\alpha$, and let
$$\gamma_S = \prod_{i\in S}\gamma_i.$$
In case all cohomology classes $\gamma_j$ are even,
we define the right side of \eqref{pddff} 
by
\begin{equation}\label{mqq23}
\overline{\tau_{\alpha_1-1}(\gamma_1)\cdots
\tau_{\alpha_{\ell}-1}(\gamma_{\ell})}
=
\sum_{P \text{ set partition of }\{1,\ldots,\ell\}}\ \prod_{S\in P}\ \sum_{\widehat{\alpha}}\tau_{\widehat{\alpha}}(\widetilde{\mathsf{K}}_{\alpha_S,\widehat{\alpha}}\cdot\gamma_S) \ .
\end{equation}
The last sum is over 
all partitions $\widehat{\alpha}$ of positive size, but by the vanishing
$$\widetilde{\mathsf{K}}_{\alpha_S,\widehat{\alpha}}=0 \ \ \text{unless}
\ \ |{\alpha_S}|\geq |\widehat{\alpha}|\, , $$
the summation index  may be restricted to partitions $\widehat{\alpha}$ of positive size bounded
by $|\alpha_S|$.

The leading term of the descendent correspondence is calculated
in \cite{PPDC},
\begin{equation*}
\overline{\tau_{\alpha_1-1}(\gamma_1)\cdots
\tau_{\alpha_{\ell}-1}(\gamma_{\ell})}
= (iu)^{\ell(\alpha)-|\alpha|}\, \tau_{\alpha_1-1}(\gamma_1)\cdots
\tau_{\alpha_{\ell}-1}(\gamma_{\ell}) +\ldots .
\end{equation*}
The leading term occurs in the contribution of the maximal set partition
$$\{1\} \cup \{2\} \cup \ldots \cup \{\ell\} = \{1,2,\ldots, \ell\}$$
in $\ell$ parts, see \cite[Section 7]{PPDC}. 
In case $\alpha=1^\ell$ has all part equal to 1, the leading term is the entire formula,
\begin{equation*}
\overline{\tau_{0}(\gamma_1)\cdots
\tau_{0}(\gamma_{\ell})}
=  \tau_{0}(\gamma_1)\cdots
\tau_{0}(\gamma_{\ell})\, .
\end{equation*}

In the presence of odd cohomology, a natural sign
must be included in \eqref{mqq23}. We may write 
set partitions $P$ of $\{1,\ldots, \ell\}$ indexing 
the sum on the right side of \eqref{mqq23} 
as
$$S_1\cup \ldots \cup S_{|P|} = \{1,\ldots, \ell\}.$$
The parts $S_i$ of $P$ are unordered, but we choose an ordering
for each $P$.
We then 
obtain a permutation of $\{1, \ldots, \ell\}$
by moving the elements to the ordered parts $S_i$ (and
respecting the original order in each group).
The permutation, in turn, determines a sign $\sigma(P)$
determined by the anti-commutation of the associated
odd classes. We then write
\begin{equation*}
\overline{\tau_{\alpha_1-1}(\gamma_1)\cdots
\tau_{\alpha_{\ell}-1}(\gamma_{\ell})}
=
\sum_{P \text{ set partition of }\{1,\ldots,\ell\}}\ (-1)^{\sigma(P)}
\prod_{S_i\in P}\ \sum_{\widehat{\alpha}}\tau_{\widehat{\alpha}}(\widetilde{\mathsf{K}}_{\alpha_{S_i},\widehat{\alpha}}
\cdot\gamma_{S_i}) \ .
\end{equation*}
The descendent 
$\overline{\tau_{\alpha_1-1}(\gamma_1)\cdots
\tau_{\alpha_{\ell}-1}(\gamma_{\ell})}$ is easily seen to have the
same commutation rules with respect to odd
cohomology as ${\tau_{\alpha_1-1}(\gamma_1)\cdots
\tau_{\alpha_{\ell}-1}(\gamma_{\ell})}$.

To state the descendent correspondence proposed in
\cite{PPDC} for all nonsingular projective 3-folds $X$, the basic degree
$$d_\beta = \int_{\beta} c_1(X) \ \in \mathbb{Z}$$
associated to the class $\beta\in H_2(X,\mathbb{Z})$ will be required.

\begin{Conjecture}
\label{ttt222} 
For $\gamma_i \in H^{*}(X,\mathbb{Q})$, we have
\begin{multline*}
(-q)^{-d_\beta/2}\ZZ_{\mathsf{P}}\Big(X;q\ \Big|  
{\tau_{\alpha_1-1}(\gamma_1)\cdots
\tau_{\alpha_{\ell}-1}(\gamma_{\ell})}
\Big)_\beta \\ =
(-iu)^{d_\beta}\ZZ'_{\mathsf{GW}}\Big(X;u\ \Big|   
\ \overline{\tau_{\alpha_1-1}(\gamma_1)\cdots
\tau_{\alpha_{\ell}-1}(\gamma_{\ell})}\ 
\Big)_\beta 
\end{multline*}
under the variable change $-q=e^{iu}$.
\end{Conjecture}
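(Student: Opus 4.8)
The plan is to deduce the conjecture for the quintic, and more generally for Calabi--Yau complete intersections in products of projective spaces, from the toric case by degeneration. For a nonsingular toric $3$-fold the descendent correspondence is already known: it is governed by the universal matrix $\widetilde{\mathsf{K}}$ of \cite{PPDC} through the rule \eqref{mqq23}, and the $\T$-equivariant Pairs side is rational by Toric rationality. Since the quintic is not toric, I would realize it as the central fiber of a degeneration whose pieces reduce, after further degeneration, to toric geometries. The degeneration formula operates only on \emph{relative} invariants, so the first and most substantial task is to upgrade the correspondence to relative geometries $(X,D)$ carrying descendent insertions in the interior together with relative conditions along the divisor $D$.

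For the relative correspondence I would use the rubber calculus and deformation to the normal cone, expressing relative invariants of $(X,D)$ in terms of absolute invariants of $X$ and rubber integrals associated to $D$. The decisive class of geometries is the projective bundle $\proj(N) \to S$ over a surface $S$ taken relative to a section: these are precisely the building blocks into which the complete intersection degenerates, and their GW/P descendent theory can be controlled because a further degeneration of the base $S$ into toric surfaces renders the total geometry toric, hence subject to $\widetilde{\mathsf{K}}$. The output I want from this step is a relative correspondence of the same shape as Conjecture \ref{ttt222}: the matrix $\widetilde{\mathsf{K}}$ acts on the interior insertions through the substitution \eqref{h3492} adapted to the relative (logarithmic) tangent bundle of $(X,D)$, while the relative conditions along $D$ are matched by the corresponding relative GW/P rule.

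With the relative correspondence available, I would prove its compatibility with the degeneration formulas. Both $\ZZ_{\mathsf{P}}$ and $\bZ'_{\mathsf{GW}}$ satisfy degeneration formulas writing the invariants of $X = X_1 \cup_D X_2$ as a sum over splittings $\beta = \beta_1 + \beta_2$ and over a diagonal basis of relative insertions along $D$ of products of relative invariants of $(X_1,D)$ and $(X_2,D)$. The point is that applying the relative rule to each factor and summing must reproduce the absolute rule \eqref{mqq23}. This rests on two matchings: the splitting across $D$ of the small-diagonal class $\Delta$ underlying the multi-point descendents \eqref{j77833}, and the additivity of the normalization exponent $d_\beta = \int_\beta c_1(T_X)$, so that the prefactors factor correctly once the shift to the logarithmic tangent bundles of the pieces is accounted for. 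Granting these, deformation invariance of both partition functions transports the matched correspondence from the central fiber to the smooth quintic; there $c_1(T_X) = 0$ forces $d_\beta = 0$, so the prefactors $(-q)^{-d_\beta/2}$ and $(-iu)^{d_\beta}$ are trivial.

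The main obstacle is exactly the relative correspondence together with its degeneration compatibility at the level of \emph{descendents}. The matrix $\widetilde{\mathsf{K}}$ is built from the capped descendent vertex and depends nontrivially on the Chern classes $c_1,c_2,c_3$ of the tangent bundle, so an interior descendent $\tau_k(\gamma)$ with $\gamma$ supported near $D$ must split into interior and relative contributions coherently with the gluing of $\widetilde{\mathsf{K}}$ along $D$ and with the diagonal splitting of \eqref{j77833}. Showing that the correspondence commutes with the degeneration formula for arbitrary descendents --- not merely for stationary insertions, where such arguments are classical --- is the technical heart of the proof. Once this gluing is in place, the reduction of the quintic and of the remaining Calabi--Yau complete intersections to toric building blocks is a matter of exhibiting suitable degenerations.
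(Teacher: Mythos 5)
The statement you are proving is Conjecture~\ref{ttt222}, which asserts the descendent correspondence for \emph{every} nonsingular projective 3-fold $X$ and \emph{arbitrary} classes $\gamma_i \in H^*(X,\mathbb{Q})$. The paper does not prove this; it remains a conjecture there, established only for toric 3-folds (quoted from \cite{PPDC}) and, via Theorem~\ref{qqq111}, for Fano and Calabi--Yau complete intersections in products of projective spaces with \emph{even} insertions $\gamma_i \in H^{2*}(X,\mathbb{Q})$. Your degeneration strategy is, in outline, exactly the paper's proof of Theorem~\ref{qqq111} --- relative descendent correspondence with the log tangent bundle $T_X[-D]$ and relative diagonals $\Delta_{\mathsf{rel}}$, compatibility with the degeneration formula, reduction of the quintic to toric building blocks by factoring the defining equation --- but it cannot deliver the conjecture as stated. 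First, the method applies only to 3-folds that admit degenerations into controllable pieces, which an arbitrary projective 3-fold does not. Second, even for the quintic your argument only reaches the simple (equivalently, even) cohomology: the odd part $H^3(X,\mathbb{Q})$ of the quintic is $204$-dimensional and none of it survives restriction from the ambient $\mathbf{P}^4$, so it is invisible to the degeneration scheme. This is precisely why Theorem~\ref{qqq111} is stated for even classes only.

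Within the scope of the even-class complete-intersection case, there is a further gap in your claim that ``a further degeneration of the base $S$ into toric surfaces renders the total geometry toric.'' The degeneration scheme unavoidably produces projective bundles over $K3$ surfaces and over $\mathbf{P}^1$-bundles over higher-genus curves (the surfaces $S_4$, $S_3$ and the curves $C_{d_1,d_2}$ of Section~\ref{zzwe}), and these are not reducible to toric data by degenerating $S$ alone: the $K3$ case requires the fully $\mathbf{T}$-equivariant correspondence for the cap (Theorem~\ref{ppp444}), and the higher-genus case requires the bi-relative residue theories of Section~\ref{xxx3} together with the control of the \emph{surface's} odd cohomology by the method of \cite{vir} (Proposition~\ref{bbb999}). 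These constitute the technical core of the paper and are absent from your outline. Your proposal is a correct road map for Theorem~\ref{qqq111}, but as a proof of Conjecture~\ref{ttt222} it both overclaims the scope of the degeneration method and omits the non-toric surface geometries that the method itself forces you to confront.
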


By Conjecture \ref{111}, the stable pairs descendent series
on the left is expected to be a rational function in $q$, so the change
of variables is well-defined.

If $X$ is a nonsingular quasi-projective toric 3-fold,
all terms of the descendent correspondence
have $\T$-equivariant interpretations.
We take the equivariant K\"unneth decomposition in \eqref{j77833},
and the equivariant Chern classes $c_i(T_X)$ with respect to the
canonical $\T$-action on $T_X$ in \eqref{h3492}.
The toric case is proven in \cite{PPDC}.

\vspace{10pt}
\noindent{\bf Toric correspondence.} {\em
For $\gamma_i \in H^{*}_{\mathbf{T}}(X,\mathbb{Q})$,
we have
\begin{multline*}
(-q)^{-d_\beta/2}\ZZ_{\mathsf{P}}\Big(X;q\ \Big|  
{\tau_{\alpha_1-1}(\gamma_1)\cdots
\tau_{\alpha_{\ell}-1}(\gamma_{\ell})}
\Big)^\T_\beta \\ =
(-iu)^{d_\beta}\ZZ'_{\mathsf{GW}}\Big(X;u\ \Big|   \
\overline{\tau_{\alpha_1-1}(\gamma_1)\cdots
\tau_{\alpha_{\ell}-1}(\gamma_{\ell})}\
\Big)^\T_\beta 
\end{multline*}
under the variable change $-q=e^{iu}$ 
for all nonsingular quasi-projective toric 3-folds $X$.}

\subsection{Complete intersections}
Let $X$ be a Fano or Calabi-Yau complete intersection 
in a product of projective spaces,
$$ X \subset \PP^{n_1} \times \cdots \times \PP^{n_m}\ .$$
The main result of the paper is the proof of the descendent
correspondence for even classes.

\begin{Theorem}
\label{qqq111} 
Let $X$ be a Fano or Calabi-Yau complete intersection 3-fold in
a product of projective spaces, and let
$\gamma_i \in H^{2*}(X,\mathbb{Q})$ be even classes. Then,
$$\ZZ_{\mathsf{P}}\Big(X;q\ \Big|  
{\tau_{\alpha_1-1}(\gamma_1)\cdots
\tau_{\alpha_{\ell}-1}(\gamma_{\ell})}
\Big)_\beta \ \in \mathbb{Q}(q)\ ,$$
and we have the correspondence
\begin{multline*}
(-q)^{-d_\beta/2}\ZZ_{\mathsf{P}}\Big(X;q\ \Big|  
{\tau_{\alpha_1-1}(\gamma_1)\cdots
\tau_{\alpha_{\ell}-1}(\gamma_{\ell})}
\Big)_\beta \\ =
(-iu)^{d_\beta}\ZZ'_{\mathsf{GW}}\Big(X;u\ \Big|   
\ \overline{\tau_{\alpha_1-1}(\gamma_1)\cdots
\tau_{\alpha_{\ell}-1}(\gamma_{\ell})}\ 
\Big)_\beta 
\end{multline*}
under the variable change $-q=e^{iu}$.
\end{Theorem}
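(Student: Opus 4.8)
The plan is to deduce the projective complete intersection case from the already-established \textbf{Toric correspondence} via a degeneration-to-the-normal-cone argument, reducing to relative geometries that are toric or built from toric pieces. The overall strategy has two independent components: first, establishing the rationality of the stable pairs series $\ZZ_{\mathsf{P}}(X;q\,|\,\cdots)_\beta \in \mathbb{Q}(q)$, and second, proving the numerical identity between the rationalized pairs series and the Gromov-Witten series under $-q=e^{iu}$. Since both the pairs side and the Gromov-Witten side satisfy the degeneration formula, and since the correspondence matrix $\widetilde{\mathsf{K}}$ is universal (depending only on the formal Chern roots $c_i$ and not on the global geometry), the correspondence is compatible with gluing: if $X$ degenerates to $X_1 \cup_D X_2$ along a smooth divisor $D$, and the relative correspondence holds for the pairs $(X_1,D)$ and $(X_2,D)$, then it holds for $X$. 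The first step is therefore to formulate and prove a \emph{relative} descendent correspondence, where the insertions include relative (boundary) conditions along $D$.

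The key steps, in order, are as follows. First, I would embed $X \subset \PP^{n_1}\times\cdots\times\PP^{n_m}$ and realize $X$ as a member of a linear system, then degenerate the ambient projective bundle structure so that $X$ breaks into pieces whose Gromov-Witten and pairs theories are accessible. The \emph{projective bundles over surfaces relative to a section}, highlighted in the abstract, are the essential building blocks: a $\PP^1$- or $\PP^2$-bundle $\PP(\oh\oplus L)$ over a surface $S$, taken relative to the section at infinity, has a fiberwise torus action, and one wants to propagate the correspondence from the toric base case through these relative geometries using the degeneration formula combined with the rigidity/deformation invariance of both theories in the base surface class. Second, I would set up the induction on $\beta$ (curve class) and on the number of descendent insertions, using the degeneration formula to express invariants of $X$ in terms of relative invariants of the simpler pieces, matching the combinatorics of the relative boundary conditions on both sides; here the universality and the vanishing $\widetilde{\mathsf{K}}_{\alpha,\widehat{\alpha}}=0$ for $|\alpha|<|\widehat{\alpha}|$ control the triangularity of the matching. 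Third, one invokes the \textbf{Toric correspondence} as the base of the induction for the toric relative pieces, and checks that the capped-vertex construction of $\widetilde{\mathsf{K}}$ glues correctly across the relative divisor (this is where the structure of $\widetilde{\mathsf{K}}$ as built from the capped descendent vertex is used).

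The hard part will be the relative geometry: proving the descendent correspondence for the \emph{relative} pairs $(Y,D)$ where $Y$ is a projective bundle over a surface and $D$ is the section, since these are generally not toric (the base surface need not be toric), and one cannot directly apply the Toric correspondence. The mechanism to overcome this is to exploit the fibered structure, deforming the surface $S$ and using that both the Gromov-Witten and stable pairs theories, together with the correspondence matrix, depend on $S$ only through its cohomology and Chern classes, which can be matched against a toric model via a further degeneration of $S$ itself or via an appropriate universality statement for the fiber direction. One must also verify that the rationality in $q$ is preserved under each degeneration and partial summation, so that the change of variables $-q=e^{iu}$ remains well-defined throughout the induction; this requires that the relative pairs series lie in $\mathbb{Q}(q)$, which follows from the toric rationality applied to the pieces together with the degeneration formula.

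A secondary technical point is bookkeeping the prefactors $(-q)^{-d_\beta/2}$ and $(-iu)^{d_\beta}$ under degeneration: the degree $d_\beta = \int_\beta c_1(T_X)$ is additive in the split curve classes only after accounting for the shift coming from the relative divisor $D$, so one must track the contributions of $d_\beta$ across the normal-cone degeneration and confirm that the multiplicativity of the two sides of the correspondence is consistent with the additivity of these prefactors. Once the relative correspondence is in place for the projective-bundle building blocks and the prefactor bookkeeping is verified, the theorem for $X$ follows by assembling the pieces of the degeneration of $X$ and summing over the relative boundary conditions, yielding both the rationality $\ZZ_{\mathsf{P}}(X;q\,|\,\cdots)_\beta\in\mathbb{Q}(q)$ and the stated identity.
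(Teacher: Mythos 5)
Your high-level skeleton matches the paper: degenerate $X$ by factoring the defining equations, reduce via the degeneration formula and its compatibility with the correspondence to relative geometries built from toric pieces and projective bundles over surfaces relative to a section, and use the Toric correspondence as the base case. However, the step you yourself flag as the hard part --- the relative correspondence for $\mathbf{P}_S/S_\infty$ when $S$ is not toric --- is handled by a mechanism that does not work. You propose to deform $S$ to a toric model using the claim that both theories ``depend on $S$ only through its cohomology and Chern classes.'' The surfaces that actually arise in the degeneration of the quintic are $K3$ surfaces (the quartic $S_4$) and $\mathbf{P}^1$-bundles over curves $C_{4,5}$ of high genus; neither is deformation equivalent to a toric surface, and no such universality statement is available. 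The paper instead proves the $K3$ case by first establishing the fully $\mathbf{T}$-equivariant descendent correspondence for the $3$-fold cap (a depth induction intertwined with localization and rubber calculus, Sections 3--4), and proves the higher-genus case by degenerating the base curve, which forces the introduction of bi-relative residue theories for $\mathbf{Y}/\mathbf{Y}_\infty\cup\mathbf{D}_\infty$ (Section 5) since the two relative divisors intersect.

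Two further gaps. First, even though the insertions in Theorem \ref{qqq111} are even, the degeneration formula sums over relative boundary conditions weighted by a full basis of $H^*(D,\mathbb{Q})$, and the divisors $\mathbf{P}_{C_{4,5}}$ occurring in the scheme have odd cohomology; your argument never confronts odd classes. The paper eliminates them by the method of \cite{vir} (algebraicity of the virtual class, degeneration, monodromy invariance, and elliptic vanishing relations after reducing to genus $1$), together with an invertibility statement (Lemma \ref{cakeshop}) playing the role of Lemma 5.6 of \cite{vir}. Second, your induction only uses the degeneration formula in the direction ``relative pieces determine the absolute total space,'' but the paper's reduction scheme repeatedly needs the reverse direction --- extracting, say, $\mathsf{Z}(\mathbf{P}^3[4,5]/S_4)$ from $\mathsf{Z}(\mathbf{P}^3[4,5])$ and $\mathsf{Z}(\mathbf{P}_{S_4}/S_4)$ --- which requires inverting the gluing matrix over all boundary conditions; this is the nondegeneracy result of Proposition 6 of \cite{PP2} and is not automatic. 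Without these three ingredients the induction cannot close.
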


If we specialize Theorem \ref{qqq111} to the case
where all descendents are primary or stationary,
we obtain the explicit correspondence conjectured
first in \cite{MNOP2} for the Donaldson-Thomas theory of ideal sheaves.

\begin{Corollary} \label{ccc000}
Let $X$ be a Fano or Calabi-Yau complete intersection 3-fold in
a product of projective spaces, and let
$\gamma_i \in H^{2*}(X,\mathbb{Q})$ be even classes of positive
degree. Then,
$$\bZ_{\mathsf{P}}\left(X;q \
\Bigg| \ \prod_{i=1}^r {\tau}_0(\gamma_{i})
 \prod_{j=1}^s {\tau}_{k_j}(\mathsf{p}) \right)_{\beta} \ \in \mathbb{Q}(q)\ ,$$
and we have the correspondence
\begin{multline*}
(-q)^{-d_\beta/2}\ \bZ_{\mathsf{P}}\left(X;q \
\Bigg| \ \prod_{i=1}^r {\tau}_0(\gamma_{i})
 \prod_{j=1}^s {\tau}_{k_j}(\mathsf{p}) \right)_{\beta}=\\
(-iu)^{d_\beta} (iu)^{-\sum k_j}\ 
 \bZ'_{\mathsf{GW}}\left(X;u \ \Bigg| \ \prod_{i=1}^r \tau_0(\gamma_{i}) 
\prod_{j=1}^s {\tau}_{k_j}(\mathsf{p})  \right)_{\beta} \ 
\end{multline*}
under the variable change $-q=e^{iu}$.
\end{Corollary}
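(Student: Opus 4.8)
The plan is to deduce Corollary \ref{ccc000} from Theorem \ref{qqq111} purely by specializing and unwinding the definition of the correspondence rule in the case where each insertion is either a primary descendent $\tau_0(\gamma_i)$ or a stationary descendent $\tau_{k_j}(\mathsf{p})$. The rationality assertion is immediate: it is exactly the first conclusion of Theorem \ref{qqq111} with the chosen insertions, so no new work is required there. The substance is the computation of the right-hand side, namely identifying $(-iu)^{d_\beta}\ZZ'_{\mathsf{GW}}(X;u\mid \overline{\cdots})_\beta$ with the explicit normalized Gromov-Witten series in the corollary, which amounts to showing that the bar-operation, when restricted to primary and stationary inputs, collapses to the simple diagonal rule with the scalar factor $(iu)^{-\sum k_j}$.

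First I would invoke the structure of the correspondence matrix $\widetilde{\mathsf{K}}$ stated in the excerpt: $\widetilde{\mathsf{K}}_{\alpha,\widehat\alpha}$ vanishes unless $|\alpha|\ge|\widehat\alpha|$, and it lies in $\mathbb{Q}[i,c_1,c_2,c_3]((u))$. For a single-box partition $\alpha=(1)$ (corresponding to $\tau_0$), the size constraint forces $\widehat\alpha=(1)$ as well, so the only contribution is the leading term, which I would identify (from the construction in \cite{PPDC}) as the identity up to the expected power of $u$; this should give $\tau_0(\gamma_i)\mapsto \tau_0(\gamma_i)$ with no descendent shift. For a stationary insertion $\tau_{k_j}(\mathsf{p})$, corresponding to $\alpha=(k_j+1)$, I would again use the size bound together with the fact that $\gamma_S=\mathsf{p}$ multiplies against the diagonal classes $\theta^{\mathsf{p}}$ to force the only surviving $\widehat\alpha$ to be the one-part partition, extracting the scalar $(iu)^{-k_j}$ from the relevant matrix entry. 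Multiplying these across all insertions produces the global factor $(iu)^{-\sum k_j}$.

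The key combinatorial step is to show that the sum over set partitions $P$ in \eqref{mqq23} degenerates to the trivial (all-singleton) partition when the inputs are primary or stationary. I would argue that any nontrivial grouping $S$ with $|S|\ge 2$ produces $\gamma_S=\prod_{i\in S}\gamma_i$, and when at least one factor is the point class $\mathsf{p}$, the product either vanishes (two point classes, or a point class times a positive-degree class on a $3$-fold once degrees exceed the top) or else the size constraint $|\alpha_S|\ge|\widehat\alpha|$ combined with the dimension of $\gamma_S$ kills the term. Here the hypothesis that all $\gamma_i$ have \emph{positive} degree is exactly what rules out the cross terms; this is why the corollary requires positive-degree primaries. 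The clean vanishing of multi-element blocks is what allows the right-hand side to be written as an ordinary product of single-insertion images rather than a full set-partition sum.

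The main obstacle I anticipate is the precise bookkeeping of powers of $(-iu)$ versus $(iu)$ and the verification that the leading coefficients of the relevant $\widetilde{\mathsf{K}}_{\alpha,\widehat\alpha}$ entries are exactly the normalizing constants claimed, rather than merely proportional to them. This requires either a direct appeal to the explicit formulas for the capped descendent vertex in \cite{PPDC} or a careful degree/dimension count matching the virtual dimension \eqref{k345} against the prescribed insertions. Once the matrix entries are pinned down and the set-partition collapse is established, the corollary follows by substituting into the statement of Theorem \ref{qqq111} and reading off the normalization; I would present this final substitution as a short explicit computation rather than a separate argument.
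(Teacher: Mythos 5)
Your overall route coincides with the paper's: rationality and the correspondence are read off from Theorem \ref{qqq111}, and the entire content of Corollary \ref{ccc000} is the evaluation of the bar-operation on primary and stationary insertions. The paper disposes of that evaluation in one line by citing the initial-term results of Theorem 2 of \cite{PPDC}, and you correctly sense that the relevant entries of $\widetilde{\mathsf{K}}$ must ultimately be imported from there. The problem is that the two elementary vanishing arguments you offer in place of that citation are both incorrect, and they sit exactly where the real content is.

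First, for a stationary insertion $\tau_{k}(\mathsf{p})$, i.e. $\alpha=(k+1)$, it is not true that pairing against the diagonal forces $\widehat{\alpha}$ to have one part: by \eqref{j77833} with $\gamma=\mathsf{p}$ one has $\tau_{\widehat{\alpha}}(\mathsf{p})=\tau_{\widehat{\alpha}_1-1}(\mathsf{p})\cdots\tau_{\widehat{\alpha}_{\hat{\ell}}-1}(\mathsf{p})$, a nonzero insertion for \emph{every} $\widehat{\alpha}$ (the paper records precisely this example after \eqref{j77833}). Multiplication by $\mathsf{p}$ only kills the positive-degree Chern-class part of $\widetilde{\mathsf{K}}_{(k+1),\widehat{\alpha}}$; the vanishing of the surviving scalar parts for $\widehat{\alpha}\neq(k+1)$ and the identification of $\widetilde{\mathsf{K}}_{(k+1),(k+1)}$ with $(iu)^{-k}$ are genuine computations of initial terms and do not follow from the size bound $|\alpha|\geq|\widehat{\alpha}|$. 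Second, the set-partition sum \eqref{mqq23} does not collapse for degree reasons alone: your argument handles blocks containing a point class (where $\gamma_S=0$ indeed), but a block consisting of two or three primary insertions of low positive degree --- say two divisor classes $D_1,D_2$ --- has $\gamma_S=D_1 D_2\neq 0$, and the term $\sum_{\widehat{\alpha}}\tau_{\widehat{\alpha}}(\widetilde{\mathsf{K}}_{(1,1),\widehat{\alpha}}\cdot D_1D_2)$ is absent only because $\widetilde{\mathsf{K}}_{(1,\ldots,1),\widehat{\alpha}}$ vanishes for length at least $2$, again part of the initial-term package of \cite{PPDC} rather than a consequence of positivity of degree. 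Both steps must be replaced by the explicit computation of the leading entries of $\widetilde{\mathsf{K}}$ (equivalently, the citation the paper actually makes); as written, your derivations of the collapse would fail.
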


If we specialize Theorem \ref{qqq111} further to the Calabi-Yau case
(with no descendent insertions), we obtain the following result.

\begin{Corollary}
\label{ccc111} 
Let $X$ be a Calabi-Yau complete intersection 3-fold in
a product of projective spaces. Then,
$$\ZZ_{\mathsf{P}}\Big(X;q 
\Big)_\beta \ \in \mathbb{Q}(q)\ ,$$
and we have the correspondence
$$
\ZZ_{\mathsf{P}}\Big(X;q\Big)_\beta  =
\ZZ'_{\mathsf{GW}}\Big(X;u\Big)_\beta 
$$
under the variable change $-q=e^{iu}$.
\end{Corollary}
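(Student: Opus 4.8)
The plan is to derive Corollary \ref{ccc111} as the specialization of Theorem \ref{qqq111} to the case of no descendent insertions on a Calabi-Yau target. First I would set $\ell = 0$, so that there are no $\tau$-insertions on either side. The left-hand side of the correspondence in Theorem \ref{qqq111} then becomes the bare stable pairs partition function $\ZZ_{\mathsf{P}}(X;q)_\beta$, and the rationality assertion $\ZZ_{\mathsf{P}}(X;q)_\beta \in \mathbb{Q}(q)$ is exactly the specialization of the rationality statement already contained in Theorem \ref{qqq111}.

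Next I would observe that the normalizing prefactors trivialize. Since $X$ is Calabi-Yau, its canonical bundle is trivial and $c_1(T_X)=0$, whence $d_\beta = \int_\beta c_1(T_X) = 0$. It follows that $(-q)^{-d_\beta/2}=1$ and $(-iu)^{d_\beta}=1$, so both powers of $q$ and $u$ in the correspondence of Theorem \ref{qqq111} disappear.

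The only remaining step is to evaluate the correspondence (bar) operation applied to the empty insertion. By the definition in \eqref{mqq23}, this is a sum over set partitions $P$ of $\{1,\ldots,\ell\}$. When $\ell = 0$ the indexing set is empty, there is a unique empty set partition with no parts, and the associated product over $S \in P$ is the empty product equal to $1$; the sign $\sigma(P)$ is trivial as well. Hence the right-hand side of the correspondence reduces to $\ZZ'_{\mathsf{GW}}(X;u)_\beta$, and combining the three observations gives the clean equality $\ZZ_{\mathsf{P}}(X;q)_\beta = \ZZ'_{\mathsf{GW}}(X;u)_\beta$ under $-q = e^{iu}$.

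I do not anticipate any genuine obstacle here: all the analytic content is carried by Theorem \ref{qqq111}, and the work is purely the bookkeeping of verifying that both the normalizing factors and the correspondence matrix become trivial in the Calabi-Yau regime without insertions.
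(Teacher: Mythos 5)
Your proposal is correct and is exactly the paper's route: the paper obtains Corollary \ref{ccc111} by specializing Theorem \ref{qqq111} to the Calabi-Yau case with no descendent insertions, and your bookkeeping (the empty set partition giving $\overline{\,\cdot\,}=1$, and $d_\beta=\int_\beta c_1(T_X)=0$ killing both prefactors) is the entire content of that specialization.
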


Corollary \ref{ccc111} together with the DT/PT correspondence
proven by Toda \cite{Toda} and Bridgeland \cite{Bridge} implies the
original GW/DT correspondence \cite{MNOP1} in case $X$ is a Calabi-Yau
complete intersection in a product of projective spaces.

\subsection{BPS counts}
For complete intersection Calabi-Yau 3-folds, Theorem \ref{qqq111}
is closely related to 
 the BPS structure conjectured by Gopakumar and Vafa \cite{GV}
in 1998. 


The method of \cite{GV} was to consider limits of type IIA string
theory which may be conjecturally analyzed in M-theory.
A remarkable proposal was made in \cite{GV} for the form of the
Gromov-Witten potential $\mathsf{F}^X$ of a Calabi-Yau 3-fold $X$.
Let $$\mathsf{F}^X(u,v)= 
\sum_{g\geq 0} u^{2g-2} \mathsf{F}^X_g(v),\ \ \ \ \ 
\mathsf{F}^X_g(v)= 
\sum_{0 \neq \beta \in H_2(X, \mathbb{Z})} N^X_{g,\beta}\ v^\beta,$$
where $N^X_{g,\beta}$ is the (connected) genus $g$ Gromov-Witten
invariant of $X$ in curve class $\beta$. 
For each curve class $\beta\in H_2(X, \mathbb{Z})$ and
genus $g$, there is conjecturally an {\em integer} $n^X_{g,\beta}$ 
 counting BPS states
in the associated M-theory. For fixed $\beta$, the count
$n^X_{g,\beta}$ is conjectured to be
nonzero for only finitely many $g$.
The formula predicted in \cite{GV} is:
\begin{equation}
\label{ttttt}
\mathsf{F}^X(u,v)
= \sum_{g\geq 0} \sum_{\beta\neq 0} \ n^X_{g,\beta} u^{2g-2} \sum_{d>0} \frac{1}{d}\Big(
\frac{\text{sin}
({du/2})}{u/2}\Big)^{2g-2} v^{d\beta}.
\end{equation}
The BPS form \eqref{ttttt} places integrality constraints
on the Gromov-Witten invariants. 


 We can uniquely
{\em define} invariants $n_{g,\beta}^X \in \mathbb{Q}$
by \eqref{ttttt}. Neither the integrality nor the
vanishing of $n_{g,\beta}^X$ for sufficiently high $g$ is
then clear. As a corollary of Theorem \ref{qqq111},
we obtain the following result.


\begin{Corollary}
\label{ccc333} 
Let $X$ be a Calabi-Yau complete intersection 3-fold in
a product of projective spaces, and let $\beta \in H_2(X,\mathbb{Z})$: 
\begin{enumerate}
\item[(i)] After the variable change $-q=e^{iu}$, 
$${\mathsf{F}}^X_{\beta}(q)= \text{\em Coeff}_{v^\beta}\left[ \mathsf{F}^X
\right]\in \mathbb{Q}(q)$$
is a rational function invariant under $q\leftrightarrow q^{-1}$.

\item[(ii)] If, for all divisors $\tilde{\beta}|\beta$,
$n_{g,\tilde{\beta}}^X$ vanishes for all sufficiently large $g$,
then $$n_{g, \beta}^X \in \mathbb{Z}, \ \ \forall g\geq 0\ .$$
\end{enumerate} 
\end{Corollary}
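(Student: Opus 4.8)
\emph{Part (i).} The plan is to read off the structure of the connected series $\mathsf{F}^X_\beta$ from the disconnected series controlled by the correspondence. By Corollary \ref{ccc111}, $\ZZ_{\mathsf{P}}(X;q)_\beta=\ZZ'_{\mathsf{GW}}(X;u)_\beta$ is the Laurent expansion of a rational function in $q$ under $-q=e^{iu}$. Since $\overline M'$ and the stable-pairs moduli carry no collapsed components, the full disconnected series is the ordinary exponential of the connected one:
\begin{equation*}
1+\sum_{\beta\neq0}\ZZ'_{\mathsf{GW}}(X;u)_\beta\,v^\beta=\exp\Big(\sum_{\beta\neq0}\mathsf{F}^X_\beta(u)\,v^\beta\Big).
\end{equation*}
For a complete intersection in a product of projective spaces only finitely many effective splittings $\beta=\beta_1+\cdots+\beta_k$ occur, so extracting $\mathsf{F}^X_\beta=\mathrm{Coeff}_{v^\beta}\log(\cdots)$ writes $\mathsf{F}^X_\beta$ as a finite $\Q$-polynomial in the rational functions $\ZZ'_{\mathsf{GW}}(X;u)_{\beta_i}$ with $\beta_i$ of smaller degree; induction on the degree then gives $\mathsf{F}^X_\beta(q)\in\Q(q)$. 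For the functional equation I observe that $\mathsf{F}^X_\beta(u)=\sum_{g\ge0}N^X_{g,\beta}u^{2g-2}$ is even in $u$, while $u\mapsto-u$ sends $-q=e^{iu}$ to $(-q)^{-1}$, i.e.\ $q\mapsto q^{-1}$; evenness thus forces $\mathsf{F}^X_\beta(q)$ to be invariant under $q\leftrightarrow q^{-1}$.

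\emph{Part (ii), setup.} The strategy is to convert integrality of the stable-pairs coefficients into integrality of $n^X_{g,\beta}$ through a plethystic logarithm. Because $X$ is Calabi--Yau the stable-pairs invariants are integer-valued Behrend-weighted Euler characteristics, so $\ZZ_{\mathsf{P}}(X;q)_\beta\in\Z((q))$. After the substitution, the genus-$g$, degree-$d$ summand of the Gopakumar--Vafa formula \eqref{ttttt} simplifies: $u^{2g-2}\big(\sin(du/2)/(u/2)\big)^{2g-2}=(2\sin(du/2))^{2g-2}=t_d^{\,g-1}$, where, in the variable $Q=-q$,
\begin{equation*}
t_d=2-Q^d-Q^{-d}=\psi^d(t_1),\qquad t_1=2-Q-Q^{-1},
\end{equation*}
and $\psi^d$ is the Adams operation $Q\mapsto Q^d,\ v^\beta\mapsto v^{d\beta}$. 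Combining this with the exponential relation of (i) identifies the correspondence with a plethystic exponential:
\begin{equation*}
1+\sum_{\beta\neq0}\ZZ_{\mathsf{P}}(X;q)_\beta\,v^\beta=\mathrm{PExp}\Big(\sum_{\beta\neq0}P_\beta(Q)\,v^\beta\Big),\qquad P_\gamma(Q)=\sum_{g\ge0}n^X_{g,\gamma}\,t_1^{\,g-1},
\end{equation*}
with $\log\mathrm{PExp}(A)=\sum_{d\ge1}\tfrac1d\,\psi^d(A)$.

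\emph{Part (ii), finish.} I would then run two successive integrality arguments. On $\Z((Q))$ the Adams operations satisfy the Frobenius congruence $\psi^p(f)\equiv f^p\pmod p$ (as $f(Q^p)\equiv f(Q)^p$ in $\FF_p((Q))$), so the standard necklace/plethystic integrality lemma applies: the $\Z((Q))$-integrality of the left-hand coefficients forces $P_\beta(Q)\in\Z((Q))$ for every $\beta$, by induction on the degree. Invoking the finiteness hypothesis on all divisors $\tilde\beta\mid\beta$ (in particular $\tilde\beta=\beta$), the sum $P_\beta(Q)=\sum_{g=0}^{G}n^X_{g,\beta}\,t_1^{\,g-1}$ is finite, so $t_1P_\beta=\sum_{g}n^X_{g,\beta}\,t_1^{\,g}$ is a Laurent polynomial; lying simultaneously in $\Z((Q))$ and in $\Q[Q,Q^{-1}]$, it in fact lies in $\Z[Q,Q^{-1}]$, and it is symmetric under $Q\leftrightarrow Q^{-1}$ by (i). Since $\{t_1^{\,g}\}_{g\ge0}$ is a unitriangular $\Z$-basis of the symmetric integral Laurent polynomials (leading term $(-1)^gQ^g$), the coefficients $n^X_{g,\beta}$ are integers.

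\emph{Main obstacle.} The hardest point is exactly this reconciliation: the correspondence delivers integrality of the stable-pairs coefficients in the $Q$-adic sense, whereas BPS integrality is a statement in the genus basis $\{t_1^{g-1}\}$. Bridging the two requires simultaneously handling the multiple-cover contributions of \emph{all} divisors of $\beta$ through the finiteness hypothesis, controlling the pole of $P_\beta$ at $Q=1$ produced by the $g=0$ term $n^X_{0,\beta}t_1^{-1}$, and using the $q\leftrightarrow q^{-1}$ symmetry from (i). The genus-reduction hypothesis is precisely what makes $t_1P_\beta$ polynomial and so lets the unitriangular change of basis finish the argument; without it the last step fails.
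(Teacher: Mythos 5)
Your argument is correct and is essentially the proof the paper intends: the paper disposes of Corollary \ref{ccc333} in one line by citing Theorem \ref{qqq111} (through Corollary \ref{ccc111}) together with Section 3 of \cite{pt}, and the content of that citation is exactly what you have reconstructed --- rationality and the $q\leftrightarrow q^{-1}$ symmetry of the connected series from evenness in $u$, then M\"obius inversion of the Gopakumar--Vafa formula in $Q=-q$, the Frobenius/necklace integrality lemma over $\mathbb{Z}((Q))$, and the change of basis to $\{(2-Q-Q^{-1})^{g}\}$ made possible by the finiteness hypothesis on divisors of $\beta$. No discrepancy to report.
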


Corollary \ref{ccc333} follows easily from Theorem 1 and the 
results of Section 3 of \cite{pt}. 
The rationality of part (i) is slightly weaker
than the full Gopakumar-Vafa predicted BPS form, but becomes
equivalent with the vanishing assumed in (ii).
A proof of the integrality of $n_{g,\beta}^X$ has been recently
claimed in \cite{IIPP}. The method is analytic but eventually
reduces the integrality to the local curves calculation of \cite{BP}.
The vanishing (ii) is open.

\subsection{Plan of the paper}
We will prove Theorem \ref{qqq111} via the degeneration
scheme established in \cite{mptop}. To control the
Gromov-Witten and stable pairs theories of Fano and
Calabi-Yau complete
intersections in products of projective spaces,
we must prove GW/P correspondences for
relative and descendent insertions in 
several simpler geometries.

Let $D\subset X$ be a nonsingular divisor in a nonsingular
3-fold $X$.
The first step in the proof of Theorem \ref{qqq111} is to
formulate a
GW/P descendent correspondence for the
relative geometry $X/D$. The interaction of the
descendents with the relative divisor is
explained in Section \ref{relth} with a 
full correspondence
proposed in Conjecture \ref{ttt444} of Section \ref{pwwf}.

The degeneration scheme of
\cite{mptop} requires the study of
 $\mathbf{P}^1$-bundles
$$\pi: \mathbf{P}_S \rightarrow S$$
over surfaces $S$ relative to a section of $\pi$
where $S$ is either 
\begin{enumerate}
\item[(i)] a toric surface,
\item[(ii)] a $K3$ surface, 
\item[(iii)] or a $\mathbf{P}^1$-bundle over a
higher genus curve $C$.
\end{enumerate}
Sections \ref{n3n3}-\ref{phgc} are devoted to the proofs of
descendent correspondences for the relative surface
geometries (i)-(iii).

The toric case (i) is studied in Section \ref{n3n3}.
For the $K3$ surface, the results of Section 8  of \cite{PPDC}
establish special cases. The required descendent
correspondence for $\mathbf{P}_{K3}$ is proven in
Section \ref{xxx1} after the
fully equivariant relative descendent correspondence
for the 3-fold cap is established.

The technically most difficult results concern
the surface geometries (iii). We study
higher genus curves by degeneration to genus 0.
The method requires establishing
correspondences  for special surface geometries
in Section \ref{xxx2} 
and the introduction of bi-relative residue
theories in Section \ref{xxx3}. The
odd cohomology of the higher genus curves, discussed
in Section \ref{phgc}, is
controlled by the strategy first employed in
\cite{vir}.

The degeneration scheme and the proof
of Theorem \ref{qqq111} is presented in Section \ref{laman}.
In fact our methods are valid in any context in which
the Fano or Calabi-Yau 3-folds can be efficiently degenerated.
As an example, the GW/P correspondence for the
Enriques Calabi-Yau is discussed in Section \ref{ecy}.

The application of relative and descendent methods to
the GW/P correspondences for non-toric Calabi-Yau geometries
has been one of the major motivations for our work in
\cite{part1,PP2,PPstat,PPDC}. The recent proof \cite{kkv} of the
full Katz-Klemm-Vafa conjecture for the Gromov-Witten theory
of $K3$ surfaces uses the GW/P correspondences for 
non-toric hypersurface Calabi-Yau 3-folds established here.

\subsection{Acknowledgments}
Discussions with J. Bryan, S. Katz, D. Maulik, A. Oblomkov, A. Okounkov, 
R. Thomas, and Y. Toda 
about  Gromov-Witten theory, stable pairs,
and descendent invariants
played an important role. We thank the referee for suggesting several
improvements. 
R.P. was partially supported by NSF grant
DMS-1001154, SNF grant 200021-143274, and ERC grant
AdG-320368-MCSK. A.P. was supported by a NDSEG graduate fellowship.
The research reported here was pursued during several visits of A.P.
to ETH Z\"urich.

\section{Relative theories}\label{relth}
\subsection{Definitions}
Let $X$ be a nonsingular 3-fold with a nonsingular divisor
$D\subset X$. Relative Gromov-Witten and 
relative stable pairs
theories enumerate curves with specified tangency to
the divisor $D$. See \cite{L,LW,MNOP2,part1} for a technical discussion
of relative theories.

In Gromov-Witten theory, relative conditions  are
represented by a partition $\mu$ of the integer
$
\int_\beta [D],
$
each part $\mu_i$ of which is marked
by a cohomology class $\delta_i\in H^*(D,\mathbb{Z})$,
\begin{equation}
\label{mm33}
\mu=( (\mu_1,\delta_1), \ldots, (\mu_\ell,\delta_\ell))\, .
\end{equation}
The numbers $\mu_i$ record
the multiplicities of intersection with $D$
while the cohomology labels $\delta_i$ record where the tangency occurs.
More precisely, let $\oM_{g,r}'(X/D,\beta)_\mu$ be the
moduli space of stable relative maps with tangency conditions
$\mu$ along $D$. To impose the full boundary condition,
we 
pull-back the classes $\delta_i$
via the evaluation maps
\begin{equation}\label{gtth341}
\oM_{g,r}'(X/D,\beta)_\mu \to D
\end{equation}
at the points of tangency.
Also, the tangency points are
considered to be unordered.{\footnote{The evaluation
maps are well-defined only after ordering the points.
We define the theory first with ordered tangency points. 
The unordered theory is then defined by dividing by the
automorphisms of the cohomology weighted partition $\mu$.}}

In the stable pairs theory, the relative moduli space admits a natural
morphism to the Hilbert scheme of
$d$ points in $D$,
$$P_n(X/D,\beta) \to \Hilb(D,\int_\beta [D])\ .$$ 
Cohomology classes on $\Hilb(D,\int_\beta [D])$ may 
thus
be pulled-back to the relative moduli space. We will work in
the \emph{Nakajima basis} of $H^*(\Hilb(D,\int_\beta [D]),\mathbb{Q})$ indexed
by a partition $\mu$ of $\int_\beta [D]$
labeled by cohomology classes of $D$ as in \eqref{mm33}. For example, the
class
$$
\left.\big|\mu\rang \in H^*(\Hilb(D,\int_\beta [D]),\mathbb{Q})\,,
$$
with all cohomology labels equal to the identity,
 is $\prod \mu_i^{-1}$ times
the Poincar\'e dual of the closure of the subvariety formed by unions of
schemes of length
$$
\mu_1,\dots, \mu_{\ell(\mu)}
$$
supported at $\ell(\mu)$ distinct points of $D$.

The conjectural relative GW/P correspondence for primary
fields \cite{MNOP2}
equates the partition functions of the theories.

\begin{Conjecture}\label{htht}
For $\gamma_i \in H^{*}(X,\mathbb{Q})$,
we have
\begin{multline*}
(-q)^{-d_\beta/2}\,
\bZ_{\mathsf{P}}
\Big( X/D;q\ \Big|\
\tau_0(\gamma_1)\cdots\tau_0(\gamma_r)\, \Big| \mu \Big)_\beta
=\\
(-iu)^{d_\beta+\ell(\mu)-|\mu|}\,  \bZ'_{\mathsf{GW}}
\Big( X/D;u\ \Big|\
\tau_0(\gamma_1)\cdots\tau_0(\gamma_r) \, \Big| \mu  \Big )_\beta
\,,
\end{multline*}
after the change of variables $e^{iu}=-q$. 
\end{Conjecture}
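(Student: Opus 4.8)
The plan is to deduce the relative correspondence from the already-established Toric correspondence by a degeneration argument, following the scheme of \cite{mptop}. Both sides are governed by \emph{degeneration formulas}: under a simple degeneration of the pair $X/D$ to a normal-crossings configuration $X_1\cup_E X_2$, the relative Gromov-Witten series $\bZ'_{\mathsf{GW}}(X/D\,|\cdots|\mu)_\beta$ and the relative stable pairs series $\bZ_{\mathsf{P}}(X/D\,|\cdots|\mu)_\beta$ each split as a sum over intermediate relative conditions along the new divisor $E$, weighted by the inverse of the intersection pairing. My first step would be to record these formulas precisely on both sides and to observe that the prefactors $(-q)^{-d_\beta/2}$ and $(-iu)^{d_\beta+\ell(\mu)-|\mu|}$ behave \emph{multiplicatively} under gluing: the exponents add because $d_\beta$, $|\mu|$, and $\ell(\mu)$ are additive across the degeneration, with the contribution of the gluing partition absorbed by the gluing matrix. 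Along the way I would note that rationality of the stable pairs side — needed for the substitution $-q=e^{iu}$ to make sense — propagates through the same degeneration from the Toric rationality result, since rationality is preserved under the finite gluing sum.

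Second, I would show that the correspondence is \emph{compatible} with these degeneration formulas: if Conjecture \ref{htht} holds for each piece $X_i/(D_i\cup E)$ and for every relative condition along $E$, then summing against the gluing matrix reproduces the statement for $X/D$. The point requiring care is that the relative boundary conditions enter differently on the two sides — as cohomology-weighted partitions on the Gromov-Witten side and through the Nakajima basis of $H^*(\Hilb(E,\cdot))$ on the stable pairs side — so I must check that the two gluing pairings are identified under $-q=e^{iu}$, including the $u$- and $q$-powers carried by each part of the intermediate partition. This is the relative analogue of the statement that the capped correspondence matrix $\widetilde{\mathsf{K}}$ of \cite{PPDC} intertwines the diagonal splittings.

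Third, I would reduce an arbitrary complete-intersection pair to toric building blocks. Using the degeneration scheme of \cite{mptop}, $X$ together with $D$ degenerates into a union of the simpler relative geometries listed in the introduction — the $\mathbf{P}^1$-bundles over toric surfaces, over $K3$ surfaces, and over higher-genus-base $\mathbf{P}^1$-bundles, each relative to a section. For the toric pieces the required correspondence is an immediate specialization of the Toric correspondence stated above, restricting the equivariant descendent statement to primary insertions and passing to the non-equivariant limit where it exists. The genuinely non-toric pieces are then the base cases that must be proven directly.

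Finally, the \emph{main obstacle} I anticipate is precisely these non-toric base cases, together with the bookkeeping of the gluing pairing. For the $K3$ and higher-genus geometries there is no torus action rendering the invariants computable, so one cannot simply quote the toric statement; instead one must run a further internal degeneration of the $\mathbf{P}^1$-bundle relative to its section, use equivariant localization in the fiber direction against the capped vertex, and — in the higher-genus case — introduce a bi-relative (rubber) residue calculus to strip off the base curve and reduce to genus $0$. Controlling the odd cohomology of the higher-genus base, and verifying that the dimension-shift prefactors $d_\beta+\ell(\mu)-|\mu|$ survive the rubber rigidification intact, is where I expect the real work to lie.
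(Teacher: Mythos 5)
This statement is Conjecture~\ref{htht} in the paper: it is asserted for an \emph{arbitrary} nonsingular projective $3$-fold $X$ with a nonsingular divisor $D\subset X$ and arbitrary classes $\gamma_i\in H^*(X,\mathbb{Q})$, and the paper does not prove it — it is left open, with only special cases established. Your proposal is, in outline, a faithful description of how the authors prove their main result (Theorem~\ref{qqq111} and the relative statements for projective-bundle and blown-up geometries): degeneration following \cite{mptop}, compatibility of the correspondence with the degeneration formula, toric and capped-vertex inputs from \cite{PPDC}, bi-relative residue theories, and the rubber/odd-cohomology analysis in the style of \cite{vir}. But as a proof of the conjecture \emph{as stated} it has a gap that cannot be repaired by more care: the very first step requires degenerating the pair $X/D$ into pieces for which the correspondence is already known (toric $3$-folds, $\mathbf{P}^1$-bundles over toric, $K3$, or ruled surfaces). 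Such a degeneration exists only for very special $X$ — essentially complete intersections in products of projective spaces — and no analogous scheme is available for a general nonsingular projective $3$-fold. This is precisely why the statement remains a conjecture in the paper while Theorem~\ref{qqq111} is restricted to Fano and Calabi-Yau complete intersections.

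A second, independent gap: even where the degeneration scheme applies, the paper only controls descendents of \emph{simple} (even) cohomology classes of the complete intersection; Theorem~\ref{qqq111} is stated for $\gamma_i\in H^{2*}(X,\mathbb{Q})$, and the odd cohomology is handled only for fibered geometries (elliptic bases, $K3$ fibers) via the monodromy and vanishing arguments of \cite{vir}. Conjecture~\ref{htht} allows arbitrary $\gamma_i\in H^*(X,\mathbb{Q})$ and arbitrary cohomology weights on the relative condition $\mu$, so your argument would at best yield the conjecture for simple insertions on complete-intersection pairs — a useful partial result, and indeed the one the paper actually delivers, but not the statement you set out to prove. If you intend your write-up as a proof of the relative correspondence for the specific geometries arising in the degeneration of complete intersections, you should say so explicitly and restrict the hypotheses accordingly.
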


As before,  $\bZ_{\mathsf{P}}
\left(\left. X/D;q\ \right|
\tau_0(\gamma_1)\cdots\tau_0(\gamma_r)\, \big| \mu \right)_\beta$
is
conjectured to be a rational function of $q$.  Conjecture \ref{htht} is
made for every boundary condition \eqref{mm33}.

\subsection{Diagonal classes}\label{diagclas}
To state our results for the Gromov-Witten/Pairs descendent
correspondence in the relative case, a discussion of 
diagonal classes is required.

For the absolute geometry $X$, the product $X^s$ naturally
parameterizes $s$ ordered (possibly coincident) points on $X$.
For the relative geometry $X/D$, the moduli space of
$s$ ordered (possibly coincident) points 
$$(p_1,\ldots, p_s) \in X/D$$
is a more  subtle space.
The points are not allowed to lie on the relative divisor $D$.
When the points approach $D$, the target $X$ degenerates.
The resulting moduli space $(X/D)^s$ is a nonsingular variety.
Let
$$\Delta_{\mathsf{rel}} \subset (X/D)^s$$
consisting of the small diagonal where all the points $p_i$ 
are coincident.
As a variety, $\Delta_{\mathsf{rel}}$ is isomorphic to $X$.

The space $(X/D)^s$
is a special case of well-known constructions in
relative geometry. 
For example, $(X/D)^2$ consists of
6 strata:

\begin{picture}(150,150)(-120,-5)
\thicklines
\put(10,10){\line(1,0){100}}
\put(10,110){\line(1,0){100}}
\put(10,10){\line(0,1){100}}
\put(110,10){\line(0,1){100}}
\put(25,80){$1\bullet$}
\put(75,60){$2\bullet$}
\put(55,20){$X$}
\put(115,20){$D$}
\end{picture}

\begin{picture}(150,150)(0,-5)
\thicklines
\put(10,10){\line(1,0){100}}
\put(10,110){\line(1,0){100}}
\put(10,10){\line(0,1){100}}
\put(110,10){\line(0,1){100}}

\put(110,110){\line(2,1){40}}
\put(110,10){\line(2,1){40}}
\put(150,30){\line(0,1){100}}
\put(155,40){$D$}

\put(120,80){$1\bullet$}
\put(75,60){$2\bullet$}
\put(55,20){$X$}

\put(210,10){\line(1,0){100}}
\put(210,110){\line(1,0){100}}
\put(210,10){\line(0,1){100}}
\put(310,10){\line(0,1){100}}

\put(310,110){\line(2,1){40}}
\put(310,10){\line(2,1){40}}
\put(350,30){\line(0,1){100}}
\put(355,40){$D$}

\put(225,80){$1\bullet$}
\put(325,60){$2\bullet$}
\put(255,20){$X$}

\end{picture}

\begin{picture}(150,150)(-100,-5)
\thicklines
\put(10,10){\line(1,0){100}}
\put(10,110){\line(1,0){100}}
\put(10,10){\line(0,1){100}}
\put(110,10){\line(0,1){100}}

\put(110,110){\line(2,1){40}}
\put(110,10){\line(2,1){40}}
\put(150,30){\line(0,1){100}}
\put(155,40){$D$}

\put(120,80){$1\bullet$}
\put(120,50){$2\bullet$}
\put(55,20){$X$}

\end{picture}

\begin{picture}(150,150)(-80,-5)
\thicklines
\put(10,10){\line(1,0){100}}
\put(10,110){\line(1,0){100}}
\put(10,10){\line(0,1){100}}
\put(110,10){\line(0,1){100}}

\put(110,110){\line(2,1){40}}
\put(110,10){\line(2,1){40}}
\put(150,30){\line(0,1){100}}

\put(150,30){\line(2,1){40}}
\put(150,130){\line(2,1){40}}
\put(190,50){\line(0,1){100}}
\put(195,60){$D$}

\put(160,80){$1\bullet$}
\put(120,50){$2\bullet$}
\put(55,20){$X$}

\thicklines

\end{picture}

\begin{picture}(150,150)(-80,-5)
\thicklines
\put(10,10){\line(1,0){100}}
\put(10,110){\line(1,0){100}}
\put(10,10){\line(0,1){100}}
\put(110,10){\line(0,1){100}}

\put(110,110){\line(2,1){40}}
\put(110,10){\line(2,1){40}}
\put(150,30){\line(0,1){100}}

\put(150,30){\line(2,1){40}}
\put(150,130){\line(2,1){40}}
\put(190,50){\line(0,1){100}}
\put(195,60){$D$}

\put(160,80){$2\bullet$}
\put(120,50){$1\bullet$}
\put(55,20){$X$}

\thicklines

\end{picture}

\noindent As a variety, $(X/D)^2$ is the blow-up of $X^2$ along $D^2$.
And, $\Delta_{\mathsf{rel}} \subset (X/D)^2$ is the strict transform
of the standard diagonal.

Select a subset $S$ of cardinality $s$
from the $r$ markings of the moduli space
of maps.
Just as $\oM_{g,r}'(X,\beta)$
admits a canonical evaluation to $X^s$ via
the selected markings, 
the moduli space $\oM_{g,r}'(X/D,\beta)_\mu$
admits a canonical evaluation
 $$\text{ev}_S: \oM_{g,r}'(X/D,\beta)_\mu \rightarrow (X/D)^s , $$
well-defined by the definition of a relative stable
map (the markings never map to the relative divisor).
The class 
$$\text{ev}_S^*(\Delta_{\mathsf{rel}}) \in H^*(\oM_{g,r}'(X/D,\beta)_\mu)$$
plays a crucial role in the relative descendent
correspondence.

By forgetting the relative structure, we obtain a projection
$$\pi:(X/D)^s \rightarrow X^s\ .$$
The product contains the standard diagonal $\Delta\subset X^s$. However,
$$\pi^*(\Delta) \neq \Delta_{\mathsf{rel}}\ .$$
The former has more components in the relative boundary if
$D\neq \emptyset$.

\subsection{Relative descendent correspondence} \label{pwwf}
Let $\widehat{\alpha}$ be a partition
of length $\widehat{\ell}$.
Let $\Delta_{\mathsf{rel}}$ be the cohomology class of the
small diagonal in  $(X/D)^{\widehat{\ell}}$.
For a cohomology class $\gamma$ of $X$, let
$$\gamma\cdot \Delta_{\mathsf{rel}} \in H^*((X/D)^{\widehat{\ell}},\mathbb{Q}).$$
We define the relative descendent insertion $\tau_\alpha(\gamma)$ by
\begin{equation}\label{j9994}
\tau_{\widehat{\alpha}}(\gamma)= 
\psi_1^{\widehat{\alpha}_1-1} \cdots \psi_{\hat{\ell}}^{\widehat{\alpha}_{\hat{\ell}}-1} \cdot
\text{ev}^*_{1,\ldots,\hat{\ell}} ( \gamma\cdot \Delta_{\mathsf{rel}}) \ .
\end{equation}
In case, $D=\emptyset$, definition \eqref{j9994}
specializes to \eqref{j77833}.

Let $\Omega_X[D]$ denote the locally free sheaf of 
differentials with logarithmic poles along $D$.
Let $$T_{X}[-D] = \Omega_{X}[D]^{\ \vee}$$
denote the dual sheaf of tangent fields with logarithmic
zeros.

For the relative geometry $X/D$, we let the coefficients of
$\widetilde{\mathsf{K}}$ act on the cohomology of $X$ via the
substitution
$$c_i= c_i(T_{X}[-D])$$
instead of the substitution $c_i=T_X$ used in the absolute case.
Then, we define 
\begin{equation} \label{gtte4}
\overline{\tau_{\alpha_1-1}(\gamma_1)\cdots
\tau_{\alpha_{\ell}-1}(\gamma_{\ell})}
=
\sum_{P \text{ set partition of }\{1,\ldots,l\}}\ \prod_{S\in P}\ \sum_{\widehat{\alpha}}\tau_{\widehat{\alpha}}(\widetilde{\mathsf{K}}_{\alpha_S,\widehat{\alpha}}\cdot\gamma_S) \ 
\end{equation}
as before via \eqref{j9994} instead of 
\eqref{j77833}.

Definition \eqref{gtte4} is for  even classes $\gamma_i$.
In the presence of odd $\gamma_i$, a sign has to be included
exactly as in the absolute case.

\begin{Conjecture}
\label{ttt444} 
For $\gamma_i \in H^{*}(X,\mathbb{Q})$,
we have 
\begin{multline*}
(-q)^{-d_\beta/2}\ZZ_{\mathsf{P}}\Big(X/D;q\ \Big|  
{\tau_{\alpha_1-1}(\gamma_1)\cdots
\tau_{\alpha_{\ell}-1}(\gamma_{\ell})} \ \Big| \ \mu
\Big)_\beta \\ =
(-iu)^{d_\beta+\ell(\mu)-|\mu|}\ZZ'_{\mathsf{GW}}\Big(X/D;u\ \Big|   
\ \overline{\tau_{a_1-1}(\gamma_1)\cdots
\tau_{\alpha_{\ell}-1}(\gamma_{\ell})}
\ \Big| \ \mu\Big)_\beta 
\end{multline*}
under the variable change $-q=e^{iu}$.
\end{Conjecture}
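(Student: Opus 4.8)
The plan is to prove Conjecture \ref{ttt444} by degeneration, reducing the relative correspondence for a general pair $X/D$ to the toric correspondence already established in \cite{PPDC}. Both the relative stable pairs theory and the relative Gromov-Witten theory obey degeneration formulas: along a degeneration of the target into two pieces glued over a common divisor, each partition function is expressed as a convolution of relative invariants of the pieces, summed over the relative conditions $\mu$ indexed by partitions of $\int_\beta[D]$. The first task is therefore to check that the proposed correspondence is \emph{compatible with gluing}. The essential structural input is the substitution $c_i = c_i(T_X[-D])$ by the logarithmic tangent bundle, rather than $c_i(T_X)$: the logarithmic correction along $D$ is exactly what makes the correspondence matrix $\widetilde{\mathsf{K}}$ restrict correctly to the relative divisor and transform consistently under the degeneration formula, matching the shift $\ell(\mu)-|\mu|$ in the power of $(-iu)$ on the Gromov-Witten side.

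First I would settle the base case of a toric relative geometry. By deformation to the normal cone of $D\subset X$ one replaces a neighborhood of $D$ by the projective completion $\mathbf{P}(N_{D/X}\oplus\mathcal{O}_D)\to D$ relative to its section $D_\infty$ at infinity; when $D$ is toric this cap is a toric relative $3$-fold and the toric correspondence applies verbatim. The descendent insertions, including the diagonal-twisted insertion $\tau_{\widehat{\alpha}}(\gamma)$ of \eqref{j9994}, are governed by the capped descendent vertex of \cite{PPDC}, which is precisely the object from which $\widetilde{\mathsf{K}}$ was extracted in the absolute case. The only change relative to \eqref{j77833} is that the ordinary diagonal $\Delta$ is replaced by its strict transform $\Delta_{\mathsf{rel}}$ inside $(X/D)^{\widehat{\ell}}$; one verifies that the capped-vertex computation returns the \emph{same} entries $\widetilde{\mathsf{K}}_{\alpha,\widehat{\alpha}}$, with only the Chern roots taken from $T_X[-D]$.

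Next I would globalize to the non-toric divisors required by the degeneration scheme of \cite{mptop}, namely the $\mathbf{P}^1$-bundles over $K3$ surfaces and over higher genus curves appearing in the plan. For the $K3$ case the argument proceeds by deformation invariance together with the fully equivariant relative correspondence for the $3$-fold cap established in Section \ref{xxx1}. For a $\mathbf{P}^1$-bundle over a higher genus curve one degenerates the base to genus $0$ and invokes the bi-relative residue theories of Section \ref{xxx3} to propagate the correspondence across the two relative divisors simultaneously, the odd cohomology being handled as in Section \ref{phgc}. In each case the normalization of the Nakajima class $|\mu\rangle$ and the dimension of $\Hilb(D,\int_\beta[D])$ account for the power shift $\ell(\mu)-|\mu|$; this bookkeeping is automatic from the degeneration formula once the leading toric term is matched.

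The main obstacle is proving that a \emph{single} universal matrix $\widetilde{\mathsf{K}}$, independent of the pair $(X,D)$, governs the relative correspondence, the only dependence on $D$ entering through the substitution $c_i = c_i(T_X[-D])$. This demands a careful analysis of the capped descendent vertex relative to $D$: one must show that the rubber integrals over the relative boundary and the gluing over $D$ do not perturb the entries of $\widetilde{\mathsf{K}}$ but merely twist its Chern-class input by the logarithmic term. Controlling the diagonal class $\Delta_{\mathsf{rel}}$, which lives in the nontrivial relative configuration space $(X/D)^{\widehat{\ell}}$ rather than in $X^{\widehat{\ell}}$ and acquires extra boundary components as the target degenerates, is where the decisive work lies. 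Once this compatibility is secured, Conjecture \ref{ttt444} follows for each geometry in the scheme by induction on the complexity of $(X,D)$ from the toric base case.
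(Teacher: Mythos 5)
The statement you are addressing is Conjecture \ref{ttt444}, which the paper does \emph{not} prove: it is formulated for an arbitrary nonsingular projective 3-fold $X$ with an arbitrary nonsingular divisor $D$ and arbitrary classes $\gamma_i$, and the paper establishes it only in special geometries (Theorems \ref{ttt999}, \ref{ttt9999}, \ref{ppp444}, \ref{ppp555}, \ref{xxx999} and Propositions \ref{kk33}, \ref{bbb999}, \ref{yyy999}). Your proposal cannot close the general statement, and the decisive gap is that your ``induction on the complexity of $(X,D)$'' has no base case covering a general pair. Degeneration to the normal cone of $D\subset X$ trades $\ZZ(X/D)$ for the \emph{absolute} series $\ZZ(X)$ together with the series of $\mathbf{P}(N_{D/X}\oplus\mathcal{O}_D)$ relative to its sections; so you still need the absolute correspondence for $X$ itself, which is Conjecture \ref{ttt222} and is open for 3-folds that cannot be degenerated into toric or otherwise controlled pieces. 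A general projective 3-fold admits no such degeneration, so the toric correspondence of \cite{PPDC} gives no foothold, and the paper is explicit that its methods apply only ``in any context in which the Fano or Calabi-Yau 3-folds can be efficiently degenerated.'' Your second step also misidentifies what deformation to the normal cone accomplishes: making $D$ toric does not make the problem toric, since the other component of the special fiber is $X$ again.

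Even restricted to the degenerable geometries the paper does treat, your argument defers the genuinely hard content to unproved assertions: that the capped descendent vertex computed relative to $D$ ``returns the same entries $\widetilde{\mathsf{K}}_{\alpha,\widehat{\alpha}}$'' with only the Chern input twisted to $T_X[-D]$, and that $\Delta_{\mathsf{rel}}$ interacts correctly with the boundary of $(X/D)^{\widehat{\ell}}$ under degeneration. The paper never verifies these points in the abstract; instead it proves the correspondence separately for each required relative geometry --- the equivariant cap in Section \ref{xxx1}, the geometry $Y/Y_\infty$ in Section \ref{xxx2}, the bi-relative residue theories in Section \ref{xxx3}, and projective bundles over toric surfaces, $K3$ surfaces, and higher genus curves in Sections \ref{n3n3} and \ref{phgc} --- using capped localization together with depth and dimension inductions whose base cases are the toric capped descendent vertices of \cite{PPDC}. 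What you have written is therefore an accurate sketch of the paper's plan of attack for Theorem \ref{qqq111} (the complete intersection case), not a proof of Conjecture \ref{ttt444}, which remains open as stated.
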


In addition, the stable pairs descendent series on the left is
conjectured to be a rational function in $q$, so the change
of variables is well-defined.
Conjecture \ref{ttt444}
is also well-defined in the equivariant case with respect
to a group action on $X$ preserving the relative divisor $D$.
Definition \eqref{j9994} lifts canonically to
the equivariant cohomology. The coefficients of
$\widetilde{\mathsf{K}}$ act on the 
equivariant cohomology of $X$ via the
equivariant Chern classes
$c_i(T_{X}[-D])$.

\subsection{Degeneration}
There is no difficulty in proving the compatibility of 
Conjectures \ref{ttt222} and \ref{ttt444} with respect to the
degeneration formula. In fact, 
both definition \eqref{j9994} and
the replacement of
$T_X$ by $T_X[-D]$ are required for compatibility with
degeneration formula.
Definition \eqref{j9994} canonically lifts the diagonal
splittings which occur in the correspondence for the
absolute case.

The log tangent bundle arises for the following reason.
Let
$$
\pi:\mathfrak{X}\to B
$$
be a nonsingular $4$-fold fibered over an irreducible
nonsingular base curve $B$. Let $X$ be a nonsingular
fiber, and let
$$
X_1 \cup_{D} X_2
$$
be a reducible special fiber consisting of two nonsingular
$3$-folds intersecting transversally along a nonsingular
surface $D$.
Let $T_{\mathfrak{X}}[-X_1-X_2]$ be the tangent bundle 
of the total space
$\mathfrak{X}$ with logarithmic zeros along $X_1\cup_D X_2$.
The basic restriction property 
$$c( T_{\mathfrak{X}}[-X_1-X_2])|_{X_i} = c(T_{X_i}[-D])$$
holds on the special fiber.
The Chern classes of the tangent
bundle of a general fiber of
$\pi$ therefore are extended by the Chern classes 
of the log tangent bundle of the special fiber.

Since the compatibility with degeneration will play an 
important role in the paper, we state the result (a
formal consequence of the usual degeneration formula
in Gromov-Witten theory \cite{LR,L}).

\pagebreak
\noindent{\bf Compatibility with degeneration.}
{\em Let $\gamma_1, \ldots,\gamma_\ell$ be cohomology classes
on the total space $\mathfrak{X}$. We have
\begin{multline*}
\ZZ'_{\mathsf{GW}}\Big(X \Big|   
\ \overline{\tau_{a_1-1}(\gamma_1)\cdots
\tau_{\alpha_{\ell}-1}(\gamma_{\ell})}
\ \Big)_\beta 
=\\
{\mathlarger{\mathlarger{\mathlarger{\sum}}}}\  
\ZZ'_{\mathsf{GW}}\Big(X_1/D \Big|   
\ \overline{\prod_{i\in I_1}\tau_{a_i-1}(\gamma_i)}
\ \Big| \ \mu\Big)_{\beta_1}\ \zz(\mu) u^{2\ell(\mu)}\\ 
\cdot \ZZ'_{\mathsf{GW}}\Big(X_2/D \Big|   
\ \overline{\prod_{i\in I_2}\tau_{a_i-1}(\gamma_i)}
\ \Big| \ \mu^\vee\Big)_{\beta_2}\ .
\end{multline*}
The sum is over all marking distributions and curve class splittings 
$$I_1\cup I_2 = \{1, \ldots, \ell\}, \ \ \ \beta=\beta_1+\beta_2,$$
and all boundary conditions $\mu$ along $D$.}

\vspace{10pt}
The boundary conditions $\mu$ are partitions weighted by 
elements of a fixed basis of $H^*(D,\mathbb{Q})$.
The boundary condition
$\mu^\vee$ has the same parts as $\mu$ but with weights
given by dual elements of the dual{\footnote{With
respect to the intersection pairing.}} basis of $H^*(D,\mathbb{Q})$. 
The gluing factor is defined by
\begin{equation}
\zz(\mu) = \prod_{i=1}^{\ell(\mu)} \mu_i \cdot |\text{Aut}(\mu)|
\label{k233}
\end{equation}
The first factor in \eqref{k233} is simply the product of the
parts of $\mu$. The second term is the order
of the symmetry group of $\mu$ {\em as a weighted partition}.

\subsection{Relative results}\label{rr}
The first results about the descendent correspondence in the
relative case concern projective bundles over a nonsingular
surface $S$.
Let $$L_0, L_\infty\rightarrow S$$
be two line bundles. 
The projective bundle{\footnote{We always follow the convention
of projectivization by 1-dimension subspaces.}}
$$\mathbf{P}_S= \mathbf{P}(L_0 \oplus L_\infty) \rightarrow S$$
admits sections 
$$ S_i= \mathbf{P}(L_i) \subset \mathbf{P}_S\ .$$
We will establish 
the 
relative descendent correspondence 
of Conjecture \ref{ttt444}
for $\mathbf{P}_S/S_\infty$ and $\mathbf{P}_S/ S_0 \cup S_\infty$
when $S$ is a toric surface.

There is a canonical $\com^*$-action on $\mathbf{P}_S$ by scaling
the coordinates on the $\mathbf{P}^1$-fibers,
\begin{equation}\label{k45p}
\xi\cdot [l_0,l_\infty] = [\xi l_0, l_\infty], \ \ \ \ \ \xi\in \com^* \ .
\end{equation}
We denote by $t$ the generator of the equivariant cohomology
of $\com^*$.
We will prove Conjecture \ref{ttt444} for $\mathbf{P}_S$
equivariantly with respect to the fiberwise $\com^*$-action \eqref{k45p}.

\begin{Theorem}
\label{ttt999} 
Let $S$ be a nonsingular projective toric surface.
For classes $\gamma_i \in H^{*}_{\com^*}(\mathbf{P}_S,\mathbb{Q})$,
we have 
$$\ZZ_{\mathsf{P}}\Big(\mathbf{P}_S/S_\infty   ;q\ \Big|  
{\tau_{\alpha_1-1}(\gamma_1)\cdots
\tau_{\alpha_{\ell}-1}(\gamma_{\ell})} \ \Big| \ \mu
\Big)_\beta^{\com^*}\in \mathbb{Q}(q,t)$$
and the correspondence
\begin{multline*}
(-q)^{-d_\beta/2}\ZZ_{\mathsf{P}}\Big(\mathbf{P}_S/S_\infty   ;q\ \Big|  
{\tau_{\alpha_1-1}(\gamma_1)\cdots
\tau_{\alpha_{\ell}-1}(\gamma_{\ell})} \ \Big| \ \mu
\Big)_\beta^{\com^*} \\ =
(-iu)^{d_\beta+\ell(\mu)-|\mu|}\ZZ'_{\mathsf{GW}}\Big(\mathbf{P}_S/S_\infty
;u\ \Big|   
\ \overline{\tau_{a_1-1}(\gamma_1)\cdots
\tau_{\alpha_{\ell}-1}(\gamma_{\ell})}
\ \Big| \ \mu\Big)_\beta^{\com^*} 
\end{multline*}
under the variable change $-q=e^{iu}$.
\end{Theorem}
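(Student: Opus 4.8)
The plan is to exploit the toric geometry of the total space. Since $S$ is a nonsingular projective toric surface, the line bundles $L_0$ and $L_\infty$ carry $T_S$-equivariant structures, so $\mathbf{P}_S=\mathbf{P}(L_0\oplus L_\infty)$ is itself a nonsingular projective toric $3$-fold with torus $\T=T_S\times\com^*$, the last factor being exactly the fiberwise action \eqref{k45p}. Moreover $S_\infty=\mathbf{P}(L_\infty)$ is a $\T$-invariant toric divisor, so $(\mathbf{P}_S,S_\infty)$ is a toric relative $3$-fold. I would first upgrade the statement to the full $\T$-equivariant relative correspondence and then recover the asserted $\com^*$-equivariant form by restriction: because $S$ is projective, the relevant $\com^*$-fixed loci in the moduli spaces are compact, so the $\com^*$-equivariant relative invariants are well defined and are obtained from the $\T$-equivariant ones by the specialization $s_1,s_2\mapsto 0$, $s_3\mapsto t$. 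Rationality in $\mathbb{Q}(q,t)$ then follows from the Toric rationality of \cite{part1,PP2} in $\mathbb{Q}(q,s_1,s_2,s_3)$, the compactness guaranteeing that no pole is created along $s_1=s_2=0$; and since the correspondence is linear in the insertions, it suffices to treat classes $\gamma_i$ lifted from $H^*_\T(\mathbf{P}_S,\mathbb{Q})$, which surjects onto $H^*_{\com^*}(\mathbf{P}_S,\mathbb{Q})$.

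For the $\T$-equivariant relative correspondence, I would apply virtual $\T$-localization to both $\ZZ_{\mathsf{P}}(\mathbf{P}_S/S_\infty)$ and $\ZZ'_{\mathsf{GW}}(\mathbf{P}_S/S_\infty)$. The fixed loci are controlled by the toric combinatorics of $\mathbf{P}_S$: away from $S_\infty$ they reduce to the capped descendent vertex and edge contributions, while along $S_\infty$ they produce expanded degenerations with rubber integrals weighted by the gluing factors $\zz(\mu)$. The essential input is that each such local building block satisfies the descendent GW/P correspondence; this is precisely the content extracted from the capped descendent vertex of \cite{PPDC} that underlies the Toric correspondence. The remaining task is to check that assembling these local identities reproduces the global relative correspondence in the exact shape of Conjecture \ref{ttt444}.

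The compatibility of the assembly with the correspondence is where the definitions of Section \ref{relth} do their work. The descendent-to-correspondent rule \eqref{gtte4} is built from set partitions, the universal matrix $\widetilde{\mathsf{K}}$, and the diagonal class $\Delta_{\mathsf{rel}}$ via \eqref{j9994}; all three are local and split along the fixed loci, so the right-hand side distributes correctly over the localization sum. The replacement of $T_X$ by the log tangent sheaf in the substitution $c_i=c_i(T_{\mathbf{P}_S}[-S_\infty])$ matches the equivariant weights appearing along $S_\infty$ in the localization graphs, by the same restriction property $c(T_{\mathfrak{X}}[-X_1-X_2])|_{X_i}=c(T_{X_i}[-D])$ that governs the Compatibility with degeneration. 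Finally, the factors $\zz(\mu)$ and the rubber contributions over $S_\infty$ appear identically on the two sides, so the relative gluing respects the correspondence. Once these matchings are in place, the global identity follows term by term.

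I expect the main obstacle to be the descendent bookkeeping under localization in the presence of the relative divisor: namely, showing that the diagonal-split insertions of \eqref{j9994} and the log tangent substitution are \emph{exactly} compatible with the rubber terms over $S_\infty$, so that no correction beyond the stated $\widetilde{\mathsf{K}}$-transform and the gluing factor $\zz(\mu)$ survives. The definitions \eqref{j9994} and \eqref{gtte4} were engineered for this purpose, but verifying the cancellation of rubber contributions against the diagonal splittings for arbitrary descendents $\tau_{\alpha_i-1}(\gamma_i)$, rather than for primary fields alone, is the technically delicate heart of the argument.
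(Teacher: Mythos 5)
Your proposal correctly sets up the first half of the paper's argument: a capped localization with respect to the fiberwise $\com^*$-action, with the contributions away from $S_\infty$ reduced to the capped descendent vertex of \cite{PPDC} via the full torus $\T=T_S\times\com^*$ and the specialization of the $T_S$-parameters to zero (this is exactly Proposition \ref{aaa999}, and the compactness of the $\com^*$-fixed locus plays the role you assign to it). The genuine gap is in the second half. You assert that ``the factors $\zz(\mu)$ and the rubber contributions over $S_\infty$ appear identically on the two sides, so the relative gluing respects the correspondence,'' and you flag the verification as ``delicate'' --- but this is not a verification that can be done by inspecting the definitions. The capped descendent vertex of \cite{PPDC} says nothing about descendent insertions supported over the relative divisor, and the rubber integrals over $S_\infty$ carry nontrivial descendents (the classes $\gamma_i$ restricted to $S_\infty$ via \eqref{krr4}, twisted by $t+N$). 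The correspondence for these capped contributions over $S_\infty$ is Proposition \ref{aaa9999}, and it is the technical heart of the proof; no amount of ``compatibility of the definitions'' substitutes for it.

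The missing idea is the mechanism by which Proposition \ref{aaa9999} is actually proven: a multi-parameter induction (on the $S$-degree $L_\beta$, the number of descendent insertions, the descendent degree in reverse, and the codimension $\theta(\nu)$ of the relative condition) in which the unknown capped contributions over $S_\infty$ are pinned down by \emph{global vanishing relations}. One forms compact partition functions whose integrand dimension exceeds the virtual dimension, so they vanish identically for both theories; expanding these by capped localization produces a square linear system for the unknowns, whose coefficient matrix is the degree-$0$ pairing governed by Proposition 6 of \cite{PP2} and is therefore nonsingular. Since every other term in the system already satisfies the correspondence by induction or by Proposition \ref{aaa999}, the unique solution does too. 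Without this (or an equivalent device), your localization assembly leaves the rubber descendent contributions over $S_\infty$ undetermined, and the proof does not close.
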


The parallel result holds when the projective
bundle geometry is taken relative to both sections.

\begin{Theorem}
\label{ttt9999} 
Let $S$ be a nonsingular projective toric surface.
Consider the relative geometry $\mathbf{P}_S/S_0 \cup S_\infty$.
For $\gamma_i \in H^{*}_{\com^*}(\mathbf{P}_S,\mathbb{Q})$,
we have 
$$\ZZ_{\mathsf{P}}\Big(\nu\  \Big|  
{\tau_{\alpha_1-1}(\gamma_1)\cdots
\tau_{\alpha_{\ell}-1}(\gamma_{\ell})} \ \Big| \ \mu
\Big)_\beta^{\com^*}\in \mathbb{Q}(q,t)$$
and the correspondence
\begin{multline*}
(-q)^{-d_\beta/2}\ZZ_{\mathsf{P}}\Big( \nu\ \Big|  
{\tau_{\alpha_1-1}(\gamma_1)\cdots
\tau_{\alpha_{\ell}-1}(\gamma_{\ell})} \ \Big| \ \mu
\Big)_\beta^{\com^*} \\ =
(-iu)^{d_\beta+\ell(\nu)-|\nu|
+\ell(\mu)-|\mu|
}\ZZ'_{\mathsf{GW}}\Big(\nu \ \Big|   
\ \overline{\tau_{a_1-1}(\gamma_1)\cdots
\tau_{\alpha_{\ell}-1}(\gamma_{\ell})}
\ \Big| \ \mu\Big)_\beta^{\com^*} 
\end{multline*}
under the variable change $-q=e^{iu}$.
\end{Theorem}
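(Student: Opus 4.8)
The plan is to deduce Theorem~\ref{ttt9999} from Theorem~\ref{ttt999} by degenerating the singly-relative geometry $\mathbf{P}_S/S_\infty$ to the normal cone of the second section $S_0$. Since $S$ is a nonsingular projective toric surface, $\mathbf{P}_S$ is a nonsingular projective toric $3$-fold and the sections $S_0,S_\infty$ are invariant divisors; in particular the cap $\mathbf{R}=\mathbf{P}(N_{S_0/\mathbf{P}_S}\oplus \cO_S)$ appearing below is again a toric $\mathbf{P}^1$-bundle over $S$ relative to a single section, so Theorem~\ref{ttt999} applies to it verbatim. First I would form the degeneration with total space $\mathrm{Bl}_{S_0\times\{0\}}\!\big(\mathbf{P}_S\times\mathbb{A}^1\big)$ fibered over $\mathbb{A}^1$, keeping $S_\infty\times\mathbb{A}^1$ relative throughout (it is disjoint from the blow-up center). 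The general fiber is $\mathbf{P}_S/S_\infty$ and the special fiber is $\mathbf{P}_S\cup_{S_0}\mathbf{R}$: the strict transform component is $\mathbf{P}_S$, which retains $S_\infty$ and acquires the gluing divisor $S_0$, hence is exactly the doubly-relative target $\mathbf{P}_S/(S_0\cup S_\infty)$, while the exceptional component is the cap $\mathbf{R}$ relative to its gluing section. Because $S_0$ is a fixed section of the fiberwise scaling, the entire degeneration is $\com^*$-equivariant.

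Next I would apply the degeneration formula on the stable pairs side and, for the barred descendents, the Compatibility with degeneration statement on the Gromov--Witten side, so that $\overline{\tau_{\alpha_1-1}(\gamma_1)\cdots\tau_{\alpha_\ell-1}(\gamma_\ell)}$ distributes over the two components with matching diagonal splittings. Crucially, the total space carries the log tangent bundle with zeros along $X_1\cup X_2$ and along the persistent relative divisor $S_\infty\times\mathbb{A}^1$; by the restriction property $c(T_{\mathfrak{X}}[-X_1-X_2])|_{X_i}=c(T_{X_i}[-D])$ it restricts on the component $\mathbf{P}_S$ to $T_{\mathbf{P}_S}[-S_0-S_\infty]$, which is exactly the substitution $c_i=c_i(T_{\mathbf{P}_S}[-S_0-S_\infty])$ required in $\widetilde{\mathsf{K}}$ for the two-divisor case and explains why Theorem~\ref{ttt9999} carries the doubly-logarithmic tangent bundle in place of the singly-logarithmic one of Theorem~\ref{ttt999}. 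Schematically, on the Gromov--Witten side the formula reads
\[
\ZZ'_{\mathsf{GW}}\big(\mathbf{P}_S/S_\infty \mid \mu\big)
=\sum_{\nu}
\ZZ'_{\mathsf{GW}}\big(\mathbf{P}_S/(S_0\cup S_\infty)\mid \nu,\mu\big)\,
\zz(\nu)\,u^{2\ell(\nu)}\,
\ZZ'_{\mathsf{GW}}\big(\mathbf{R}/S_0\mid \nu^\vee\big),
\]
with the parallel gluing relation in $q$ on the stable pairs side. Both the left-hand factor and every cap factor satisfy the correspondence by Theorem~\ref{ttt999}.

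It then remains to solve these relations for the unknown doubly-relative series. I would organize the cap contributions $\ZZ(\mathbf{R}/S_0\mid\nu^\vee)$, together with auxiliary insertions on $\mathbf{R}$, into a gluing operator on the space indexed by the relative conditions $\nu$ along $S_0$, and argue that this operator is invertible over $\mathbb{Q}(q,t)$: ordering relative partitions by size, the operator is triangular with diagonal governed by the degree-zero rubber-identity term, which is a unit. Since the same triangular operator governs both sides and is compatible with $-q=e^{iu}$, applying its inverse propagates the correspondence, and the rationality in $\mathbb{Q}(q,t)$, from $\mathbf{P}_S/S_\infty$ and $\mathbf{R}/S_0$ to $\mathbf{P}_S/(S_0\cup S_\infty)$. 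The combined exponent $d_\beta+\ell(\nu)-|\nu|+\ell(\mu)-|\mu|$ arises by adjoining the $S_0$-boundary shift $\ell(\nu)-|\nu|$ and the gluing factor $u^{2\ell(\nu)}$ to the singly-relative exponent $d_\beta+\ell(\mu)-|\mu|$ of Theorem~\ref{ttt999} and matching the stable pairs gluing powers of $q$ under the variable change.

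The step I expect to be the main obstacle is precisely this inversion: establishing that the cap gluing operator is nondegenerate over $\mathbb{Q}(q,t)$ and, more delicately, that the inversion preserves the correspondence so that the per-$\nu$ factors $(-iu)^{\ell(\nu)-|\nu|}$ reassemble into the single clean exponent of the statement. The triangularity itself is routine, but the nondegeneracy of the relative boundary pairing and the exact bookkeeping of the gluing exponents under $-q=e^{iu}$ require care. By contrast, odd cohomology causes no difficulty here: $\mathbf{P}_S$ over a toric surface has only even cohomology, so the even-class definition \eqref{gtte4} applies and no signs $\sigma(P)$ enter.
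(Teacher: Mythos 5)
Your argument is correct in outline, but it takes a genuinely different route from the paper. The paper's proof of Theorem \ref{ttt9999} is a three-line corollary of the machinery already built in Section \ref{n3n3}: in the capped localization formula for $\mathbf{P}_S/S_0\cup S_\infty$ with respect to the fiberwise $\com^*$-action, the capped contributions over $S_0$ and over $S_\infty$ both take exactly the form of the capped contributions over $S_\infty$ for the singly relative geometry $\mathbf{P}_S/S_\infty$, so both are covered by Proposition \ref{aaa9999} and the theorem follows with no further work. Your route --- degeneration to the normal cone of $S_0$ inside $\mathbf{P}_S/S_\infty$ followed by inversion of the cap gluing --- is instead the technique the paper deploys repeatedly in Section \ref{laman}, and it does work here: the exceptional component $\mathbf{P}(N_{S_0/\mathbf{P}_S}\oplus\mathcal{O}_S)$ relative to its gluing section is again a $\mathbf{P}^1$-bundle of two line bundles over the toric surface $S$, hence covered by Theorem \ref{ttt999}; the degeneration is $\com^*$-equivariant since $S_0$ is fixed; and the log tangent bundle bookkeeping is as you describe. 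The step you correctly identify as the crux, the nondegeneracy of the gluing operator, is exactly Proposition 6 of \cite{PP2}, which the paper invokes for the same purpose both in the induction step of Proposition \ref{aaa9999} and in reversing normal-cone degenerations in Section \ref{laman}. What your write-up elides is the full shape of the linear system: a single degeneration identity has many unknowns (all weighted partitions $\nu$, all splittings $\beta=\beta_1+\beta_2$ with $\beta_2$ possibly of positive $S$-degree on the cap, and all distributions of the insertions $\gamma_i$ between the two components), so you must generate equations by adjoining auxiliary insertions $\tau[\lambda]$ of classes supported on $[S_0]$, which restrict to the exceptional component, and run a simultaneous induction on the $S$-degree of $\beta_1$ and on the number of insertions remaining on the doubly relative piece before Proposition 6 of \cite{PP2} gives a square invertible system at fixed codimension $\theta(\nu)$. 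The trade-off is clear: the paper's argument is essentially free once Proposition \ref{aaa9999} is in place, while yours uses Theorem \ref{ttt999} purely as a black box at the cost of the inversion apparatus and its attendant bookkeeping.
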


Theorems \ref{ttt999} and \ref{ttt9999} 
will be proven in Section \ref{n3n3}.
We will use the 
absolute toric correspondence and the relative
projective bundle geometries to prove Theorem \ref{qqq111} in Section \ref{laman}.

\section{Proofs of Theorems \ref{ttt999} and \ref{ttt9999}}
\label{n3n3}

\subsection{Conventions}
Localization with respect to the fiberwise $\com^*$-action
will play a central role in the proofs of the
descendent correspondence for the relative projective
bundle geometries. 
We will use the localization formula for $\mathbf{P}_S/S_\infty$
in a capped form following \cite{moop,PPDC}.
We review the constructions here. 

Since the fiberwise $\com^*$ acts trivially on $S$, we have
the simple characterization
$$H^*_{\com^*}(S,\mathbb{Q}) =
H^*(S,\mathbb{Q})\otimes_{\mathbb{Q}}{{\mathbb{Q}}}[t]\ .$$
Via the $\com^*$-invariant projection 
$$\pi: \mathbf{P}_S \rightarrow S\ ,$$
there is a canonical pull-back
$$\pi^*: H^*_{\com^*}(S,\mathbb{Q}) \rightarrow
H^*_{\com^*}(\mathbf{P}_S,\mathbb{Q})\ . $$

The localized
$\com^*$-equivariant cohomology of $\mathbf{P}_S$ is
a free module of rank 2 over the localized 
$\com^*$-equivariant cohomology of $S$,
\begin{equation}\label{htyy7}
H^*_{\com^*}(\mathbf{P}_S,\mathbb{Q})_{\frac{1}{t}}
\stackrel{\sim}{=}
H^*_{\com^*}(S,\mathbb{Q})_{\frac{1}{t}} \cdot[S_0] \ \oplus 
H^*_{\com^*}(S,\mathbb{Q})_{\frac{1}{t}}
\cdot[S_\infty].
\end{equation}
The normal bundles of $S_0$ and $S_\infty$ in $\mathbf{P}_S$ 
are
$$\mathcal{N}^*=L_\infty \otimes L_0^* \ \ \ \text{and} \ \ \
\mathcal{N}=L_0 \otimes L_\infty^*$$
respectively.
Under the isomorphism \eqref{htyy7}, we have
\begin{equation}\label{krr4}
\pi^*(\gamma) \ = \   \frac{\gamma}{-t-N}[S_0] + 
\frac{\gamma}{t+N}[S_\infty], \ \ \ \ \
\gamma\in H^*_{\com^*}(S,\mathbb{Q})
\end{equation}
where $N=c_1(\mathcal{N})\in H^*(S,\mathbb{Q})$.
Equation \eqref{krr4} is the Atiyah-Bott localization
formula for the fiberwise $\com^*$-action on $\mathbf{P}_S$.

Let $L\in H_2({S},\mathbb{Z})$ be a fixed ample polarization of $S$.
We will measure the $S$-degree of curve classes
on $\mathbf{P}_S$ via $\pi$ push-forward followed by intersection with
$L$,
$$
L_\beta = \int_S L\cdot \pi_*(\beta)\ .$$
Let $[\mathbf{P}]\in H_2(\mathbf{P}_S,\mathbb{Z})$ be the 
class of a fiber of $\pi$.
We have an exact sequence
\begin{equation}\label{fgbb}
0\longrightarrow \mathbb{Z}[\mathbf{P}] \longrightarrow
H_2(\mathbf{P}_S,\mathbb{Z}) \stackrel{\pi_*}{\longrightarrow}
H_2(S,\mathbb{Z})\longrightarrow 0\ .
\end{equation}
The only effective curve classes with $L_\beta=0$ are 
multiples of $[\mathbf{P}]$.

The inclusions of $S$ via $S_0$ and $S_\infty$  determine
two sections of the surjection in \eqref{fgbb}.
Let 
$$\text{Eff}(S_0),\ \text{Eff}(S_\infty) \subset H_2(\mathbf{P}_S,\mathbb{Z})$$
denote the effective curve classes supported on $S_0$ and $S_\infty$
respectively.

\subsection{Log tangent bundle}
The definition of  the descendent correspondence
$$\tau_{\alpha_1-1}(\gamma_1) \cdots  \tau_{\alpha_\ell-1}(\gamma_\ell)
\ \ \ 
\mapsto \ \ \
\overline{\tau_{\alpha_1-1}(\gamma_1) \cdots  \tau_{\alpha_\ell-1}(\gamma_\ell)}\ 
$$
for the relative geometry $\mathbf{P}_S/ S_\infty$
requires 
the Chern classes of the log tangent bundle $T_{\mathbf{P}_S}[-S_\infty]$.

Similarly, for the relative
geometry $\mathbf{P}_S/ S_0 \cup S_\infty$, the Chern classes
of $T_{\mathbf{P}_S}[-S_0-S_\infty]$ are required.

\begin{Lemma}\label{fvv}
The total Chern classes  are
\begin{eqnarray*}
c(T_{\mathbf{P}_S}[-S_\infty])& = &  c(\pi^*T_{S})\cdot (1+[S_0])\ ,\\
c(T_{\mathbf{P}_S}[-S_0-S_\infty])&= & c(\pi^* T_{S})
\end{eqnarray*}
in the $\com^*$-equivariant cohomology of $\mathbf{P}_S$
for the fiberwise action.
\end{Lemma}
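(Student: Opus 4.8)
The plan is to deduce both identities from the logarithmic relative tangent sequence for the $\mathbf{P}^1$-bundle $\pi\colon \mathbf{P}_S \to S$. For $D$ equal to $S_\infty$ or $S_0\cup S_\infty$, the divisor $D$ is a union of sections of $\pi$, hence horizontal and mapping isomorphically to $S$ on each component. Consequently $\pi$ is log smooth with respect to the divisorial log structure attached to $D$ (with the trivial log structure on $S$), and I would use the resulting short exact sequence of locally free sheaves
\begin{equation*}
0 \longrightarrow T_{\mathbf{P}_S/S}[-D] \longrightarrow T_{\mathbf{P}_S}[-D] \longrightarrow \pi^* T_S \longrightarrow 0 .
\end{equation*}
Here the middle term is the log tangent bundle appearing in the statement, the kernel is the relative log tangent line bundle, and the surjection is the differential of $\pi$: a vector field tangent to $D$ projects to a horizontal field, and every tangent vector to $S$ lifts to a field tangent to $D$ precisely because $D\to S$ is an isomorphism on each component. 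Taking total $\com^*$-equivariant Chern classes and using multiplicativity, the lemma reduces to computing $c_1^{\com^*}\big(T_{\mathbf{P}_S/S}[-D]\big)$ for the two choices of $D$.

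Since $D$ is relatively a reduced union of points on each fiber, $T_{\mathbf{P}_S/S}[-D] = T_{\mathbf{P}_S/S}\otimes \OO(-D)$, so it suffices to compute $c_1^{\com^*}(T_{\mathbf{P}_S/S})$ and subtract $[D]$. I would obtain this class by restriction to the two fixed sections. Because each $S_i$ is a section, the vertical tangent space along $S_i$ is complementary to $\pi^*T_S|_{S_i}$ and is thus identified $\com^*$-equivariantly with the normal bundle, $T_{\mathbf{P}_S/S}|_{S_i} = N_{S_i/\mathbf{P}_S}$. Using the normal bundle data recorded before the lemma, the equivariant Euler classes are $e(N_{S_0}) = -t-N$ and $e(N_{S_\infty}) = t+N$, exactly the denominators appearing in the localization formula \eqref{krr4}. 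On the other hand the class $[S_0]+[S_\infty]$ restricts to $e(N_{S_0})$ on $S_0$ and to $e(N_{S_\infty})$ on $S_\infty$, since $S_0\cap S_\infty=\emptyset$. As the equivariant cohomology of $\mathbf{P}_S$ is free over $\mathbb{Q}[t]$, restriction to the fixed locus $S_0\sqcup S_\infty$ is injective, and this forces
\begin{equation*}
c_1^{\com^*}(T_{\mathbf{P}_S/S}) = [S_0]+[S_\infty].
\end{equation*}

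The two cases then follow at once. For $D=S_\infty$ I get $c_1^{\com^*}(T_{\mathbf{P}_S/S}[-S_\infty]) = [S_0]$, hence $c(T_{\mathbf{P}_S/S}[-S_\infty]) = 1+[S_0]$, and the exact sequence gives $c(T_{\mathbf{P}_S}[-S_\infty]) = (1+[S_0])\,c(\pi^*T_S)$. For $D = S_0\cup S_\infty$ the relative log tangent line bundle has $c_1^{\com^*}(T_{\mathbf{P}_S/S}[-S_0-S_\infty]) = 0$, which reflects the fact that the generator $z\partial_z$ of the fiberwise $\com^*$-action trivializes it away from the equivariant twist; the exact sequence then yields $c(T_{\mathbf{P}_S}[-S_0-S_\infty]) = c(\pi^*T_S)$.

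The only genuinely structural point — and the step I would treat most carefully — is the exactness of the logarithmic relative tangent sequence, i.e. the log smoothness of $\pi$ relative to the horizontal divisor $D$; everything else is bookkeeping for a line bundle on a ruled surface over $S$. If one prefers to avoid the language of log geometry, I would establish the same sequence by hand: identify $T_{\mathbf{P}_S}[-D]$ with the sheaf of vector fields tangent to $D$, check that its projection to $\pi^*T_S$ is surjective precisely because $D$ is a disjoint union of sections, and identify the kernel with $T_{\mathbf{P}_S/S}\otimes\OO(-D)$. The equivariant refinements are automatic, as all sheaves involved carry natural $\com^*$-linearizations compatible with the maps.
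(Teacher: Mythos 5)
The paper gives no argument for Lemma \ref{fvv} --- it explicitly leaves the ``elementary derivation'' to the reader --- so there is no proof of record to compare against. Your argument is correct and complete: the logarithmic relative tangent sequence
$0 \to T_{\mathbf{P}_S/S}(-D) \to T_{\mathbf{P}_S}[-D] \to \pi^*T_S \to 0$
is exact precisely because $D$ is a disjoint union of sections, and your identification $c_1^{\com^*}(T_{\mathbf{P}_S/S})=[S_0]+[S_\infty]$ via restriction to the fixed sections (using $T_{\mathbf{P}_S/S}|_{S_i}=N_{S_i}$, the Euler classes $-t-N$ and $t+N$ matching \eqref{krr4}, and injectivity of restriction to the fixed locus for the free $\mathbb{Q}[t]$-module $H^*_{\com^*}(\mathbf{P}_S)$) is exactly the kind of bookkeeping the authors had in mind. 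The two stated formulas then follow by subtracting $[S_\infty]$, respectively $[S_0]+[S_\infty]$, and invoking multiplicativity of the total Chern class.
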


In both cases, the restriction of the Chern classes to $S_\infty$
involves only classes pulled-back from $S$ via $\pi$.
We leave the elementary derivation of Lemma \ref{fvv} to the reader.

\subsection{Capped localization}

\subsubsection{Capping over $S_0$}
Let $P_n(\mathbf{P}_S/S_\infty,\beta)_\mu$ be the moduli space
of stable pairs with boundary condition given by $\mu$.
Let $\alpha$ be a partition of positive size, and let
 $$\Gamma=(\gamma_1, \ldots, \gamma_\ell), \ \ \ 
\gamma_i \in H^*(S,\mathbb{Q})
$$
be a vector of cohomology classes. Let 
$$ \tau_\alpha(\Gamma_0)=
\tau_{\alpha_1-1}(\gamma_1[S_0]) \ldots \tau_{\alpha_\ell-1}(\gamma_\ell[S_0])$$
be the associated descendent insertion over $S_0$. 
We can study the partition functions
\begin{equation} \label{nwn3}
   \sum_n q^n
\int_{[P_n(\mathbf{P}_S/S_\infty,\beta)_\mu]^{vir}}
\tau_\alpha(\Gamma_0), \ \ \ 
\end{equation}
\begin{equation*}
\sum_g u^{2g-2}
\int_{[\overline{M}'_{g,\star}(\mathbf{P}_S/S_\infty,\beta)_\mu]^{vir}}
\overline{\tau_\alpha(\Gamma_0)}
\end{equation*}
via localization with respect to  the fiberwise $\com^*$-action.
Recall, $\overline{\tau_\alpha(\Gamma_0)}$ is defined by \eqref{gtte4}
and is a sum of terms.
For the stable maps moduli space, the number of markings
depends upon the summand of $\overline{\tau_\alpha(\Gamma_0)}$,
and is denoted by $\star$.

The stable pairs capped descendent over $S_0$ is a sum of {particular} 
localization contributions to \eqref{nwn3}.
Let 
$$ U_{n,\beta,\mu} \subset P_n(\mathbf{P}_S/S_\infty,\beta)_\mu$$
be the open locus corresponding to stable pairs
which {\em do not carry components of positive $S$-degree
in the rubber over $S_\infty$}.
The open set $U_{n,\beta,\mu}$ is $\com^*$-invariant and has 
compact $\com^*$-fixed locus. Indeed, the fixed locus
$$U_{n,\beta,\mu}^{\com^*} \subset U_{n,\beta,\mu}$$
consists precisely of the $\com^*$-fixed loci of
$P_n(\mathbf{P}_S/S_\infty,\beta)_\mu$ with no components of
positive $S$-degree in the rubber over $S_\infty$.
Unless the curve class $\beta$ is of the form 
\begin{equation}\label{pd59}
\beta= \beta_0 + |\mu|[{\mathbf{P}}], \ \ \ \beta_0\in \text{Eff}(S_0),
\end{equation}
the open set $U_{n,\beta,\mu}$ is empty.
The stable pairs capped descendent over $S_0$ is
\begin{equation}\label{pdd4}
\mathsf{C}^{\mathsf{P}}_0(\tau_\alpha(\Gamma_0), \beta)_\mu = 
\sum_n q^n
\int_{[U_{n,\beta,\mu}]^{vir}}
\tau_\alpha(\Gamma_0)
\ \ \ \in \mathbb{Q}[t,\frac{1}{t}]((q)),
\end{equation}
well-defined by $\com^*$-residues.{\footnote{We have presented
the definition of the stable pairs capped descendent $\mathsf{C}^{\mathsf{P}}_0(\tau_\alpha(\Gamma_0), \beta)_\mu$ to parallel as closely as
possible the definition of
the Gromov-Witten
capped descendent $\mathsf{C}^{\mathsf{GW}}_0(\tau_\alpha(\Gamma_0), 
\beta)_\mu$. Instead of considering
$P_n(\mathbf{P}_S/S_\infty,\beta)_\mu$ as a space (after a
fixed representative of the Nakajima basis element $|\mu\rangle$
is chosen), we could alternatively arrive at the {\em same}
definition of $\mathsf{C}^{\mathsf{P}}_0(\tau_\alpha(\Gamma_0), \beta)_\mu$
via the $\com^*$-residue
$$\mathsf{C}^{\mathsf{P}}_0(\tau_\alpha(\Gamma_0), \beta)_\mu = 
\sum_n q^n
\int_{[U_{n,\beta}]^{vir}}
\tau_\alpha(\Gamma_0) \cup \text{ev}^*(\mu)\, ,$$
where  
$U_{n,\beta} \subset P_n(\mathbf{P}_S/S_\infty,\beta)$
is the open locus corresponding to stable pairs
which {\em do not carry components of positive $S$-degree
in the rubber over $S_\infty$} and 
$\text{ev}$ is the boundary map to the Hilbert scheme of points of
$S_\infty$.
}}
If condition \eqref{pd59} is not satisfied, 
$\mathsf{C}^{\mathsf{P}}_0(\tau_\alpha(\Gamma_0), \beta)_\mu$
vanishes.

For Gromov-Witten theory, we consider the parallel
open set
$$ \widetilde{U}_{g,\beta,\mu} \subset 
\overline{M}_{g,\star}'(\mathbf{P}_S/S_\infty,\beta)_\mu$$
corresponding to stable maps
which {\em do not carry curves of positive $S$-degree
in the rubber over $S_\infty$}.
The open set $\widetilde{U}_{g,\beta,\mu}$ is $\com^*$-invariant and has 
compact $\com^*$-fixed locus.
We again define the Gromov-Witten capped descendent over $S_0$ 
via $\com^*$-residues,
\begin{equation}\label{pddd44}
\mathsf{C}^{\mathsf{GW}}_0\Big(\overline{\tau_\alpha(\Gamma_0)},\beta\Big)_\mu = 
\sum_g u^{2g-2}
\int_{[\widetilde{U}_{g,\beta,\mu}]^{vir}}
\overline{\tau_\alpha(\Gamma_0)}
\ \ \ \in \mathbb{Q}[t,\frac{1}{t}]((u))
.
\end{equation}
The capped descendent \eqref{pddd44} vanishes unless
condition \eqref{pd59} is satisfied.

\subsubsection{Capping over $S_\infty$}
We can similarly define the capped contribution over $S_\infty$.
Let 
$$\tau_{\widehat{\alpha}}({\widehat{\Gamma}}_\infty) =
\tau_{\widehat{\alpha}_1-1}(\widehat{\gamma}_1[S_\infty]) \ldots 
\tau_{\widehat{\alpha}_\ell-1}
(\widehat{\gamma}_\ell[S_\infty])\ .$$
Consider the integrals
\begin{equation} \label{nwn33}
\sum_n q^n
\int_{[P_n(\mathbf{P}_S/S_0 \cup S_\infty,\beta)_{\nu,\mu}]^{vir}}
\tau_{\widehat{\alpha}}(\widehat{\Gamma}_\infty)
\end{equation}
\begin{equation*}
\sum_g u^{2g-2}
\int_{[\overline{M}'_{g,\star}(\mathbf{P}_S/S_0 \cup S_\infty,\beta)_{\nu,\mu}]^{vir}}
\overline{\tau_{\widehat{\alpha}}(\widehat{\Gamma}_\infty)}
\end{equation*}
via localization with respect to  the fiberwise $\com^*$-action.

The stable pairs
capped descendent over $S_\infty$ is again a sum of {particular} 
localization contributions to \eqref{nwn33}.
Let 
$$ W_{n,\beta,\nu,\mu} \subset P_n(\mathbf{P}_S/S_0\cup S_\infty,\beta)_{\nu,\mu}$$
be the open locus corresponding to stable maps
which {\em do not carry components of positive $S$-degree
in the rubber over $S_0$}.
The open set $W_{n,\beta,\nu,\mu}$ is $\com^*$-invariant and has 
compact $\com^*$-fixed locus. The fixed locus
$$W_{n,\beta,\nu,\mu}^{\com^*} \subset W_{n,\beta,\nu,\mu}$$
consists precisely of the $\com^*$-fixed loci of
$P_n(\mathbf{P}_S/S_0\cup S_\infty,\beta)_{\nu,\mu}$ with no components of
positive $S$-degree in $S_0$.
Unless the curve class $\beta$ satisfies
\begin{equation}\label{pd599}
\beta= |\nu|[{\mathbf{P}}]+ \beta_\infty, \ \ \ \beta_\infty\in 
\text{Eff}(S_\infty),
\end{equation}
the open set $W_{n,\beta,\nu,\mu}$ is empty.
The stable pairs capped descendent over $S_\infty$ is
\begin{equation}\label{pdd44}
\mathsf{C}^{\mathsf{P}}_\infty(\tau_{\widehat{\alpha}}(\widehat{\Gamma}_\infty),\beta)_{\nu,\mu} =
\sum_n q^n
\int_{[W_{n,\beta,\nu,\mu}]^{vir}}
\tau_{\widehat{\alpha}}(\widehat{\Gamma}_\infty)
\ \ \  \in \mathbb{Q}[t,\frac{1}{t}]((q))
\end{equation}
well-defined by $\com^*$-residues.
The capped descendent \eqref{pdd44} vanishes unless
condition \eqref{pd599} is satisfied.

For Gromov-Witten theory, we consider the parallel
open set
$$ \widetilde{W}_{g,\beta,\nu,\mu} \subset 
\overline{M}_{g,\star}'(\mathbf{P}_S/S_0\cup S_\infty,\beta)_{\nu,\mu}$$
corresponding to stable maps
which {\em do not carry curves of positive $S$-degree
in the rubber over  $S_0$}.
The open set $\widetilde{W}_{g,\beta,\nu,\mu}$ is $\com^*$-invariant and has 
compact $\com^*$-fixed locus.
We define the Gromov-Witten capped descendent over $S_\infty$ 
via $\com^*$-residues,
\begin{equation}\label{pded4}
\mathsf{C}^{\mathsf{GW}}_\infty\Big(\overline{\tau_{\widehat{\alpha}}
(\widehat{\Gamma}_\infty)},
\beta\Big)_{\nu,\mu} = 
\sum_g u^{2g-2}
\int_{[\widetilde{W}_{g,\beta,\nu,\mu}]^{vir}}
\overline{\tau_{\widehat{\alpha}}(\widehat{\Gamma}_\infty)}
\ \ \ \in \mathbb{Q}[t,\frac{1}{t}]((u))
 .
\end{equation}
The capped descendent \eqref{pded4} vanishes unless
condition \eqref{pd599} is satisfied.

\subsubsection{Capped localization formula} 
Let $\Phi=(\phi_1, \ldots, \phi_f)$ be a graded basis of $H^*(S,\mathbb{Q})$,
and let $\phi_1^\vee, \ldots, \phi_f^\vee$ be the dual basis satisfying
$$\int_S \phi_i \cdot \phi_j^\vee = \delta_{ij} \ .$$
We take the cohomological weights of the relative
boundary condition $\mu$ to lie in the basis $\Phi$.
Let $\mu^\vee$ then denote the boundary condition
obtained by replacing each $\phi_i$ by the Poincar\'e dual 
class $\phi_i^\vee$.

Let $\beta \in H_2(\mathbf{P}_S, \mathbb{Z})$ be a curve
class. A splitting of $\beta$ of type $d\geq 0$ is
a pair of curve classes $\beta_0,\beta_\infty$ of $\mathbf{P}_S$
satisfying
$$\beta_0\in \text{Eff}(S_0), \ \ 
\beta_\infty \in \text{Eff}(S_\infty), \ \ \text{and} \ \  
  \beta_0 + d[\mathbf{P}] + \beta_\infty = \beta.$$
We will often denote the type of a splitting by 
$$\beta=\beta_0+d[\mathbf{P}]+\beta_\infty\ .$$
A given $\beta \in  H_2(\mathbf{P}_S, \mathbb{Z})$
admits only finitely many such splittings.

The capped localization formula for $\mathbf{P}_S/S_\infty$ is is easy
to state in terms of the capped descendents over
$S_0$ and $S_\infty$.
First consider the stable pairs partition function{\footnote{We
depart slightly from the notation of the Introduction for
more efficient presentation of the data.}}
\begin{multline*}
\mathsf{Z}^{\mathsf{P}}_{\beta,\mu}\left(\tau_\alpha(\Gamma_0)\cdot
\tau_{\widehat{\alpha}}(
\widehat{\Gamma}_\infty)\right)^{\com^*} =\\
\sum_n q^n
\int_{[P_n(\mathbf{P}_S/S_\infty,\beta)_\mu]^{vir}}
\prod_i \tau_{\alpha_i-1}(\gamma_i[S_0])
\cdot
\prod_j \tau_{\widehat{\alpha}_j-1}(\widehat{\gamma}_j[S_\infty]).
\end{multline*}
The capped localization formula is
\begin{multline*}
\mathsf{Z}^{\mathsf{P}}_{\beta,\mu}\left(\tau_\alpha(\Gamma_0)\cdot
\tau_{\widehat{\alpha}}(
\widehat{\Gamma}_\infty)\right)^{\com^*}=\\
\sum \mathsf{C}^{\mathsf{P}}_0(\tau_\alpha(\Gamma_0),\beta_0+d[\mathbf{P}])_{\nu}
%
 \frac{(-1)^{|\nu|-\ell(\nu)} 
\zz(\nu)}{q^{|\nu|}} \
\mathsf{C}^{\mathsf{P}}_\infty
(\tau_{\widehat{\alpha}}(\widehat{\Gamma}_\infty),d[\mathsf{P}]+
\beta_\infty)_{\nu^\vee,\mu}).
\end{multline*}
The sum on the right side is the triple sum
$$\sum_{d\geq 0}\  \sum_{\beta_0+d[\mathsf{P}]+\beta_\infty=\beta} \ \sum_{|\nu|=d}\ .$$
The gluing factor $\zz(\nu)$ is defined by \eqref{k233}.

The parallel partition function{\footnote{Since 
$S_0$ and $S_\infty$ are disjoint, we have
$$\overline{\tau_\alpha(\Gamma_0) \cdot
\tau_{\widehat{\alpha}}(
\widehat{\Gamma}_\infty)}=
\overline{\tau_\alpha(\Gamma_0)}\cdot
\overline{\tau_{\widehat{\alpha}}(
\widehat{\Gamma}_\infty)}$$
by definition \eqref{gtte4}.}}
in Gromov-Witten theory is
\begin{multline*}
\mathsf{Z}'^{\, \mathsf{GW}}_{\beta,\mu}\Big(\overline{\tau_\alpha(\Gamma_0)}\cdot
\overline{\tau_{\widehat{\alpha}}(
\widehat{\Gamma}_\infty)}\Big)^{\com^*} =\\
\sum_g u^{2g-2}
\int_{[\overline{M}_{g,\star}'(\mathbf{P}_S/S_\infty,\beta)_\mu]^{vir}}
\overline{\prod_i \tau_{\alpha_i-1}(\gamma_i[S_0])}
\cdot
\overline{\prod_j \tau_{\widehat{\alpha}_j-1}(\widehat{\gamma}_j[S_\infty])},
\end{multline*}
and the capped localization formula is
\begin{multline*}
\mathsf{Z}'^{\, \mathsf{GW}}_{\beta,\mu}\left(
\overline{\tau_\alpha(\Gamma_0)}\cdot
\overline{\tau_{\widehat{\alpha}}(
\widehat{\Gamma}_\infty)}\right)^{\com^*}=\\
\mathsf{C}^{\mathsf{GW}}_0\Big(\overline{\tau_\alpha(\Gamma_0)},\beta_0
+d[\mathbf{P}]\Big)_{\nu}\ 
\,
{\zz(\nu)}\, {u^{2\ell(\nu)}} \
\mathsf{C}^{\mathsf{GW}}_\infty
\Big(
\overline{\tau_{\widehat{\alpha}}(\widehat{\Gamma}_\infty)},
d[\mathbf{P}]+\beta_\infty\Big)_{\nu^\vee,\mu}\ ,
\end{multline*}
where again the sum on the right is the triple sum
$$\sum_{d\geq 0}\  \sum_{\beta_0+d[\mathsf{P}]+\beta_\infty=\beta} \ \sum_{|\nu|=d}\ .$$

The idea of capping localization contributions has been used
extensively in \cite{moop,PPDC}. The main properties are the
following:
\begin{enumerate}
\item[$\bullet$] By definition, the capped contributions differ from the
bare residue contributions by just edge contributions and
$1$-legged $S$-degree 0 contributions on the far vertex.
\item[$\bullet$] The capped localization formula
is obtained from the standard localization formula by redistributing
edge and $1$-legged $S$-degree $0$ contributions (no new geometric
derivation is required).
\item[$\bullet$] The capped contributions, unlike the bare
contributions, are conjectured to have well-behaved
rationality and GW/P correspondence properties.
\end{enumerate}

\subsubsection{Capped edge}
In the capped localization formulas of 
\cite{moop,PPDC},  capped edge terms appear:
\begin{eqnarray*}
\mathsf{Z}^{\mathsf{P}}_{d,\nu,\mu} &=&
\sum_n q^n
\int_{[P_n(\mathbf{P}_S/S_0 \cup S_\infty,d[\mathbf{P}])_\mu]^{vir}}
1\ , \\
\mathsf{Z}'^{\, \mathsf{GW}}_{d,\nu,\mu} & = &
\sum_g u^{2g-2}
\int_{[\overline{M}_g(\mathbf{P}_S/S_0 \cup S_\infty,d[\mathbf{P}])_\mu]^{vir}}
1\ 
\end{eqnarray*}
where $d=|\nu|=|\mu|$.
By the following result,
the capped edges here are trivial, and hence need not be included
in the capped localization formulas in our geometry.

\begin{Lemma} We have the evaluations \label{kllw}
\begin{eqnarray*}
\mathsf{Z}^{\mathsf{P}}_{d,\nu,\mu^\vee} &=&
\delta_{\nu,\mu} \ \frac{(-1)^{|\nu|-\ell(\nu)}} 
{\zz(\nu)} \, q^{d}
\\
\mathsf{Z}'^{\, \mathsf{GW}}_{d,\nu,\mu^\vee} & = &
\delta_{\nu,\mu} \
\frac{1}{\zz(\nu)} \, u^{-2\ell(\nu)} \ .
\end{eqnarray*}
\end{Lemma}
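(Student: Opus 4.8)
The plan is to exploit the fact that the curve class $d[\mathbf{P}]$ is purely vertical. First I would observe that any stable pair or stable map in class $d[\mathbf{P}]$ has $S$-degree zero, so by the exact sequence \eqref{fgbb} its support lies in fibers of $\pi:\mathbf{P}_S\to S$ over finitely many points of $S$. Since the relative divisors $S_0,S_\infty$ are both canonically identified with $S$ and the curve meets each in a length-$d$ subscheme, the incidence with $S_0$ and with $S_\infty$ occurs over the \emph{same} points of $S$. Consequently the moduli space $P_n(\mathbf{P}_S/S_0\cup S_\infty,d[\mathbf{P}])_{\nu,\mu^\vee}$ (and its Gromov-Witten analogue) maps to a configuration space of points on $S$, and both the moduli space and its virtual class factor as a product of the relative theory of a single fiber $\mathbf{P}^1/\{0,\infty\}$ in degree $d$ and the combinatorial/cohomological data recorded on $S$. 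Because $S$ is toric one may alternatively localize with respect to the torus acting on $S$, reducing $\mathbf{P}_S$ locally to $\mathbf{P}^1\times\mathbb{C}^2$ and bringing the computation into the capped form treated in \cite{moop}; I would use whichever bookkeeping is cleaner.

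Second, I would invoke the evaluation of the fiberwise tube. The relative geometry $\mathbf{P}^1/\{0,\infty\}$ in degree $d$ with empty insertion is the trivial cylinder, whose partition function is the identity on the Fock space spanned by cohomology-weighted partitions; this is known in Gromov-Witten theory (Okounkov-Pandharipande, and in the capped form of \cite{moop}) and in the stable pairs theory (\cite{part1,moop}). The only $\com^*$-fixed configuration contributing is the trivial tube: a disjoint union of $\ell(\nu)$ fibers over $\ell(\nu)$ distinct points of $S$, the fiber over the $i$-th point carrying multiplicity $\nu_i$. This forces $\nu=\mu$ as partitions and produces the stated powers. In Gromov-Witten theory the $i$-th fiber contributes a genus-zero degree-$\nu_i$ cover, so the total domain genus is $1-\ell(\nu)$ and $u^{2g-2}=u^{-2\ell(\nu)}$; in the stable pairs theory the minimal Euler characteristic of the tube is $d$, giving the factor $q^d$ together with the sign $(-1)^{|\nu|-\ell(\nu)}$ arising from the edge normalization of the stable pairs vertex.

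Third, I would assemble the contributions over $S$. The cohomology weights of $\nu$ (carried on $S_0\cong S$) and of $\mu^\vee$ (carried on $S_\infty\cong S$) are paired at the common support points via $\int_S \phi_i\cdot\phi_j^\vee=\delta_{ij}$; summing over which points carry which parts yields the Kronecker delta $\delta_{\nu,\mu}$ of cohomology-weighted partitions together with the symmetry factor $|\text{Aut}(\nu)|$, while the product $\prod_i \nu_i$ emerges from the normalization of the Nakajima basis class $\langle\nu|$. These combine to the gluing factor $\zz(\nu)$ in the denominator, matching \eqref{k233}.

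The main obstacle I anticipate is the rigidity step: one must show that no configuration other than the trivial tube contributes, i.e.\ that fibers of positive genus, collided support points, or nontrivial rubber over $S_0$ or $S_\infty$ either fail to have compact $\com^*$-fixed locus or integrate to zero, and then track the automorphism and sign factors exactly. Once the reduction to the fiber theory and the identification of the tube with the identity are in place, the remaining verification of $\zz(\nu)$ and of the sign $(-1)^{|\nu|-\ell(\nu)}$ is a bookkeeping exercise within the established localization formalism of \cite{moop,PPDC}.
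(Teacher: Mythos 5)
Your route is genuinely different from the paper's, and it is workable in outline, but it has a gap precisely at the step you flag as "the main obstacle": you never actually prove that only the trivial tube configurations contribute, nor that the virtual class of $P_n(\mathbf{P}_S/S_0\cup S_\infty,d[\mathbf{P}])_{\nu,\mu^\vee}$ factors as (fiber theory of $\mathbf{P}^1/\{0,\infty\}$) $\times$ (point-counting data on $S$). The moduli space is not literally such a product: the supporting points on $S$ move and collide, stable pairs in class $d[\mathbf{P}]$ exist for all $n\geq d$ with embedded points and nonsplit thickenings, and on the Gromov--Witten side multiple covers of fibers of arbitrary genus contribute. Showing all of these integrate to zero is essentially equivalent to the "tube acts as the identity" statement you are invoking, so as written the argument is circular at the level of the surface-fibered geometry (the local-curve version of that statement in \cite{BP,lcdt,part1} does not by itself give the factorization over $S$). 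Your bookkeeping of the answer on the minimal stratum is correct --- the genus $1-\ell(\nu)$ of the disjoint covers gives $u^{-2\ell(\nu)}$, the minimal Euler characteristic $d$ gives $q^d$, and the automorphisms and Poincar\'e pairing assemble to $\delta_{\nu,\mu}/\zz(\nu)$ --- but the vanishing of everything else is the actual content and is left open.

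The paper avoids all of this with a two-line argument: degenerating $\mathbf{P}_S/S_0\cup S_\infty$ to two copies of itself glued along a fiber of $\pi$, the degeneration formula gives
\begin{equation*}
\mathsf{Z}^{\mathsf{P}}_{d,\nu,\mu^\vee} \;=\; \sum_{\lambda}\mathsf{Z}^{\mathsf{P}}_{d,\nu,\lambda^\vee}\,(-1)^{|\lambda|-\ell(\lambda)}\zz(\lambda)\,q^{-|\lambda|}\,\mathsf{Z}^{\mathsf{P}}_{d,\lambda,\mu^\vee}\,,
\end{equation*}
i.e.\ $M=MGM$ where $G$ is the diagonal gluing matrix. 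Since the leading $q^d$ coefficient of $M$ is the (nondegenerate) intersection pairing on the Nakajima basis, $M$ is invertible, so $GM=\mathrm{Id}$ and $M=G^{-1}$, which is exactly the claimed evaluation; the Gromov--Witten case is identical. This buys you the lemma for an arbitrary projective surface $S$ with no analysis of the moduli space whatsoever, whereas your approach, even once the rigidity step is supplied (e.g.\ by full $\mathbf{T}$-localization when $S$ is toric, reducing to the capped $(0,0)$ edges of \cite{moop}), would either be restricted to toric $S$ or require a separate virtual product formula. If you want to salvage your computation, the cleanest fix is to run the self-gluing identity first and use your minimal-stratum calculation only to verify the leading term.
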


\begin{proof}
We use the standard degeneration of
$\mathbf{P}_S/S_0 \cup S_\infty$ to  
$$\mathbf{P}_S/S_0 \cup S_\infty \ \cup \ \mathbf{P}_S/S_0 \cup S_\infty\ .$$
For the stable pairs, the degeneration formula for the
capped edges is
$$\mathsf{Z}^{\mathsf{P}}_{d,\nu,\mu^\vee} = 
\sum_{\lambda}
\mathsf{Z}^{\mathsf{P}}_{d,\nu,\lambda^\vee}\
(-1)^{|\lambda|-\ell(\lambda)} \,
\zz(\lambda) \, q^{-|\lambda|} \
\mathsf{Z}^{\mathsf{P}}_{d,\lambda,\mu^\vee}\ .
$$
The capped edge evaluation follows immediately. A parallel
argument is valid in Gromov-Witten theory.
\end{proof}

\subsection{Proof of Theorem \ref{ttt999}}

\subsubsection{Correspondence over $S_0$}
We will use
 the capped localization formulas together with
$\com^*$-equivariant descendent correspondences
for the capped contributions over $S_0$ and $S_\infty$
to prove Theorem \ref{ttt999}.

To study the contributions over $S_0$, we require
the full torus action.
Since $S$ is a toric surface, a 2-dimension torus $T$
acts on $S$. We lift $T$ to the line bundles $L_0$ and $L_1$.
Let $\mathbf{T}$ be the full 3-dimensional torus
acting on the relative geometry $\mathbf{P}_S/S_\infty$,
$$\mathbf{T}= T \times \com^*,$$
where the second factor is the fiberwise $\com^*$.

The capped contribution over $S_0$ is not difficult to understand.
The open sets 
\begin{equation}\label{5666}
 U_{n,\beta,\mu} \subset P_n(\mathbf{P}_S/S_\infty,\beta)_\mu, \ \ \ \ 
 \widetilde{U}_{g,\beta,\mu} \subset 
\overline{M}_{g,\star}'(\mathbf{P}_S/S_\infty,\beta)_\mu
\end{equation}
after localization with respect to the $\mathbf{T}$-action
yield 
only the standard capped descendent vertices at the $T$-fixed
points of $S_0$.


We consider capped contributions over $S_0$ in curve class
$$\beta = \beta_0 +d[\mathbf{P}], \ \ \ \beta_0\in \text{Eff}(S_0).$$ Let $\mu$ be a
boundary condition along $S_\infty$ with $|\mu|=d$. 

\begin{Proposition}
\label{aaa999}
The $\com^*$-equivariant descendent correspondence 
for the capped contributions over $S_0$ holds. We have
$\mathsf{C}^{\mathsf{P}}_0\left(\tau_\alpha(\Gamma_0), \beta\right)_\mu
\in \mathbb{Q}(q,t)$
and 
\begin{equation*}
(-q)^{-d_{\beta}/2}\,
\mathsf{C}^{\mathsf{P}}_0\left(\tau_\alpha(\Gamma_0), \beta\right)_\mu
 =
(-iu)^{d_{\beta}+\ell(\mu)-|\mu|}\,
\mathsf{C}^{\mathsf{GW}}_0\Big(\overline{\tau_\alpha(\Gamma_0)}, \beta\Big)_\mu
\end{equation*}
under the variable change $-q=e^{iu}$.
\end{Proposition}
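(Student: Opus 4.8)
I would compute both capped descendents by introducing the full torus $\mathbf{T}=T\times\com^*$ and localizing, reducing the statement to the toric descendent correspondence of \cite{PPDC} at the $T$-fixed points of $S_0$. First I would note that $\mathsf{C}^{\mathsf{P}}_0(\tau_\alpha(\Gamma_0),\beta)_\mu$ and $\mathsf{C}^{\mathsf{GW}}_0(\overline{\tau_\alpha(\Gamma_0)},\beta)_\mu$ are defined by $\com^*$-residues over the $\com^*$-invariant open sets $U_{n,\beta,\mu}$ and $\widetilde{U}_{g,\beta,\mu}$, which have compact $\com^*$-fixed loci, so these residues are a priori independent of the auxiliary $T$-action on $S$ and on $L_0,L_\infty$. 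Since the insertions $\tau_\alpha(\Gamma_0)$ are supported over $S_0$ with classes pulled back from $S$, every term is defined $\mathbf{T}$-equivariantly, and I may evaluate the $\com^*$-residues by passing to the finer $\mathbf{T}$-localization.

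Next I would analyze the $\mathbf{T}$-fixed loci. Because the capped contribution over $S_0$ excludes configurations with positive $S$-degree in the rubber over $S_\infty$, the curve class has the form $\beta=\beta_0+d[\mathbf{P}]$ with $\beta_0\in\text{Eff}(S_0)$, and after further localizing by $T$ the fixed loci concentrate over the finitely many $T$-fixed points $p\in S_0$. As indicated in the text, the open sets \eqref{5666} then contribute only the standard capped descendent vertices of \cite{moop,PPDC} at these fixed points, glued along the $T$-invariant curves of $S_0$ by the usual edge terms. At each fixed point $p$ the local model of $\mathbf{P}_S/S_\infty$ is the toric relative cap $\com^2\times\mathbf{P}^1/\com^2\times\{\infty\}$ with its three-dimensional torus action, so the vertex contributions are exactly those for which the correspondence is available.

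I would then invoke the toric correspondence of \cite{PPDC} vertex by vertex. The rule $\tau_\alpha(\Gamma_0)\mapsto\overline{\tau_\alpha(\Gamma_0)}$ is precisely the transformation \eqref{gtte4}, and in the relative geometry the matrix $\widetilde{\mathsf{K}}$ acts through the Chern classes $c_i(T_{\mathbf{P}_S}[-S_\infty])$; here Lemma \ref{fvv} identifies these near $S_0$ with $c(\pi^*T_S)\cdot(1+[S_0])$, so the local log tangent data used at each fixed point agrees with the toric capped vertex correspondence. Since $\widetilde{\mathsf{K}}$ is itself constructed from the capped descendent vertex, the vertex-level identity equating the stable pairs and Gromov-Witten caps under $-q=e^{iu}$ is exactly the content of \cite{PPDC}, and toric rationality gives rationality in $q$ of each factor. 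Assembling the $T$-fixed contributions therefore yields $\mathsf{C}^{\mathsf{P}}_0(\tau_\alpha(\Gamma_0),\beta)_\mu\in\mathbb{Q}(q,t)$ together with the claimed identity.

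The main obstacle will be the bookkeeping of the prefactors $(-q)^{-d_\beta/2}$ and $(-iu)^{d_\beta+\ell(\mu)-|\mu|}$ across the fixed-point decomposition: one must check that the virtual dimension constant $d_\beta$ and the boundary shift $\ell(\mu)-|\mu|$ along $S_\infty$ distribute over vertices and edges exactly as in the toric correspondence, so that the local normalizations multiply to the global ones. A secondary point is to confirm that the weights $s_1,s_2$ cancel in the total, as they must since the defining $\com^*$-residues do not see the $T$-action; this follows because both sides are manifestly $T$-invariant and the toric correspondence holds identically in the $s_i$.
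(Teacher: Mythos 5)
Your proposal follows essentially the same route as the paper's proof: apply $\mathbf{T}$-equivariant localization to the open sets \eqref{5666}, express the capped contributions in terms of the capped descendent vertices of \cite{PPDC}, invoke the toric descendent correspondence there, and use compactness of the $\com^*$-fixed locus to justify specializing the $T$-parameters. The extra detail you supply (the log tangent identification via Lemma \ref{fvv}, the prefactor bookkeeping, the cancellation of $s_1,s_2$) is consistent with, and an elaboration of, the paper's three-sentence argument.
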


\begin{proof}
We apply $\mathbf{T}$-equivariant localization
to the open sets \eqref{5666} to express
capped contributions in terms of 
descendent vertices \cite{PPDC}. We then apply  the GW/P correspondence
established in Theorem 8 of \cite{PPDC}. Since the $\com^*$-fixed locus is
compact (for the fiberwise $\com^*$-action), we may set the equivariant parameters
of $T$ to 0. 
\end{proof} 

\subsubsection{Correspondence over $S_\infty$}
The next step is to prove a descendent correspondence
for the capped contributions over $S_\infty$.
Consider capped contributions over $S_\infty$ in curve class
$$\beta = d[\mathbf{P}]+\beta_\infty, \ \ \ \beta_\infty\in \text{Eff}(S_\infty)
.$$ Let $\nu,\mu$ be 
boundary conditions along $S_0$ and $S_\infty$ with $|\nu|=d$. 

\begin{Proposition}
\label{aaa9999}
The $\com^*$-equivariant descendent correspondence 
for the capped contributions over $S_\infty$ holds. We have
$\mathsf{C}^{\mathsf{P}}_\infty\left
(\tau_{\widehat{\alpha}}(\widehat{\Gamma}_\infty), \beta\right)_{\nu,\mu} \in
\mathbb{Q}(q,s)$ and
\begin{equation*}
(-q)^{-d_{\beta}/2}\,
\mathsf{C}^{\mathsf{P}}_\infty\left
(\tau_{\widehat{\alpha}}(\widehat{\Gamma}_\infty), \beta\right)_{\nu,\mu}
 =
(-iu)^{d_{\beta}+\ell(\nu)-|\nu|+\ell(\mu)-|\mu|}\,
\mathsf{C}^{\mathsf{GW}}_\infty\Big(\overline{\tau_{\widehat{\alpha}}
(\widehat{\Gamma}_\infty)}, \beta
\Big)_{\nu,\mu}
\end{equation*}
under the variable change $-q=e^{iu}$.
\end{Proposition}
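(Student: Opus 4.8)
The plan is to reduce the correspondence over $S_\infty$ to the correspondence over $S_0$ already established in Proposition \ref{aaa999}, by exploiting the symmetry of the geometry $\mathbf{P}_S = \mathbf{P}(L_0\oplus L_\infty)$ under interchange of the two summands. First I would introduce the fiberwise involution
$$
\iota\colon \mathbf{P}_S \to \mathbf{P}_S,\qquad [l_0,l_\infty]\mapsto [l_\infty,l_0],
$$
which swaps the two sections, $\iota(S_0)=S_\infty$ and $\iota(S_\infty)=S_0$, and which carries the fiberwise $\com^*$-action \eqref{k45p} to its inverse. Under $\iota$, the relative geometry $\mathbf{P}_S/S_0\cup S_\infty$ is identified with itself after exchanging the two relative divisors, the boundary conditions $\nu$ and $\mu$ are interchanged, and the open locus $W_{n,\beta,\nu,\mu}$ (no positive $S$-degree in the rubber over $S_0$) is carried to the locus $U$-type condition over the swapped $S_\infty$. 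Crucially, $\iota$ sends the descendent insertions $\tau_{\widehat{\alpha}}(\widehat{\Gamma}_\infty)$ weighted over $S_\infty$ to insertions weighted over $S_0$, so that the capped descendent $\mathsf{C}^{\mathsf{P}}_\infty$ becomes a capped descendent of type $\mathsf{C}^{\mathsf{P}}_0$ for the $\iota$-transformed data.

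The key steps, in order, would be: (i) verify that $\iota$ is $T$-equivariant for a suitable twist of the torus action on $L_0,L_\infty$ and intertwines the fiberwise $\com^*$ with its inverse, sending the equivariant parameter $t\mapsto -t$; (ii) check that $\iota$ respects the virtual classes on both the stable pairs and stable maps moduli spaces, so that the capped contributions transform as honest pushforwards under the induced isomorphism; (iii) confirm that the correspondence matrix $\widetilde{\mathsf{K}}$, which is built from the capped descendent vertex, is compatible with the substitution $c_i = c_i(T_{\mathbf{P}_S}[-S_0-S_\infty])$ and that this substitution is $\iota$-invariant, which follows from the second formula of Lemma \ref{fvv} since $c(T_{\mathbf{P}_S}[-S_0-S_\infty]) = c(\pi^*T_S)$ is manifestly pulled back from $S$ and hence fixed by the fiberwise involution; and (iv) track the exponents of $(-iu)$ and the sign and gluing factors through the involution, noting that the symmetric roles of $\nu$ and $\mu$ in the exponent $d_\beta + \ell(\nu)-|\nu|+\ell(\mu)-|\mu|$ are precisely what makes the transported identity match the claimed one. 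Having set this up, the desired correspondence over $S_\infty$ is obtained by applying Proposition \ref{aaa999} to the $\iota$-image of the data.

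I expect the main obstacle to be step (iii), the $\iota$-compatibility of the descendent correspondence data. The subtlety is that the capped contribution over $S_\infty$ now carries a second relative divisor $S_0$ with boundary condition $\nu$, so the naive symmetry argument must be promoted to the doubly-relative geometry $\mathbf{P}_S/S_0\cup S_\infty$ rather than $\mathbf{P}_S/S_\infty$; one must verify that the capped descendent vertex computation of \cite{PPDC}, used to prove Proposition \ref{aaa999}, applies verbatim with the extra relative boundary over $S_0$ inserted. If the direct involution argument proves delicate to make fully rigorous, the alternative is to mirror the proof of Proposition \ref{aaa999} directly: apply $\mathbf{T}$-equivariant localization to the open sets $W_{n,\beta,\nu,\mu}$ and $\widetilde{W}_{g,\beta,\nu,\mu}$, express the capped contributions over $S_\infty$ in terms of descendent vertices together with the capped edge evaluations of Lemma \ref{kllw}, invoke the toric correspondence of Theorem 8 of \cite{PPDC} at the $T$-fixed points of $S_\infty$, and then set the $T$-equivariant parameters to zero using compactness of the fiberwise $\com^*$-fixed locus. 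Either route reduces the statement to the already-proven capped vertex correspondence; the involution approach is cleaner but requires the compatibility checks above, while the localization approach is more mechanical but duplicates the bookkeeping of Proposition \ref{aaa999}.
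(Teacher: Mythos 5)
Your involution idea does not reduce Proposition \ref{aaa9999} to Proposition \ref{aaa999}, because the two capped contributions are \emph{not} mirror images of one another: $\mathsf{C}^{\mathsf{P}}_0$ is defined on the singly-relative geometry $\mathbf{P}_S/S_\infty$, where the cap sits over the \emph{non-relative} divisor $S_0$, while $\mathsf{C}^{\mathsf{P}}_\infty$ is defined on the doubly-relative geometry $\mathbf{P}_S/S_0\cup S_\infty$, where the cap sits over one of the \emph{relative} divisors. Applying $\iota$ to $\mathsf{C}^{\mathsf{P}}_\infty$ produces an $S_0$-cap for $\mathbf{P}_S/S_0\cup S_\infty$ carrying an extra boundary condition along the swapped divisor --- an object Proposition \ref{aaa999} says nothing about. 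The distinction is not bookkeeping: the reason the $S_0$-cap is computable is that its positive $S$-degree components live in the honest (non-rubber) total space over $S_0$, where the full torus $\mathbf{T}$ acts and localization reduces everything to the capped descendent vertices of \cite{PPDC}. In the doubly-relative geometry, a $\com^*$-fixed stable pair cannot be supported on the relative divisor $S_0$, so its positive $S$-degree components are forced into one of the two rubbers (see formula \eqref{crubp}): the $S_\infty$-cap is intrinsically a \emph{rubber} integral over $S_\infty$ carrying the descendents, and the fiberwise $\com^*$-action is trivial on rubber. Your fallback --- ``localize $W_{n,\beta,\nu,\mu}$ and invoke the toric vertex correspondence at the fixed points of $S_\infty$'' --- fails for exactly the same reason: localization there lands you in rubber moduli, not in capped vertices, and the correspondence of \cite{PPDC} does not apply to rubber integrals with descendents.

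The paper's actual argument is of a different character: a four-fold induction on $L_\beta$, $\ell(\widehat{\alpha})$, $\deg(\widehat{\Gamma})$ (reverse), and $\theta(\nu)$. When the descendent degree is large relative to the virtual dimension (Case I), both sides vanish by a dimension count on the rubber over $S_\infty$. Otherwise (Case II), one writes down globally defined series $\mathsf{Z}^{\mathsf{P}}_{\beta,\mu}\left(\tau[\lambda]\cdot\tau_{\widehat{\alpha}}(\widehat{\Gamma}_{\mathrm{Id}})\right)$ that vanish identically by compactness, expands them by capped localization, and observes that the unknown $S_\infty$-caps appear in a linear system whose coefficient matrix (built from $S_0$-caps in fiber classes) is nonsingular by Proposition 6 of \cite{PP2}; all other terms are controlled by the induction hypothesis and Proposition \ref{aaa999}. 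If you want to salvage your write-up, you must replace the involution step with an argument that actually determines the rubber contributions over $S_\infty$ --- the linear-algebra mechanism above is the missing idea.
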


Theorem \ref{ttt999} is an immediate consequence of
Propositions \ref{aaa999} and \ref{aaa9999} and
the capped localization formulas for
$\mathbf{P}_S/S_\infty$. Proposition \ref{aaa9999} is harder to prove
than Proposition \ref{aaa999} because of the possibility of
curves of positive $S$-degree in the rubber over $S_\infty$. 
The GW/P correspondence for the descendent vertex \cite{PPDC}
does not directly apply. The proof of Proposition \ref{aaa9999}
is given in Sections \ref{instrat} - \ref{indstep}.

\subsubsection{Induction strategy}\label{instrat}
If $\beta$ is not an effective curve class, both capped
descendent contributions over $S_\infty$ vanish and
Proposition \ref{aaa9999} is trivial.

We will prove Proposition \ref{aaa9999}
for effective curve classes  by induction on
$L_\beta$ and the length $\ell(\widehat{\alpha})$.
The base case is
$$L_\beta =0  \ \ \ \text{and} \ \ \ \ell(\widehat{\alpha})=0 \ .$$
If $L_\beta=0$, then $\beta_\infty=0$.
If  $\ell(\widehat{\alpha})$ is also 0,
the capped contribution over $S_\infty$ is equal to the
capped edge term determined by Lemma \ref{kllw}.
Proposition \ref{aaa9999}
for $\beta_\infty=0$ and $\ell(\widehat{\alpha})=0$ is then easily seen to
hold.

Consider capped contributions over $S_\infty$ in curve class
$$\beta = d[\mathbf{P}]+\beta_\infty, \ \ \ \beta_\infty\in 
\text{Eff}(S_\infty)
.$$ Let $\nu$ and $\mu$ be 
relative conditions along $S_0$ and $S_\infty$ with $|\nu|=d$.
We take the cohomology weights of $\nu$ and $\mu$ to
lie in the basis 
$$\Phi=(\phi_1,\ldots, \phi_f)$$ of $H^*(S,\mathbb{Q})$.
Let $\text{deg}(\nu)$ and $\text{deg}(\mu)$
be the sum of the (complex) degrees{\footnote{We will always
use the complex grading (which is $\frac{1}{2}$ of the 
real grading).}}
 of the cohomology weights of
$\nu$ and $\mu$ respectively.
The codimensions of the relative conditions $\nu$ and $\mu$ are
$$\theta(\nu)= |\nu|-\ell(\nu)+\text{deg}(\nu) \ \ \ \text{and} \ \ \
\theta(\mu)=|\mu|-\ell(\mu)+\text{deg}(\mu) \ .$$

For the vector $\widehat{\Gamma}=(\widehat{\gamma}_1,\ldots,
\widehat{\gamma}_\ell)$
associated to the descendent insertion,
 define{\footnote{Again, we use the complex grading.}}
\begin{equation}\label{lee4}
\text{deg}(\widehat{\Gamma})= \frac{1}{2}\sum_{i=1}^\ell 
\text{deg}(\widehat{\gamma}_i), \ \ \ \widehat{\gamma}_i 
\in H^{\text{deg}(\widehat{\gamma}_i)}(S,\mathbb{Q}) \ .
\end{equation}
The maximum value of $\text{deg}(\widehat{\Gamma})$ is $2\ell$.

We will prove Proposition \ref{aaa9999} for the capped
contributions
\begin{equation}\label{dffd}
\mathsf{C}^{\mathsf{P}}_\infty(\tau_{\widehat{\alpha}}(\widehat{\Gamma}_\infty), \beta)_{\nu,\mu}, \ \ \ 
\ \ 
 \mathsf{C}^{\mathsf{GW}}_\infty\Big(\overline{\tau_{\widehat{\alpha}}
(\widehat{\Gamma}_\infty)}, \beta
\Big)_{\nu,\mu}\ .
\end{equation}
By induction, we assume Proposition \ref{aaa9999} has been established
for all capped contributions 
$$\mathsf{C}^{\mathsf{P}}_\infty(\tau_{\alpha'}(\Gamma'_\infty), \beta')_{\nu',\mu'}, \ \ \ \ \ 
 \mathsf{C}^{\mathsf{GW}}_\infty\Big(\overline{\tau_{\alpha'}(\Gamma'_\infty)}, 
\beta'\Big)_{\nu',\mu'}\ $$
satisfying at least 1 of the following 4 conditions:
\begin{enumerate}
\item[$\bullet$]
$L_{\beta'}< L_\beta\ ,$
\item[$\bullet$] $L_{\beta'}=L_\beta$ and  
$\ell(\alpha')<\ell(\widehat{\alpha})$, 
\item[$\bullet$]$L_{\beta'}=L_\beta$, $\ell(\alpha')=\ell(\widehat{\alpha})$, and
$\text{deg}(\Gamma')>\text{deg}(\widehat{\Gamma})$,
\item[$\bullet$]
$L_{\beta'}=L_\beta$, $\ell(\alpha')<\ell(\widehat{\alpha})$, 
$\text{deg}(\Gamma')=\text{deg}(\widehat{\Gamma})$, and
$\theta(\nu')< \theta(\nu)$.
\end{enumerate}
Via the third condition, we include a reverse induction over 
$\text{deg}(\widehat{\Gamma})$. 
Since $\text{deg}(\widehat{\Gamma})\leq 2\ell$, the reverse induction
is possible.

The proof of the induction step requires 
the $\com^*$-localization formula for the capped 
descendent contributions over $S_\infty$ in terms
of rubber moduli spaces. A review of the basic facts
is presented in Sections \ref{rubcon} and \ref{dimmm}.
 

\subsubsection{Rubber geometry} \label{rubcon}
The capped contributions \eqref{dffd} over $S_\infty$
are defined via $\com^*$-residues.
The $\com^*$-localization formula for the capped contributions
has three parts:
\begin{enumerate}
\item[(i)] rubber integrals over  $S_0$,
\item[(ii)] edge terms,
\item[(iii)] rubber integrals over $S_\infty$.
\end{enumerate}
The edge terms for stable pairs and stable maps are determined
by Lemma \ref{kllw}.
We discuss the rubber integrals here.

Consider first 
{rubber}{\footnote{We
follow the terminology and conventions of the
rubber discussion in \cite{part1}
for stable pairs and \cite{mptop} for Gromov-Witten theory.}} geometry
for the moduli of stable pairs.
Let
$$P_n(\mathbf{P}_S/S_0 \cup S_\infty,\beta)^\circ_{\epsilon,\delta} 
\subset P_n(\mathbf{P}/S_0\cup S_\infty,\beta)_{\epsilon,\delta}$$
denote the open set with finite stabilizers for the fiberwise
$\C^*$-action
and {\em no} destabilization over $S_\infty$.
The rubber moduli space,
$${P_n(\mathbf{P}_S/S_0\cup S_\infty,\beta)}^\sim_{\epsilon,\delta}  
= P_n(\mathbf{P}/S_0 \cup S_\infty,\beta)^\circ_{\epsilon,\delta}\ /\ \C^*,$$
denoted by a superscripted tilde,
is determined by the (stack) quotient. 
The rubber moduli space
carries a virtual fundamental class,
 $$[{P_n(\mathbf{P}_S/S_0\cup S_\infty,\beta)}^\sim_{\epsilon,\delta} ]^{vir}.$$
The fiberwise $\com^*$-action is lost after the quotient, the
fiberwise $\com^*$ acts trivially on the rubber moduli space.

The rubber moduli space $P_n(\mathbf{P}_S/S_0\cup S_\infty, \beta)^\sim
_{\epsilon,\delta}$ carries
cotangent lines associated to $S_0$ and $S_\infty$. 
A construction can be found in Section 1.5.2 of \cite{mptop}.
Let
\begin{equation*} 
\Psi_0,\Psi_\infty\in H^2({P_n(\mathbf{P}_S/S_0\cup S_\infty,\beta)}^\sim
_{\epsilon,\delta},
{\mathbb Q})
\end{equation*}
denote the associated cotangent line classes.

The $\com^*$-localization formula for the capped
descendent contribution over $S_\infty$ for stable pairs
is:
\begin{multline}\label{crubp}
\mathsf{C}^{\mathsf{P}}_\infty(\tau_{\widehat{\alpha}}(\widehat{\Gamma}_\infty), \beta)_{\nu,\mu}
=\\
\sum_{|\lambda|=d} {\mathsf R}^{\mathsf{P}}_{d[\mathbf{P}]}
\Big( \frac{1}{-\Psi_\infty-t}\Big)_{\nu,\lambda} \ 
\frac{{(-1)^{|\lambda|-\ell(\lambda)}} 
\zz(\lambda)}{q^{d}}\ \cdot \\ 
{\mathsf R}^{\mathsf{P}}_{\beta}\Big( \frac{1}{-\Psi_0+t}
\cdot \prod_{i=1}^\ell \tau_{\widehat{\alpha}_i-1}\big((t+N)
\widehat{\gamma}_i\big)\Big)
_{\lambda^\vee,\mu}\ .
\end{multline}
Here, ${\mathsf R}^{\mathsf{P}}_{d[\mathbf{P}]}$ 
denotes the generating series for
rubber integrals over $S_0\subset \mathbf{P}_S$ 
of curve class $d[\mathbf{P}]$
with inverse normal 
factor{\footnote{The normal factor
is the tensor of the tangent line $-\Psi_\infty$ of the
rubber moduli with the tangent line $-t$ on the target fiber
(which is pure weight).}}  $\Big( \frac{1}{-\Psi_\infty-t}\Big)$.
Similarly, ${\mathsf R}^{\mathsf{P}}_{\beta}$
denotes the generating series of
rubber integrals over $S_\infty\subset \mathbf{P}_S$  of
 curve class $\beta$ with inverse normal factor
$\Big( \frac{1}{-\Psi_0+t}\Big)$ and descendent insertions.
There are only two virtual normal directions in the relative
geometry here.

For Gromov-Witten theory, a parallel discussion yields the
$\com^*$-localization formula:
\begin{multline}\label{crubgw}
\mathsf{C}^{\mathsf{GW}}_\infty\Big
(\overline{\tau_{\widehat{\alpha}}(\widehat{\Gamma}_\infty)}, \beta\Big)_{\nu,\mu}
=\\
\sum_{|\lambda|=d} {\mathsf R}'^{\, \mathsf{GW}}_{d[\mathbf{P}]}
\Big( \frac{1}{-\Psi_\infty-t}\Big)_{\nu,\lambda} \ 
\zz(\lambda){u^{2\ell(\lambda)}}\ \cdot \\ 
{\mathsf R}'^{\, \mathsf{GW}}_{\beta}\Big( \frac{1}{-\Psi_0+t}
\cdot \overline{\prod_{i=1}^\ell \tau_{\widehat{\alpha}_i-1}\big((t+N)
\widehat{\gamma}_i\big)}\Big)
_{\lambda^\vee,\mu}\ .
\end{multline}

\subsubsection{Virtual dimensions} \label{dimmm}

The virtual dimensions of the stable pairs and
stable map spaces are
\begin{eqnarray*}
\text{dim} \ 
[P_n(\mathbf{P}_S/S_0 \cup S_\infty,\beta)_{\nu,\mu}]^{vir} & = &
d_\beta- \theta(\nu)
-\theta(\mu)       \ , \\
\text{dim}\
[\overline{M}_{g,\ell}'(\mathbf{P}_S/S_\infty,\beta)_{\nu,\mu}]^{vir}
& = &
d_\beta+\ell-\theta(\nu)
-\theta(\mu)\ .
\end{eqnarray*}
The virtual dimensions of the rubber moduli space
are 1 less,
\begin{eqnarray*}
\text{dim} \ 
[P_n(\mathbf{P}_S/S_0 \cup S_\infty,\beta)_{\nu,\mu}^\sim]^{vir} & = &
d_\beta- \theta(\nu)
-\theta(\mu)-1       \ , \\
\text{dim}\
[\overline{M}_{g,\ell}'(\mathbf{P}_S/S_\infty,\beta)_{\nu,\mu}^\sim]^{vir}
& = &
d_\beta+\ell-\theta(\nu)
-\theta(\mu)-1\ .
\end{eqnarray*}

\subsubsection{Proof of the induction step} \label{indstep}
We return to the proof of Proposition \ref{aaa9999}
via the induction strategy of Section \ref{instrat}.
We must prove the descendent correspondence for the
capped contributions \eqref{dffd} assuming the induction
hypothesis.
The analysis divides into two cases. 

\vspace{15pt}
\noindent {\bf Case I.}  $|\widehat{\alpha}|-2\ell(\widehat{\alpha})
+\text{deg}(\widehat{\Gamma}) \geq d_\beta-\theta(\nu)
-\theta(\mu)$
\vspace{15pt}

Under the hypothesis of Case I,
we will prove  the vanishing of both sides of the descendent
correspondence of Proposition \ref{aaa9999}
 for capped contributions over $S_\infty$ by a straightforward
dimension analysis. 

First, consider the moduli space of stable pairs.
Formula \eqref{crubp} expresses
$$\mathsf{C}^{\mathsf{P}}_\infty\left(\tau_\alpha(\Gamma_\infty), 
\beta\right)_{\nu,\mu}, \ \ \ 
\beta= d[\mathbf{P}]+\beta_\infty
$$
in terms of integrals over rubber moduli spaces.
The rubber over $S_0$ carries curve classes
with $S$-degree 0. In formula \eqref{crubp}, if
$$\theta(\nu) +\theta(\lambda) > 2d,$$
then the rubber integrals over $S_0$ vanish (since the
the virtual dimension{\footnote{The
leading $q$ term of ${\mathsf R}^{\mathsf{P}}_{d[\mathbf{P}]}
\Big( \frac{1}{-\Psi_\infty+t}\Big)_{\nu,\lambda}
$, given by the intersection pairing
between $\nu$ and $\lambda$, is degenerate.}} of the rubber moduli spaces
over $S_0$
is $2d-1$). 
Therefore,
$$\theta(\lambda^\vee) \geq \theta(\nu)\ .$$
As a consequence, the 
 virtual dimensions of the rubber moduli spaces over $S_\infty$
in \eqref{crubp}
never exceed
$$d_\beta-\theta(\nu)
-\theta(\mu)-1\ .$$
The dimension of the integrand on the rubber of $\infty$
is at least the dimension of  
$$\text{dim}\Big(\prod_{i=1}^\ell \tau_{\widehat{\alpha}_i-1}\big(\widehat{\gamma}_i\big) \Big)
=|\widehat{\alpha}|-2\ell(\widehat{\alpha})+\text{deg}
(\widehat{\Gamma}) >d_\beta-\theta(\nu)
-\theta(\mu)-1\ ,$$
where the inequality is by the hypothesis of Case I.
We conclude {\em every} rubber integral{\footnote{All the rubber
integrals are non-equivariant (there is no $\com^*$-action).
For a nonvanishing result,
the integrand can not exceed the virtual dimension.}}
over $S_\infty$
in \eqref{crubp} vanishes and hence
$$\mathsf{C}^{\mathsf{P}}_\infty\left(\tau_{\widehat{\alpha}}(\widehat{\Gamma}_\infty), 
\beta\right)_{\nu,\mu}=0 \ .$$

The argument for the vanishing of 
$\mathsf{C}^{\mathsf{GW}}_\infty\Big(\overline{\tau_{\widehat{\alpha}}
(\widehat{\Gamma}_\infty)}, \beta\Big)_{\nu,\mu}$
is identical. We use the compatibility of the correspondence
with grading established in Proposition 24 of \cite{PPDC}
and the identification of the log tangent bundle of Lemma \ref{fvv}.
Degree can be interchanged between the cotangent
lines and Chern class of $T_{\mathbf{P}_S}(-S_0-S_\infty)$. However,
since 
$$c(T_{\mathbf{P}_S}[-S_0-S_\infty])=  c(\pi^* T_{S}),$$
the dimension calculus for the vanishing remains unchanged.
We conclude 
$$\mathsf{C}^{\mathsf{GW}}_\infty\Big(\overline{\tau_{\widehat{\alpha}}
(\widehat{\Gamma}_\infty)}, 
\beta\Big)_{\nu,\mu}=0 \ .$$
Proposition \ref{aaa9999} is established in Case I.

\vspace{15pt}
\noindent {\bf Case II.}  $|\widehat{\alpha}|-2\ell(\widehat{\alpha})+\text{deg}(\widehat{\Gamma}) < d_\beta
-\theta(\nu)
-\theta(\mu)
$
\vspace{15pt}
 
The capped contributions need not vanish 
under the hypothesis of Case II.
However, we will find parallel inductive
relations to establish the descendent
correspondence of Proposition \ref{aaa9999}.


To each 
partition $\lambda$ weighted by cohomology classes of $S$
in the basis $\Phi$,
\begin{equation}\label{rr6rr}
( (\lambda_1,\delta_1), \ldots, (\lambda_{\ell(\lambda)},
\delta_{\ell(\lambda)}))\ ,  \ \ \
|\lambda|=\sum_{i=1}^{\ell(\lambda)} \lambda_i, \ \ \ \delta_i 
\in \Phi\ , 
\end{equation}
we associate a descendent
insertion over $S_0$,
$$\tau[\lambda]= 
\tau_{\lambda_1-1}(\delta_1[S_0])\cdots
\tau_{\lambda_{\ell(\lambda)}-1}(\delta_{\ell(\lambda)}[S_0])
 \ .$$
The dimension of the descendent insertion $\tau[\lambda]$
equals $\theta(\lambda)$.

Let $\Lambda_\nu$ be the  set of cohomology weighted
partitions \eqref{rr6rr} defined by
$$\Lambda_\nu = \Big\{ \lambda \ \Big| \ |\lambda|=|\nu|, \ \ \theta(\lambda)= \theta(\nu)\ \Big\}\ .$$
Since there are only finitely many partitions \eqref{rr6rr}
satisfying $|\lambda|=|\nu|$, the set
$\Lambda_\nu$ is finite.

For each cohomology weighted partition 
$\lambda\in \Lambda_\nu$, consider the stable pairs
and Gromov-Witten
generating series 
\begin{multline} \label{vanser}
\mathsf{Z}^{\mathsf{P}}_{\beta,\mu}\left(\tau[\lambda]\cdot
\tau_{\widehat{\alpha}}(
\widehat{\Gamma}_{\text{Id}})\right)^{\com^*} =\\
\sum_n q^n
\int_{[P_n(\mathbf{P}_S/S_\infty,\beta)_\mu]^{vir}}
\prod_i \tau_{\lambda_i-1}(\delta_i[S_0])
\cdot
\prod_j \tau_{\widehat{\alpha}_j-1}(\widehat{\gamma}_j).
\end{multline}
\begin{multline*} 
\mathsf{Z}'^{\, \mathsf{GW}}_{\beta,\mu}\Big(\overline{\tau[\lambda]\cdot
\tau_{\widehat{\alpha}}(
\widehat{\Gamma}_{\text{Id}})}\Big)^{\com^*} =\\
\sum_g u^{2g-2}
\int_{[\overline{M}'_{g,\star}(\mathbf{P}_S/S_\infty,\beta)_\mu]^{vir}}
\overline{\prod_i \tau_{\lambda_i-1}(\delta_i[S_0])
\cdot
\prod_j \tau_{\widehat{\alpha}_j-1}(\widehat{\gamma}_j)}.
\end{multline*}

The dimension of the integrand in for the stable pair series  \eqref{vanser} 
is
$$\theta(\nu)+  
|\widehat{\alpha}|-2\ell(\widehat{\alpha})+\text{deg}(\widehat{\Gamma}),$$
and the dimension of the moduli space of stable pairs
is 
$d_\beta
-\theta(\mu)$.
By the hypothesis of Case II, the integrand dimension is {\em strictly
less} than the dimension of the moduli space. By compactness
of the geometry,
the series \eqref{vanser} vanishes identically,
\begin{equation} \label{vanser2}
\mathsf{Z}^{\mathsf{P}}_{\beta,\mu}\left(\tau[\lambda]\cdot
\tau_{\widehat{\alpha}}(
\widehat{\Gamma}_{\text{Id}})\right)^{\com^*} = 0 \ .
\end{equation}
An identical dimension count shows
\begin{equation} \label{vanser3}
\mathsf{Z}'^{\, \mathsf{GW}}_{\beta,\mu}\Big(\overline{\tau[\lambda]\cdot
\tau_{\widehat{\alpha}}(
\widehat{\Gamma}_{\text{Id}})}\Big)^{\com^*} =0\ .
\end{equation}
The relations \eqref{vanser2}-\eqref{vanser3} 
 will be used to uniquely determine
the capped contributions
\begin{equation} \label{pddww}
\mathsf{C}^{\mathsf{P}}_\infty\left(\tau_{\widehat{\alpha}}(\widehat{\Gamma}_\infty), \beta\right)_{\nu,\mu},\ \ \ \ 
\mathsf{C}^{\mathsf{GW}}_\infty\left(\overline{\tau_{\widehat{\alpha}}(\widehat{\Gamma}_\infty)}, \beta\right)_{\nu,\mu}, \ \ \
\beta= d[\mathbf{P}]+\beta_\infty
\ .
\end{equation}
Moreover, the determinations will be sufficiently compatible to
prove the correspondence of Proposition \ref{aaa9999}
for \eqref{pddww}.

We will expand relations \eqref{vanser2}-\eqref{vanser3}
 using the 
capped localization formula. First, we write
\begin{equation}\label{jj445}
\tau_{\widehat{\alpha}_j-1}(\widehat{\gamma}_j) =
\tau_{\widehat{\alpha}_j-1}\left(\frac{\widehat{\gamma}_j}{-t-N}[S_0]\right) + 
\tau_{\widehat{\alpha}_j-1}\left(
\frac{\widehat{\gamma}_j}{t+N}[S_\infty]
\right)
\end{equation}
using the basic identity \eqref{krr4}.
We have already proven the descendent correspondence
for 
almost all the terms of the 
parallel capped localization formulas for \eqref{vanser2}-\eqref{vanser3}.
The correspondence is proven
for all the capped contributions over $S_0$ by Proposition \ref{aaa999}.
Also, the correspondence is proven for
all the capped contributions over $S_\infty$ which are
covered by the induction hypothesis of Section \ref{instrat}.
We can write 
$$0 = 
\sum_{|\rho|=d} \mathsf{C}^{\mathsf{P}}_0(\tau[\lambda],d[\mathbf{P}])_{\rho}
%
 \frac{(-1)^{|\rho|-\ell(\rho)} 
\zz(\rho)}{q^{|\rho|}} \ \left(\frac{1}{t}\right)^{\ell(\widehat{\alpha})}
\mathsf{C}^{\mathsf{P}}_\infty
(\tau_{\widehat{\alpha}}(\widehat{\Gamma}_\infty),\beta)_{\rho^\vee,\mu})+ \ldots,$$
$$0 = 
\sum_{|\rho|=d} \mathsf{C}^{\mathsf{GW}}_0(\overline{\tau[\lambda]},d[\mathbf{P}])_{\rho}\
%
\zz(\rho){u^{2\ell(\rho)}} \ \left(\frac{1}{t}\right)^{\ell(\widehat{\alpha})}
\mathsf{C}^{\mathsf{GW}}_\infty
(\overline{\tau_{\widehat{\alpha}}(\widehat{\Gamma}_\infty)},
\beta)_{\rho^\vee,\mu})+ \ldots,$$
where the sums are over all cohomology weighted
partitions $\rho$ of $d$.
The dots stand for terms covered by the 
first 3 inductive conditions: 
\begin{enumerate}
\item[$\bullet$]
lower $S$-degree over $S_\infty$, 
\item[$\bullet$]
fewer descendent
insertions over $S_\infty$, 
\item[$\bullet$]
higher descendent degree over $S_\infty$.
\end{enumerate}
The induction condition over descendent degree is used to
 replace
$$\frac{\widehat{\gamma}_j}{t+N} = \frac{\widehat{\gamma}_j}{t}
- \frac{\widehat{\gamma}_jN}{t^2} +\frac{\widehat{\gamma}_jN^2}{t^3}$$
by the leading term (note $N^3=0$ in $H^*(S,\mathbb{Q})$).

Using the $4^{th}$ induction condition, the relations can
be simplified further.
The capped contributions over $S_0$,
$$\mathsf{C}^{\mathsf{P}}_0(\tau[\lambda],d[\mathbf{P}])_{\rho}, \ \ \
\mathsf{C}^{\mathsf{GW}}_0(\overline{\tau[\lambda]},d[\mathbf{P}])_{\rho}
$$
have curve class of $S$-degree 0. Hence, the capped
contributions equal the full stable pairs 
and Gromov-Witten partition functions
\begin{eqnarray*}
\mathsf{C}^{\mathsf{P}}_0(\tau[\lambda],d[\mathbf{P}])_{\rho} & =&
\mathsf{Z}^{\mathsf{P}}_{d[\mathbf{P}] ,\rho}(\tau[\lambda])^{\com^*}\ , \\
\mathsf{C}^{\mathsf{GW}}_0(\overline{\tau[\lambda]},d[\mathbf{P}])_{\rho} & =&
\mathsf{Z}'^{\, \mathsf{GW}}_{d[\mathbf{P}] ,\rho}(\overline{\tau[\lambda]})^{\com^*}\ , 
\end{eqnarray*}
Since the moduli space $P_n(\mathbf{P}_S,d[\mathbf{P}])_{\rho}$
has virtual dimension  $2d-\theta(\rho)$, we see only
terms with
$$\theta(\lambda)+ \theta(\rho) \geq 2d $$
occur in the stable pairs relation. A parallel dimension
count yields the same conclusion on the Gromov-Witten side. 
When the inequality is strict, we have
$$\theta(\lambda)+ \theta(\rho) > 2d   \ \ \implies \ \
\theta(\rho^\vee) < \theta(\lambda)=\theta(\nu), $$
so the terms are covered by the $4^{th}$ induction condition.

The final forms we find for the principal terms on the right side of
the
relations \eqref{vanser2}-\eqref{vanser3} are the following:
$$
\sum_{|\rho|=d,\ \theta(\rho)=\theta(\nu^\vee)} \mathsf{C}^{\mathsf{P}}_0(\tau[\lambda],d[\mathbf{P}])_{\rho}
%
 \frac{(-1)^{|\rho|-\ell(\rho)} 
\zz(\rho)}{q^{|\rho|}} \ \left(\frac{1}{t}\right)^{\ell(\widehat{\alpha})}
\mathsf{C}^{\mathsf{P}}_\infty
(\tau_{\widehat{\alpha}}(\widehat{\Gamma}_\infty),\beta)_{\rho^\vee,\mu})+ \ldots,$$
$$ 
\sum_{|\rho|=d,\ \theta(\rho)=\theta(\nu^\vee)} \mathsf{C}^{\mathsf{GW}}_0(\overline{\tau[\lambda]},d[\mathbf{P}])_{\rho}\
%
\zz(\rho){u^{2\ell(\rho)}} \ \left(\frac{1}{t}\right)^{\ell(\widehat{\alpha})}
\mathsf{C}^{\mathsf{GW}}_\infty
(\overline{\tau_{\widehat{\alpha}}(\widehat{\Gamma}_\infty)},
\beta)_{\rho^\vee,\mu})+ \ldots,$$
The capped contributions 
$$\mathsf{C}^{\mathsf{P}}_\infty
(\tau_{\widehat{\alpha}}(\widehat{\Gamma}_\infty),\beta)_{\rho^\vee,\mu}),\ \ \ 
\mathsf{C}^{\mathsf{GW}}_\infty
(\overline{\tau_{\widehat{\alpha}}(\widehat{\Gamma}_\infty)},
\beta)_{\rho^\vee,\mu})$$
as $\rho$ varies yield exactly $|\Lambda_\nu|$ unknowns.
As $\lambda$ varies, we obtain exactly $|\Lambda_\nu|$ equations.
The coefficients of the system are nonsingular by Proposition 6
of \cite{PP2} on the stable pairs side (and therefore also
on the Gromov-Witten side by Proposition \ref{aaa999}).
Hence, the relations uniquely determine all the unknowns including
\eqref{pddww}. Since the descendent correspondences have already been
proven for all of the
terms besides the unknowns,
we conclude Proposition \ref{aaa9999} holds for
\eqref{pddww}. The induction step has been established. \qed

\subsection{Proof of Theorem \ref{ttt9999}}
The capped localization
formulas for stable pairs and stable maps
for the relative geometry $\mathbf{P}_S/ S_0 \cup S_\infty$
have
contributions over $S_0$ and $S_\infty$.
Both take the form of the capped contributions over
$S_\infty$ for the relative geometry
$\mathbf{P}_S/ S_\infty$. Hence,
both are covered by the descendent correspondence of
Proposition \ref{aaa9999}.
Theorem \ref{ttt9999} follows immediately. \qed

\subsection{Non-toric surfaces}\label{ntss}
Let $S$ be a nonsingular projective surface (not
necessarily toric) with line bundles 
$$L_0, L_\infty \rightarrow S \ .$$
As a consequence of Conjecture \ref{ttt222}, 
Theorems \ref{ttt999} and \ref{ttt9999} should hold
for non-toric $S$ exactly as stated. 

In fact, our proofs of Theorems \ref{ttt999} and \ref{ttt9999}
are valid for {\em any} nonsingular projective surface
$S$ for which Proposition \ref{aaa999}, concerning the
correspondence for capped descendent contributions of $\mathbf{P}_S/S_\infty$
over $S_0$, holds. The toric hypothesis for $S$ was only
used to establish Proposition \ref{aaa999} via the
descendent correspondence of \cite{PPDC} for capped vertices in toric
geometry.

In order to prove Theorem \ref{qqq111}, we will require Theorem \ref{ttt999}
for particular non-toric surfaces.
Let 
$$\epsilon: S\rightarrow C$$
 be a surface $S$ expressed as a $\mathbf{P}^1$-bundle
over a curve of genus $g$.
Let 
$$L^C_{0},  L^C_\infty \rightarrow C$$
be line bundles.
We will prove Proposition \ref{aaa999}  for $S$ and the line bundles
\begin{equation}\label{frrp}
\epsilon^*L^C_0, \ \epsilon^*L^C_\infty \rightarrow S\ 
\end{equation}
in Section \ref{phgc}. As a consequence, Theorem \ref{ttt999}
will also hold for the geometry determined by the data 
\eqref{frrp}.

In the proof of Theorem \ref{qqq111}, $K3$ surfaces will also appear.
A special case of Theorem \ref{ttt999} for $K3$ surfaces $S$
(in the non-equivariant limit) has been
established in Proposition 26 of \cite{PPDC}. 
In Proposition \ref{kk33} of Section \ref{kkk333}, we will prove
the results we require for $K3$ surfaces.

\section{Descendent correspondence for the cap}
\label{xxx1}
\subsection{Overview} 
The 1-leg {\em cap} is the total space of  
the trivial bundle,
\begin{equation}\label{pkk9}
N = \cO_{\PP^1} \oplus \cO_{\PP^1} \rightarrow \PP^1\ ,
\end{equation}
relative to the fiber
$$N_\infty \subset N$$
over $\infty \in \PP^1$.
The total space $N$ naturally carries an action of a 
3-dimensional torus $$\mathbf{T} = T \times \com^*\ .$$
Here, $T$ acts by  scaling the
factors of $N$ and preserving the relative divisor $N_\infty$. 
The $\com^*$-action
on the base $\PP^1$ which fixes the points  $0, \infty\in \PP^1$ 
lifts to an additional $\com^*$-action on $N$ fixing
$N_\infty$. Let the tangent weights at $0,\infty\in \PP^1$
with respect to the last $\com^*$-factor be $-s_3$ and $s_3$
respectively.{\footnote{The tangent weight conventions here match
\cite{part1}.}}

The equivariant cohomology 
ring $H_{\mathbf{T}}^*(\bullet)$ is generated by
the Chern classes $s_1$, $s_2$, and $s_3$
of the standard representation of the three $\com^*$-factors.
Following \cite{part1}, we define 
\begin{multline}\label{pppw}
{\mathsf Z}^{\mathsf{P}}_{d,\eta} 
\left(   \prod_{j=1}^k \tau_{i_j}([0]) \
\prod_{j'=1}^{{k'}} \tau_{i'_{j'}}([\infty])
\right)^{\mathsf{cap},\mathbf{T}}
 = \\ 
\sum _{n\in \Z }q^{n}
\int _{[P_{n} (N/N_\infty,d)_\eta]^{vir}}
  \prod_{j=1}^k \tau_{i_j}([0]) \
\prod_{j'=1}^{k'} \tau_{i'_{j'}}([\infty])
\ ,
\end{multline}
by $\mathbf{T}$-equivariant residues.
By Theorem 3 of \cite{part1}, the partition 
 function \eqref{pppw} is
a Laurent series in $q$ 
of a rational function{\footnote{By Theorem 5 of \cite{part1},
the poles in $q$ of the partition function occur only at
roots of unity.}} in
$\mathbb{Q}(q,s_1,s_2,s_3)$.
Let 
\begin{multline}\label{pppgw}
{\mathsf Z}'^{\, \mathsf{GW}}_{d,\eta} 
\left(\   \overline{\prod_{j=1}^k \tau_{i_j}([0]) \
\prod_{j'=1}^{{k'}} \tau_{i'_{j'}}([\infty])}\
\right)^{\mathsf{cap},\mathbf{T}}
 =\\ 
\sum _{g\in \Z } u^{2g-2}
\int _{[\overline{M}_{g,\star}' (N/N_\infty,d)_\eta]^{vir}}
  \overline{\prod_{j=1}^k \tau_{i_j}([0]) \
\prod_{j'=1}^{k'} \tau_{i'_{j'}}([\infty])}
\ ,
\end{multline}
be the parallel Gromov-Witten partition function.

Our goal here is to prove the relative descendent
correspondence of Conjecture \ref{ttt444}
for the fully $\mathbf{T}$-equivariant partition
functions \eqref{pppw} and \eqref{pppgw}.

\begin{Theorem}
\label{ppp444} 
For the cap geometry $N/N_\infty$, we have 
\begin{multline*}
(-q)^{-d}\ZZ^{\mathsf{P}}_{d,\eta}\Big(
\prod_{j=1}^k \tau_{i_j}([0]) \
\prod_{j'=1}^{{k'}} \tau_{i'_{j'}}([\infty])
\Big)^{\mathsf{cap},\mathbf{T}} \\ =
(-iu)^{|\eta|+\ell(\eta)}\ZZ'^{\,\mathsf{GW}}_{d,\eta}\Big(\
\overline{\prod_{j=1}^k \tau_{i_j}([0]) \
\prod_{j'=1}^{k'} \tau_{i'_{j'}}([\infty])}\
\Big) ^{\mathsf{cap},\mathbf{T}}
\end{multline*}
under the variable change $-q=e^{iu}$.
\end{Theorem}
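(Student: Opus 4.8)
The plan is to prove Theorem \ref{ppp444} by localizing the cap with respect to the base $\mathbb{C}^*$-action and then matching, factor by factor, the resulting vertex and rubber contributions on the two sides. Since $N$ is noncompact, both \eqref{pppw} and \eqref{pppgw} are defined by $\mathbf{T}$-residues, and the asserted identity lives in $\mathbb{Q}(q,s_1,s_2,s_3)$; I would localize only with respect to the $\mathbb{C}^*$-factor acting on the base $\mathbb{P}^1$ (generator $s_3$), keeping the fiber torus $T=(\mathbb{C}^*)^2$ (generators $s_1,s_2$) equivariant throughout. The $\mathbb{C}^*$-fixed loci lie over $0$ and $\infty\in\mathbb{P}^1$. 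The key simplification is that $[0]$ and $[\infty]$ are the disjoint fiber divisors $N_0$ and $N_\infty$, so $[0]$ restricts to zero near $\infty$ and $[\infty]$ restricts to zero near $0$; hence the descendents sort cleanly, with every $\tau_{i_j}([0])$ contributing over $0$ and every $\tau_{i'_{j'}}([\infty])$ over $\infty$. Organizing the fixed-point contributions exactly as in the capped localization formulas of \cite{moop,PPDC} reviewed in Section \ref{n3n3}, I obtain a capped descendent vertex over $0$, a gluing edge indexed by a partition, and rubber integrals over $N_\infty$ carrying the relative boundary $\eta$, in the shape of \eqref{crubp} and \eqref{crubgw}.

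Over $0$ the local geometry is $\mathbb{C}^3$ with tangent weights $s_1,s_2,-s_3$, and the contribution is precisely the capped descendent vertex of \cite{PPDC}. Its GW/P correspondence is Theorem 8 of \cite{PPDC}, applied exactly as in the proof of Proposition \ref{aaa999}; the substitution $c_i=c_i(T_N[-N_\infty])$ is the correct one, since the logarithmic twist along $N_\infty$ does not alter the classes restricted near $0$ (the same computation as Lemma \ref{fvv}). Thus the $0$-vertex already satisfies the correspondence term by term.

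The crux is the rubber over $\infty$: I must establish the GW/P correspondence for the rubber integrals over $N_\infty\cong\mathbb{C}^2$ that appear in \eqref{crubp}--\eqref{crubgw}, carrying the descendents $\tau_{i'_{j'}}([\infty])$ together with the inverse cotangent factors and the normal-bundle weights produced by the interaction of $[\infty]$ with the log tangent bundle. My first approach would be a second localization of this rubber with respect to the fiber torus $T$ on $N_\infty=\mathbb{C}^2$: the $T$-fixed locus sits over the origin, reducing the two-dimensional rubber to the one-leg $\mathbb{P}^1$-rubber whose descendent correspondence is already available from \cite{part1,PPDC}, the weight factors being matched using the compatibility of $\widetilde{\mathsf{K}}$ with the grading (Proposition 24 of \cite{PPDC}). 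Where inverse cotangent-line insertions obstruct a direct reduction, I would instead run the induction of Proposition \ref{aaa9999} --- on the degree $d$ and on the number and degree of the $\infty$-descendents --- using the vanishing of over-dimensional rubber integrals and the nonsingularity of the associated linear system (Proposition 6 of \cite{PP2}) to pin down the rubber series uniquely and compatibly on both sides. The main obstacle is exactly this step: the localization relation expresses the cap through the rubber and, conversely, the rubber through caps, so the apparent circularity must be broken by inducting on $d$ and the insertion data rather than solving directly.

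Finally I would reassemble the matched vertex, edge, and rubber factors. The edges are the diagonal evaluations of Lemma \ref{kllw}, contributing $\tfrac{(-1)^{|\lambda|-\ell(\lambda)}\zz(\lambda)}{q^{|\lambda|}}$ for stable pairs against $\zz(\lambda)\,u^{2\ell(\lambda)}$ in Gromov-Witten theory, and the correspondence holds factorwise, so the products agree. Tracking the powers of $q$ and $u$ --- the vertex supplying its share of $(-q)^{-d}$, the edges the displayed gluing factors, and the rubber the remainder --- and using $d_\beta=2d$ and $|\eta|=d$, the exponents combine under $-q=e^{iu}$ to $(-q)^{-d}$ on the stable pairs side and $(-iu)^{|\eta|+\ell(\eta)}$ on the Gromov-Witten side, which is the claim.
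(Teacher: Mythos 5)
Your overall architecture (localize with respect to the base $\com^*$, match the vertex over $0$ via the capped descendent vertex of \cite{PPDC}, and identify the rubber over $N_\infty$ as the crux) is sound, and it correctly isolates where the new content lies. But the mechanism you propose for the crux has a genuine gap. You want to pin down the rubber series carrying the $\tau_{i'_{j'}}([\infty])$ by running the induction of Proposition \ref{aaa9999}, whose engine is the vanishing of over-dimensional integrals. That vanishing is available there only because the surface $S$ is \emph{compact} and the relevant integrals are non-equivariant, so an integrand of excessive dimension forces the series to vanish identically. For the cap, $N_\infty\cong\com^2$ is noncompact, the partition functions are defined purely as $\mathbf{T}$-equivariant residues, and equivariant residues of over-dimensional integrands need not vanish; there is no compact global geometry from which to extract the linear relations \eqref{vanser2}--\eqref{vanser3}. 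Your fallback --- a second $T$-localization reducing to a one-leg $\mathbb{P}^1$-rubber ``already available from \cite{part1,PPDC}'' --- also does not close the argument: the descendent correspondence for insertions at the \emph{relative} point of the one-leg geometry is exactly what Theorem \ref{ppp444} asserts and is not established in those references (as the paper notes in the proof of Proposition \ref{kk33}, only descendents of the non-relative point were treated in \cite{PPDC}).

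The paper breaks the circularity you flag by a different induction, on the \emph{depth} (the number of descendents of $[\infty]$, equivalently of $\mathsf{1}$), intertwining two moves. First, the $\mathbf{T}$-equivariant localization of the cap together with the rubber calculus --- the topological recursion applied to $\frac{s_3}{s_3-\psi_0}=1+\frac{\psi_0}{s_3-\psi_0}$ --- trades each descendent of $[\infty]$ for a descendent of $\mathsf{p}$ rigidifying a new tube factor, expressing the depth-$(m{+}1)$ cap in terms of the depth-$0$ cap and the $T$-depth-$m$ tube (Lemma \ref{p45}). Second, degeneration plus the full rank of the matrix $M_d$ of $\mathsf{p}$-descendents against boundary conditions inverts the tube back to the cap at the same depth (Lemmas \ref{rtt5} and \ref{nndd}); the base case is the one-leg capped descendent vertex. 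This full-rank inversion, not dimension vanishing, is the substitute for compactness; to repair your argument you would need to replace your step (b) by something of this kind.
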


The proof of Theorem \ref{ppp444}, given in Sections
\ref{depp} -- \ref{jajaja}, follows the strategy of the proof of
Theorem 3 of \cite{PPDC}. The main idea is to intertwine
an induction on the depth of
the descendent theories with the localization formula.

\subsection{$T$-depth} \label{depp}
For $N$ defined by \eqref{pkk9},
let $S\subset N$ be the relative divisor associated
to the points $p_1,\ldots, p_r\in \PP^1$.
We consider the $T$-equivariant stable pairs theory of $N/S$
with respect to the scaling action.

The $T$-{\em depth} $m$ theory of $N/S$ consists of all
$T$-equivariant series
\begin{equation}
\label{hkkq}
{\mathsf Z}^{\mathsf{P}}_{d,\eta^1,\dots,\eta^r} 
\left( \prod_{{j'}=1}^{k'}
\tau_{i'_{j'}}(\mathsf{1})  \  \prod_{j=1}^k \tau_{i_j}(\mathsf{p})   
\right)^{N/S,T}
\end{equation}
where $k' \leq m$.
Here, $\mathsf{p}\in H^2(\PP^1,\mathbb{Z})$ is the class of a point,
and the $\eta^i$ are partitions determining the relative conditions 
along $\pi^{-1}(p_i)$.
The $T$-depth $m$ theory has at most $m$ descendents
of $1$ and arbitrarily many descendents of $\mathsf{p}$ in the integrand.
The $T$-depth $m$ theory of $N/S$ is {\em correspondent} if 
Conjecture \ref{ttt444} holds for all
$T$-depth $m$ series \eqref{hkkq},
\begin{multline*}
\label{hkkq44}
(-q)^{-d}{\mathsf Z}^{\mathsf{P}}_{d,\eta^1,\dots,\eta^r} 
\left( \prod_{{j'}=1}^{k'}
\tau_{i'_{j'}}(\mathsf{1})  \  \prod_{j=1}^k \tau_{i_j}(\mathsf{p})   
\right)^{N/S,T} = \\
(-iu)^{2d+\sum_l\left(\ell(\eta^l)-|\eta^l|\right)}\
{\mathsf Z}'^{\,\mathsf{GW}}_{d,\eta^1,\dots,\eta^r} 
\left(\ \overline{\prod_{{j'}=1}^{k'}
\tau_{i'_{j'}}(\mathsf{1})  \  \prod_{j=1}^k \tau_{i_j}(\mathsf{p})}   
\ \right)^{N/S,T}
\end{multline*}

The $T$-depth 0 theory concerns only descendents of $\mathsf{p}$.
By taking the specialization $s_3=0$, we have 
$$
\ZZ_{d,\eta}^{\mathsf{P}}
\left(   \prod_{j=1}^k \tau_{i_j}(\mathsf{p})
\right)^{\mathsf{cap},T}=
\ZZ_{d,\eta}^{\mathsf{P}}
\left(   \prod_{j=1}^k \tau_{i_j}([0])
\right)^{\mathsf{cap},\mathbf{T}}\Big|_{s_3=0}\ . $$
The parallel relation holds for Gromov-Witten theory.
By the descendent correspondence for the
1-leg capped vertex \cite{PPDC}, we see the $T$-depth 0
theory of the cap is correspondent.

\begin{Lemma} The $T$-depth 0 theory of $N/S$ is correspondent.
\label{ht99}
\end{Lemma}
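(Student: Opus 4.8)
The plan is to deduce Lemma \ref{ht99} directly from the fully $\mathbf{T}$-equivariant descendent correspondence for the $1$-leg capped vertex already established in \cite{PPDC}, by specializing the equivariant parameter $s_3$ of the base $\com^*$ to $0$. The essential observation is that in $T$-depth $0$ every insertion is a descendent of the point class $\mathsf{p}\in H^2(\PP^1,\mathbb{Q})$, and that under $s_3=0$ both equivariant fixed-point classes $[0]$ and $[\infty]$ of the base restrict to $\mathsf{p}$. Hence the $T$-equivariant series with insertions $\tau_{i_j}(\mathsf{p})$ is precisely the $s_3=0$ restriction of the $\mathbf{T}$-equivariant cap series with insertions $\tau_{i_j}([0])$, on both the stable pairs and Gromov-Witten sides, exactly as recorded in the specialization relations stated just before the lemma.

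Concretely, I would first invoke the capped vertex correspondence of \cite{PPDC}, which is the identity of Theorem \ref{ppp444} in the case $k'=0$ (all descendents placed at $[0]$), valid $\mathbf{T}$-equivariantly. I would then justify that the restriction $s_3=0$ is legitimate: by Theorem 3 of \cite{part1} the stable pairs series \eqref{pppw} is a rational function in $\mathbb{Q}(q,s_1,s_2,s_3)$, and the series with $[0]$-insertions relevant here carry no pole along $s_3=0$, so the restriction is well-defined; the same holds term by term for the genus-graded Gromov-Witten coefficients in \eqref{pppgw}. Since the prefactors $(-q)^{-d}$ and $(-iu)^{|\eta|+\ell(\eta)}$ do not involve $s_3$, and the change of variables $-q=e^{iu}$ is likewise independent of $s_3$, setting $s_3=0$ in the $\mathbf{T}$-equivariant identity yields exactly the $T$-depth $0$ correspondence for the single cap $N/N_\infty$ (and one checks that its exponent $2d+\ell(\eta)-|\eta|=|\eta|+\ell(\eta)$ since $|\eta|=d$).

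The one point demanding care is that the bar operation commutes with the specialization. The matrix $\widetilde{\mathsf{K}}$ acts through the substitution $c_i=c_i(T_N[-N_\infty])$ and the diagonal class $\Delta_{\mathsf{rel}}$ enters $\overline{\prod_j\tau_{i_j}([0])}$, so I would verify that both specialize correctly at $s_3=0$, giving $\overline{\prod_j\tau_{i_j}([0])}\big|_{s_3=0}=\overline{\prod_j\tau_{i_j}(\mathsf{p})}$ as an insertion on the $T$-equivariant moduli space; this is the same log-tangent-bundle compatibility used throughout Section \ref{n3n3} and is routine but should be stated. Finally, to pass from the single cap to the general relative geometry $N/S$ with $S=\pi^{-1}(p_1)\cup\cdots\cup\pi^{-1}(p_r)$, I would degenerate the base $\PP^1$ so as to separate the relative fibers into a configuration of caps, each carrying a single relative divisor and a sub-collection of the $\tau_{i_j}(\mathsf{p})$'s, and then apply the Compatibility with degeneration of Section \ref{relth} together with the cap result just obtained. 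I expect the main (though quite mild) obstacle to be exactly this combination of checking pole-freeness of the $s_3=0$ restriction and the degeneration reduction for general $S$; the correspondence itself introduces no new enumerative content beyond \cite{PPDC}.
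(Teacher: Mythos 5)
Your overall strategy is the same as the paper's: reduce the $T$-depth $0$ theory of the cap to the fully $\mathbf{T}$-equivariant $1$-leg capped descendent vertex of \cite{PPDC} via the specialization $s_3=0$ (this is exactly the displayed identity the paper records just before the Lemma), and then pass from the single cap to $N/S$ by degeneration. The extra care you take with pole-freeness at $s_3=0$ and with the compatibility of the bar operation with the specialization is reasonable, though the paper treats these as implicit.

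There is, however, one concrete missing ingredient in your degeneration step. When $S=\pi^{-1}(p_1)\cup\cdots\cup\pi^{-1}(p_r)$ with $r\geq 2$, you cannot arrange a configuration in which every component is a cap ``carrying a single relative divisor'': separating the relative fibers (or bubbling off caps to receive the descendents of $\mathsf{p}$, which is what the paper actually does) inevitably leaves components of the local curve geometry that are relative to two or more fibers and carry \emph{no} descendent insertions. The degeneration formula then expresses $\mathsf{Z}(N/S)$ in terms of the capped series you control \emph{and} these insertion-free multi-relative local curve series, so the correspondence for the latter is an independent input --- it is not a consequence of the cap result, nor of the compatibility statement. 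The paper supplies it by citing the equivariant GW/P correspondence for local curves without insertions (\cite{mpt,lcdt}). Without that citation (or an argument such as the full-rank matrix induction of Lemma \ref{rtt5}, which is only introduced afterwards for positive depth), your degeneration identity involves unknown factors and does not close. Adding this one reference repairs the argument and brings it in line with the paper's proof.
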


\begin{proof} By the degeneration formula, all the descendents
$\tau_{i_j}(\mathsf{p})$ can be degenerated on to a cap. 
The $T$-depth 0 theory of the cap is correspondent.
The 
theories of local curves without any insertions are
correspondent by \cite{mpt,lcdt}.
Hence, the result follows by the 
compatibility of Conjecture \ref{ttt444} with the
degeneration formula. 
\end{proof}

\subsection{Induction I}

To establish the descendent correspondence
 for the $T$-depth $m$ theory of
$N/S$, the following result
is required.

\begin{Lemma} The descendent correspondence for the
 $T$-depth $m$ theory of the cap implies
the descendent correspondence of the 
$T$-depth $m$ theory
of $N/S$. \label{rtt5}
\end{Lemma}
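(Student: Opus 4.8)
The plan is to prove Lemma \ref{rtt5} by degenerating the base $\PP^1$ of $N$ and invoking the compatibility of Conjecture \ref{ttt444} with the degeneration formula, reducing the relative geometry $N/S$ with $r$ relative fibers $\pi^{-1}(p_1),\ldots,\pi^{-1}(p_r)$ to cap geometries $N/N_\infty$ carrying the descendent insertions together with local curve geometries carrying no insertions. First I would degenerate the base so that the relative fibers and the insertions become distributed across a chain of copies of $N$ glued along fibers. The point descendents $\tau_{i_j}(\mathsf{p})$ are supported over points of the base $\PP^1$, so exactly as in the proof of Lemma \ref{ht99} they degenerate onto cap components; the relative conditions $\eta^i$ are carried by the components containing the $p_i$, and the gluing along the new nodal fibers is governed by the edge evaluations of Lemma \ref{kllw}. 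The remaining and essential ingredient is the treatment of the identity descendents $\tau_{i'_{j'}}(\mathsf{1})$.

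Since $\mathsf{1}$ restricts to the identity on every component, a descendent $\tau_{i'}(\mathsf{1})$ does \emph{not} localize to a single piece: in the degeneration formula its contribution is a sum over components, arising from the splitting of $\mathrm{ch}(\FF)$ across the node. To control this I would set up an induction on the curve degree $d$ (together with the number $k'\le m$ of identity descendents). In each term of the degeneration the curve class splits as $\beta=\beta_1+\beta_2$ with both parts effective. Terms in which a positive part of the degree is carried away from the piece supporting a given $\tau(\mathsf{1})$ are handled by the induction on $d$, while the leading terms reproduce a cap descendent series, correspondent by the hypothesis of the lemma, multiplied by an insertion-free local curve series, correspondent by \cite{mpt,lcdt}. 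The crucial bookkeeping point is that each $\tau(\mathsf{1})$ goes to exactly one component, so the total number of identity descendents is conserved and every cap piece remains within $T$-depth $m$; no piece is forced above depth $m$.

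Finally, since the descendent correspondence is compatible with the degeneration formula (and both the stable pairs and Gromov--Witten sides satisfy it via the bar operation), correspondence for all the pieces assembles to the correspondence for $N/S$ at $T$-depth $m$, completing the induction step. I expect the main obstacle to be precisely the identity descendents $\tau_{i'_{j'}}(\mathsf{1})$: unlike $\tau(\mathsf{p})$, they cannot be confined to a chosen cap, and the delicate point is to show that every term in which such a descendent meets a component of full curve degree is already covered either by the degree induction or by the insertion-free local curve input of \cite{mpt,lcdt}. Once this is secured, the reduction to the cap geometry is automatic.
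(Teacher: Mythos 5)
Your proposal has a genuine gap, and it also misses the mechanism the paper actually uses. The central difficulty you correctly identify — that $\tau_{i'}(\mathsf{1})$ cannot be confined to a chosen component — is not resolved by your inductions. In the degeneration formula the identity descendents distribute in all possible ways, and in particular there are terms in which one or more $\tau(\mathsf{1})$ land on the component carrying a relative condition $\eta^i$. Such a term is a positive-depth series for $N$ relative to several fibers, i.e.\ exactly the object the lemma is trying to determine; it is neither a cap (so the depth-$m$ cap hypothesis does not apply) nor an insertion-free local curve (so \cite{mpt,lcdt} does not apply). Your proposed induction on the curve degree $d$ cannot absorb these terms: for the local curve $N=\cO_{\PP^1}\oplus\cO_{\PP^1}\rightarrow\PP^1$ in class $d[\PP^1]$, the degree over each component of a chain degeneration of the base equals the intersection number with the gluing fiber, which is $d$ on every piece. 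There is no splitting $\beta=\beta_1+\beta_2$ that lowers the degree, so the induction on $d$ never gets invoked and the problematic terms survive.

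The paper closes the argument with an inversion step that is absent from your plan. It inducts on the number $r$ of relative fibers: to reach $N$ relative to $r+1$ points, it bubbles off a single cap at a new point of the base and loads the cap with an arbitrary monomial $L=\prod_i\tau_i(\mathsf{p})^{n_i}$ of point descendents. The degeneration formula then expresses the known series relative to $r$ points (with $L$ inserted) as a sum over boundary partitions $\mu$ of products of the unknown series relative to $r+1$ points with the cap series $\mathsf{Z}^{\mathsf{P}}_{d,\mu}(L)^{\mathsf{cap},T}$; terms where identity descendents migrate to the cap have strictly lower depth on the remaining piece and are handled by induction on $m$. Because the matrix $M_d$ with entries $\mathsf{Z}^{\mathsf{P}}_{d,\mu}(L)^{\mathsf{cap},T}$ has maximal rank (its leading $q^d$ coefficients come from Chern characters of the tautological bundle on the Hilbert scheme of the fiber), the linear system can be inverted to solve for the series relative to $r+1$ points, identically on the stable pairs and Gromov--Witten sides. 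Without this full-rank invertibility input — or some substitute for it — a direct degeneration of $N/S$ onto caps cannot be made to close.
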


\begin{proof}
We must prove the descendent correspondence for the $T$-depth $m$ theories of
$N$
relative to $p_1,\ldots, p_r \in \Pp$.
If $r=1$, the geometry is the cap and the correspondence of the
$T$-depth $m$ theories is given.
Assume the correspondence holds for $r$. We will show 
the correspondence holds for
$r+1$.

Let $p(d)$ be the number of partitions of size $d>0$.
Consider the $\infty \times p(d)$ matrix $M_d$, indexed by 
monomials 
$$L= \prod_{i\geq 0} \tau_i (\mathsf{p})^{n_i} $$
in the descendents of $\mathsf{p}$ and partitions $\mu$ of $d$,
 with
coefficient 
$
{\mathsf Z}_{d,\mu}^{\mathsf{P}}
\left( L   
\right)^{{\mathsf{cap}},T}$
in position $(L,\mu)$.
The lowest Euler characteristic for a degree $d$
stable pair on the cap is $d$.  
The leading $q^d$
coefficients of $M_d$ are well-known to be of maximal
rank.{\footnote{ The leading $q^d$ coefficients
are obtained from the Chern characters
of the tautological rank $d$ bundle
on $\text{Hilb}(N_\infty,d)$.
The Chern characters generate the ring
$H^*_T(\text{Hilb}(N_\infty,d),\mathbb{Q})$ after
localization as can easily
be seen in the $T$-fixed point basis. 
A more refined result is
discussed in Proposition 9 of \cite{part1}.}}
Hence, the full matrix $M_d$ is also of maximal rank.

Consider $N$
relative to  $r+1$ points in $T$-depth $m$,
\begin{equation}
\label{yone}
{\mathsf Z}^{\mathsf{P}}_{d,\eta^1, \ldots, \eta^r,\mu} 
\left( \prod_{j'=1}^{k'}
\tau_{i'_{j'}}(1)  \  \prod_{j=1}^k \tau_{i_j}(\mathsf{p})   
\right)^{N/S,T}\ .
\end{equation}
We will determine the series \eqref{yone} from the
$T$-depth $m$ series relative to  $r$ points,
\begin{equation}
\label{yall}
{\mathsf Z}^{\mathsf{P}}_{d,\eta^1, \ldots, \eta^r}
\left( L \ \prod_{j'=1}^{k'}
\tau_{i'_{j'}}(1)  \  \prod_{j=1}^k \tau_{i_j}(\mathsf{p})   
\right)^{N/S,T}
\end{equation}
defined by all monomials $L$  in the descendents of $\mathsf{p}$.

Consider the $T$-equivariant degeneration of 
$N$ 
by bubbling off a single cap at a point not equal
to $p_1,\ldots, p_r$.
All the descendents of $\mathsf{p}$
remain on the original $N$ in the degeneration except for those 
in $L$ which distribute to the cap. 
By induction on $m$, we need only analyze the terms of the degeneration formula
in which the descendents of 1 distribute away from the cap.
Then,
since $M_d$ has full rank,
the invariants \eqref{yone} are determined
by the invariants \eqref{yall}.

The parallel inductive construction for Gromov-Witten theory determines
\begin{equation}
\label{gwyone}
{\mathsf Z}'^{\,\mathsf{GW}}_{d,\eta^1, \ldots, \eta^r,\mu} 
\left(\ \overline{\prod_{j'=1}^{k'}
\tau_{i'_{j'}}(1)  \  \prod_{j=1}^k \tau_{i_j}(\mathsf{p}) }  \
\right)^{N/S,T}\ 
\end{equation}
in terms of 
the
$T$-depth $m$ series relative to  $r$ points,
\begin{equation}
\label{gwyall}
{\mathsf Z}'^{\,\mathsf{GW}}_{d,\eta^1, \ldots, \eta^r}
\left(\ \overline{L \ \prod_{j'=1}^{k'}
\tau_{i'_{j'}}(1)  \  \prod_{j=1}^k \tau_{i_j}(\mathsf{p})}   
\ \right)^{N/S,T}\ ,
\end{equation}
the $T$-depth $m$ theory of the cap, and theories of lower $T$-depth.
By the compatibility of the descendent correspondence with
the degeneration formula, the determinations of the
$T$-depth $m$ theories of $N$ relative $r+1$ points in $\mathbf{P}^1$
respect the descendent correspondence.
\end{proof}

\vspace{10pt}

The 1-leg {\em tube} is the total space of  
the trivial bundle,
\begin{equation*}
N = \cO_{\PP^1} \oplus \cO_{\PP^1} \rightarrow \PP^1\ ,
\end{equation*}
relative to the fibers
$$N_0,N_\infty \subset N$$
over both $0,\infty \in \PP^1$.
The tube carries a fiberwise $T$-action as well as
a full $\mathbf{T}$-action.
Lemma \ref{rtt5} implies the following
result which will be  half of our induction argument
relating the descendent theory of the cap and
the tube.

\begin{Lemma}\label{nndd}
The descendent correspondence for the $T$-depth $m$ theory of the 
cap implies the descendent correspondence for the  $T$-depth $m$ theory of
the tube. 
\end{Lemma}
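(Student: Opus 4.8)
The plan is to recognize the tube as a special case of the relative geometry already treated in Lemma \ref{rtt5}. Recall that the tube is the total space $N=\cO_{\PP^1}\oplus\cO_{\PP^1}\to\PP^1$ taken relative to the two fibers $N_0$ and $N_\infty$ over $0,\infty\in\PP^1$. In the notation of Lemma \ref{rtt5}, where $N/S$ denotes $N$ relative to the fibers over points $p_1,\dots,p_r\in\PP^1$, this is exactly the case $r=2$ with $p_1=0$ and $p_2=\infty$, so that $S=N_0\cup N_\infty$. The fiberwise $T$-action used to define $T$-depth is the scaling action common to the cap, the tube, and $N/S$; the extra base factor $\com^*$ of $\mathbf{T}$ plays no role in the $T$-depth statement.

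Under this identification, the $T$-depth $m$ theory of the tube, consisting of the $T$-equivariant series with relative conditions $\eta^1,\eta^2$ at the two ends, at most $m$ descendents $\tau_{i'_{j'}}(\mathsf{1})$ of the identity, and arbitrarily many descendents $\tau_{i_j}(\mathsf{p})$ of the point class, coincides term by term with the $T$-depth $m$ theory of $N/S$ for $r=2$. Consequently the descendent correspondence for one is literally the same assertion as for the other.

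First I would verify this matching of geometries and insertions, taking care only with the bookkeeping that identifies the two relative boundary conditions of the tube with the two relative points $0,\infty$ of the $r=2$ geometry. Then I would simply invoke Lemma \ref{rtt5} with $r=2$: the descendent correspondence for the $T$-depth $m$ theory of the cap implies the correspondence for the $T$-depth $m$ theory of $N/S$ for every finite $r$, and in particular for the tube. Since all of the genuine work, namely the induction on $r$ via degeneration by bubbling off a cap together with the maximal-rank property of the leading descendent matrix $M_d$, is already carried out in the proof of Lemma \ref{rtt5}, there is no remaining obstacle; the present statement is an immediate specialization.
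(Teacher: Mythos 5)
Your proposal is correct and is exactly the paper's argument: the paper states Lemma \ref{nndd} as an immediate consequence of Lemma \ref{rtt5}, since the tube is precisely the geometry $N/S$ with $r=2$ and $S=N_0\cup N_\infty$. Nothing further is needed.
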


\subsection{$\mathbf{T}$-depth}

The $\mathbf{T}$-{\em depth} $m$ theories of the cap 
consists of all the
$\mathbf{T}$-equivariant series
\begin{equation}
\label{hkkqq}
{\mathsf Z}^{\mathsf{P}}_{d,\eta} 
\left( 
\prod_{j=1}^k \tau_{i_j}([0])  \  \prod_{{j'}=1}^{k'}
\tau_{i'_{j'}}([\infty])  
\right)^{\mathsf{cap},\mathbf{T}}\ ,
\end{equation}
\begin{equation*}
\label{hkkqqgw}
{\mathsf Z}'^{\,\mathsf{GW}}_{d,\eta} 
\left( \
\overline{\prod_{j=1}^k \tau_{i_j}([0])  \  \prod_{{j'}=1}^{k'}
\tau_{i'_{j'}}([\infty])}\  
\right)^{\mathsf{cap},\mathbf{T}}\ ,
\end{equation*}
where $k' \leq m$.
Here, $0\in \Pp$ is the non-relative $\mathbf{T}$-fixed point and
$\infty\in \Pp$ is the relative point.
The $\mathbf{T}$-depth $m$ theory of the cap 
is {\em correspondent} if Conjecture \ref{ttt444} holds
for all depth $m$  stable pairs and Gromov-Witten
partition functions \eqref{hkkqq}.

\begin{Lemma}\label{dummm} The 
descendent correspondence for the $\mathbf{T}$-depth $m$
theory of the cap implies the
descendent correspondence 
for  the $T$-depth $m$ theory of the cap.
\end{Lemma}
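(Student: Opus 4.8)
The plan is to deduce the coarser $T$-equivariant correspondence from the finer $\mathbf{T}$-equivariant one by realizing the $T$-theory as the $s_3\to 0$ specialization of the $\mathbf{T}$-theory and rewriting the base insertions $\mathsf{1}$ and $\mathsf{p}$ in the $\mathbf{T}$-fixed-point basis $[0],[\infty]$ of $H^*_{\mathbf{T}}(\Pp)$. Concretely, with the tangent weights $-s_3$ at $0$ and $s_3$ at $\infty$, the fixed-point class $[0]$ restricts to $-s_3$ at $0$ and to $0$ at $\infty$, so in localized equivariant cohomology
\[
\mathsf{1}=\frac{[\infty]-[0]}{s_3}\,,\qquad \mathsf{p}\ \leftrightarrow\ [0]\,,
\]
the second being the lift already used in the $T$-depth $0$ relation (note $[0]|_{s_3=0}=\mathsf{p}$). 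All classes in sight are even, so no signs intervene.

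First I would substitute these lifts into the $T$-depth $m$ integrand. Since a descendent $\tau_k(\gamma)$ is linear in $\gamma$, each identity insertion becomes $\tau_{i'_{j'}}(\mathsf{1})=s_3^{-1}\bigl(\tau_{i'_{j'}}([\infty])-\tau_{i'_{j'}}([0])\bigr)$; expanding the product of the $k'\le m$ such factors writes the stable-pairs cap series as a $\mathbb{Q}(s_3)$-linear combination of $\mathbf{T}$-equivariant cap series in which each summand carries at most $m$ descendents of the relative point $[\infty]$. Hence every summand lies in the $\mathbf{T}$-depth $m$ theory. The same expansion is legitimate on the Gromov-Witten side because the correspondence rule $\overline{(\,\cdot\,)}$ is \emph{multilinear} in the cohomology weights: in $\tau_{\widehat{\alpha}}(\widetilde{\mathsf{K}}_{\alpha_S,\widehat{\alpha}}\cdot\gamma_S)$ the product $\gamma_S=\prod_{i\in S}\gamma_i$ contains each $\gamma_i$ exactly once, so $\overline{(\,\cdot\,)}$ is linear in every weight separately and $\overline{\prod\tau(\mathsf{1})\prod\tau(\mathsf{p})}$ expands into the identical $\mathbb{Q}(s_3)$-combination of barred $[0],[\infty]$ insertions.

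Next I would apply the assumed $\mathbf{T}$-depth $m$ correspondence to each summand. Because the prefactors $(-q)^{-d}$ and $(-iu)^{|\eta|+\ell(\eta)}$ and the substitution $-q=e^{iu}$ do not involve $s_3$, the identical $\mathbb{Q}(s_3)$-linear combination of these correspondences holds as an identity of rational functions in $q,s_1,s_2,s_3$ under the change of variables. Finally I would specialize $s_3\to 0$: the combined stable-pairs series is regular there, since it computes the honest $T$-equivariant identity-insertion theory, so the apparent $s_3^{-k'}$ poles of the individual summands must cancel; the Chern classes $c_i(T_N[-N_\infty])$ that feed $\widetilde{\mathsf{K}}$ specialize continuously, so forming the bar commutes with setting $s_3=0$; and the $s_3=0$ value of the combined correspondence is exactly the $T$-depth $m$ statement.

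The main obstacle is precisely this $s_3\to 0$ specialization, i.e.\ establishing that the $\mathbf{T}$-equivariant cap series assembled from the identity lift is regular at $s_3=0$ and specializes to the genuinely $T$-equivariant series with $\mathsf{1},\mathsf{p}$ insertions. This is the analogue for identity descendents of the $T$-depth $0$ identity quoted in the text, and the cancellation of the $s_3^{-1}$ poles produced by $\mathsf{1}=(\,[\infty]-[0]\,)/s_3$ is what makes it hold. Once regularity is secured on the stable-pairs side, the term-by-term correspondence transports it to the Gromov-Witten side, so the limit may be taken simultaneously on both sides and the reduction is complete.
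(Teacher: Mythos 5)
Your argument is correct and is essentially the paper's own proof: the paper likewise writes $1 = -[0]/s_3 + [\infty]/s_3$, computes at most $m$ descendents of $1$ via at most $m$ descendents of $[\infty]$ in the $\mathbf{T}$-equivariant theory, and then specializes $s_3=0$. Your additional remarks on the multilinearity of the bar operation and the cancellation of the $s_3^{-k'}$ poles merely make explicit what the paper leaves implicit.
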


\begin{proof}
The identity class
$1\in H^*_T(\Pp,\mathbb{Z})$ may be
expressed in terms of the $\mathbf{T}$-fixed point classes,
$$1 = -\frac{[0]}{s_3} + \frac{[\infty]}{s_3}\ .$$
We can calculate at most $m$ descendents of $1$ in  the $T$-equivariant
theory via at most 
$m$ descendents of $[\infty]$ in the $\mathbf{T}$-equivariant
theory (followed by the specialization $s_3=0$).
\end{proof}

\subsection{Induction II}

\label{ind2}

The first half of our induction argument was established
in Lemma \ref{nndd}.
The second half relates the tube back to the 
cap with an increase in depth.

\begin{Lemma} The descendent correspondence for the  \label{p45} 
the ${T}$-depth $m$
theory of the tube implies the
descendent correspondence for the
 $\mathbf{T}$-depth $m+1$ theory of the cap.
\end{Lemma}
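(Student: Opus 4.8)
The goal is to go from the ${T}$-depth $m$ theory of the tube to the $\mathbf{T}$-depth $m+1$ theory of the cap, that is, to trade a descendent of the $\mathbf{T}$-fixed point class $[\infty]$ (the ``extra'' one producing depth $m+1$) for the non-equivariant identity insertions that the tube theory controls. The mechanism is $\com^*$-localization for the fiberwise action on the cap $N/N_\infty$, exactly as in the rubber/capped-contribution analysis of Section \ref{n3n3} and in the proof of Theorem 3 of \cite{PPDC}. First I would write the $\com^*$-localization formula for the cap partition function \eqref{hkkqq} with $k'+1 \le m+1$ descendents of $[\infty]$. The localization graph splits the cap into the non-relative fixed point $0$ and the rubber over $\infty$: the $0$-vertex produces a capped contribution, and the rubber over $\infty$ is a tube-type geometry, since the rubber target is $N/N_0\cup N_\infty$. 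The key point is that the descendents of $[\infty]$, being supported at the relative point, get pushed entirely onto the rubber, where $[\infty]$ restricts to an identity-type class.

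**Key steps in order.**
First, expand the cap invariant \eqref{hkkqq} via the $\com^*$-localization formula, isolating the rubber integral over $\infty$ with cotangent line $\Psi_\infty$ and the edge factor of Lemma \ref{kllw}. Second, use the rigidification/rubber-calculus identity (as in the passage from \eqref{crubp} for stable pairs and \eqref{crubgw} for Gromov-Witten) to convert the cotangent-line integrand $\frac{1}{-\Psi_\infty - t}$ expansion into genuine tube invariants: each power of $\Psi_\infty$ corresponds to a descendent insertion of the identity on the tube, and the $k'+1$ descendents of $[\infty]$ become descendents of $\mathsf{1}$ on the tube after the substitution $[\infty] \to \mathsf{1}$ dictated by the normal-bundle weight $t$. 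This is precisely where the depth increases by one: the rubber-to-tube dictionary turns the single $[\infty]$-descendent that pushed the cap theory to depth $m+1$ into an identity descendent on a tube theory that is still only depth $m$. Third, apply the given ${T}$-depth $m$ correspondence for the tube (the hypothesis of the lemma) to each rubber/tube term, together with the already-established correspondence for the capped contribution at $0$ (which follows from the $\mathbf{T}$-depth $m$ hypothesis on the cap, available by the induction, via Lemma \ref{dummm} and the correspondence for lower-depth pieces). Fourth, reassemble: since all the building blocks on both sides match under $-q = e^{iu}$ and the edge and gluing factors $\zz(\lambda)$ transform compatibly (as in the capped-localization bookkeeping of Section \ref{n3n3}), the full cap invariant at $\mathbf{T}$-depth $m+1$ satisfies the correspondence. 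One must also track the dimension/degree shifts $(-q)^{-d}$ versus $(-iu)^{|\eta|+\ell(\eta)}$ and the factors of $t$ introduced by the localization weights, exactly as in the normalization of Theorem \ref{ppp444}.

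**The main obstacle.**
The hard part will be the rubber-calculus bookkeeping in the third step: cleanly identifying the expansion of the inverse cotangent-line factor $\frac{1}{-\Psi_\infty - t}$ so that a single descendent of $[\infty]$ on the cap corresponds to an identity descendent of the \emph{same} depth count on the tube, rather than accidentally raising the depth of the tube theory and breaking the induction. One must verify that the $\Psi_\infty$-powers generated by localization only ever contribute descendents of $\mathsf{p}$ (not of $\mathsf{1}$) on the tube, so that the tube invariants invoked genuinely lie in ${T}$-depth $m$ and the hypothesis applies. The analogous control was the crux of the proof of Theorem 3 in \cite{PPDC}, and here it rests on the fact that $\Psi_\infty$ pairs against the relative geometry at the fixed point rather than producing new identity insertions, together with the compatibility of the correspondence with the degeneration formula stated in Section \ref{relth}.
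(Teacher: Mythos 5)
Your overall skeleton matches the paper's: localize the $\mathbf{T}$-equivariant cap with respect to the torus acting on the base $\Pp$ (not the ``fiberwise'' action --- for the cap the relevant $\com^*$ is the third factor of $\mathbf{T}$ moving points toward $0$ and $\infty$), obtain a vertex over $0$ and rubber over $\infty$, convert the rubber to tube invariants by rubber calculus, and then invoke the $T$-depth $m$ tube hypothesis together with the depth $0$ cap. However, the one step that actually makes the lemma true --- why the tube invariants that appear have depth at most $m$ rather than $m+1$ --- is misstated in a way that would break the induction. You assert that the descendents of $[\infty]$ all become descendents of $\mathsf{1}$ on the tube ``after the substitution $[\infty]\to\mathsf{1}$''; if that were the mechanism, a depth $m+1$ cap would produce a depth $m+1$ tube and the hypothesis would not apply. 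The paper's formula instead sums over set partitions $\mathsf{R}=(R_1,\dots,R_{r})$ of the $[\infty]$-indices (with the ordering condition on minima), one part per level of the rubber chain: in each tube factor exactly one descendent, the minimal-index one $\tau_{i'_{m_i}}(\mathsf{p})$, appears as a point-class insertion that rigidifies that rubber level, while the remaining $|R_i|-1$ indices contribute $\tau_{i'_{j'}}(1)$ (each carrying a factor of $s_3$, whence the prefactor $s_3^{k'-r(\mathsf{R})}$). Since every part is nonempty, each tube factor carries at most $k'-1\le m$ identity insertions, and this is the entire content of the depth drop.

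Relatedly, the ``obstacle'' you flag --- that the $\Psi_\infty$-powers might contribute identity descendents --- is not where the danger lies. The cotangent-line factors $\frac{1}{-\Psi-t}$ are absorbed by the relation $\frac{s_3}{s_3-\psi_0}=1+\frac{\psi_0}{s_3-\psi_0}$ and the topological recursion, which splits the rubber into levels; they never produce descendent insertions. The correct bookkeeping is the decomposition of each class $[\infty]$ into a point class plus $s_3$ times the identity and the requirement that at least one point-class descendent rigidify each rubber level. Without this, your argument does not establish that the tube theories invoked lie in $T$-depth $m$, so as written the proof has a genuine gap at its central step. (A minor further point: the vertex over $0$ in the localization formula carries only descendents of $[0]$ and is therefore $\mathbf{T}$-depth $0$; you do not need the depth $m$ cap hypothesis for it.)
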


\begin{proof}
The result follows from the $\mathbf{T}$-equivariant
localization
formula for the cap in terms of the $T$-equivariant
theory of the tube (already used in \cite{part1}).
We first review the formula.

For the theory of stable pairs, consider the partition function
\begin{equation} \label{hgg66t}
{\mathsf Z}^{\mathsf{P}}_{d,\eta} 
\left(   \prod_{j=1}^k \tau_{i_j}([0])  \prod_{j'=1}^{k'} \tau_{i'_{j'}}([\infty])
\right)^{\mathsf{cap},\mathbf{T}} \ . 
\end{equation}
We will write the $\mathbf{T}$-equivariant localization formula
for \eqref{hgg66t}, as a sum over set partitions 
$$\mathsf{R}=(R_1,R_2,\ldots, R_{r(\mathsf{R})}), \ \ \ 
R_i\subset\{ 1, \ldots, k'\}$$
satisfying the following conditions
\begin{enumerate}
\item[$\bullet$] $R_i$ are nonempty and disjoint,
\item[$\bullet$] $R_1 \cup R_2 \cup \ldots \cup R_{r(\mathsf{R})} = \{ 1, \ldots, k'\}$,
\item[$\bullet$] $\text{min}\{j'\in R_i\} > \text{min}\{j'\in R_{i+1}\}$ .
\end{enumerate}
Let $m_i$ be the minimal index in $R_i$.
As a consequence of the third condition, $m_{r(\mathsf{R})}=1\in R_{r(\mathsf{R})}$.
The formula for the partition function \eqref{hgg66t} is
\begin{multline*}
 \sum_{\mathsf{R}}
s_3^{k'-r(\mathsf{R})}\ {\mathsf Z}^{\mathsf{P}}_{d,\eta^1} 
\left(   \prod_{j=1}^k \tau_{i_j}([0]) \right)^{\mathsf{cap},\mathbf{T}}
\frac{g^{\eta^1\tilde{\eta}^1}}{q^d}
{\mathsf Z}^{\mathsf{P}}_{d,\tilde{\eta}^1,\eta^2} 
\left(  \tau_{i'_{m_1}}(\mathsf{p}) \prod_{j'\in R_1^*} \tau_{i'_{j'}}(1) \right)^{\mathsf{tube},{T}} \\ \ \ \ \ \ \ \ \ \ \ \ \ \ \ \
\ \ \ \ \ \ \ \ \ \ \ \ \ \ \ \ \ \ \ \ \ \ \ \ \ \ \ \ \ \ \ 
\cdot \frac{g^{\eta^2\tilde{\eta}^2}}{q^d}
{\mathsf Z}^{\mathsf{P}}_{d,\tilde{\eta}^2,\eta^3} 
\left(  \tau_{i'_{m_2}}(\mathsf{p}) \prod_{j'\in R_2^*} \tau_{i'_{j'}}(1) \right)^{\mathsf{tube},{T}}\\ \ \ \ \ \ \  \ \ \ \  \ \  
\cdots \\
\ \ \ \ \ \ \ \ \ \ \ \ \ \  \ \ \ \ \ \ \ \ \ \ \ \ \ \ \ \ \ \ \ \
\hspace{50pt} 
\cdot \frac{g^{\eta^{r}\tilde{\eta}^r}}{q^d}
{\mathsf Z}^{\mathsf{P}}_{d,\tilde{\eta}^r,\eta} 
\left(  \tau_{i'_{m_r}}(\mathsf{p}) \prod_{j'\in R_r^*} \tau_{i'_{j'}}(1) \right)^{\mathsf{tube},{T}}, \hspace{-7pt}
\end{multline*}
where the metric term is
$$g^{\eta\tilde{\eta}} = (s_1s_2)^{\ell(\eta)}{(-1)^{|\eta|-\ell(\eta)}} 
\zz(\eta)\cdot \delta_{\eta,\tilde{\eta}}\ .$$
The above $\mathbf{T}$-equivariant formula is proven via
localization and the rubber calculus, see Section 7.2 of \cite{part1}.

For a partition function \eqref{hgg66t} of $\mathbf{T}$-depth
$m+1$, the right side of the $\mathbf{T}$-equivariant localization
formula is in terms of the $\mathbf{T}$-depth 0 theory of the
cap and the $T$-depth $m$ theory of the tube. 
%
Consider next the 
Gromov-Witten partition function,
\begin{equation} \label{gg66t}
{\mathsf Z}'^{\,\mathsf{GW}}_{d,\eta} 
\left(\   \overline{\prod_{j=1}^k \tau_{i_j}([0])  
\prod_{j'=1}^{k'} \tau_{i'_{j'}}([\infty])}\
\right)^{\mathsf{cap},\mathbf{T}} \ . 
\end{equation}
The $\mathbf{T}$-equivariant localization formula for
\eqref{gg66t} is  
\begin{multline*}
\hspace{-13pt}
 \sum_{\mathsf{R}}
s_3^{k'-r(\mathsf{R})}\ {\mathsf Z}'^{\,\mathsf{GW}}_{d,\eta^1} 
\left(\   \overline{\prod_{j=1}^k \tau_{i_j}([0])}\ \right)^{\mathsf{cap},\mathbf{T}}
\hspace{-5pt}
\frac{h^{\eta^1\tilde{\eta}^1}}{u^{-2\ell(\eta^1)}}
{\mathsf Z}'^{\,\mathsf{GW}}_{d,\tilde{\eta}^1,\eta^2} 
\left(  \overline{\tau_{i'_{m_1}}(\mathsf{p}) \prod_{j'\in R_1^*} 
\tau_{i'_{j'}}(1)} \right)^{\mathsf{tube},{T}} \\ \ \ \ \ \ \ \ \ \ \ \ \ \ \ \
\ \ \ \ \ \ \ \ \ \ \ \ \ \ \ \ \ \ \ \ \ \ \ \ \ \ \ \ \ \ \ 
\cdot \frac{h^{\eta^2\tilde{\eta}^2}}{u^{-2\ell(\eta^2)}}
{\mathsf Z}'^{\,\mathsf{GW}}_{d,\tilde{\eta}^2,\eta^3} 
\left(  \overline{\tau_{i'_{m_2}}(\mathsf{p}) \prod_{j'\in R_2^*} 
\tau_{i'_{j'}}(1)} \right)^{\mathsf{tube},{T}}\\ \ \ \ \ \ \  \ \ \ \  \ \  
\cdots \\
\ \ \ \ \ \ \ \ \ \ \ \ \ \  \ \ \ \ \ \ \ \ \ \ \ \ \ \ \ \ \ \ \ \
\hspace{50pt} 
\cdot \frac{h^{\eta^{r}\tilde{\eta}^r}}{u^{-2\ell(\eta^r)}}
{\mathsf Z}'^{\,\mathsf{GW}}_{d,\tilde{\eta}^r,\eta} 
\left(  \overline{\tau_{i'_{m_r}}(\mathsf{p}) \prod_{j'\in R_r^*} \tau_{i'_{j'}}(1)} \right)^{\mathsf{tube},{T}}, \hspace{-7pt}
\end{multline*}
where the metric is now
$$h^{\eta\tilde{\eta}} = (s_1s_2)^{\ell(\eta)} 
\zz(\eta)\cdot \delta_{\eta,\tilde{\eta}}\ .$$
The proof is again via standard localization and 
rubber calculus.

The
descendent correspondence of Conjecture \ref{ttt444} is formally
compatible with the above $\mathbf{T}$-equivariant localization formulas.
Since the right sides concern only 
the $\mathbf{T}$-depth 0 theory of the
cap and the $T$-depth $m$ theory of the tube,
Lemma \ref{p45} is an immediate consequence. 
\end{proof}

\subsection{Gromov-Witten side}
The stable pairs localization formula for \eqref{hgg66t} in 
Section \ref{ind2} was
explained in \cite{part1}. While the Gromov-Witten side is
parallel,
we present the first cases here to help the reader.

To start, we write the
 localization formula for 
$\mathbf{T}$-depth 1 series for the cap as 
\begin{multline*}
{\mathsf Z}'^{\, \mathsf{GW}}_{d,\eta} 
\left(\   \overline{\prod_{j=1}^k \tau_{i_j}([0]) \cdot \tau_{i'_1}([\infty])}
\ \right)^{\mathsf{cap},\mathbf{T}} = \\
{\mathsf Z}'^{\, \mathsf{GW}}_{d,\eta} 
\left(\   \overline{\prod_{j=1}^k \tau_{i_j}([0])} \cdot \overline{\tau_{i'_1}([\infty])}
\ \right)^{\mathsf{cap},\mathbf{T}}=
\\
\sum_{|\mu|=d}
\bW_\mu^{\mathsf{Vert}} \left(\ \overline{\prod_{j=1}^k \tau_{i_j}([0])}  \ 
   \right) \cdot
\bW_\mu^{(0,0)} \cdot
\mathsf{S}^{\mu}_{\eta}(\overline{\tau_{i'_1}}) \ ,
\end{multline*}
where the rubber term on the right is 
\begin{eqnarray*}
\mathsf{S}^\mu_\eta(\overline{\tau_{i'_1}}) & = &   
\sum_{g} u^{2g-2}
\left\langle \mu \ \left| \ \frac{ s_3 \overline{\tau_{i'_1}}}{s_3-\psi_0}  \ \right|\ \eta 
\right\rangle_{g,d}^{
\sim}. \\
\end{eqnarray*}
Here, $\mathsf{W}^{\mathsf{Vert}}_\mu$ and $\mathsf{W}^{0,0}_\mu$
denote the Gromov-Witten vertex and edge terms.

The rubber term simplifies
via  the
topological recursion relation for $\psi_0$ after
writing
\begin{equation}\label{nhhk}
\frac{s_3}{s_3-\psi_0} = 1 + \frac{\psi_0}{s_3-\psi_0}\ .
\end{equation}
We find the relation
\begin{eqnarray*}
\mathsf{S}^\mu_\eta(\overline{\tau_{i'_1}}) 
& = &
\sum_{|\tilde{\eta}|=d} \mathsf{S}^\mu_{\tilde{\eta}} 
\cdot \frac{h^{\tilde{\eta}\tilde{\eta}}}{u^{-2\ell(\tilde{\eta})}} \cdot 
 {\mathsf Z}'^{\, \mathsf{GW}}_{d,\tilde{\eta},\eta} 
\left(   \overline{\tau_{i_1'}([\infty])} \right)^{\mathsf{tube},T}\
\end{eqnarray*}
where the rubber term on the right is
\begin{eqnarray*}
\mathsf{S}^\mu_\eta & = &   
\sum_{g} u^{2g-2}
\left\langle \mu \ \left| \ \frac{ 1}{s_3-\psi_0}  \ \right|\ \eta 
\right\rangle_{g,d}^{
\sim}. \\
\end{eqnarray*}
The leading  $1$ on the right side of \eqref{nhhk} corresponds to
the degenerate leading term of $\mathsf{S}^\mu_{\tilde{\eta}}$.
The topological recursion applied to the $\psi_0$ prefactor
of the second term produces the rest of $\mathsf{S}^\mu_{\tilde{\eta}}$.
We have also used here the identification of the log tangent
bundle on the destabilized cap.

After reassembling the localization formula, we find
\begin{multline*}
{\mathsf Z}'^{\, \mathsf{GW}}_{d,\eta} 
\left(\ \overline{\prod_{j=1}^k \tau_{i_j}([0]) \cdot \tau_{i'_1}([\infty])}
\ \right)^{\mathsf{cap},\mathbf{T}} = 
\\
\sum_{|\tilde{\eta}|=d}
{\mathsf Z}'^{\, \mathsf{GW}}_{d,\tilde{\eta}} 
\left(\  \overline{\prod_{j=1}^k \tau_{i_j}([0])} \
\right)^{\mathsf{cap},\mathbf{T}}
\cdot \frac{h^{\tilde{\eta}\tilde{\eta}}}{u^{-2\ell(\tilde{\eta})}} \cdot 
 {\mathsf Z}'^{\, \mathsf{GW}}_{d,\tilde{\eta},\eta} 
\left(   \overline{\tau_{i_1'}([\infty])} \right)^{\mathsf{tube},T}
\end{multline*}
which is equivalent to the first case of the Gromov-Witten
formula of Section \ref{ind2}.

The higher cases of the Gromov-Witten localization formula
of Section \ref{ind2} are proven by expanding 
definition \eqref{gtte4} of the descendent correspondence and
following the rubber calculus. Consider
\begin{multline*}
{\mathsf Z}'^{\, \mathsf{GW}}_{d,\eta} 
\left(\ \overline{\prod_{j=1}^k \tau_{i_j}([0]) \cdot \tau_{i'_1}([\infty])
\tau_{i'_2}([\infty])}
\ \right)^{\mathsf{cap},\mathbf{T}}
= \\
{\mathsf Z}'^{\, \mathsf{GW}}_{d,\eta} 
\left(\   \overline{\prod_{j=1}^k \tau_{i_j}([0])} \cdot 
\overline{\tau_{i'_1}([\infty])
\tau_{i'_2}([\infty])
}
\ \right)^{\mathsf{cap},\mathbf{T}},
\end{multline*}
where we have 
\begin{multline} \label{naaa}
\overline{\tau_{i'_1}([\infty])
\tau_{i'_2}([\infty])
} =
s_3\sum_{\alpha} 
\tau_{\alpha}(\widetilde{K}_{(i_1'+1,i_2'+1),\alpha}\cdot [\infty]) 
\\ +
\sum_{\delta} 
\tau_{\delta}(\widetilde{K}_{(i_1'+1),\delta}\cdot [\infty])
\cdot
\sum_{\epsilon} 
\tau_{\epsilon}(\widetilde{K}_{(i_2'+1),\epsilon}\cdot [\infty])
\end{multline}
by definition.
The first summand on the right of \eqref{naaa} is obtained from
 the set partition
$\{1,2\}$ and the second term from the set partition $\{1\}\cup \{2\}$.
After applying localization and the rubber calculus to the
$\{1,2\}$ term, we obtain
the $\{1,2\}$ term of 
\begin{equation}\label{hohoho}
\sum_{|\tilde{\eta}|=d}
s_3{\mathsf Z}'^{\, \mathsf{GW}}_{d,\tilde{\eta}} 
\left(\  \overline{\prod_{j=1}^k \tau_{i_j}([0])} \
\right)^{\mathsf{cap},\mathbf{T}}
\cdot \frac{h^{\tilde{\eta}\tilde{\eta}}}{u^{-2\ell(\tilde{\eta})}} \cdot 
 {\mathsf Z}'^{\, \mathsf{GW}}_{d,\tilde{\eta},\eta} 
\left(   \overline{\tau_{i_1'}(\mathsf{p})
\tau_{i_2'}(1)
} \right)^{\mathsf{tube},T}\ .
\end{equation}
After applying localization and the rubber calculus to the
$\{1\} \cup \{2\}$ term of \eqref{naaa}, we obtain
the $\{1\} \cup \{2\}$ term of \eqref{hohoho}
plus the full series
\begin{multline} \label{pzz2}
\sum_{|\tilde{\mu}|,|\tilde{\eta}|=d}
{\mathsf Z}'^{\,\mathsf{GW}}_{d,\tilde{\mu}} 
\left(\ \overline{\prod_{j=1}^k \tau_{i_j}([0])} 
\ \right)^{\mathsf{cap},\mathbf{T}}
\cdot \frac{h^{\tilde{\mu}\tilde{\mu}}}{u^{-2\ell(\tilde{\mu})}} \cdot 
 {\mathsf Z}'^{\,\mathsf{GW}}_{d,\tilde{\mu},\tilde{\eta}} 
\left(
\overline{\tau_{i'_{2}}(\mathsf{p})}    \right)^{\mathsf{tube},T} 
\\ 
\cdot \frac{h^{\tilde{\eta}\tilde{\eta}}}{u^{-2\ell(\tilde{\eta})}} \cdot 
 {\mathsf Z}'^{\,\mathsf{GW}}_{d,\tilde{\eta},{\eta}} 
\left(\overline{\tau_{i'_{1}}(\mathsf{p})}    \right)^{\mathsf{tube},T} \ . \ \ \ \ \ 
\end{multline}
Combining \eqref{hohoho} and \eqref{pzz2} exactly
yields the Gromov-Witten formula of Section \ref{ind2} for
2 insertions over $\infty$.

\subsection{Proof of Theorem \ref{ppp444}}\label{jajaja}
Lemmas \ref{nndd}--\ref{p45}
together provide an induction which establishes
the descendent correspondence for
the $\mathbf{T}$-depth $m$ theory of
the cap for all $m$. \qed

\vspace{10pt}
Since the classes of the $\mathbf{T}$-fixed points $0,\infty \in \Pp$
generate $H_{\mathbf{T}}^*(\Pp, \mathbb{Z})$
after localization, Theorem \ref{ppp444}
is a $\mathbf{T}$-equivariant correspondence for
the full descendent theory of the cap.

\subsection{$K3$ surfaces}
\label{kkk333}
For a surface $S$,
following the notation of 
Section \ref{rr}, let 
$$\mathbf{P}_S= \mathbf{P}(L_0 \oplus L_\infty) \rightarrow S, \ \ \
 S_i= \mathbf{P}(L_i) \subset \mathbf{P}_S\ .$$

\begin{Proposition}
\label{kk33} 
Let $S$ be a nonsingular projective $K3$ surface.
For classes $\gamma_i \in H^{*}(S,\mathbb{Q})$,
we have 
$$\ZZ_{\mathsf{P}}\Big(\mathbf{P}_S/S_\infty   ;q\ \Big|  
{\tau_{\alpha_1-1}(\gamma_1)\cdots
\tau_{\alpha_{\ell}-1}(\gamma_{\ell})} \ \Big| \ \mu
\Big)_\beta\in \mathbb{Q}(q)$$
and the correspondence
\begin{multline*}
(-q)^{-d_\beta/2}\ZZ_{\mathsf{P}}\Big(\mathbf{P}_S/S_\infty   ;q\ \Big|  
{\tau_{\alpha_1-1}(\gamma_1)\cdots
\tau_{\alpha_{\ell}-1}(\gamma_{\ell})} \ \Big| \ \mu
\Big)_\beta \\ =
(-iu)^{d_\beta+\ell(\mu)-|\mu|}\ZZ'_{\mathsf{GW}}\Big(\mathbf{P}_S/S_\infty
;u\ \Big|   
\ \overline{\tau_{a_1-1}(\gamma_1)\cdots
\tau_{\alpha_{\ell}-1}(\gamma_{\ell})}
\ \Big| \ \mu\Big)_\beta 
\end{multline*}
under the variable change $-q=e^{iu}$.
\end{Proposition}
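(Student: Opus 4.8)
The plan is to deduce Proposition \ref{kk33} from the fully $\mathbf{T}$-equivariant cap correspondence of Theorem \ref{ppp444} along exactly the route already set up for toric surfaces. By the discussion of Section \ref{ntss}, the entire proof of Theorem \ref{ttt999} for a surface $S$ rests on the toric hypothesis \emph{only} through Proposition \ref{aaa999}: the capped localization formulas, the induction of Section \ref{instrat}, and the correspondence of Proposition \ref{aaa9999} over $S_\infty$ all go through verbatim for any nonsingular projective $S$ once the correspondence for the capped descendent contributions over $S_0$ is established. Hence I would first reduce to proving Proposition \ref{aaa999} for a $K3$ surface $S$, namely
\[
(-q)^{-d_\beta/2}\,\mathsf{C}^{\mathsf{P}}_0(\tau_\alpha(\Gamma_0),\beta)_\mu = (-iu)^{d_\beta+\ell(\mu)-|\mu|}\,\mathsf{C}^{\mathsf{GW}}_0\big(\overline{\tau_\alpha(\Gamma_0)},\beta\big)_\mu,
\]
together with $\mathsf{C}^{\mathsf{P}}_0 \in \mathbb{Q}(q,t)$, for $\beta = \beta_0 + d[\mathbf{P}]$ with $\beta_0 \in \text{Eff}(S_0)$ and $|\mu|=d$.

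The difficulty relative to the toric case is that $S$ carries no torus action, so the toric argument --- full $\mathbf{T}$-localization down to the $T$-fixed points of $S$ followed by Theorem 8 of \cite{PPDC} --- is unavailable. Instead I would use only the fiberwise $\com^*$-action. Under $\com^*$-localization the open set $U_{n,\beta,\mu}$ (respectively $\widetilde{U}_{g,\beta,\mu}$) contributes through configurations supported near $S_0$, and a tubular neighborhood of $S_0\cong S$ in $\mathbf{P}_S$ is the total space of the normal line bundle $\mathcal{N}=L_0\otimes L_\infty^*$. The fiber $\mathbf{P}^1$ of $\pi$ is, along $S_0$, precisely the base $\mathbf{P}^1$ of the cap geometry $N/N_\infty$ of \eqref{pkk9}, with the two trivial directions of $N$ matched to the two tangent directions of $S$, the base weight $s_3$ matched to the fiberwise weight $t$, and the relative end $N_\infty$ matched to the $S_\infty$ end. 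Thus the capped contribution over $S_0$ is governed by the universal capped descendent vertex attached to $N/N_\infty$, integrated over $S_0$.

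The key point is that Theorem \ref{ppp444} establishes the descendent correspondence for $N/N_\infty$ as an \emph{identity of rational functions} in the equivariant parameters $s_1,s_2,s_3$, where $s_1,s_2$ scale the two normal directions. I would globalize this identity by the splitting principle: writing $\mathsf{C}^{\mathsf{P}}_0$ and $\mathsf{C}^{\mathsf{GW}}_0$ as the integral over $S$ of the universal cap series, with $s_1,s_2$ specialized to the Chern roots of $\pi^*T_S$, with $s_3$ specialized to $t$, and with the twist recorded by $N=c_1(\mathcal{N})$. Since $T_S$ and $\mathcal{N}$ are genuine bundles on $S$, this substitution is legitimate; the Chern roots are nilpotent, so rationality in $(q,t)$ is inherited from the rationality of the cap (Theorem 3 of \cite{part1}), and the correspondence, being an identity in $(s_1,s_2,s_3)$, descends. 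This is consistent with Lemma \ref{fvv}: along $S_0$ the relevant Chern classes of $T_{\mathbf{P}_S}[-S_\infty]$ are pulled back from $S$, so the $\widetilde{\mathsf{K}}$-action defining $\overline{\tau_\alpha(\Gamma_0)}$ is exactly the one governed by the Chern roots of $T_S$ appearing in the cap formula.

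The main obstacle is making this globalization rigorous for non-toric $S$: one must present $\mathsf{C}^{\mathsf{P}}_0$ (and its Gromov--Witten counterpart) genuinely as an integral over $S$ of the universal cap vertex --- \emph{including} the curves of class $\beta_0$ lying inside $S_0$, which are invisible to the bare cap --- and then verify that the operation $\tau_\alpha(\Gamma_0)\mapsto\overline{\tau_\alpha(\Gamma_0)}$, with its diagonal class $\Delta_{\mathsf{rel}}$ and its $\widetilde{\mathsf{K}}$-coefficients evaluated on $c_i(T_{\mathbf{P}_S}[-S_\infty])$, commutes with the Chern-root specialization. Once this compatibility is checked, Proposition \ref{aaa999} holds for $K3$, and Proposition \ref{kk33} follows at once from the argument of Section \ref{ntss}.
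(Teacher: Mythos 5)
There is a genuine gap, and you have in fact pointed at it yourself without closing it. Your reduction to Proposition \ref{aaa999} for a $K3$ surface is legitimate (this is exactly the observation of Section \ref{ntss}), but your proposed proof of that proposition --- presenting $\mathsf{C}^{\mathsf{P}}_0(\tau_\alpha(\Gamma_0),\beta)_\mu$ as an integral over $S$ of the universal cap series of $N/N_\infty$ with $s_1,s_2$ specialized to the Chern roots of $T_S$ --- does not work for the classes that actually matter. For $\beta=\beta_0+d[\mathbf{P}]$ with $\beta_0\neq 0$, the $\com^*$-fixed locus of $U_{n,\beta,\mu}$ contains curves of positive $S$-degree lying inside $S_0$; these are global objects in the surface whose moduli is not a fiberwise product of cap data, so there is no ``universal cap vertex integrated over $S_0$'' to specialize. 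In the toric case this is exactly where the full torus $\mathbf{T}=T\times\com^*$ is used: localization with respect to $T$ breaks the $S$-degree-carrying curves into capped descendent vertices at the $T$-fixed points of $S_0$, and Theorem 8 of \cite{PPDC} applies vertex by vertex. A $K3$ surface admits no such torus action, and the fiberwise $\com^*$ alone cannot produce this decomposition. Even in the pure fiber-class case the splitting-principle substitution would require checking that the rational functions of Theorem \ref{ppp444} have no poles along $s_1=s_2=0$ before nilpotent Chern roots can be inserted; you do not address this either.

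The paper takes a different and much shorter route: it does not attempt to prove Proposition \ref{aaa999} for $K3$ at all. Instead it invokes Proposition 26 of \cite{PPDC}, which already established the correspondence for $\mathbf{P}_{K3}/S_\infty$ when the insertions $\gamma_i$ are supported on $S_0$ (by arguments specific to $K3$ geometry carried out there), and uses the new fully $\mathbf{T}$-equivariant cap correspondence of Theorem \ref{ppp444} solely to remove that support hypothesis --- i.e., to handle descendents of classes localized at the relative divisor. To repair your argument you would need either to supply the missing global-to-local decomposition for non-toric $S$ (which is the real content and is not a formality), or to follow the paper and reduce to the previously established $S_0$-supported case.
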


\begin{proof}
If the cohomology insertions $\gamma_i$ are
supported on $S_0$, then the above correspondence is
proven
in Proposition 26 of \cite{PPDC}.
The support hypotheses for $\gamma_i$ were  
needed there since, for the $\mathbf{T}$-equivariant cap,
only the correspondence for descendents of the
non-relative point had been proven in \cite{PPDC}.
Theorem \ref{ppp444} now removes the need for the support
hypothesis.
The proof of Proposition 26 together with Theorem \ref{ppp444}
yields the result.
\end{proof}

\section{The geometry $\Pp\times \com \times
\Pp \ /\ \mathbf{P}^1  \times \com$}
\label{xxx2}

\subsection{Overview} \label{erst}
Let $Y$ denote the 
the quasi-projective variety
$\Pp\times \com \times \Pp$.
For clarity, we will denote the first factor by $\mathbf{P}^1_1$ and the
third factor by $\mathbf{P}^1_3$.
Let
$$\pi_1 : Y \rightarrow \mathbf{P}^1_1, \ \ \ 
\pi_3 : Y \rightarrow \mathbf{P}^1_3
$$
denote the projections onto to the first and last factors.

The variety $Y$
admits an action of the 3-torus 
$$\mathbf{T}=\com_1^* \times \com_2^* \times \com_3^*\ .$$
The first factor $\com_1^*$ of $\mathbf{T}$ acts on
$\mathbf{P}^1_1$ with fixed points $0,\infty\in \mathbf{P}^1_1$
with tangent weights $-s_1,s_1$ respectively.
The factor $\com^*_2$ acts on $\com$ with
fixed point $0\in \com$ with tangent weight $-s_2$.
Finally, $\com_3^*$ acts on
$\mathbf{P}^1_3$ with fixed points $0,\infty\in \mathbf{P}^1_3$
with tangent weights $-s_3,s_3$ respectively.

Define the divisors
 $Y_0,Y_\infty \subset Y$ to be the fibers of $\pi_3$ over $0,\infty\in 
\mathbf{P}^1_3$,
$$
Y_0 =  \mathbf{P}^1_1\times
\com\times \{0 \}, \ \ \ 
Y_\infty = \mathbf{P}^1_1 \times
\com\times \{\infty\} .$$
Both $Y_0$ and $Y_\infty$ are preserved by the $\mathbf{T}$-action.
Let 
$$[0],[\infty]\in H^2_{\mathbf{T}}(Y,\mathbb{Q})$$
denote the classes of $Y_0$ and $Y_\infty$ respectively.

The projection $\pi_1$ is equivariant with respect to the
projection of $\mathbf{T}$ onto $\com^*_1$.
We will view
$$\theta_j,\theta'_{j'}\in  H^*_{\com_1^*}(\mathbf{P}^1_1,\mathbb{Q})$$
as classes in $H^*_{\mathbf{T}}(Y,\mathbb{Q})$ via pull-back
by $\pi_1$.

Since $Y_\infty$ is preserved
by the $\mathbf{T}$-action, we can define
\begin{multline}\label{hhhw}
{\mathsf Z}^{\mathsf{P}}_{\beta,\eta} 
\left(   \prod_{j=1}^k \tau_{i_j}(\theta_j[0]) \
\prod_{j'=1}^{{k'}} \tau_{i'_{j'}}(\theta_{j'}'[\infty])
\right)^{Y/Y_\infty,\mathbf{T}}
 = \\ 
\sum _{n\in \Z }q^{n}
\int _{[P_{n} (Y/Y_\infty,\beta)_\eta]^{vir}}
  \prod_{j=1}^k \tau_{i_j}(\theta_j[0]) \
\prod_{j'=1}^{k'} \tau_{i'_{j'}}(\theta'_{j'}[\infty])
\ ,
\end{multline}
by $\mathbf{T}$-equivariant residues.
Here, $\beta \in H_2(Y,\mathbb{Z})$ is a curve
class (specified by degrees along the two $\Pp$-factors),
and
$\eta$ is a boundary condition along $Y_\infty$.
The parallel Gromov-Witten partition function is
\begin{multline}\label{hhhgw}
{\mathsf Z}'^{\,\mathsf{GW}}_{\beta,\eta} 
\left(\   \overline{\prod_{j=1}^k \tau_{i_j}(\theta_j[0]) \
\prod_{j'=1}^{{k'}} \tau_{i'_{j'}}(\theta_{j'}'[\infty])}\
\right)^{Y/Y_\infty,\mathbf{T}}
 =\\ 
\sum _{g\in \Z } u^{2g-2}
\int _{[\overline{M}_{g,\star}' (Y/Y_\infty,\beta)_\eta]^{vir}}
  \overline{\prod_{j=1}^k \tau_{i_j}(\theta_j[0]) \
\prod_{j'=1}^{k'} \tau_{i'_{j'}}(\theta_{j'}'[\infty])}
\ .
\end{multline}

Our goal here is to prove the relative descendent
correspondence of Conjecture \ref{ttt444}
for the fully $\mathbf{T}$-equivariant partition
functions \eqref{hhhw} and \eqref{hhhgw}.

\begin{Theorem}
\label{ppp555} 
For the relative geometry $Y/Y_\infty$, we have
$$\ZZ^{\mathsf{P}}_{\beta,\eta}\Big(
\prod_{j=1}^k \tau_{i_j}(\theta_j[0]) \
\prod_{j'=1}^{{k'}} \tau_{i'_{j'}}(\theta_{j'}'[\infty])
\Big)^{Y/Y_\infty,\mathbf{T}} \ \in \mathbb{Q}(q,s_1,s_2,s_3)$$
and the correspondence 
\begin{multline*}
(-q)^{-d_\beta/2}\ZZ^{\mathsf{P}}_{\beta,\eta}\Big(
\prod_{j=1}^k \tau_{i_j}(\theta_j[0]) \
\prod_{j'=1}^{{k'}} \tau_{i'_{j'}}(\theta_{j'}'[\infty])
\Big)^{Y/Y_\infty,\mathbf{T}} \\ = 
(-iu)^{d_\beta+\ell(\eta)-|\eta|}\ZZ'^{\,\mathsf{GW}}_{\beta,\eta}\Big(\
\overline{\prod_{j=1}^k \tau_{i_j}(\theta_j[0]) \
\prod_{j'=1}^{k'} \tau_{i'_{j'}}(\theta_{j'}'[\infty])}\
\Big) ^{Y/Y_\infty,\mathbf{T}}
\end{multline*}
under the variable change $-q=e^{iu}$.
\end{Theorem}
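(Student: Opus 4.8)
\noindent The plan is to view $Y/Y_\infty$ as the trivial projective bundle
$$\mathbf{P}_S = S\times \mathbf{P}^1_3 \longrightarrow S\,, \qquad S = \mathbf{P}^1_1 \times \com\,,$$
over the quasi-projective toric surface $S$, relative to the section $S_\infty = Y_\infty$, and to run the argument of Section \ref{n3n3} with the fiberwise $\com_3^*$-action (equivariant parameter $s_3$) playing the role of the scaling action \eqref{k45p}. The essential difference from Theorem \ref{ttt999} is that $S$ is \emph{non-compact}, so I retain the full torus $\mathbf{T}=\com_1^*\times\com_2^*\times\com_3^*$ throughout and define every integral by $\mathbf{T}$-residues; the $\mathbf{T}$-fixed loci are compact, being supported over the finitely many torus-fixed points of $Y$. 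First I would establish the $\com_3^*$-capped localization formula expressing \eqref{hhhw} and \eqref{hhhgw} in terms of capped contributions $\mathsf{C}^{\mathsf{P}}_0,\mathsf{C}^{\mathsf{GW}}_0$ over $Y_0$ (carrying the $[0]$-insertions) and $\mathsf{C}^{\mathsf{P}}_\infty,\mathsf{C}^{\mathsf{GW}}_\infty$ over $Y_\infty$ (carrying the $[\infty]$-insertions), exactly as in Section \ref{n3n3}. The relevant log tangent bundles are governed by the analogue of Lemma \ref{fvv}, namely $c(T_Y[-Y_\infty]) = c(\pi_3^* T_S)\cdot(1+[Y_0])$ and $c(T_Y[-Y_0-Y_\infty]) = c(\pi_3^* T_S)$.

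\noindent Next I would prove the correspondence for the capped contributions over $Y_0$ (the analogue of Proposition \ref{aaa999}) by applying the remaining $\com_1^*\times\com_2^*$-localization on $S$. The fixed points of $\com_1^*\times\com_2^*$ on $S=\mathbf{P}^1_1\times\com$ are the two points $(0,0)$ and $(\infty,0)$, and over each the capped contribution collapses to the $1$-leg capped descendent vertex of the cap geometry $N/N_\infty$ of Section \ref{xxx1}; the pulled-back classes $\theta_j,\theta'_{j'}$ enter only through their restrictions to these points. The crucial input is that Theorem \ref{ppp444} supplies the \emph{fully} $\mathbf{T}$-equivariant cap correspondence, so no specialization $s_3=0$ is needed. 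This is precisely the point at which the present section goes beyond Theorem \ref{ttt999}: in the compact case one sets the surface parameters to zero after fiberwise localization, whereas here non-compactness forbids this and forces the full cap correspondence of Theorem \ref{ppp444}.

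\noindent The correspondence over $Y_\infty$ (the analogue of Proposition \ref{aaa9999}) I would then establish by the induction of Section \ref{instrat} on the quadruple $(L_\beta,\ell(\widehat{\alpha}),\text{deg}(\widehat{\Gamma}),\theta(\nu))$, with $L_\beta$ now measuring the $\mathbf{P}^1_1$-degree. The rubber localization formulas \eqref{crubp}--\eqref{crubgw} reduce these capped contributions to rubber integrals over $Y_0$ and $Y_\infty$, and the Case I/Case II dimension analysis goes through verbatim since $c(T_Y[-Y_0-Y_\infty])=c(\pi_3^* T_S)$. In Case II the vanishing relations \eqref{vanser2}--\eqref{vanser3}, together with the nonsingularity of the resulting linear system (Proposition 6 of \cite{PP2} on the stable pairs side, transported to Gromov-Witten via the $Y_0$ correspondence just proven), determine the unknown capped contributions compatibly on both sides. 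Assembling the capped localization formula yields Theorem \ref{ppp555}; rationality in $\mathbb{Q}(q,s_1,s_2,s_3)$ follows from the rationality of the capped contributions, the stable pairs inputs being rational by \cite{part1}.

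\noindent The hard part will be the bookkeeping forced by non-compactness: one must check that the over-$Y_0$ contributions genuinely collapse onto the established full-$\mathbf{T}$ cap theory rather than its $s_3=0$ shadow, that all rubber integrals over $Y_\infty$ remain well-defined $\mathbf{T}$-residues, and that the nondegenerate linear system of Case II still applies when the base surface has only two torus-fixed points. Once these equivariant points are secured, Theorem \ref{ppp555} follows formally from Theorem \ref{ppp444} and the capped localization formulas, exactly as Theorem \ref{ttt999} followed from Propositions \ref{aaa999} and \ref{aaa9999}.
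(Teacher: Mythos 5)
Your identification of $Y/Y_\infty$ with $\mathbf{P}_S/S_\infty$ for $S=\mathbf{P}^1_1\times\com$ is geometrically correct, but the induction of Section \ref{n3n3} does not transport to a non-compact base, and this is where the proposal breaks. The engine of the induction over $S_\infty$ is the pair of vanishing relations \eqref{vanser2}--\eqref{vanser3}: a full partition function whose integrand has (complex) degree strictly less than the virtual dimension vanishes ``by compactness of the geometry.'' For $S=\mathbf{P}^1_1\times\com$ the moduli spaces are non-compact in the $\com^*_2$-direction, the series exist only as equivariant residues, and a residue of a low-degree class need not vanish (already the equivariant integral of $1$ over $\com$ is a nonzero multiple of $1/s_2$). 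Likewise the Case I vanishing of rubber integrals over $S_\infty$ uses that the rubber theory is non-equivariant and compact, which fails here, and the dual boundary conditions $\mu^\vee$ require a Poincar\'e pairing on $H^*(S_\infty)$ that is unavailable for non-compact $S_\infty$ without localization. So the analogue of Proposition \ref{aaa9999} --- the entire over-$Y_\infty$ half of your capped localization --- is not delivered by your argument. (A smaller slip: for $d_1>0$ the capped contributions over $Y_0$ are $2$-leg capped descendent vertices at $(0,0,0)$ and $(\infty,0,0)$ joined along $\mathbf{P}^1_1\times\{0\}\times\{0\}$, not $1$-leg caps; that part is still covered by \cite{PPDC}.)

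The paper's proof is structured precisely to sidestep this. It first runs the depth induction of Section \ref{xxx1} (Lemmas \ref{nnnddd}--\ref{p45q}), using rubber calculus and the full-rank matrix of $\tau_i(\mathsf{p})$-descendents to strip away all insertions $\tau_{i'_{j'}}(\theta'_{j'}[\infty])$, reducing to the $\mathbf{T}$-depth $0$ statement (Proposition \ref{ppp555p}). At depth $0$ the only new objects are the capped rubber contributions over $Y_\infty$, which carry no descendents at all; these are treated by the differential-equation technique of \cite{moop} and, crucially, by compactifying to $\mathbf{Y}=(\mathbf{P}^1)^3$ with specially chosen boundary conditions so that genuine dimension and compactness arguments become available again (Lemmas \ref{keed} and \ref{keed2}, which feed off Theorem \ref{ttt9999} and are then localized back down to the residue theory). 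To repair your outline you would need a substitute for the vanishing relations over a non-compact base; the compactification of the $\com$-factor together with the depth induction is exactly that substitute, and neither step appears in your proposal.
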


The proof of Theorem \ref{ppp555}, given in Sections
\ref{depin} -- \ref{jjcc3}, again proceeds
by induction on the depth of
the descendent theories.
A study of capped rubber for the geometry
$Y/ Y_0\cup Y_\infty$ is required for the 
base case of the induction.

\subsection{Depth induction}
\label{depin}
The proof of Theorem \ref{ppp444} can be exactly followed
to establish Theorem \ref{ppp555}. To start, we define
the two notions of depth for the geometry $Y$.

Let $S\subset Y$ be the relative divisor $\cup_i \pi_3^{-1}(p_i)$
associated
to the points $p_1,\ldots, p_r\in \mathbf{P}^1_3$.
Let $$T=\com_1^* \times \com_2^*\subset \mathbf{T}$$
be the first two factors of the 3-torus.
We consider the $T$-equivariant stable pairs theory of $Y/S$.
The $T$-{\em depth} $m$ theory of $Y/S$ consists of all
$T$-equivariant series
\begin{equation}
\label{hkkqqq}
{\mathsf Z}^{\mathsf{P}}_{\beta,\eta^1,\dots,\eta^r} 
\left( \prod_{{j'}=1}^{k'}
\tau_{i'_{j'}}(\theta_{j'}'\cdot \mathsf{1})  \  \prod_{j=1}^k \tau_{i_j}(\theta_j\cdot \mathsf{p})   
\right)^{Y/S,T}
\end{equation}
where $k' \leq m$.
Here, $\mathsf{p}\in H^2(Y,\mathbb{Z})$ denotes the
class of a fiber of $\pi_3$,
and the $\eta^i$ are partitions determining the relative conditions 
along $\pi^{-1}(p_i)$.
Following exactly the proof of Lemma \ref{nndd}, we obtain the
following result.

\begin{Lemma}\label{nnnddd}
The descendent correspondence for the $T$-depth $m$ theory of 
$Y/Y_\infty$ implies the descendent correspondence for the  $T$-depth $m$ theory of
the $Y/Y_0\cup Y_\infty$. \qed
\end{Lemma}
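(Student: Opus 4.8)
The plan is to follow the proof of Lemma \ref{rtt5} essentially verbatim, replacing the local geometry $N = \cO_{\Pp}\oplus\cO_{\Pp}$ together with its relative fibers by the geometry $Y$ together with the relative fibers of $\pi_3$. Concretely, I would prove the stronger statement that the descendent correspondence for the $T$-depth $m$ theory of $Y/Y_\infty$ implies the descendent correspondence for the $T$-depth $m$ theory of $Y/S$ for every relative divisor $S=\cup_{i=1}^r \pi_3^{-1}(p_i)$ with $p_1,\dots,p_r\in\mathbf{P}^1_3$, and then specialize to $r=2$ with $\{p_1,p_2\}=\{0,\infty\}$, which is exactly the geometry $Y/Y_0\cup Y_\infty$. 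The argument is an induction on the number $r$ of relative fibers, the base case $r=1$ being $Y/Y_\infty$, for which the correspondence holds by hypothesis.

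For the inductive step $r\to r+1$, I would apply the $\mathbf{T}$-equivariant degeneration formula to the (inductively known) $T$-depth $m$ series relative to $r$ fibers carrying an additional descendent monomial $L$ in the insertions $\tau_i(\theta\cdot\mathsf{p})$, degenerating $Y$ along $\mathbf{P}^1_3$ so as to bubble off a single cap $Y/Y_\infty$ at a point of $\mathbf{P}^1_3$ distinct from $p_1,\dots,p_r$, onto which precisely the descendents in $L$ slide. Each such expansion then equals a sum over relative conditions $\mu$ at the new node of the unknown $(r+1)$-fiber series times the cap contribution $\mathsf{Z}^{\mathsf{P}}_{d,\mu}(L)^{\mathsf{cap},T}$, together with terms in which some of the (at most $m$) descendents of the identity land on the cap. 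By induction on the depth $m$, the latter terms are already controlled, so only the terms in which all identity descendents remain on the original $Y$ need to be analyzed.

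The key input is the full rank of the matrix whose $(L,\mu)$ entry is $\mathsf{Z}^{\mathsf{P}}_{d,\mu}(L)^{\mathsf{cap},T}$, with rows indexed by the monomials $L$ in the descendents $\tau_i(\theta\cdot\mathsf{p})$, $\theta$ ranging over a basis of $H^*_{\com_1^*}(\mathbf{P}^1_1,\mathbb{Q})$, and columns indexed by the cohomology-weighted partitions $\mu$ of the $\pi_3$-degree $d$. As in Lemma \ref{rtt5}, the leading $q$-coefficients are governed by the Chern characters of the tautological bundle on the Hilbert scheme $\Hilb(Y_\infty,d)$ of the relative divisor $Y_\infty\cong\mathbf{P}^1_1\times\com$; these generate $H^*_T(\Hilb(Y_\infty,d),\mathbb{Q})$ after localization, as is visible in the $T$-fixed point basis since $\com_1^*\times\com_2^*$ acts on $\mathbf{P}^1_1\times\com$ with isolated fixed points. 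Full rank then lets me solve for the $(r+1)$-fiber series in terms of the known $r$-fiber series carrying $L$, simultaneously on the stable pairs and Gromov-Witten sides; by the compatibility of Conjecture \ref{ttt444} with the degeneration formula, these solutions respect the correspondence, completing the inductive step.

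The main obstacle is precisely the verification of this full-rank property for the new relative surface $Y_\infty\cong\mathbf{P}^1_1\times\com$ in place of the $\com^2$ occurring for the cap $N/N_\infty$; once the generation of localized equivariant cohomology by the tautological Chern characters on $\Hilb(Y_\infty,d)$ is confirmed via the fixed-point basis, the remainder of the argument is a faithful transcription of the proof of Lemma \ref{rtt5}, with the extra $\com_1^*$-cohomology labels $\theta$ on $\mathbf{P}^1_1$ carried passively through the degeneration formula.
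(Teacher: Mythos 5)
Your proposal is correct and follows essentially the same route as the paper, which simply asserts that Lemma \ref{nnnddd} is obtained ``following exactly the proof of Lemma \ref{nndd}'' --- that is, the bubbling-off-a-cap degeneration combined with the full-rank matrix argument of Lemma \ref{rtt5}, transported from $N/N_\infty$ to $Y/Y_\infty$. The one new verification you correctly flag --- full rank of the descendent/boundary-condition pairing for $\Hilb(\mathbf{P}^1_1\times\com,d)$ via the two-fixed-point basis --- is exactly the computation the paper records as Lemma \ref{cakeshop}.
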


The stable $\mathbf{T}$-{\em depth} $m$ theory of $Y/Y_\infty$ 
consists of all the
$\mathbf{T}$-equivariant series
\begin{equation}
\label{hkkqq5}
{\mathsf Z}^{\mathsf{P}}_{\beta,\eta} 
\left( 
\prod_{j=1}^k \tau_{i_j}(\theta_j[0])  \  \prod_{{j'}=1}^{k'}
\tau_{i'_{j'}}(\theta_{j'}'[\infty])  
\right)^{Y/Y_\infty,\mathbf{T}}
\end{equation}
where $k' \leq m$.

The proofs of Lemmas \ref{dummm} and \ref{p45} are formal and
remain valid
for the the geometry $Y/Y_\infty$. As a result, we obtain
the following two results.

\begin{Lemma}\label{dummmq} The 
descendent correspondence for the $\mathbf{T}$-depth $m$
theory of $Y/Y_\infty$ implies the
descendent correspondence 
for  the $T$-depth $m$ theory of the $Y/Y_\infty$. \qed
\end{Lemma}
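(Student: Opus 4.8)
The plan is to reproduce the proof of Lemma~\ref{dummm} essentially verbatim, since its only input is an Atiyah--Bott localization identity on the $\mathbf{P}^1_3$--factor that is insensitive to the product structure of $Y$. First I would record the decomposition of the identity class in $H^*_{\com_3^*}(\mathbf{P}^1_3,\mathbb{Q})$, pulled back to $Y$ via $\pi_3$,
\[
1 \;=\; -\frac{[0]}{s_3} \;+\; \frac{[\infty]}{s_3}\,,
\]
where $[0],[\infty]$ are the classes of the $\mathbf{T}$--invariant divisors $Y_0,Y_\infty$ and $s_3$ is the equivariant parameter of $\com_3^*$, together with the fact that the fiber class $\mathsf{p}$ of $\pi_3$ is the restriction of $[0]$ to $H^*_T(Y)$ at $s_3=0$.

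The key step is then a formal substitution. A $T$--depth $m$ insertion carries at most $m$ factors $\tau_{i'_{j'}}(\theta'_{j'}\cdot 1)$ and arbitrarily many factors $\tau_{i_j}(\theta_j\cdot\mathsf{p})$; I would replace
\[
\tau_{i'_{j'}}(\theta'_{j'}\cdot 1)\ \longmapsto\ -\tfrac{1}{s_3}\tau_{i'_{j'}}(\theta'_{j'}[0]) + \tfrac{1}{s_3}\tau_{i'_{j'}}(\theta'_{j'}[\infty])\,,\qquad \tau_{i_j}(\theta_j\cdot\mathsf{p})\ \longmapsto\ \tau_{i_j}(\theta_j[0])\,,
\]
and specialize $s_3=0$ at the end. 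The bookkeeping I want to highlight is that each monomial in the expansion acquires at most $m$ descendents of $[\infty]$ (one for each factor $\tau(\theta'\cdot 1)$ that selects its $[\infty]$--summand), while descendents of $[0]$ may be arbitrarily numerous; hence only $\mathbf{T}$--depth $m$ series of $Y/Y_\infty$ appear, and the hypothesis applies term by term. On the stable pairs side the substitution is immediate from linearity of the integral in the insertion classes.

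The one point requiring a little care is the Gromov--Witten side, where the same linear combination must reproduce $\overline{(\cdots)}$. For this I would check that the operation \eqref{gtte4} is multilinear in the insertion classes $\gamma_i$: each summand $\tau_{\widehat\alpha}(\widetilde{\mathsf{K}}_{\alpha_S,\widehat\alpha}\cdot\gamma_S)$ is linear in $\gamma_S=\prod_{i\in S}\gamma_i$, and since each index lies in a unique block $S$ of every set partition, the whole expression is multilinear in $(\gamma_1,\dots,\gamma_\ell)$. Consequently the substitution commutes with the bar, and the correspondence for the $\mathbf{T}$--depth $m$ theory, applied to each term and combined with the common prefactors $(-q)^{-d_\beta/2}$ and $(-iu)^{d_\beta+\ell(\eta)-|\eta|}$, yields the correspondence for the $T$--depth $m$ theory. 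The remaining subtlety, the regularity of the $s_3=0$ specialization despite the $1/s_3$ poles in the individual terms, is automatic: the combination computes the genuinely $T$--equivariant invariant, which lies in $\mathbb{Q}(q,s_1,s_2)$, and the pole cancellation occurs identically on both sides because they are assembled by one and the same multilinear substitution. Thus no genuine obstacle arises, and the lemma follows as in Lemma~\ref{dummm}.
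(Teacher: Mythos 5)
Your proposal is correct and follows exactly the paper's route: the paper proves this lemma by observing that the argument of Lemma \ref{dummm} (the decomposition $1 = -[0]/s_3 + [\infty]/s_3$ followed by the specialization $s_3=0$) is formal and carries over verbatim to $Y/Y_\infty$. The extra checks you record — multilinearity of the bar operation and regularity of the $s_3=0$ limit — are exactly the points the paper leaves implicit when calling the argument formal.
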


\begin{Lemma} The descendent correspondence for the  \label{p45q} 
the ${T}$-depth $m$
theory of the tube implies the
descendent correspondence for the
 $\mathbf{T}$-depth $m+1$ theory of the cap. \qed
\end{Lemma}

Lemmas \ref{nnnddd}--\ref{p45q} together establish a
recursion which reduces Theorem \ref{ppp555} to the
base case of the $\mathbf{T}$-depth 0 theory of $Y/Y_\infty$.

\subsection{$\mathbf{T}$-depth 0} \label{fvvf}
The last step in the proof of Theorem \ref{ppp555} is to
establish the descendent correspondence in 
the base case of $\mathbf{T}$-depth 0. 
 
\begin{Proposition}
\label{ppp555p} 
For the relative geometry $Y/Y_\infty$, we have
$$\ZZ^{\mathsf{P}}_{\beta,\eta}\Big(
\prod_{j=1}^k \tau_{i_j}(\theta_j[0])\Big)^{Y/Y_\infty,\mathbf{T}} \ \in \mathbb{Q}(q,s_1,s_2,s_3)$$
and the correspondence 
\begin{multline*}
(-q)^{-d_\beta/2}\ZZ^{\mathsf{P}}_{\beta,\eta}\Big(
\prod_{j=1}^k \tau_{i_j}(\theta_j[0]) 
\Big)^{Y/Y_\infty,\mathbf{T}} \\ =
(-iu)^{d_\beta+\ell(\eta)-|\eta|}\ZZ'^{\,\mathsf{GW}}_{d,\eta}\Big(\
\overline{\prod_{j=1}^k \tau_{i_j}(\theta_j[0])} \
\Big) ^{Y/Y_\infty,\mathbf{T}}
\end{multline*}
under the variable change $-q=e^{iu}$.
\end{Proposition}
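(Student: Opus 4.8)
The plan is to recognize $Y/Y_\infty$ as a relative projective bundle over a toric surface and then run the capped localization machinery of Section~\ref{n3n3} in the fully $\mathbf{T}$-equivariant setting. Writing $S=\mathbf{P}^1_1\times\com$, the total space is $Y=S\times\mathbf{P}^1_3$, and since $Y_\infty=S\times\{\infty\}$ there is an identification
\[
Y/Y_\infty \;=\; \mathbf{P}_S/S_\infty, \qquad L_0=L_\infty=\mathcal{O}_S,
\]
under which the fiberwise scaling $\com^*$ of Section~\ref{n3n3} is the factor $\com_3^*$ and the residual torus $T=\com_1^*\times\com_2^*$ acts on the quasi-projective toric surface $S$. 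The normal bundle $\mathcal{N}=L_0\otimes L_\infty^*$ is trivial, so $N=0$. Under this dictionary $[0]=[S_0]$, the insertions $\tau_{i_j}(\theta_j[0])$ are descendents supported over $S_0$ with weights $\theta_j\in H^*_{\com_1^*}(\mathbf{P}^1_1)\subset H^*_T(S)$, and the hypothesis of $\mathbf{T}$-depth $0$ means precisely that \emph{all} insertions lie over $S_0$.

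With this in place, I would apply the same $\com_3^*$-capped localization formula as in Section~\ref{n3n3}. It expresses \eqref{hhhw} and \eqref{hhhgw} as triple sums over splittings of the capped descendent contributions over $S_0$, the trivial capped edges evaluated in Lemma~\ref{kllw}, and the capped contributions over $S_\infty$; because every insertion lies over $S_0$, the capped contributions over $S_\infty$ carry \emph{no} descendents. The correspondence for the capped descendent contributions over $S_0$ is exactly Proposition~\ref{aaa999}: its proof reduces, via $\mathbf{T}$-localization, to the toric capped descendent vertex correspondence of \cite{PPDC}, which applies to the quasi-projective toric surface $S=\mathbf{P}^1_1\times\com$. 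The only change from the projective case is that the $\com_3^*$-fixed locus $S_0\sqcup S_\infty$ is now non-compact, so the equivariant parameters $s_1,s_2$ of $T$ are retained rather than set to $0$; this is why the answer lives in $\mathbb{Q}(q,s_1,s_2,s_3)$.

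It then remains to treat the capped contributions over $S_\infty$, which carry no descendents. These are the ``capped rubber for $Y/Y_0\cup Y_\infty$'' and, having $\ell(\widehat{\alpha})=0$, are governed by the $\ell(\widehat{\alpha})=0$ case of the capped localization for $\mathbf{P}_S/S_0\cup S_\infty$; this case requires only the edge evaluations of Lemma~\ref{kllw} together with the no-descendent relative correspondence for local curves established in \cite{mpt,lcdt}. Finally, the formal compatibility of Conjecture~\ref{ttt444} with the capped localization formula, already verified in Section~\ref{n3n3}, assembles the correspondent pieces: the gluing factors $\zz(\cdot)$, the relative shift $\ell(\eta)-|\eta|$, and the change $-q=e^{iu}$ all match between the two theories, yielding the stated correspondence together with rationality in $\mathbb{Q}(q,s_1,s_2,s_3)$.

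The main obstacle is the capped rubber over $Y_\infty$: one must confirm that, with no descendents present, these contributions genuinely reduce to the known primary/no-descendent correspondence and do not require the descendent induction of Proposition~\ref{aaa9999}, whose Case~I/Case~II dimension arguments use compactness that is unavailable for the non-compact $S$. Establishing the $Y/Y_0\cup Y_\infty$ capped rubber in the fully $\mathbf{T}$-equivariant, non-compact setting — and checking that retaining the parameters $s_1,s_2$ throughout the toric capped-vertex correspondence of \cite{PPDC} causes no loss — is the step that carries the real content here.
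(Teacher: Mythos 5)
Your setup is right and matches the paper's: write the partition functions via capped localization with respect to $\com_3^*$, observe that the capped contributions over $Y_0$ are $2$-leg capped toric descendent vertices covered by the correspondence of \cite{PPDC} (with $s_1,s_2$ retained), and observe that the capped contributions over $Y_\infty$ carry no descendent insertions. You also correctly flag the capped rubber over $Y_\infty$ as the step carrying the real content. The problem is that your proposed resolution of that step is wrong. The capped rubber contributions over $Y_\infty$ are \emph{not} the $\ell(\widehat{\alpha})=0$ case of Lemma \ref{kllw} plus the no-descendent local-curves correspondence of \cite{mpt,lcdt}. The rubber over $Y_\infty=\mathbf{P}^1_1\times\com\times\{\infty\}$ carries curve classes with positive degree along $\mathbf{P}^1_1$ (i.e., $d_1>0$), so it is a rubber theory over a non-compact \emph{surface}, not a local curve geometry; the local-curves results do not apply, and the vanishing arguments of Proposition \ref{aaa9999} fail for exactly the compactness reason you yourself identify. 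Having no descendent insertions does not make these contributions trivial.

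What the paper actually does is import the technique of \cite{moop}: the differential equations of Section 3.2 and the analysis of Section 3.4 there (Lemma 6 of \cite{moop}) show that the correspondence for the capped rubber over $Y_\infty$ is \emph{equivalent} to a non-rubber correspondence, namely Proposition \ref{ppp555pp} for the bi-relative geometry $Y/Y_0\cup Y_\infty$ with a single insertion $\tau_0(\gamma[0])$ and the $2$-torus $T=\com_1^*\times\com_2^*$. That proposition is then proven in Section \ref{jjcc3} by compactifying to $\mathbf{Y}=\mathbf{P}^1_1\times\mathbf{P}^1_2\times\mathbf{P}^1_3$, choosing special boundary conditions so that the virtual dimension count $2d_1-2$ forces the compact partition functions either to vanish or to be independent of $s_1,s_2$ (whence they reduce to Theorem \ref{ttt9999}), and then running $T$-equivariant localization to extract the desired residue invariants (Lemmas \ref{keed} and \ref{keed2}). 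None of this machinery appears in your proposal, and without it the correspondence for the capped rubber over $Y_\infty$ --- and hence Proposition \ref{ppp555p} --- is not established.
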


We can write the partition function for $Y/Y_\infty$
via capped localization for both stable pairs and
Gromov-Witten theory. The capped contributions
over $Y_0$ are 2-leg capped toric descendent vertices and
satisfy the descendent correspondence by \cite{PPDC}.
The interesting new terms in the capped localization formula
occur over $Y_\infty$ --- the capped rubber contributions.
The capped rubber contributions carry {\em no descendent
insertions}.

To prove the correspondence for the capped rubber
contributions over $Y_\infty$, we follow the technique 
developed in \cite{moop} where the capped rubber
contributions for 
$$\mathcal{A}_n \times \Pp\ / \mathcal{A}_n \times \{\infty\}$$
over $\infty$ were studied.
Via the differential equations constructed in Sections 3.2 of
\cite{moop}, the analysis of Section 3.4 can be applied to
our capped rubber contributions. The proof of 
Lemma 6 of \cite{moop} is valid here. As a result
the correspondence for the capped rubber contributions of $Y/Y_\infty$
over $Y_\infty$ is equivalent to the following non-rubber
correspondence.

We consider the relative geometry $Y\ /\ Y_0\cup Y_\infty$
with respect to the 2-torus $T$-action by the
first two factors $T\subset \mathbf{T}$.
Let $\gamma \in H_{\com^*}^*(\mathbf{P}^1_1,\mathbb{Q})$
be the class of the fixed point $\infty \in \mathbf{P}^1_1$.

\begin{Proposition}
\label{ppp555pp} 
For the relative geometry $Y/ Y_0\cup Y_\infty$, we have
$$\ZZ^{\mathsf{P}}_{\beta,\nu,\mu}\Big(\tau_0(\gamma[0])
\Big)^{Y/Y_0\cup Y_\infty,{T}} \ \in \mathbb{Q}(q,s_1,s_2)$$
and the correspondence 
\begin{multline*}
(-q)^{-d_\beta/2}\ZZ^{\mathsf{P}}_{\beta,\nu,\mu}\Big(
 \tau_0(\gamma[0])
\Big)^{Y/Y_0\cup Y_\infty,{T}} \\ =
(-iu)^{d_\beta+\ell(\nu)-|\nu|+ \ell(\mu)-|\mu|}\ZZ'^{\,\mathsf{GW}}_{\beta,\nu,\mu}\Big(\
\overline{\tau_0(\gamma[0])} \
\Big) ^{Y/Y_0\cup Y_\infty,{T}}
\end{multline*}
under the variable change $-q=e^{iu}$.
\end{Proposition}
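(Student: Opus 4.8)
The plan is to identify the relative geometry $Y/Y_0\cup Y_\infty$ with a trivial $\mathbf{P}^1$-bundle over a toric surface and to deduce the statement from the machinery already assembled for Theorem \ref{ttt9999}. Setting
\[
S=\mathbf{P}^1_1\times\com,\qquad L_0=L_\infty=\cO_S,
\]
we have $Y=\mathbf{P}_S=\mathbf{P}(\cO_S\oplus\cO_S)=S\times\mathbf{P}^1_3$, with $Y_0=S_0$ and $Y_\infty=S_\infty$ the two sections and with the fiberwise scaling action of Section \ref{rr} realized by $\com^*_3$. Under this identification the insertion $\tau_0(\gamma[0])$ is exactly a single primary descendent over $S_0$ weighted by the class $\gamma=[\infty]\in H^*_{\com^*_1}(\mathbf{P}^1_1)\subset H^*_T(S)$, with no descendent over $S_\infty$. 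Thus Proposition \ref{ppp555pp} is the special case $\ell=1$, $\alpha_1=1$ of Theorem \ref{ttt9999} for the surface $S$, once that theorem is known for $S$.

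First I would pass to the full torus $\mathbf{T}=T\times\com^*_3$ and establish the $\mathbf{T}$-equivariant correspondence for $\mathbf{P}_S/S_0\cup S_\infty$. The surface $S=\mathbf{P}^1_1\times\com$ is quasi-projective rather than projective, but its $T$-fixed locus is the two points $\{0,\infty\}\times\{0\}$ and is therefore compact, which is all the proof of Theorem \ref{ttt9999} actually uses. As recorded in Section \ref{ntss}, that proof is valid for any surface for which Proposition \ref{aaa999} holds, while the inductive argument of Proposition \ref{aaa9999}, the capped-edge evaluation of Lemma \ref{kllw}, and the capped localization formulas are formal and surface-independent. For the toric surface $S$ at hand, Proposition \ref{aaa999} follows verbatim from its original argument: $\mathbf{T}$-localization on the open sets $U_{n,\beta,\mu}$ and $\widetilde{U}_{g,\beta,\mu}$ expresses the capped contributions over $S_0$ in terms of the toric capped descendent vertices at the two $T$-fixed points of $S_0$, and these satisfy the correspondence by Theorem 8 of \cite{PPDC}. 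The only change from the compact case is that the surface parameters $s_1,s_2$ are retained, since $S$ itself is noncompact even though its fixed locus is compact; consequently the capped contributions and the full partition function land in $\mathbb{Q}(q,s_1,s_2,s_3)$, and the nonsingularity input of Proposition 6 of \cite{PP2} used in Case II of Proposition \ref{aaa9999} continues to apply.

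It remains to recover the $T$-equivariant statement by specializing $s_3=0$. This is legitimate: the insertion $\tau_0(\gamma[0])$ and the relative conditions $\nu,\mu$ are defined $\mathbf{T}$-equivariantly, and the $T$-fixed locus of $Y$ is $\{0,\infty\}_1\times\{0\}\times\mathbf{P}^1_3$, hence compact, so the $T$-equivariant residues defining both sides of Proposition \ref{ppp555pp} are simply the $s_3=0$ values of the $\mathbf{T}$-equivariant series and there is no pole along $s_3=0$. Since both the definition of $\overline{\tau_0(\gamma[0])}$ via \eqref{gtte4} and the change of variables $-q=e^{iu}$ commute with this specialization, the $\mathbf{T}$-equivariant correspondence descends to the desired identity in $\mathbb{Q}(q,s_1,s_2)$. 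I expect the main obstacle to be bookkeeping rather than conceptual: one must check at each step of the Theorem \ref{ttt9999} argument---the dimension and vanishing analysis of Case I, the compactness of the relevant $\com^*_3$-fixed loci, and the solvability of the linear system in Case II---that compactness of the $T$-fixed locus of $S$ substitutes for projectivity of $S$, all while carrying the surface parameters $s_1,s_2$ as formal variables instead of setting them to zero as in the original compact setting.
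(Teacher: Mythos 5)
Your reduction of $Y/Y_0\cup Y_\infty$ to the projective-bundle setup of Theorem \ref{ttt9999} with $S=\mathbf{P}^1_1\times\com$ is the right identification of the geometry, but the claim that the proof of Theorem \ref{ttt9999} survives with ``compactness of the $T$-fixed locus of $S$'' substituted for projectivity of $S$ is where the argument breaks. The inductive proof of Proposition \ref{aaa9999} is \emph{not} formal in this respect: Case I concludes the vanishing of the capped contributions from the fact that the rubber integrals over $S_\infty$ are non-equivariant integrals over \emph{proper} moduli spaces, so that an integrand of degree exceeding the virtual dimension forces vanishing; and Case II derives the key relations \eqref{vanser2}--\eqref{vanser3} from the vanishing of $\mathsf{Z}^{\mathsf{P}}_{\beta,\mu}\left(\tau[\lambda]\cdot\tau_{\widehat{\alpha}}(\widehat{\Gamma}_{\text{Id}})\right)$ when the integrand degree is strictly \emph{less} than the dimension of the moduli space, explicitly ``by compactness of the geometry.'' For $S=\mathbf{P}^1_1\times\com$ these moduli spaces are not proper; the series are only defined as $T$-equivariant residues, and a residue of a class whose degree mismatches the virtual dimension is a nonzero rational function of $s_1,s_2$ of the appropriate (possibly negative) degree rather than zero. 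Both vanishing inputs therefore fail, the linear system of Case II acquires unknown inhomogeneous terms, and the induction does not close. This is consistent with Section \ref{ntss} of the paper, which relaxes only the toric hypothesis on $S$ and explicitly retains projectivity.

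The paper's actual proof in Section \ref{jjcc3} is designed precisely to route around this: it compactifies $\com$ to $\mathbf{P}^1_2$, so that $\mathbf{Y}=\mathbf{P}^1_1\times\mathbf{P}^1_2\times\mathbf{P}^1_3$ is a $\mathbf{P}^1_3$-bundle over the \emph{projective} toric surface $\mathbf{P}^1_1\times\mathbf{P}^1_2$. There the dimension counts are honest vanishing statements (Lemmas \ref{keed} and \ref{keed2}), with the only nontrivial matching in the boundary dimension-zero case supplied by Theorem \ref{ttt9999} for that compact surface. The $T$-equivariant residue theory over $0\in\mathbf{P}^1_2$ --- which is the content of Proposition \ref{ppp555pp} --- is then extracted by applying $T$-localization to these compact identities, with the localization terms supported over $\infty\in\mathbf{P}^1_2$ controlled by an induction on the curve class (and by a vanishing of unconstrained curves in the case $d_3=0$). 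If you want to salvage your approach, you would have to supply a replacement for the two vanishing steps valid for residue theories on the open surface, and in effect that replacement \emph{is} the compactification argument.
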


By basic properties of the descendent correspondence \cite{PPDC},
$$\overline{\tau_0(\gamma[0])}=\tau_0(\gamma[0])\ . $$
Proposition \ref{ppp555p}
is a consequence of Proposition \ref{ppp555pp} together
with the recursion of Lemmas \ref{nnnddd} - \ref{p45q}.
Hence the proof of Theorem \ref{ppp555} will be complete
once Proposition \ref{ppp555pp} is established.

\subsection{Proof of Proposition \ref{ppp555pp}} \label{jjcc3}
The curve class $\beta\in H_2(Y,\mathbb{Z})$ 
is a linear combination of the curves
$$[\mathbf{P}_1^1] = \mathbf{P}_1^1 \times \{0\} \times \{0\}, \ \ \ \ \ 
[\mathbf{P}_3^1] =   \{0\} \times \{0\} \times \mathbf{P}_3^1\ .$$
If $\beta$ is a multiple of $[\mathbf{P}_3^1]$, then Proposition 
\ref{ppp555pp} reduces immediately to the $T$-equivariant  
descendent correspondence of local curves \cite{PPDC}.

Let $\mathbf{Y}=\mathbf{P}_1^1 \times \mathbf{P}_2^1 \times \mathbf{P}_3^1$.
We view the projective variety $\mathbf{Y}$ as 
a $\mathbf{T}$-equivariant compactification of
the quasi-projective variety $Y$,
$$\mathbf{P}_1^1\times \com \times \mathbf{P}_3^1 \subset
\mathbf{P}_1^1 \times \mathbf{P}_2^1 \times \mathbf{P}_3^1  .$$
Let $\mathbf{Y}_0$ and $\mathbf{Y}_\infty$ be the $\pi_3$-fibers of
$\mathbf{Y}$ over $0,\infty \in  \mathbf{P}_3^1$. 
Our proof of Proposition \ref{ppp555pp} will be obtained
from studying the partition functions
\begin{equation} \label{hilu}
\ZZ^{\mathsf{P}}_{\beta,\nu,\mu}\Big(
 \tau_0(\gamma[0])
\Big)^{\mathbf{Y}/\mathbf{Y}_0\cup \mathbf{Y}_\infty,{T}}, \ \ \ \
\ZZ'^{\,\mathsf{GW}}_{\beta,\nu,\mu}\Big(\
\overline{\tau_0(\gamma[0])} \
\Big) ^{  \mathbf{Y}/\mathbf{Y}_0\cup \mathbf{Y}_\infty         ,{T}}
\end{equation}
for the compact relative geometry
$\mathbf{Y}/  \mathbf{Y}_0 \cup \mathbf{Y}_\infty$.
We will consider curve classes
$$\beta = d_1[\mathbf{P}_1^1] + d_3[\mathbf{P}_3^1] \in
H_2(\mathbf{Y},\mathbb{Z})$$
for which $d_1>0$ and $d_3\geq  0$.

If $d_3>0$,
the relative conditions $\nu$ and $\mu$
in \eqref{hilu} will be taken 
to be of a special form. The relative condition
$\nu$ 
is a partition of $d_3$  weighted by $H_T^*(\mathbf{P}_1^1 \times
\mathbf{P}_2^1,\mathbb{Q})$. We {\em require} the weights 
of all the parts $\nu_i$ to be
the pull-backs of the classes the $\com^*_1$-fixed points 
$0,\infty \in \mathbf{P}_1^1$
{\em except} for the
weight of the part $\nu_1$. For $\nu_1$, we take the weight
to be the class of one of the following $T$-fixed points:
$$(0,0), (\infty,0) \in \mathbf{P}_1^1 \times
\mathbf{P}_2^1 \ .$$
For $\mu$,
we {require} {\em all} weights to be 
the pull-backs of the classes of
$0,\infty \in \mathbf{P}_1^1$

The moduli space of stable pairs $P_n(\mathbf{Y}/\mathbf{Y}_0 \cup \mathbf{Y}_\infty,\beta)_{\nu,\mu}$ has virtual dimension
$2d_1+2d_3$ minus the constraints imposed by the boundary
conditions.
The number of constraints imposed by $\nu$ is $d_3+1$ and by $\mu$ is $d_3$.
Hence, the virtual dimension of the stable pairs space is
$$2d_1+2d_3 -2d_3-1\ .$$
The integrand $\tau_0(\gamma[0])$ imposes another
constraint, so the virtual dimension of the integrals
in the  stable pairs
partition function \eqref{hilu}
is 
$2d_1 -2$. The parallel dimension analysis for the Gromov-Witten 
partition function \eqref{hilu} also yields $2d_1-2$.

\begin{Lemma} \label{keed}
For $d_3>0$ with our special boundary conditions $\nu$ and $\mu$,
we have
$$\ZZ^{\mathsf{P}}_{\beta,\nu,\mu}\Big(\tau_0(\gamma[0])
\Big)^{\mathbf{Y}/\mathbf{Y}_0 \cup \mathbf{Y}_\infty,{T}} \ \in \mathbb{Q}(q,s_1,s_2)$$
and the correspondence 
\begin{multline*}
(-q)^{-d_\beta/2}\ZZ^{\mathsf{P}}_{\beta,\nu,\mu}\Big(
 \tau_0(\gamma[0])
\Big)^{\mathbf{Y}/\mathbf{Y}_0 \cup \mathbf{Y}_\infty,{T}} \\ =
(-iu)^{d_\beta+\ell(\nu)-|\nu|+ \ell(\mu)-|\mu|}\ZZ'^{\,\mathsf{GW}}_{\beta,\nu,\mu}\Big(\
\overline{\tau_0(\gamma[0])} \
\Big) ^{\mathbf{Y}/\mathbf{Y}_0 \cup \mathbf{Y}_\infty,{T}}
\end{multline*}
under the variable change $-q=e^{iu}$.
\end{Lemma}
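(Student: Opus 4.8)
Since $\mathbf{Y}=\mathbf{P}_1^1\times\mathbf{P}_2^1\times\mathbf{P}_3^1$ is projective, the moduli spaces $P_n(\mathbf{Y}/\mathbf{Y}_0\cup\mathbf{Y}_\infty,\beta)_{\nu,\mu}$ and $\overline{M}'_{g,\star}(\mathbf{Y}/\mathbf{Y}_0\cup\mathbf{Y}_\infty,\beta)_{\nu,\mu}$ are proper, so the $T$-equivariant invariants on both sides are honest integrals, polynomial in $s_1,s_2$, and the $q$-rationality of the stable pairs series follows from the toric rationality results of \cite{part1,PP2}. Because $\overline{\tau_0(\gamma[0])}=\tau_0(\gamma[0])$, the assertion is exactly the relative GW/P correspondence for a single codimension-two insertion on the toric, doubly-relative $3$-fold $\mathbf{Y}/\mathbf{Y}_0\cup\mathbf{Y}_\infty$. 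The plan is to reduce it, by toric localization in the $\mathbf{P}_1^1$-direction, to correspondences already established, exploiting the special shape of $\nu,\mu$ and the dimension bound $2d_1-2$.

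Concretely, I would localize with respect to the $\com_1^*$-action on $\mathbf{P}_1^1$ (equivalently, degenerate $\mathbf{P}_1^1$). The $\com_1^*$-fixed loci distribute the curve over $0,\infty\in\mathbf{P}_1^1$ together with $\com_1^*$-fixed rational curves carrying the degree $d_1$. The edge contributions are the cap and tube invariants of the trivial bundle $N\to\mathbf{P}^1$ governed by Theorem \ref{ppp444} and the local-curve results \cite{lcdt,mpt}; the vertex contributions over $0,\infty\in\mathbf{P}_1^1$ have $\mathbf{P}_1^1$-degree zero and, as relative invariants of the fibre $\mathbf{P}_2^1\times\mathbf{P}_3^1$ (relative to the fibres of $\pi_3$ over $0,\infty\in\mathbf{P}_3^1$) in class $d_3[\mathbf{P}_3^1]$, are covered by the $1$-leg correspondence of \cite{moop} and the $d_3=0$ local-curve/toric-vertex correspondence of \cite{PPDC}. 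The insertion $\gamma=[\infty\in\mathbf{P}_1^1]$ localizes onto the vertex over $\infty$, and the special weights of $\nu,\mu$ — pulled back from the $\com_1^*$-fixed points of $\mathbf{P}_1^1$, with $\nu_1$ pinned to a $T$-fixed point lying over $0\in\mathbf{P}_2^1$ — are precisely those compatible with this localization.

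To assemble the pieces I would use that the descendent correspondence is defined $\mathbf{T}$-equivariantly and is compatible with the degeneration/gluing formula of Section \ref{relth}, so the prefactors $(-q)^{-d_\beta/2}$ and $(-iu)^{d_\beta+\ell(\nu)-|\nu|+\ell(\mu)-|\mu|}$ factor across vertices and edges exactly as in the capped localization formulas of Section \ref{n3n3}. The dimension count, giving virtual dimension $2d_1-2$, together with the special boundary conditions forces all but finitely many intermediate relative conditions to contribute: the integrand exceeds the virtual dimension outside a finite set of cohomology-weighted partitions of $d_3$, as in the vanishing analysis of Proposition \ref{aaa9999}. The resulting finite linear system is non-degenerate by Proposition 6 of \cite{PP2}, and the same system governs the Gromov-Witten side through the vertex and edge correspondences, so the total invariant is pinned down identically in the two theories.

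The main obstacle I anticipate is that the $\mathbf{P}_1^1$-vertex contributions still carry $\mathbf{P}_3^1$-relative (and potentially rubber) structure when $d_3>0$; reducing them to genuinely established input rests on the $1$-leg correspondence of \cite{moop} and on a careful matching of the relative conditions $\nu,\mu$ and of the $\psi$-class and rubber calculus across the two theories. Verifying that the correspondence commutes with the $\com_1^*$-localization, and that the non-degenerate linear system forces both theories to the same answer while the surface parameters $s_1,s_2$ are retained throughout, is the crux of the argument; once Lemma \ref{keed} is in hand, Proposition \ref{ppp555pp} follows by extracting the $0\in\mathbf{P}_2^1$ contribution from the compact geometry $\mathbf{Y}$.
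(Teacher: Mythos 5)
Your proposal misses the observation that makes this lemma essentially trivial, and the localization scheme you substitute for it has a genuine gap. The whole point of the dimension count carried out just before the lemma is that, after imposing the boundary conditions $\nu,\mu$ and the insertion $\tau_0(\gamma[0])$, the integrals defining both partition functions have virtual dimension $2d_1-2$. Since $\mathbf{Y}$ is projective and $\mathbf{Y}_0$, $\mathbf{Y}_\infty$ are disjoint, the moduli spaces are proper, so for $2d_1-2>0$ the $T$-equivariant pushforward to a point of a class of codimension strictly less than the virtual dimension lands in $H^{<0}_T(\pt)=0$: both sides vanish identically and there is nothing to match. For $d_1=1$ the integrals are degree-zero, hence independent of $s_1,s_2$, and the correspondence is then exactly Theorem \ref{ttt9999} applied to $\mathbf{P}_S/S_0\cup S_\infty$ with $S=\mathbf{P}^1_1\times\mathbf{P}^1_2$ toric and the fiberwise $\com^*$ being $\com^*_3$. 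That is the entire proof; no localization in the $\mathbf{P}^1_1$-direction is needed, and this is precisely why the compact geometry $\mathbf{Y}$ and the special boundary conditions were introduced.

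The gap in your route: the $\com^*_1$-localization you propose produces, as its basic building blocks, residue theories over the fixed points of $\mathbf{P}^1_1$ carrying the class $d_3[\mathbf{P}^1_3]$ together with relative conditions along both fibers of $\pi_3$ and descendent insertions. These are not covered by the $1$-leg correspondence of \cite{moop} (no descendents, no second relative direction) nor by the capped descendent vertex of \cite{PPDC} (which lives inside the full $\mathbf{T}$-residue framework rather than the $T$-equivariant relative geometry you need here). Indeed, the localization terms you would have to control are essentially the residue invariants of Proposition \ref{ppp555pp} — the very statement that Lemma \ref{keed} is being used to prove. The paper runs the logic in the opposite direction: it first establishes Lemma \ref{keed} by compactness and the dimension count, and only afterwards applies $T$-localization to Lemma \ref{keed} to extract Proposition \ref{ppp555pp}. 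As written, your argument is circular, or at best requires re-deriving the hard inputs of Sections \ref{xxx1}--\ref{xxx2} with additional relative directions. Your opening observations — properness gives honest integrals polynomial in $s_1,s_2$, and $\overline{\tau_0(\gamma[0])}=\tau_0(\gamma[0])$ — are correct and are the right starting point; you need only push the dimension count to its conclusion instead of invoking localization.
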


\begin{proof}
We can assume $d_1>0$, then $2d_1-2\geq 0$.
If $2d_1-2>0$, the both the stable pairs and Gromov-Witten 
partition functions vanish by the compactness of the 
geometry.
If $2d_1-2=0$, then both partition functions are
independent of the equivariant parameters $s_1$ and $s_2$.
The required matching then follows from Theorem \ref{ttt9999}.
\end{proof}

We can apply $T$-equivariant localization to both sides
of the correspondence of Lemma \ref{keed}. Via the action of
the second factor of $T$, the $T$-equivariant contributions occur
with $\mathbf{P}_2^1$ coordinate either $0$ or $\infty$ (remember
the curve class $\beta$ is degree 0 over $\mathbf{P}_2^1$).
The localization contributions where the entire curve
$\beta$ and all the boundary conditions lie  over $0\in \mathbf{P}_2^1$
yield{\footnote{Up to a harmless $s_2$ factor obtained
from the cohomology weight of the part $\nu_1$.}} the residue invariants 
appearing in Proposition \ref{ppp555pp}. 
All the other terms in the localization formula can be expressed
as the residue invariants of Proposition \ref{ppp555pp} (over
$0$ or $\infty\in \mathbf{P}_2^1$) 
with lesser $\beta$. Hence the $T$-equivariant localization
relation applied to Lemma \ref{keed} reduces 
Proposition \ref{ppp555pp} to the case where $d_3=0$.

To prove the $d_3=0$ case of Proposition \ref{ppp555pp}, we 
replace Lemma \ref{keed} with a different partition function.
Let 
$$\gamma_0 \in H^*_T(\mathbf{P}_1^1\times \mathbf{P}_2^1,\mathbb{Q})$$
be the class of the point $(\infty,0)$.
Alternatively, $\gamma_0$ is the intersection of $\gamma$ with the
divisor over $0\in \mathbf{P}_2^1$.
Hence, $\gamma_0$ restricted to $\mathbf{P}^1_1 \times \{0\} \times
\mathbf{P}^1_3$ is $-s_2\gamma$.

\begin{Lemma} \label{keed2}
For $d_3=0$,
we have
$$\ZZ^{\mathsf{P}}_{\beta,\emptyset,\emptyset}\Big(\tau_0(\gamma_0[0])
\Big)^{\mathbf{Y}/\mathbf{Y}_0 \cup \mathbf{Y}_\infty,{T}} \ \in \mathbb{Q}(q,s_1,s_2)$$
and the correspondence 
\begin{multline*}
(-q)^{-d_\beta/2}\ZZ^{\mathsf{P}}_{\beta,\emptyset,\emptyset}\Big(
 \tau_0(\gamma_0[0])
\Big)^{\mathbf{Y}/\mathbf{Y}_0 \cup \mathbf{Y}_\infty,{T}} \\ =
(-iu)^{d_\beta}\ZZ'^{\,\mathsf{GW}}_{\beta,\emptyset,\emptyset}\Big(\
\overline{\tau_0(\gamma_0[0])} \
\Big) ^{\mathbf{Y}/\mathbf{Y}_0 \cup \mathbf{Y}_\infty,{T}}
\end{multline*}
under the variable change $-q=e^{iu}$.
\end{Lemma}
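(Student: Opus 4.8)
The plan is to reduce Lemma~\ref{keed2} to Theorem~\ref{ttt9999} by a dimension count exactly parallel to the proof of Lemma~\ref{keed}. The first step is a geometric identification. Since the two line bundles are trivial, $\mathbf{Y}=\mathbf{P}_1^1\times\mathbf{P}_2^1\times\mathbf{P}_3^1$ is the projectivization $\mathbf{P}_S$ of $\mathcal{O}_S\oplus\mathcal{O}_S$ over the nonsingular projective toric surface $S=\mathbf{P}_1^1\times\mathbf{P}_2^1$, with $\mathbf{Y}_0=S_0$, $\mathbf{Y}_\infty=S_\infty$, and fiber class $[\mathbf{P}_3^1]=[\mathbf{P}]$. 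Hence the compact relative geometry $\mathbf{Y}/\mathbf{Y}_0\cup\mathbf{Y}_\infty$ is precisely the doubly relative toric bundle geometry to which Theorem~\ref{ttt9999} applies.

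Next I would carry out the dimension analysis on both theories. For $d_3=0$ and empty relative conditions $\nu=\mu=\emptyset$, the moduli space $P_n(\mathbf{Y}/\mathbf{Y}_0\cup\mathbf{Y}_\infty,d_1[\mathbf{P}_1^1])$ has virtual dimension $d_\beta=2d_1$, while $\gamma_0[0]$ is a point class, so that $\tau_0(\gamma_0[0])$ is an insertion of complex codimension $2$. The stable pairs partition function therefore lands in complex degree $2-2d_1$ of $\mathbb{Q}[s_1,s_2]$. Using $\overline{\tau_0(\gamma_0[0])}=\tau_0(\gamma_0[0])$ (a single $\tau_0$, so no correspondence matrix correction), the parallel count on $\overline{M}'_{g,\star}(\mathbf{Y}/\mathbf{Y}_0\cup\mathbf{Y}_\infty,\beta)$, where the extra marking compensates the higher descendent codimension, yields the same effective degree $2-2d_1$ on the Gromov-Witten side.

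The argument then splits according to the sign of $2d_1-2$. We may assume $d_1>0$. If $d_1>1$, the degree $2-2d_1$ is negative, so by properness of the relative moduli spaces over the projective variety $\mathbf{Y}$ both the stable pairs and Gromov-Witten partition functions vanish, and the correspondence holds trivially (both sides equal $0$). If $d_1=1$, both partition functions lie in complex degree $0$ and are therefore numbers, independent of the equivariant parameters $s_1,s_2$. For these numbers the required identity is furnished by Theorem~\ref{ttt9999} applied to $\mathbf{P}_S/S_0\cup S_\infty$: in degree $0$ its fiberwise-$\com^*$-equivariant correspondence becomes an identity of non-equivariant numbers, which is exactly the $T$-equivariant identity of Lemma~\ref{keed2}.

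The main point to secure is the step at $d_1=1$: one must check that the compact relative invariants are genuinely constant in all three equivariant parameters, so that the value pinned down fiberwise-equivariantly by Theorem~\ref{ttt9999} coincides with the $T$-equivariant value sought here. Properness of $\mathbf{Y}/\mathbf{Y}_0\cup\mathbf{Y}_\infty$ over the projective $\mathbf{Y}$ is what makes this possible, since it forces the degree-$0$ part to be the underlying non-equivariant integral, independent of the chosen equivariant lift of $\gamma_0[0]$. I would also verify the normalization bookkeeping, namely that $d_\beta=2d_1$ and that the corrections $\ell(\nu)-|\nu|$ and $\ell(\mu)-|\mu|$ vanish for $\nu=\mu=\emptyset$, so that the prefactors $(-q)^{-d_\beta/2}$ and $(-iu)^{d_\beta}$ match on the two sides.
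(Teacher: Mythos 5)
Your proposal is correct and follows essentially the same route as the paper: the paper's proof of this lemma simply says that the dimension analysis from the proof of Lemma~\ref{keed} applies verbatim, which is exactly what you carry out — excess dimension $2d_1-2$, vanishing by compactness for $d_1>1$, and for $d_1=1$ a degree-zero (hence non-equivariant, lift-independent) number matched by Theorem~\ref{ttt9999}. Your explicit identification of $\mathbf{Y}/\mathbf{Y}_0\cup\mathbf{Y}_\infty$ with $\mathbf{P}_S/S_0\cup S_\infty$ for the toric surface $S=\mathbf{P}^1_1\times\mathbf{P}^1_2$ and the remark that $\overline{\tau_0(\gamma_0[0])}=\tau_0(\gamma_0[0])$ are exactly the ingredients the paper relies on.
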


\begin{proof}
The dimension analysis used in the proof of Lemma \ref{keed}
is also valid here and yields the result.
\end{proof}

Finally, we can apply $T$-equivariant localization to both sides
of the correspondence of Lemma \ref{keed2}. 
The localization contributions where the entire curve
$\beta$  lies  over $0\in \mathbf{P}_2^1$
yield{\footnote{Up to a harmless $s_2$ factor obtained
from $\gamma_0$.}} the residue invariants 
appearing in Proposition \ref{ppp555pp}. 
All the other terms in the localization formula 
include unconstrained curves over $\infty\in \mathbf{P}_2^1$
with positive $[\mathbf{P}_1^1]$ components --- all such
contributions vanish.{\footnote{The proof can be obtained
inductively
from the geometry $\mathbf{Y}/\mathbf{Y}_0 \cup \mathbf{Y}_\infty$
by considering the integrand $\tau_0(\gamma_0)$.
We leave the details
as an exercise for the reader.}}
The $T$-equivariant localization
relation applied to Lemma \ref{keed2} 
completes the proof of Proposition \ref{ppp555pp}
\qed

\vspace{10pt}
Proposition \ref{ppp555pp} was the last step in the
proof of  
Proposition \ref{ppp555p}.  The proof of Proposition
\ref{ppp555p} completes
the proof of Theorem \ref{ppp555}.

\section{Bi-relative residue theories} \label{xxx3}
\subsection{Overview}
In order to prove Theorem \ref{qqq111}, we must
first extend
Theorem \ref{ttt999} to non-toric surfaces $S$ which are
projective bundles over higher genus curves (as discussed
in Section \ref{ntss}). Our strategy is to extend Proposition
\ref{aaa999} to such surfaces. The extension of Theorem \ref{ttt999} 
then follows as a consequence.

In order to extend Proposition \ref{aaa999} to
projective bundles $S$ over higher genus curves, we will
degenerate $S$. To carry out the argument,
we will require a descendent correspondence for a particular residue theory 
in a bi-relative 3-fold geometry.

\subsection{Bi-relative geometry}
Following the notation of Section \ref{jjcc3}, let
$$\mathbf{Y}= \mathbf{P}_1^1\times \mathbf{P}_2^1 \times
\mathbf{P}_3^1\ , \ \ \ 
\mathbf{Y}_\infty = 
\mathbf{P}_1^1\times \mathbf{P}_2^1 \times
\{\infty\}\ ,
$$
and let 
$\mathbf{D}_\infty\subset \mathbf{Y}$ be the divisor
$$\mathbf{D}_\infty = \mathbf{P}_1^1\times \{\infty \} \times \mathbf{P}_3^1\ .$$
We will consider the bi-relative 3-fold geometry
\begin{equation}\label{dcct2}
\mathbf{Y} \ / \ \mathbf{Y}_\infty \cup \mathbf{D}_\infty \ .
\end{equation}
Since the divisors $\mathbf{Y}_\infty$ and $\mathbf{D}_\infty$
intersect, the full stable pairs and Gromov-Witten theories of
the geometry \eqref{dcct2} are not described by standard
relative techniques \cite{IP,LR}. 

Fortunately, we are only interested here in a corner of the bi-relative geometry
\eqref{dcct2} which can be approached by standard relative
geometry --- the residue theory over $0\in \mathbf{P}_2^1$.
To define the residues over $0\in \mathbf{P}_2^1$, curves
intersecting $\mathbf{Y}_\infty \cap \mathbf{D}_\infty$ do {\em not}
arise, so the standard relative methods are sufficient.

The descendent correspondence
for residue theory of \eqref{dcct2} over $0\in \mathbf{P}_2^1$
will play a crucial role in the extension of Proposition \ref{aaa999} and
Theorem \ref{ttt999}.

\subsection{Construction I} \label{kex1}
Consider the moduli space of stable pairs
$P_n(\mathbf{Y}/\mathbf{Y}_\infty,\beta)_\eta$ with curve class
$$\beta=d_1[\mathbf{P}^1_1]+d_2[\mathbf{P}^1_2] + d_3[\mathbf{P}^1_3]\ $$
and $\com^*_1\times \com^*_2$-equivariant
relative condition $\eta$ along $\mathbf{Y}_\infty$
with cohomology weights supported on
$$\mathbf{P}_1^1 \times \{0\} \times \{\infty\} \subset 
\mathbf{Y}_\infty \ .$$  
Define the open set
$$V_{n,\beta,\eta} \subset P_n(\mathbf{Y}/\mathbf{Y}_\infty,\beta)_\eta$$ 
to be the locus of stable pairs for which
$\mathbf{D}_\infty$ is not a zero divisor.

More precisely, a stable pair in the relative
geometry $\mathbf{Y}/\mathbf{Y}_\infty$
is supported on a destabilization $\widetilde{\mathbf{Y}}$
of $\mathbf{Y}$
along $\mathbf{Y}_\infty$.
The original divisor $\mathbf{D}_\infty$ degenerates to
the reducible divisor
$$\widetilde{\mathbf{D}}_\infty= \pi_2^{-1}(\infty) \subset \widetilde{\mathbf{Y}}, \ \ \ \
\pi_2: \widetilde{\mathbf{Y}}\rightarrow \mathbf{P}_2^1 \ .$$
We define $V_{n,\beta,\eta}$ to be the open
set of stable pairs for which $\widetilde{\mathbf{D}}_\infty$
is not a zero divisor.{\footnote{The moduli space
$P_n(\mathbf{Y}/\mathbf{Y}_\infty,\beta)_\eta$ is {\em not} relative to 
$\widetilde{\mathbf{D}}_\infty$, so transversality along 
$\widetilde{\mathbf{D}}_\infty$ is a non-trivial condition.
There is no bubbling along $\widetilde{\mathbf{D}}_\infty$.}}
In other words,
the stable pair is {\em transverse} to 
$\widetilde{\mathbf{D}}_\infty$.
Via the intersection with $\widetilde{\mathbf{D}}_\infty$, we
obtain a canonical map{\footnote{The map involves possible stabilization.}}
$$\epsilon: V_{n,\beta,\eta} \rightarrow \text{Hilb}(\mathbf{P}_1^1
\times \mathbf{P}_3^1\, /\, \mathbf{P}_1^1\times \{\infty\}, d_2)\ . $$
Here, $\text{Hilb}(\mathbf{P}_1^1
\times \mathbf{P}_3^1\, /\, \mathbf{P}_1^1\times \{\infty\}, d_2)$
is the Hilbert scheme{\footnote{The 
Hilbert scheme of points of a surface relative
to a divisor is a special case of the
relative ideal sheaf moduli for 3-folds. See
 \cite{Iman} for a discussion and study.}} of $d_2$ points of the surface
$\mathbf{P}_1^1
\times \mathbf{P}_3^1$ relative to the divisor
$\mathbf{P}_1^1
\times \{\infty \}$. 

The original 3-torus $\mathbf{T}$ acting on $\mathbf{Y}$ acts
on $V_{n,\beta,\eta}$.
While $V_{n,\beta,\eta}$ is certainly not compact, the 
$\com^*_2$-fixed
point locus is compact ---
all features of the stable pair occur on
$$\widetilde{\mathbf{D}}_0=\pi_2^{-1}(0) \subset \widetilde{\mathbf{Y}}\ .$$  
A $\com^*_2$-fixed stable pair in $V_{n,\beta,\eta}$
meets $\widetilde{\mathbf{D}}_\infty$
transversely. On $\widetilde{\mathbf{Y}} \setminus 
\widetilde{\mathbf{D}}_0$, $\com^*_2$-fixed stable pairs
are simply the pull-backs of $0$-dimensional subschemes
of $\widetilde{\mathbf{D}}_\infty$.
All components of positive degree over $\mathbf{P}_1^1 \times \mathbf{P}_3^1$
of  $\com_2^*$-fixed curves associated to stable pairs
in $V_{n,\beta,\eta}$ 
lie over $0\in \mathbf{P}_2^1$.

Let $\theta_j, \theta'_{j'} \in H^*_{\com^*_1}(\mathbf{P}_1^1,\mathbb{Q})$
be as in Section \ref{erst}. Let
$$[0,0],[0,\infty] \in H^*_{\com_2^*\times \com_3^*}(\mathbf{P}_2^1\times \mathbf{P}_3^1)$$
denote the classes of the points $(0,0)$ and $(0,\infty)$
respectively.
Let $$\phi \in H^*_{\com_1^*\times \com_3^*}(
\text{Hilb}(\mathbf{P}_1^1
\times \mathbf{P}_3^1\, /\, \mathbf{P}_1^1\times \{\infty\}, d_2),\mathbb{Q})\ .$$
We define the uncapped residue descendent series 
\begin{multline*}
{\mathsf V}^{\mathsf{P}}_{\beta}
\left(   \prod_{j=1}^k \tau_{i_j}(\theta_j[0,0]) \
\prod_{j'=1}^{{k'}} \tau_{i'_{j'}}(\theta_{j'}'[0,\infty])
\right)^{\mathbf{Y}/\mathbf{Y}_\infty\cup\mathbf{D}_\infty, \,  \mathbf{T}}_{\eta,\phi}
 = \\ 
\sum _{n\in \Z }q^{n}
\int _{[V_{n,\beta,\eta}]^{vir}}
  \prod_{j=1}^k \tau_{i_j}(\theta_j[0,0]) \
\prod_{j'=1}^{k'} \tau_{i'_{j'}}(\theta'_{j'}[0,\infty]) \cup 
\epsilon^*(\phi)
\ 
\end{multline*}
by $\mathbf{T}$-equivariant residues.

\subsection{Construction II}
Next, we consider the moduli of stable
pairs for the relative geometry
\begin{equation}\label{pss39}
\mathbf{Y} \  /  \ \mathbf{Y}_\infty \cup \mathbf{D}_\infty \ .
\end{equation}
with curve classes $d_2[\mathbf{P}_2^1]$.
Since $$[\mathbf{P}_2^1]\cdot \mathbf{Y}_\infty=0\ ,$$
the curves never meet
$\mathbf{Y}_\infty$. So the delicate study of
geometry 
relative to the singularities of 
$\mathbf{Y}_\infty \cup  \mathbf{D}_\infty$ can be completely avoided.
The moduli space
$$P_{n}(\mathbf{Y} /  \mathbf{Y}_\infty \cup \mathbf{D}_\infty,
d_2[\mathbf{P}_2^1])\ $$
is easily constructed.
The projections of
the curves to 
$$\mathbf{P}_1^1 \times \{0\}\times \mathbf{P}_3^1$$
 are never allowed to meet 
$$\mathbf{P}_1^1 \times \{0\} \times \{\infty\} \subset 
\mathbf{Y}_\infty \ .$$ 
Bubbling
occurs along $\mathbf{Y}_\infty$ to keep the projections away.
The points of the resulting
moduli space corresponds
to stable pairs which meet 
$\widetilde{\mathbf{D}}_\infty$ away from the intersection with 
$\mathbf{Y}_\infty$. Hence,
the deformation theory and virtual class are standard.

The boundary conditions 
along ${\mathbf{D}}_\infty$ are defined via the 
canonical map
$$\epsilon: 
P_{n}(\mathbf{Y} /  \mathbf{Y}_\infty \cup \mathbf{D}_\infty,
d_2[\mathbf{P}_2^1])
 \rightarrow \text{Hilb}(\mathbf{P}_1^1
\times \mathbf{P}_3^1\, /\, \mathbf{P}_1^1\times \{\infty\}, d_2)\ . $$
While any element of the cohomology of $\text{Hilb}(\mathbf{P}_1^1
\times \mathbf{P}_3^1\, /\, \mathbf{P}_1^1\times \{\infty\}, d_2)$
imposes a boundary condition, special elements
corresponding to the Nakajima basis of the cohomology of the
Hilbert scheme of points in the absolute case will
play a distinguished role.

Let $\mu$  
be partition of $d_2$ weighted by the cohomology of
the surface $\mathbf{P}_1^1\times \mathbf{P}_3^1$. 
Explicitly,
\begin{equation}\label{k33k} 
\mu=\{(\mu_1,\omega_1), \ldots, (\mu_\ell,\omega_\ell)\}, \ \ 
d_2 = \sum_{i=1}^\ell \mu_i, \ \  \omega_i \in
H_{\com^*_1\times \com^*_3}^*(\mathbf{P}_1^1\times \mathbf{P}_3^1,\mathbb{Q}).
\end{equation}
Such a weighted partition determines an element
$$\mathsf{Nak}(\mu) \in H^*_{\com^*_1\times \com^*_3}( \text{Hilb}(\mathbf{P}_1^1
\times \mathbf{P}_3^1\, /\, \mathbf{P}_1^1\times \{\infty\}, d_2),
\mathbb{Q})$$
by the following construction. 
Recall
$$\big(\mathbf{P}_1^1\times \mathbf{P}_3^1\, /\, \mathbf{P}_1^1
\times \{\infty\}\big)^{\ell} \rightarrow
\mathbf{P}_1^1\times \mathbf{P}_3^1 $$
is the space of ordered points in the relative surface
geometry, see Section \ref{diagclas}.
The cohomology weights $\omega_i$ pull-back canonically to
the space of points  
$\big(\mathbf{P}_1^1\times \mathbf{P}_3^1\, /\, \mathbf{P}_1^1
\times \{\infty\}\big)^{\ell}$.
Let
$$\mathsf{C}_\mu \subset \big(\mathbf{P}_1^1\times \mathbf{P}_3^1\, /\,
\mathbf{P}_1^1
\times \{\infty\}\big)^{\ell}  \times
\text{Hilb}\big(\mathbf{P}_1^1
\times \mathbf{P}_3^1\, /\, \mathbf{P}_1^1\times \{\infty\}, d_2\big)$$
be the closure of the locus of distinct points
in $(\mathbf{P}_1^1\times \mathbf{P}_3^1\, / \, \mathbf{P}_1^1
\times \{\infty\})^{\ell}$ carrying punctual subschemes of
lengths $\mu_1, \ldots, \mu_{\ell(\mu)}$.
Let
$$\mathsf{Nak}(\mu) = \frac{1}{\zz(\mu)} \text{pr}_{2*}\big(  \mathsf{C}_\mu
\cdot \text{pr}_1^*(\omega_1\cup \ldots \cup \omega_\ell)\big)$$
with respect to the projections of
$$(\mathbf{P}_1^1\times \mathbf{P}_3^1\, /\, \mathbf{P}_1^1
\times \{\infty\})^{\ell}  \times
\text{Hilb}(\mathbf{P}_1^1
\times \mathbf{P}_3^1\, /\, \mathbf{P}_1^1\times \{\infty\}, d_2)$$
onto the first and second factors.

Let $\mathbf{D}_0\subset \mathbf{Y}$ be the divisor
lying over $0\in \mathbf{P}_2^1$.
We can also consider the rubber moduli spaces of stable pairs
$$P_n(\mathbf{Y}/\mathbf{Y}_\infty \cup \mathbf{D}_0 \cup \mathbf{D}_\infty, d_2[\mathbf{P}_2^1])^\sim$$
which arises in the boundary of 
$P_{n}(\mathbf{Y} /  \mathbf{Y}_\infty \cup \mathbf{D}_\infty,
d_2[\mathbf{P}_2^1])$ over $\mathbf{D}_\infty$.
In addition to the boundary map $\epsilon_\infty$ associated
to $\mathbf{D}_\infty$, there is also a boundary map
$$\epsilon_0: 
P_{n}(\mathbf{Y} /  \mathbf{Y}_\infty \cup \mathbf{D}_0 \cup \mathbf{D}_\infty,
d_2[\mathbf{P}_2^1])^\sim
 \rightarrow \text{Hilb}(\mathbf{P}_1^1
\times \mathbf{P}_3^1\ /\ \mathbf{P}_1^1\times \{\infty\}, d_2) $$
obtained by the intersection with $\widetilde{\mathbf{D}}_0$.

As in Section \ref{rubcon}, we have the cotangent line classes
$$\Psi_0,\Psi_\infty\in H^2_{\com^*_1\times \com^*_3}
({P_n(\mathbf{Y} /  \mathbf{Y}_\infty \cup \mathbf{D}_0\cup \mathbf{D}_\infty,
d_2[\mathbf{P}_2^1]
)}^\sim,
{\mathbb Q})\ .$$
Define the rubber series
\begin{multline*}
{\mathsf R}^{\mathsf{P}}_{d_2[\mathbf{P}_2^1]} \left(\frac{1}{-\Phi_0+s_2}\right
)^{\mathbf{T}}_{\phi,\mu}
= \\
\sum_{n\in \mathbb{Z}} q^n 
\int_{[{P_n(\mathbf{Y} /  \mathbf{Y}_\infty \cup \mathbf{D}_0\cup \mathbf{D}_\infty,
d_2[\mathbf{P}_2^1]
)}^\sim]^{vir}} \frac{1}{-\Phi_0+s_2} \cdot
\epsilon_0^*(\phi) \cup \epsilon_\infty^*(\mathsf{Nak}(\mu)) \ .
\end{multline*}
Here, $\phi 
\in H^*_{\com^*_1\times \com^*_3}( \text{Hilb}(\mathbf{P}_1^1
\times \mathbf{P}_3^1\ /\ \mathbf{P}_1^1\times \{\infty\}, d_2),
\mathbb{Q})$
is an arbitrary class.

\subsection{Definition of the bi-relative residue} \label{kex3}
We define the bi-relative capped descendent residue theory
\begin{equation*}
{\mathsf C}^{\mathsf{P}}_{0} 
\left(   \prod_{j=1}^k \tau_{i_j}(\theta_j[0,0]) \
\prod_{j'=1}^{{k'}} \tau_{i'_{j'}}(\theta_{j'}'[0,\infty]),\beta
\right)^{\mathbf{Y}/\mathbf{Y}_\infty\cup\mathbf{D}_\infty, \,  \mathbf{T}}_{\eta,\mu}
\end{equation*}
by the formula
\begin{multline*}
\sum_{i} {\mathsf V}^{\mathsf{P}}_{\beta} 
\left(   \prod_{j=1}^k \tau_{i_j}(\theta_j[0,0]) \
\prod_{j'=1}^{{k'}} \tau_{i'_{j'}}(\theta_{j'}'[0,\infty])
\right)^{\mathbf{Y}/\mathbf{Y}_\infty\cup\mathbf{D}_\infty, \,  \mathbf{T}}_{\eta,\phi_i} \\
\cdot 
q^{-d_2}\ 
{\mathsf R}^{\mathsf{P}}_{d_2[\mathbf{P}_2^1]} \left(\frac{1}{-\Phi_0+s_2}
\right)_{\phi_i^\vee,\mu}^{\mathbf{T}}
\ 
\end{multline*}
where the sum is over the components
of a $\com^*_1 \times \com^*_3$-equivariant
K\"unneth decomposition
$$\sum_{i=1}^f \phi_i \otimes \phi_i^\vee = [\Delta] \in
H_{\com^*_1\times \com^*_3}( \text{Hilb} \times \text{Hilb},\mathbb{Q})$$
 of the diagonal of $\text{Hilb}(\mathbf{P}_1^1
\times \mathbf{P}_3^1\ /\ \mathbf{P}_1^1\times \{\infty\}, d_2)$.

\subsection{Motivation}
We have given above a rigorous definition of the bi-relative capped
descendent residue theory. If we had a complete definition of  
the stable pairs theory of the bi-relative geometry
$\mathbf{Y}/  \mathbf{Y}_\infty \cup \mathbf{D}_\infty$, the definition
of 
\begin{equation}\label{gtbbn}
{\mathsf C}^{\mathsf{P}}_{0} 
\left(   \prod_{j=1}^k \tau_{i_j}(\theta_j[0,0]) \
\prod_{j'=1}^{{k'}} \tau_{i'_{j'}}(\theta_{j'}'[0,\infty]),\beta
\right)^{\mathbf{Y}/\mathbf{Y}_\infty\cup\mathbf{D}_\infty, \,  \mathbf{T}}_{\eta,\mu}
\end{equation}
as a capped residue theory would be immediate.
Since we are interested in the residue theory over $0\in \mathbf{P}_2^1$,
the stable pairs do not interact with the singularities of
$\mathbf{Y}_\infty \cup \mathbf{D}_\infty$, and we are
able to define \eqref{gtbbn} by hand.

\subsection{Gromov-Witten theory}
Following every step of the stable pairs construction,
we can also define a bi-relative capped
descendent residue theory for stable maps,
\begin{equation}\label{gtbbnn}
{\mathsf C}^{\mathsf{GW}}_{0} 
\left(   \prod_{j=1}^k \tau_{i_j}(\theta_j[0,0]) \
\prod_{j'=1}^{{k'}} \tau_{i'_{j'}}(\theta_{j'}'[0,\infty]),\beta
\right)^{\mathbf{Y}/\mathbf{Y}_\infty\cup\mathbf{D}_\infty, \,  \mathbf{T}}_{\eta,\mu} \ .
\end{equation}
Moreover, the depth induction techniques of Sections
\ref{xxx1} -- \ref{xxx2} applied to both  the descendent
insertions and to the parts of $\mu$ yield the
following correspondence.

\begin{Theorem}
\label{ppp666} 
We have
$$\mathsf{C}^{\mathsf{P}}_{0}\Big(
\prod_{j=1}^k \tau_{i_j}(\theta_j[0,0]) \
\prod_{j'=1}^{{k'}} \tau_{i'_{j'}}(\theta_{j'}'[0,\infty]),\beta
\Big)^{\mathbf{Y}/
\mathbf{Y}_\infty\cup\mathbf{D}_\infty,\mathbf{T}}_{\eta,\mu} \ \in \mathbb{Q}(q,s_1,s_2,s_3)$$
and the correspondence 
\begin{multline*}
(-q)^{-d_\beta/2}\mathsf{C}^{\mathsf{P}}_{0}\Big(
\prod_{j=1}^k \tau_{i_j}(\theta_j[0,0]) \
\prod_{j'=1}^{{k'}} \tau_{i'_{j'}}(\theta_{j'}'[0,\infty]),\beta
\Big)^{\mathbf{Y}/\mathbf{Y}_\infty\cup\mathbf{D}_\infty,\mathbf{T}}_{\eta,\mu} = 
\\
(-iu)^{d_\beta+\ell(\eta)-|\eta|+\ell(\mu)-|\mu|}\mathsf{C}^{\mathsf{GW}}_{0}\Big(\
\overline{\prod_{j=1}^k \tau_{i_j}(\theta_j[0,0]) \
\prod_{j'=1}^{k'} \tau_{i'_{j'}}(\theta_{j'}'[0,\infty])}\ ,\beta
\Big) ^{\mathbf{Y}/\mathbf{Y}_\infty\cup\mathbf{D}_\infty,\mathbf{T}}_{\eta,\mu}
\end{multline*}
under the variable change $-q=e^{iu}$.
\end{Theorem}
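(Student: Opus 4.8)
The plan is to carry out the depth-induction machinery of Sections~\ref{xxx1} and~\ref{xxx2} as a \emph{double} induction: simultaneously on the $\mathbf{T}$-depth of the descendent insertions supported over $[0,\infty]$ (the relative direction $\pi_3$) and on the number and cohomological complexity of the parts of the boundary condition $\mu$ along $\mathbf{D}_\infty$ (the relative direction $\pi_2$). Exactly as in Section~\ref{depin}, I set $T=\com^*_1\times\com^*_2$ and $\mathbf{T}=T\times\com^*_3$, so that the factor $\com^*_3$ scaling $\mathbf{P}^1_3$ governs the passage between $T$-depth and $\mathbf{T}$-depth, while $\com^*_2$ produces the residue over $0\in\mathbf{P}^1_2$. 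The base case is $\mathbf{T}$-depth $0$ together with $\mu$ of trivial type, where the bi-relative residue degenerates to geometries already proven correspondent.

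First I would transcribe the structural Lemmas~\ref{nndd}, \ref{dummm}, and~\ref{p45} into the present setting to control the descendent insertions over $[0,\infty]$. Their proofs are formal consequences of the $\mathbf{T}$-equivariant localization formula writing the cap in terms of the tube in the $\mathbf{P}^1_3$ direction, together with the identity $1=-[0]/s_3+[\infty]/s_3$ on $\mathbf{P}^1_3$. The added $\pi_2$-relative structure and the factors $\epsilon^*(\phi)$ and $\epsilon_\infty^*(\mathsf{Nak}(\mu))$ entering the definitions of $\mathsf{V}^{\mathsf{P}}$ and $\mathsf{R}^{\mathsf{P}}$ are inert under the $\com^*_3$-rubber calculus over $\mathbf{P}^1_3$, so these lemmas survive unchanged and reduce the insertions over $[0,\infty]$ to $\mathbf{T}$-depth $0$.

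Next I would run the parallel induction on the parts of $\mu$ in the $\pi_2$ direction. Degenerating $\mathbf{P}^1_2$ by bubbling off a cap away from $0,\infty\in\mathbf{P}^1_2$, the class $\mathsf{Nak}(\mu)$ distributes its parts across the degeneration, and the full-rank property of the leading-coefficient matrix $M_{d_2}$ exploited in Lemma~\ref{rtt5} lets me trade each part of $\mu$ against descendent insertions of the fiber class on the bubbled cap. Peeling off parts in this way, while keeping the K\"unneth splitting $\sum_i\phi_i\otimes\phi_i^\vee$ and the rubber series $\mathsf{R}^{\mathsf{P}}_{d_2[\mathbf{P}^1_2]}$ built into the definition of $\mathsf{C}^{\mathsf{P}}_0$ in Section~\ref{kex3}, reduces $\mu$ to trivial type. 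At the base of both inductions only the descendents $\tau_{i_j}(\theta_j[0,0])$ remain with no residual relative complexity, and the required matching then follows from Theorem~\ref{ppp555} for $Y/Y_\infty$ (after forgetting $\mathbf{D}_\infty$) together with the local-curve and capped-vertex correspondences of~\cite{PPDC,moop}.

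The step I expect to be the main obstacle is checking the compatibility of the descendent correspondence with the two rubber calculi at once --- the $\com^*_3$-rubber over $\mathbf{Y}_\infty$ carrying $\eta$, and the $\com^*_2$-rubber over $\mathbf{D}_\infty$ encoded in the series $\mathsf{R}^{\mathsf{P}}_{d_2[\mathbf{P}^1_2]}$ with cotangent integrand $1/(-\Psi_0+s_2)$ carrying $\mu$. One must verify that the two dimension shifts $(-iu)^{\ell(\eta)-|\eta|}$ and $(-iu)^{\ell(\mu)-|\mu|}$ emerge with the correct exponents after the K\"unneth gluing of $\phi_i$ with $\phi_i^\vee$, and that the log tangent bundle identification over the corner (the analogue of Lemma~\ref{fvv}, which makes only pull-back classes appear near the relative divisors) keeps the interchange of degree between the cotangent lines $\Psi_0,\Psi_\infty$ and the Chern classes consistent on both the stable pairs and Gromov-Witten sides. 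Once this two-parameter degree bookkeeping is verified, the double induction closes and Theorem~\ref{ppp666} follows.
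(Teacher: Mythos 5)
Your first induction---reducing the descendent insertions $\tau_{i'_{j'}}(\theta'_{j'}[0,\infty])$ to $\mathbf{T}$-depth $0$ by the cap/tube localization in the $\pi_3$ direction---is exactly what the paper does. The gap is in your treatment of $\mu$. The paper does \emph{not} run a second induction in the $\pi_2$ direction at all: it restricts the cohomology weights of $\mu$ to be of the form $\gamma_i[0]$ or $\gamma_i[\infty]$, where $[0],[\infty]$ are the $\com^*_3$-fixed points of $\mathbf{P}^1_3$, and then folds the number of parts weighted by $[\infty]$ into the \emph{same} $\pi_3$-depth count as the descendents. The parts of $\mu$ over $\infty\in\mathbf{P}^1_3$ rigidify the $\pi_3$-rubber exactly as the insertions $\tau(\theta'[0,\infty])$ do, so the single induction of Lemmas \ref{nndd}--\ref{p45} reduces everything to the case where all descendents \emph{and} all parts of $\mu$ lie over $0\in\mathbf{P}^1_3$.

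Your alternative---bubbling off a cap in the $\mathbf{P}^1_2$ direction and trading parts of $\mu$ against fiber-class descendents via a full-rank matrix---runs into two concrete obstructions. First, the full relative theory of $\mathbf{Y}/\mathbf{Y}_\infty\cup\mathbf{D}_\infty$ is undefined because the divisors meet; a degeneration of $\mathbf{P}^1_2$ reproduces the corner $\mathbf{Y}_\infty\cap\mathbf{D}_\infty$ on the bubble component (which still carries the proper transform of $\mathbf{Y}_\infty$), so there is no standard degeneration formula to invoke in that direction, and the curve classes on the bubble are not pure multiples of $[\mathbf{P}^1_2]$, which is precisely the hypothesis Construction II uses to avoid the corner. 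Second, the matrix $M_d$ of Lemma \ref{rtt5} concerns descendents acting on $\Hilb(\com^2,d)$; what you would need is an invertibility statement for Nakajima classes of the relative Hilbert scheme $\Hilb(\mathbf{P}^1_1\times\mathbf{P}^1_3/\mathbf{P}^1_1\times\{\infty\},d_2)$ against descendents, which is established nowhere in the paper. Finally, even granting your reduction, the base case is not Theorem \ref{ppp555} ``after forgetting $\mathbf{D}_\infty$'': for $d_2>0$ the $\mathbf{D}_\infty$-leg survives, and the base case genuinely requires the $3$-leg capped descendent vertex of \cite{PPDC} at $(0,0,0)$ and $(\infty,0,0)$ together with the capped rubber correspondence of Proposition \ref{ppp555pp} over $\infty\in\mathbf{P}^1_3$.
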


\begin{proof}
We take the relative condition $\mu$ of the form \eqref{k33k}
to have cohomology weights
$$\omega_i = \  \gamma_i[0] \ \ \text{or}  \ \ \gamma_i[\infty]$$
where $\gamma_i\in H^*_{\com^*_1}(\mathbf{P}_1^1,\mathbb{Q})$
and $[0],[\infty]\in  H^*_{\com^*_3}(\mathbf{P}_3^1,\mathbb{Q})$
are the classes of the $\com^*_3$-fixed points.

To prove Theorem \ref{ppp666},
we exactly follow the depth induction used in the proof of
Theorems \ref{ppp444} and \ref{ppp555}. The depth count has two
components:
\begin{enumerate}
\item[$\bullet$] the number of descendent insertions of the
 form $\tau_{i'_{j'}}(\theta_{j'}'[0,\infty])$,
\item[$\bullet$] the number of parts of $\mu$ with weights of
the form $\gamma[\infty]$.
\end{enumerate}

There is no difficulty in including the parts of $\mu$ over
$\infty\in \mathbf{P}_3^1$ in the $\mathbf{T}$-equivariant
localization formula
of Lemma \ref{p45}. The descendents over $\infty\in \mathbf{P}_3^1$
were used to rigidify the rubber --- we can also use the
parts of $\mu$ to rigidify the rubber.
The outcome is a reduction of Theorem \ref{ppp666} to the
base case where all the descendent insertions and parts of $\mu$
lie over $0\in \mathbf{P}_3^1$. 

Theorem \ref{ppp666} in the base case concerns only 3-leg
descendent vertices at the points 
$(0,0,0),(\infty,0,0) \in \mathbf{Y}$
and the capped rubber contributions over $\infty\in \mathbf{P}_3^1$.
The GW/P correspondence for the 3-leg descendent
vertex has been established in \cite{PPDC}. The correspondence
for the capped
rubber of $\infty \in \mathbf{P}_3^1$ been treated already in  
Section \ref{fvvf} via Proposition \ref{ppp555pp}.
\end{proof}

\subsection{Degeneration}\label{kdd9}
Let $S$ be a nonsingular projective surface equipped with two line bundles
$L_0$ and $L_\infty$. Let
$$\pi:\mathbf{P}_S \rightarrow S$$
be the $\mathbf{P}^1$-bundle obtained from the
projectivization of the sum $L_0 \oplus L_\infty$.
The fiberwise $\com^*$-action on $\mathbf{P}_S$ leaves the divisor
 $$S_\infty = \mathbf{P}(L_\infty) \subset \mathbf{P}_S\ $$
invariant.
Let $C\subset S$ be a nonsingular curve, and let
$$\mathbf{P}_C = \pi^{-1}(C) \subset \mathbf{P}_S\ .$$

Via the fiberwise $\com^*$-action, we
can define capped bi-relative residue theories 
for the
geometry
$$\mathbf{P}_S\, /\, \mathbf{P}_C \cup S_\infty$$
for stable pairs and stable maps.
The constructions of Sections \ref{kex1}\,--\,\ref{kex3} apply here: only the
fiberwise $\com^*_2$-action was needed there to define the
bi-relative residue theories. We therefore have capped bi-relative
residue theories
\begin{equation}\label{lqq2}
{\mathsf C}^{\mathsf{P}}_{0} 
\left(   \prod_{j=1}^k \tau_{i_j}(\gamma_j) , \beta
\right)^{\mathbf{P}_S/\mathbf{P}_C \cup {S}_\infty, \,  \com^*}_{\eta,\mu}\ , \ \ \ \
{\mathsf C}^{\mathsf{GW}}_{0} 
\left(\   \overline{\prod_{j=1}^k \tau_{i_j}(\gamma_j)} 
,\beta\right)^{\mathbf{P}_S/\mathbf{P}_C \cup {S}_\infty, \,  \com^*}_{\eta,\mu}\ 
\end{equation}
where $\gamma_1,\ldots, \gamma_k \in H^*_{\com^*}(\mathbf{P}_S,\mathbb{Q})$
are classes supported on $S_0$,
$\eta$ is a $\com^*$-equivariant boundary condition along
$\mathbf{P}_C$ with support on $\mathbf{P}_C \cap S_0$,
and 
$$\text{Nak}(\mu)
\in H^*( \text{Hilb}(S\, /\, C, |\mu|),
\mathbb{Q})$$
is a Nakajima element.

The capped bi-relative residue theories occur naturally
in the degeneration formula.
Let
$$
\pi:\mathfrak{S}\to \Delta
$$
be a nonsingular $3$-fold fibered over an irreducible
nonsingular base curve $\Delta$. Let $S$ be a nonsingular
fiber, and let
$$
A \cup_{C} B
$$
be a reducible special fiber consisting of two nonsingular
surfaces intersecting transversally along a nonsingular
surface $C$.
Let
$$\mathfrak{L}_0, \mathfrak{L}_\infty \rightarrow \mathfrak{S}$$
be two line bundles.
The degeneration
\begin{equation}\label{lzxz}
 \mathbf{P}_{\mathfrak{S}} = \mathbf{P}(\mathfrak{L}_0\oplus \mathfrak{L}_\infty) \rightarrow B
\end{equation}
is a nonsingular 4-fold with a reducible fiber
$$\mathbf{P}_{S_1} \cup_{\mathbf{P}_C} \mathbf{P}_{S_2}\ .$$
Let $[\mathbf{P}] \in H_2(\mathbf{P}_{\mathfrak{S}},\mathbb{Z})$
be the curve class of the $\mathbf{P}^1$-fiber.

To write the degeneration formula corresponding to the
geometry \eqref{lzxz}, we require the following notation:
\begin{enumerate}
\item[$\bullet$]
Let 
$\beta = \beta_{S_0} + d [\mathbf{P}] \in H_2(\mathbf{P}_S,\mathbb{Z})$ where
$\beta_{S_0} \in H_2(S_0,\mathbb{Z})$.
\item[$\bullet$]
Let $\gamma_1, \ldots,\gamma_k\in H^*_{\com^*}(\mathbf{P}_{\mathfrak{S}},\mathbb{Q})$
be classes supported on $\mathfrak{S}_0$.
\item[$\bullet$] Let $\mu$ be a partition of $d$ with 
cohomology weights lying in $H^*(\mathfrak{S}_\infty,\mathbb{Q})$.
\end{enumerate}
The degeneration formula for stable pairs is
\begin{multline*}
\mathsf{C}^{\mathsf{P}}_0\Big( \prod_{j=1}^k \tau_{i_j}(\gamma_j),\beta \Big)_\mu   
^{\mathbf{P}_S/S_\infty,\com^*}
=\\
{\mathlarger{\mathlarger{\mathlarger{\sum}}}}\  
\mathsf{C}^{\mathsf{P}}_0\Big(   
\prod_{j\in J_1}\tau_{i_j}(\gamma_j),\beta_1\Big)_{\eta,\mu^1}
^{\mathbf{P}_A/\mathbf{P}_C\cup A_\infty, \com^*}\ 
(-1)^{|\eta|-\ell(\eta)}
\zz(\eta)q^{-|\eta|} 
\\
\cdot \mathsf{C}^{\mathsf{P}}_0\Big(  
{\prod_{i\in J_2}\tau_{i_j}(\gamma_j),\beta_2}\Big)_{\eta^\vee,\mu^2}
^{\mathbf{P}_B/\mathbf{P}_C\cup B_\infty, \com^*}
\ .
\end{multline*}
The sum is over all distributions of descendents,
distributions of $\mu$,
 and curve class splittings 
$$J_1\cup J_2 = \{1, \ldots, k\}, \ \  \mu= \mu^1\cup \mu^2, \ \
 \beta=\beta_1+\beta_2,$$
where we have
$$\beta_1 = \beta_{A_0}+d_1[\mathbf{P}], \ \ \beta_2=\beta_{B_0}+d_2
[\mathbf{P}]$$
with $\beta_{A_0} \in H_2(A_0,\mathbb{Z})$,
$\beta_{B_0}\in H_2(B_0,\mathbb{Z})$, and  $d_1+d_2=d$.
The sum is also over a basis $\eta$ of $H^*_{\com^*}(\mathbf{P}_C\setminus
\mathbf{P}_C\cap \mathfrak{S}_\infty,\mathbb{Q})$
supported on $\mathbf{P}_C\cap \mathfrak{S}_0$.

The above degeneration formula is straightforward consequence
of the standard degeneration formula for stable pairs
residue theories
and the definition of the 
bi-relative integrals.{\footnote{Since the curves of the
residue theory with positive $S$-degree lie in $S_0$ before degeneration,
the limits lie in $A_0$ and $B_0$ after degeneration.}}
We leave the details to the reader.

The degeneration formula for Gromov-Witten theory
takes a parallel form,
\begin{multline*}
\mathsf{C}^{\mathsf{GW}}_0\Big( \overline{\prod_{j=1}^k \tau_{i_j}(\gamma_j)},\beta \Big)_\mu   
^{\mathbf{P}_S/S_\infty,\com^*}
=\\
{\mathlarger{\mathlarger{\mathlarger{\sum}}}}\  
\mathsf{C}^{\mathsf{GW}}_0\Big(   
\overline{\prod_{j\in J_1}\tau_{i_j}(\gamma_j)},\beta_1\Big)_{\eta,\mu^1}
^{\mathbf{P}_A/\mathbf{P}_C\cup A_\infty, \com^*}\ 
\zz(\eta)u^{2\ell(\eta)}
\\
\cdot \mathsf{C}^{\mathsf{GW}}_0\Big(  
\overline{{\prod_{i\in J_2}\tau_{i_j}(\gamma_j)}},\beta_2 \Big)_{\eta^\vee,\mu^2}
^{\mathbf{P}_B/\mathbf{P}_C\cup B_\infty, \com^*}
\, 
\end{multline*}
with the same summation conventions.
The correspondence
$${\prod_{j=1}^k \tau_{i_j}(\gamma_j)} \mapsto
\overline{\prod_{j=1}^k \tau_{i_j}(\gamma_j)}$$
is defined via the conventions of Sections \ref{pwwf} for relative
geometries. The relative diagonals and log tangent bundle
are used.

The above degeneration formulas are compatible with the
natural generalization of Conjecture \ref{ttt444} for capped 
bi-relative residue theories.

\begin{Conjecture} \label{ttt555}
 For the theories \eqref{lqq2},
we have
$$\mathsf{C}^{\mathsf{P}}_{0}\Big(
\prod_{j=1}^k \tau_{i_j}(\gamma_j),\beta
\Big)^{\mathbf{P}_S/\mathbf{P}_C \cup S_\infty, \com^*}
_{\eta,\mu} \ \in \mathbb{Q}(q,t)$$
and the correspondence 
\begin{multline*}
(-q)^{-d_\beta/2}\mathsf{C}^{\mathsf{P}}_{0}\Big(
\prod_{j=1}^k \tau_{i_j}(\gamma_j),\beta
\Big)^{{\mathbf{P}_S/\mathbf{P}_C \cup S_\infty, \com^*}}_{\eta,\mu} = 
\\
(-iu)^{d_\beta+\ell(\eta)-|\eta|+\ell(\mu)-|\mu|}\mathsf{C}^{\mathsf{GW}}_{0}\Big(\
\overline{\prod_{j=1}^k \tau_{i_j}(\gamma_j)},\beta
\Big) ^{{\mathbf{P}_S/\mathbf{P}_C \cup S_\infty, \com^*}}_{\eta,\mu}
\end{multline*}
under the variable change $-q=e^{iu}$.
\end{Conjecture}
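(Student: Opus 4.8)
The plan is to prove Conjecture \ref{ttt555} by running the degeneration formulas of Section \ref{kdd9} as an induction engine, with the product bi-relative correspondence of Theorem \ref{ppp666} serving as the base case. The essential structural input is that the stable pairs and Gromov-Witten degeneration formulas displayed in Section \ref{kdd9} have identical combinatorial shape: the same sum over descendent distributions $J_1\cup J_2$, splittings $\mu=\mu^1\cup\mu^2$, intermediate relative conditions $\eta$ along $\mathbf{P}_C$, and curve-class splittings $\beta=\beta_1+\beta_2$. Their gluing factors $(-1)^{|\eta|-\ell(\eta)}\zz(\eta)q^{-|\eta|}$ and $\zz(\eta)u^{2\ell(\eta)}$ match under $-q=e^{iu}$ once the normalizations $(-q)^{-d_\beta/2}$ and $(-iu)^{d_\beta+\ell(\eta)-|\eta|+\dots}$ are distributed across the two factors, exactly as in the general compatibility-with-degeneration statement. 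Hence if the correspondence holds for every capped bi-relative residue on the right, it holds for the one on the left.

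First I would pin down the base case. Taking $S=\mathbf{P}_1^1\times\mathbf{P}_3^1$ with the trivial bundle and $C=\mathbf{P}_1^1\times\{\infty\}$, the geometry $\mathbf{P}_S/\mathbf{P}_C\cup S_\infty$ is identified with the product geometry $\mathbf{Y}/\mathbf{Y}_\infty\cup\mathbf{D}_\infty$ under $S_\infty\leftrightarrow\mathbf{D}_\infty$ and $\mathbf{P}_C\leftrightarrow\mathbf{Y}_\infty$, with the fiberwise $\com^*$ matched to $\com^*_2$. Specializing the $\mathbf{T}$-equivariant correspondence of Theorem \ref{ppp666} by setting $s_1=s_3=0$ then yields Conjecture \ref{ttt555} in this case; the specialization is legitimate because $\mathbf{P}_1^1$ and $\mathbf{P}_3^1$ are compact, so the residues are regular at $s_1=s_3=0$ and only the fiberwise parameter $t=s_2$ survives.

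Next, for the surfaces actually required in Section \ref{ntss} — the $\mathbf{P}^1$-bundles $S\to C'$ over a curve $C'$ of genus $g$ with $L_0,L_\infty$ pulled back from $C'$ — I would induct on $g$ by degenerating the base. A nodal degeneration $C'\rightsquigarrow C'_1\cup_p C'_2$ produces a degeneration $S\rightsquigarrow A\cup_C B$ in which $C=\pi_{C'}^{-1}(p)\cong\mathbf{P}^1$ is a fiber and $A,B$ are $\mathbf{P}^1$-bundles over $C'_1,C'_2$ of strictly smaller genus. Applying the degeneration formula of Section \ref{kdd9} expresses the capped residue over $S_0$ as a sum of products of bi-relative residues of $\mathbf{P}_A/\mathbf{P}_C\cup A_\infty$ and $\mathbf{P}_B/\mathbf{P}_C\cup B_\infty$; by the genus induction these already satisfy the correspondence, which therefore propagates upward. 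The genus-$0$ base is handled directly: there $S$ and the fiber $C$ are toric, the full torus acts, and the bi-relative residue localizes to fixed-point contributions each of product type, covered by Theorem \ref{ppp666} (capped vertices, edges, and the capped rubber of Proposition \ref{ppp555pp}). Within each inductive stage I would run the two-parameter depth induction of Theorems \ref{ppp444}--\ref{ppp555}, counting both descendent insertions and parts of $\mu$ supported over $\infty$, in order to rigidify the rubber and land on the base case.

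The principal difficulty — and the reason Conjecture \ref{ttt555} stands as a conjecture in full generality — is that an arbitrary pair $(S,C)$ need not admit a degeneration into toric or product pieces, so the reduction above is unavailable for general $S$. For the projective-bundle-over-curve surfaces we need, the base-curve degeneration is explicit and the argument closes; the only genuine care is bookkeeping, namely checking that the intermediate relative conditions $\eta$ created along $\mathbf{P}_C$ carry cohomology weights of the support type for which Theorem \ref{ppp666} was proven, and that the Nakajima boundary classes $\mathsf{Nak}(\mu)$ split compatibly as $\mu=\mu^1\cup\mu^2$ under the degeneration. Both follow from the constructions of Sections \ref{kex1}--\ref{kex3} together with the support hypotheses built into Theorem \ref{ppp666}.
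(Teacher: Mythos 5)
Your overall architecture (degenerate the base curve, use Theorem \ref{ppp666} for the product geometry as the base case) matches the paper's strategy in outline, but two essential ingredients are missing, and without them the argument does not close. First, your induction never actually produces the bi-relative quantities that Conjecture \ref{ttt555} is about. The degeneration formula of Section \ref{kdd9} expresses the \emph{known} capped residue of $\mathbf{P}_S/S_\infty$ as a sum of products of the \emph{unknown} bi-relative residues of the pieces; your step ``bi-relative at lower genus $\Rightarrow$ capped residue at higher genus'' is the easy direction and only yields Proposition \ref{bbb999}-type statements. To extract the bi-relative residues themselves (both as the assertion of Conjecture \ref{ttt555} and as the inductive input for the next degeneration) one must invert a linear system indexed by the intermediate boundary conditions $\eta$ along $\mathbf{P}_C$. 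The paper supplies this inversion through Lemma \ref{cakeshop} (descendent probes $\tilde{\tau}(\mathsf{p})^{\w}$ on the product piece spanning all functions on boundary conditions) together with Proposition 6 of \cite{PP2}. You invoke no invertibility statement at all, and your base-case specialization $s_1=0$ would in fact destroy it: Lemma \ref{cakeshop} separates the fixed points over $0,\infty\in\mathbf{P}^1_1$ by $s_1$-dependent eigenvalues and a Vandermonde argument, which degenerates at $s_1=0$. This is exactly why the paper keeps the full $\com^*_1\times\com^*_2$-equivariance (only $s_3$ is dropped) throughout Sections \ref{xxx3}--\ref{phgc}.

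Second, degenerating the base curve $C'$ to a nodal or reducible curve kills the odd cohomology $H^1(C',\mathbb{Q})$, so your argument determines at most the descendents of even classes and relative conditions with even weights. When $g(C')\geq 1$ the insertions $\gamma_j$ and the weights of $\mu$ and $\eta$ genuinely include odd classes, and these do not survive the degeneration. The paper devotes a separate argument (Section \ref{godd}) to reducing the odd theory to the even theory, following the algebraicity/degeneration/monodromy/elliptic-vanishing scheme of \cite{vir}, with Lemma \ref{cakeshop} again providing the needed invertibility and with the relative weights treated on the same footing as descendent insertions. Your proposal omits this entirely. (Your observation that only the pulled-back-bundle, fiber-divisor cases are needed — and that the conjecture remains open in general — is correct and matches the paper's scope.)
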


The conditions imposed on $\beta$, $\gamma_j$, $\mu$, and $\eta$
in Conjecture \ref{ttt555}
are as discussed for the degeneration formula.

\subsection{Review}
Theorems \ref{ppp444}\,--\,\ref{ppp666} are parallel results.
The strategy of depth induction is the main idea in the
proof of Theorem \ref{ppp444} for descendents on the cap.
The base case is the correspondence for the 1-leg 
capped descendent vertex of \cite{PPDC}.
For the relative geometry 
$\Pp\times \com \times
\Pp \ /\ \mathbf{P}^1  \times \com$, the
same depth induction is valid, but the base case,
settled in Proposition \ref{ppp555p}, is new.
Finally, for the bi-relative geometry of Theorem \ref{ppp666},
the relative constraints along $\mathbf{D}_\infty$ are new.
Fortunately, the relative insertions fit into the
original depth induction.

Theorem \ref{ppp666} and the degeneration
formula of Section \ref{kdd9}  are the main technical results
which will be needed to study descendent
correspondences for projective bundles over
curves.

\section{Projective bundles over higher genus curves}
\label{phgc}

\subsection{Overview}
Let $C$ be an nonsingular projective curve of genus $g$
equipped with a rank 2 vector bundle $\Lambda\rightarrow C$ and two line bundles
$$L^C_{0},  L^C_\infty \rightarrow C\ .$$
Let $S$ be the nonsingular projective surface obtained
by the projectivization of $E$,
$$ S = \mathbf{P}(E) \stackrel{\epsilon}{\longrightarrow} X\ .$$
The 
projective bundle
\begin{equation}\label{kxx5}
\mathbf{P}_S= \mathbf{P}(\epsilon^*L^C_0 \oplus \epsilon^*L^C_\infty) \rightarrow S
\end{equation}
admits sections 
$$ S_i= \mathbf{P}(\epsilon^*L^C_i) \subset \mathbf{P}_S\ .$$
We will establish here
the 
relative descendent correspondence 
of Conjecture \ref{ttt444}
for $\mathbf{P}_S/S_\infty$.

Relative projective bundle geometries over toric surfaces
were studied in Section \ref{n3n3}. We follow here the conventions
and constructions of Section \ref{n3n3}.
Let 
$$\Gamma=(\gamma_1, \ldots, \gamma_\ell), \ \ \ \gamma_i \in H^*(S,\mathbb{Q})\ . $$ 
Since $S$ has odd cohomology, the classes $\gamma_i$ may be of odd
(real) degree.

We consider capped contributions over $S_0$ in curve class
$$\beta = \beta_0 +d[\mathbf{P}], \ \ \ \beta_0\in \text{Eff}(S_0).$$ Let $\mu$ be a
boundary condition along $S_\infty$ with $|\mu|=d$,
$$\mu=\{(\mu_1,\omega_1), \ldots, (\mu_{\ell(\mu)}, \omega_{\ell(\mu)})\}$$
with $\omega_i\in H^*(S_\infty,\mathbb{Q})$.
Again, the classes $\omega_i$ may be of odd (real) degree.

\begin{Proposition}
\label{bbb999}
The $\com^*$-equivariant descendent correspondence 
for the capped contributions over $S_0$ holds for
the
geometry \eqref{kxx5}:  
$$\mathsf{C}^{\mathsf{P}}_0\left(\tau_\alpha(\Gamma_0), \beta\right)_\mu 
^{\mathbf{P}_S/S_\infty,\com^*}
\in
\mathbb{Q}(q,t)$$ and we have
\begin{equation*}
(-q)^{-d_{\beta}/2}\,
\mathsf{C}^{\mathsf{P}}_0\left(\tau_\alpha(\Gamma_0), \beta\right)_\mu
^{\mathbf{P}_S/S_\infty,\com^*}
 =
(-iu)^{d_{\beta}+\ell(\mu)-|\mu|}\,
\mathsf{C}^{\mathsf{GW}}_0\Big(\overline{\tau_\alpha(\Gamma_0)}, \beta\Big)_\mu
^{\mathbf{P}_S/S_\infty,\com^*}
\end{equation*}
under the variable change $-q=e^{iu}$.
\end{Proposition}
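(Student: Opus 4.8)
The plan is to reduce Proposition \ref{bbb999} to the bi-relative residue correspondence of Theorem \ref{ppp666} by degenerating the base surface $S = \mathbf{P}(\Lambda)$. Since $S$ is a $\mathbf{P}^1$-bundle over the genus $g$ curve $C$, I would first degenerate $C$ to a nodal curve, repeatedly splitting off rational components until the genus is reduced to zero. At each stage this induces a degeneration of the surface $S$ into two pieces $A \cup_{C'} B$ glued along a fiber $C' \cong \mathbf{P}^1$ of $\epsilon$, and correspondingly a degeneration of the total space $\mathbf{P}_S$ into $\mathbf{P}_A \cup_{\mathbf{P}_{C'}} \mathbf{P}_B$. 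The key point is that the capped contribution $\mathsf{C}^{\mathsf{P}}_0(\tau_\alpha(\Gamma_0),\beta)_\mu^{\mathbf{P}_S/S_\infty,\com^*}$ obeys exactly the degeneration formula of Section \ref{kdd9}, expressing it as a sum of products of capped bi-relative residue theories $\mathsf{C}^{\mathsf{P}}_0(\cdots)^{\mathbf{P}_A/\mathbf{P}_{C'}\cup A_\infty,\com^*}_{\eta,\mu^1}$ and $\mathsf{C}^{\mathsf{P}}_0(\cdots)^{\mathbf{P}_B/\mathbf{P}_{C'}\cup B_\infty,\com^*}_{\eta^\vee,\mu^2}$, with the identical formula on the Gromov-Witten side. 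Because the two degeneration formulas match term by term (the gluing factors transform correctly under $-q=e^{iu}$), it suffices to verify the bi-relative correspondence of Conjecture \ref{ttt555} for each building block appearing after full degeneration.

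The building blocks are of two types. The first type consists of the bi-relative residue theories for $\mathbf{P}_A/\mathbf{P}_{C'}\cup A_\infty$ where $A$ is a $\mathbf{P}^1$-bundle over a genus $0$ curve — i.e.\ over $\mathbf{P}^1$. Such an $A$ is a Hirzebruch surface, hence toric, so these pieces are governed by the capped bi-relative correspondence for toric projective bundles; the relevant statement is precisely Theorem \ref{ppp666}, which was established by the depth induction of Sections \ref{xxx1}--\ref{xxx3} with the new base case settled through Proposition \ref{ppp555pp}. The second type consists of the residue theory on the local model near the node — the fiber $\mathbf{P}_{C'}$ over $C' \cong \mathbf{P}^1$, which reduces to the geometry $\Pp\times\com\times\Pp\,/\,\mathbf{P}^1\times\com$ of Theorem \ref{ppp555}. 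Thus every factor produced by the degeneration is covered by a correspondence already proven earlier in the paper, and reassembling them via the matching degeneration formulas yields the correspondence for $\mathbf{P}_S/S_\infty$ over $S_0$.

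The main obstacle I anticipate is the presence of \emph{odd} cohomology on $S$, flagged explicitly in the statement: both the descendent weights $\gamma_i$ and the relative weights $\omega_i$ may have odd real degree, since $H^1(C,\mathbb{Q})\neq 0$ for $g>0$. The degeneration formula must be run with the sign conventions for odd classes built into definition \eqref{gtte4}, and one has to check that the Künneth splitting of the diagonal across the node $C'$, the gluing sums over dual bases, and the permutation signs $\sigma(P)$ appearing in the correspondence rule all remain consistent under degeneration. The strategy here is the one referenced in the introduction as \cite{vir}: track the odd classes through the diagonal splittings so that the $(-1)^{\sigma(P)}$ factors on the two sides match. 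Concretely, I would verify that $\text{ev}_S^*(\Delta_{\mathsf{rel}})$ and the relative Nakajima weights distribute across $A$ and $B$ with the same signs in stable pairs and Gromov-Witten theory, so that the term-by-term equality of the degeneration formulas survives the odd-degree bookkeeping. Once this sign compatibility is checked, the reduction to Theorems \ref{ppp555} and \ref{ppp666} is formal, and Proposition \ref{bbb999} follows. As noted in Section \ref{ntss}, this in turn extends Theorem \ref{ttt999} to the non-toric surfaces $S=\mathbf{P}(\Lambda)$ needed for Theorem \ref{qqq111}.
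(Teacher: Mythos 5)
Your even-degree argument is broadly in the spirit of the paper, but two steps do not work as stated. First, the building blocks you obtain after degenerating are \emph{not} directly covered by Theorem \ref{ppp666}: that theorem treats only $\mathbf{Y}=\mathbf{P}^1_1\times\mathbf{P}^1_2\times\mathbf{P}^1_3$ relative to $\mathbf{Y}_\infty\cup\mathbf{D}_\infty$, i.e.\ the case of \emph{trivial} bundles over $\mathbf{P}^1\times\mathbf{P}^1$. A general piece $\mathbf{P}_A/\mathbf{P}_{C'}\cup A_\infty$ with $A$ a twisted Hirzebruch surface and $L_0,L_\infty$ nontrivial is not that geometry. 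The paper's route is the reverse of yours: it degenerates $R$ to the normal cone of a fiber so that all four line bundles become trivial on the $\mathbf{P}^1\times\mathbf{P}^1$ component, uses the known \emph{absolute} toric correspondence (Proposition \ref{aaa999}) together with Theorem \ref{ppp666} on the trivial piece, and then \emph{inverts} the degeneration formula to solve for the unknown twisted bi-relative theory. This inversion requires the spanning result of Lemma \ref{cakeshop} (and Proposition 6 of \cite{PP2}); without it your "both pieces are known, hence the total is known" logic has nothing to apply to, because one of the pieces is not in fact known.

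Second, and more seriously, your treatment of odd cohomology fails at the first step. If you degenerate $C$ by splitting off rational components down to genus $0$, the classes in $H^1(C,\mathbb{Q})$ become vanishing cohomology: they do not extend over the total space of the degeneration, so the degeneration formula simply cannot express a descendent of an odd class (or a relative condition with odd weight) in terms of the special fiber. No amount of sign bookkeeping for $(-1)^{\sigma(P)}$ fixes this --- there is no term on the right-hand side carrying the odd insertion. This is exactly why the paper only degenerates $C$ to a \emph{chain of elliptic curves} (no vanishing cohomology) to reduce to $g=1$, and then, in the genus $1$ model $\mathbf{P}^1\times\mathbf{P}^1\times E/\mathbf{P}^1\times\{\infty\}\times E$, invokes the four properties of \cite{vir} --- algebraicity of the virtual class, degeneration, \emph{monodromy invariance}, and the \emph{elliptic vanishing relations} --- to produce enough linear equations to determine the odd-descendent integrals from the even theory. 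That mechanism (not a sign check in the gluing formula) is the actual content of the odd case, and it is absent from your proposal.
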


The proof of Proposition \ref{bbb999} will be given in Sections \ref{ttaa7}--\ref{togg4}. 
In the toric case studied in Section \ref{n3n3}, Proposition \ref{aaa999}
was shown to formally imply Theorem \ref{ttt999}. For the
geometry \eqref{kxx5}, Proposition \ref{bbb999} implies
the descendent correspondence by the same argument.

\begin{Theorem}
\label{xxx999} 
For the relative geometry $\mathbf{P}_S/S_\infty$
associated to \eqref{kxx5}
and classes
 $\gamma_i \in H^{*}_{\com^*}(\mathbf{P}_S,\mathbb{Q})$, we have 
$$\ZZ_{\mathsf{P}}\Big(\mathbf{P}_S/S_\infty   ;q\ \Big|  
{\tau_{\alpha_1-1}(\gamma_1)\cdots
\tau_{\alpha_{\ell}-1}(\gamma_{\ell})} \ \Big| \ \mu
\Big)_\beta^{\com^*}\in \mathbb{Q}(q,t)$$
and the correspondence
\begin{multline*}
(-q)^{-d_\beta/2}\ZZ_{\mathsf{P}}\Big(\mathbf{P}_S/S_\infty   ;q\ \Big|  
{\tau_{\alpha_1-1}(\gamma_1)\cdots
\tau_{\alpha_{\ell}-1}(\gamma_{\ell})} \ \Big| \ \mu
\Big)_\beta^{\com^*} \\ =
(-iu)^{d_\beta+\ell(\mu)-|\mu|}\ZZ'_{\mathsf{GW}}\Big(\mathbf{P}_S/S_\infty
;u\ \Big|   
\ \overline{\tau_{a_1-1}(\gamma_1)\cdots
\tau_{\alpha_{\ell}-1}(\gamma_{\ell})}
\ \Big| \ \mu\Big)_\beta^{\com^*} 
\end{multline*}
under the variable change $-q=e^{iu}$. \qed
\end{Theorem}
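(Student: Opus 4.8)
The plan is to deduce Theorem \ref{xxx999} from Proposition \ref{bbb999} by precisely the formal argument used in Section \ref{n3n3} to derive Theorem \ref{ttt999} from Proposition \ref{aaa999}. The starting observation is that the capped localization formula for $\mathbf{P}_S/S_\infty$ — expressing the full relative partition function as a triple sum over splittings $\beta=\beta_0+d[\mathbf{P}]+\beta_\infty$ of capped contributions over $S_0$ and $S_\infty$, glued by $\zz(\nu)$ — is a purely formal consequence of $\com^*$-localization and uses nothing about toricity of $S$. Likewise, the capped edge terms are trivialized by Lemma \ref{kllw}, whose degeneration proof is insensitive to the surface, and $\mathbf{P}_S$ remains projective when $S=\mathbf{P}(\Lambda)$ is a projective bundle over a curve, so the compactness invoked below is available. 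Thus the correspondence for $\mathbf{P}_S/S_\infty$ reduces to matching the two families of capped contributions.

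First I would invoke Proposition \ref{bbb999} directly: it supplies the $\com^*$-equivariant correspondence for all capped contributions over $S_0$ for the geometry \eqref{kxx5}. This is the \emph{only} input where the toric hypothesis entered in Section \ref{n3n3} (through the capped vertex correspondence of \cite{PPDC}), and Proposition \ref{bbb999} is exactly the replacement obtained via the bi-relative residue theory of Section \ref{xxx3} and degeneration. Second, I would establish the correspondence over $S_\infty$ — the analogue of Proposition \ref{aaa9999} — by rerunning verbatim the induction of Section \ref{instrat} on $L_\beta$, $\ell(\widehat{\alpha})$, $\text{deg}(\widehat{\Gamma})$, and $\theta(\nu)$. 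That induction uses only the $\com^*$-localization formulas \eqref{crubp}--\eqref{crubgw} expressing capped $S_\infty$ contributions through rubber integrals; the Case I dimension vanishing, which rests on the log tangent bundle identification of Lemma \ref{fvv} (valid here since $c(T_{\mathbf{P}_S}[-S_0-S_\infty])=c(\pi^*T_S)$ for any $S$); and the Case II linear system whose coefficient matrix is nonsingular by Proposition 6 of \cite{PP2}. None of these ingredients requires $S$ to be toric, so the already-proven $S_0$ correspondence drives the induction exactly as before. Combining the two capped correspondences through the localization formula then yields both the rationality statement $\in\mathbb{Q}(q,t)$ and the correspondence of Theorem \ref{xxx999}.

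The one genuinely new feature, and the point I expect to require the most care, is the odd cohomology of $S$ coming from $H^1(C)$: both the descendent labels $\gamma_i$ and the boundary weights $\omega_i$ of $\mu$ may now be odd. I would verify that the sign conventions $(-1)^{\sigma(P)}$ built into the definition \eqref{mqq23} of $\overline{\tau_{\alpha_1-1}(\gamma_1)\cdots\tau_{\alpha_\ell-1}(\gamma_\ell)}$ are compatible with the capped localization and rubber-gluing formulas — that the Koszul signs produced when distributing odd classes across the splitting $\beta_0+d[\mathbf{P}]+\beta_\infty$, across the diagonal Künneth decomposition defining $\tau_{\widehat{\alpha}}(\gamma)$, and across the set partitions $P$ agree on the stable pairs and Gromov-Witten sides. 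This is the step most likely to conceal a sign error, but since the operator $\tau\mapsto\overline{\tau}$ was constructed to carry the same commutation rules as the original insertions and the gluing factor $\zz(\nu)$ respects the grading, I expect the signs to propagate cleanly through the induction, so that the argument of Section \ref{n3n3} applies without further obstruction.
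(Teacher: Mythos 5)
Your proposal is correct and follows the paper's own route exactly: the paper deduces Theorem \ref{xxx999} from Proposition \ref{bbb999} ``by the same argument'' used in Section \ref{n3n3} to pass from Proposition \ref{aaa999} to Theorem \ref{ttt999}, having already noted in Section \ref{ntss} that the toric hypothesis entered only through the $S_0$ capped contributions. Your added attention to the Koszul signs from odd cohomology is a reasonable precaution but does not change the argument, since the paper absorbs the odd-class issues into the proof of Proposition \ref{bbb999} itself and treats the subsequent deduction as formal.
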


The parallel descendent correspondence holds when the projective
bundle geometry $\mathbf{P}_S/ S_0 \cup S_\infty$
is taken relative to both sections.

\subsection{Torus actions} \label{ttaa7}
If $\Lambda$ splits as a sum of line bundles on $C$,
\begin{equation}\label{ksss}
\Lambda=\Lambda_0 \oplus \Lambda_\infty\ ,
\end{equation}
then $S=\mathbf{P}(\Lambda)$ admits a fiberwise $\com^*$-action
by scaling. The 3-fold 
\begin{equation}\label{kssss}
\mathbf{P}_S = \mathbf{P}(\epsilon^*L_0^C \oplus \epsilon^*L_\infty^C)
\end{equation}
then carries a 2-dimensional torus action
$$\com_1^* \times \com_2^* \times \mathbf{P}_S \rightarrow \mathbf{P}_S$$
where $\com_1^*$ is the scaling associated to the splitting
\eqref{ksss} and $\com^*_2$ is the scaling associated to the
splitting \eqref{kssss}.

In case $\Lambda$  splits, we will prove the natural
$\com_1^*\times \com_2^*$-equivariant
lift of Proposition \ref{bbb999}:
$$\mathsf{C}^{\mathsf{P}}_0\left(\tau_\alpha(\Gamma_0), \beta\right)_\mu 
^{\mathbf{P}_S/S_\infty,\com^*_1\times \com^*_2}
\in
\mathbb{Q}(q,s)$$ and we have
\begin{multline*}
(-q)^{-d_{\beta}/2}\,
\mathsf{C}^{\mathsf{P}}_0\left(\tau_\alpha(\Gamma_0), \beta\right)_\mu
^{\mathbf{P}_S/S_\infty,\com^*_1\times \com^*_2}
 = \\
(-iu)^{d_{\beta}+\ell(\mu)-|\mu|}\,
\mathsf{C}^{\mathsf{GW}}_0\Big(\overline{\tau_\alpha(\Gamma_0)}, \beta\Big)_\mu
^{\mathbf{P}_S/S_\infty,\com^*_1\times \com^*_2}
\end{multline*}
under the variable change $-q=e^{iu}$.

Since every rank 2 bundle $\Lambda$ is deformation equivalent to
a split bundle, we can assume $\Lambda$ is split in the proof of
Proposition \ref{bbb999}.
We will prove the above  $\com_1^*\times \com_2^*$-equivariant
correspondence (which of course then implies the $\com_2^*$-equivariant
statement of Proposition \ref{bbb999}).

\subsection{Invertibility}
Before proving Proposition \ref{bbb999},
we will require an auxiliary result 
for the capped residue theory
\begin{equation}\label{jxxq}
\mathsf{C}^{\mathsf{P}}_0\Big( \prod_{j=1}^k \tau_{i_j}(\theta_j[0,0]),
\ d[\mathbf{P}^1_3]
\Big)_{\eta,\emptyset}^{\mathbf{Y}/\mathbf{Y}_\infty\cup 
\mathbf{D}_\infty,\mathbf{T}}
\end{equation}
derived from the analysis of stable pairs descendents in
\cite{PPstat}.

Let ${\mathcal P}(d,2)$ be the set of
pairs of partitions $(\eta^0,\eta^\infty)$
satisfying
$$|\eta^0| + |\eta^\infty| = d .$$
We define the boundary condition 
$\eta=(\eta^0,\eta^\infty)$ of \eqref{jxxq} 
by weighting the parts of $\eta^0$ with the
class 
of the point $(0,0,\infty)\in \mathbf{Y}_\infty$
 and the parts of
$\eta^\infty$ with the class of the point $(\infty,0,\infty)\in \mathbf{Y}_\infty$.

%

Let ${\mathbb Q}^{{\mathcal P}(d,2)
}$ denote the linear
space of functions from ${\mathcal P}(d,2)$ to the field
${\mathbb Q}(q,s_1,s_2,s_3)$.
Let $\mathsf{p_0},\mathsf{p_\infty} \in 
H^*_\mathbf{\com^*_1}(\mathbf{P}_1^1
,\mathbb{Q})$
be the classes of the fixed points $0,\infty \in \mathbf{P}^1_1$.
Let 
\begin{equation}\label{gvv12}
\tilde{\tau}(\mathsf{p})= \sum_{i=0}^\infty c^0_i \tau_i(\mathsf{p}_0[0,0])
+\sum_{i=0}^\infty c^\infty_i \tau_i(\mathsf{p}_\infty[0,0])
\end{equation}
be a {\em finite}
linear combination of descendents.
For $\w\geq 0$, define a function on ${\mathcal P}(d,2)$ by:
$$\gamma_{\w}: {\mathcal P}(d,2) \rarr {\mathbb Q}(q,s_1,s_2,s_3), \ \ \
\eta \mapsto  
\mathsf{C}^{\mathsf{P}}_0\Big( 
\tilde{\tau}(\mathsf{p})^\w
,
\ d[\mathbf{P}^1_3]
\Big)_{\eta,\emptyset}^{\mathbf{Y}/\mathbf{Y}_\infty\cup 
\mathbf{D}_\infty,\mathbf{T}} \ . $$
Here, $\mathsf{C}^{\mathsf{P}}_0$ is defined by a multilinear expansion of the
insertion $\tilde{\tau}(\mathsf{p})^\w$.

\begin{Lemma} \label{cakeshop}
For $d\geq 0$, there exists a linear combination $\tilde{\tau}(\mathsf{p})$ for which
the set of functions,  
$$\{\gamma_{0}, \gamma_{1}, \gamma_{2}, \dots \},$$
spans ${\mathbb Q}^{{\mathcal P}(d,2)}$.
\end{Lemma}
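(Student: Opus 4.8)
The plan is to reduce the statement to a leading-order ($q$-adic) computation on a Hilbert scheme of points and then to a cyclic-vector/Vandermonde argument. The essential external input is the generation of the (localized) cohomology of the Hilbert scheme by Chern characters of the tautological sheaf, exactly as used in the analysis of \cite{PPstat} and recorded in the footnote discussion of Proposition 9 of \cite{part1} on the matrix $M_d$.

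First I would fix the curve class $d[\mathbf{P}^1_3]$ and the empty relative condition $\mu=\emptyset$ along $\mathbf{D}_\infty$. Since $[\mathbf{P}^1_3]\cdot\mathbf{D}_\infty=0$, curves never meet $\mathbf{D}_\infty$, so the bi-relative structure there is inert and $\mathsf{C}^{\mathsf P}_0(\,\cdot\,,d[\mathbf{P}^1_3])_{\eta,\emptyset}$ is a capped residue of the familiar cap type over $0\in\mathbf{P}^1_2$. The next step is to extract the lowest-order term in $q$: as in the footnote on $M_d$, the leading coefficient of such a capped residue is computed on $\Hilb$ of the relevant divisor, and a single descendent insertion $\tau_i(\mathsf{p}_0[0,0])$ or $\tau_i(\mathsf{p}_\infty[0,0])$ acts in this leading order as cup product by a Chern character $\ch$ of the tautological sheaf. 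The crucial point is that several insertions at the non-relative point compose in leading order as the cup product of the associated Chern characters. Writing $c(\tilde\tau)\in H^*_{\mathbf T}(\Hilb)$ for the class attached to $\tilde\tau$ by this rule, I obtain that the leading $q$-coefficient of $\gamma_{\w}(\eta)$ equals the pairing of $c(\tilde\tau)^{\cup\,\w}$ with the fixed-point (Nakajima) class indexed by $\eta\in\mathcal{P}(d,2)$.

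The spanning statement then becomes a cyclic-vector problem. After localization the relevant cohomology splits as a direct sum of lines indexed by $\mathcal{P}(d,2)$, restriction to the fixed points being an isomorphism; cup product by $c(\tilde\tau)$ is diagonal, with eigenvalue at a fixed point equal to the restriction of $c(\tilde\tau)$ there. Because the Chern characters generate this ring after localization, they separate the fixed points, so a generic finite combination $\tilde\tau=\sum_i c^0_i\tau_i(\mathsf{p}_0[0,0])+\sum_i c^\infty_i\tau_i(\mathsf{p}_\infty[0,0])$ has pairwise distinct eigenvalues over $\mathcal{P}(d,2)$, while the empty insertion ($\w=0$) restricts nontrivially to every fixed point. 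The Vandermonde matrix $\big(c(\tilde\tau)|_\eta^{\,\w}\big)_{\eta,\,0\le\w<|\mathcal{P}(d,2)|}$ is then invertible, so the leading $q$-coefficients of $\gamma_0,\dots,\gamma_{|\mathcal{P}(d,2)|-1}$ form a basis. Since spanning of a finite-dimensional space by vectors whose leading coefficients are linearly independent is preserved on passing to the full (rational) $q$-series, the functions $\{\gamma_{\w}\}$ span $\mathbb{Q}^{\mathcal{P}(d,2)}$.

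The main obstacle is the leading-order identification itself: I must verify that, in the bi-relative capped residue over $0\in\mathbf{P}^1_2$, the lowest-order term genuinely reproduces the Chern-character operators on $\Hilb$ \emph{with} the multiplicative composition for repeated insertions at the non-relative point, and that the generation and separation results of \cite{PPstat} apply in this two-point $\mathbf T$-equivariant setting with partitions concentrated at $(0,0,\infty)$ and $(\infty,0,\infty)$. Once this identification and the separation of fixed points by Chern characters are established, the genericity of the eigenvalues and the Vandermonde conclusion are routine.
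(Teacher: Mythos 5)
Your proposal is correct and follows essentially the same route as the paper: restrict to the leading $q$-coefficient, identify the descendent insertions with diagonal operators on the localized cohomology of the Hilbert scheme in the $\com_1^*\times\com_2^*$-fixed-point basis (eigenvalues being symmetric functions — power sums to leading order — in the tangent/structure-sheaf weights, as in Section 1.2 of \cite{PPstat}), choose $\tilde\tau(\mathsf{p})$ with distinct eigenvalues, and conclude by Vandermonde. The one point you flag as an obstacle — that repeated insertions compose multiplicatively in leading order — is exactly what diagonality in the fixed-point basis gives you, so it is already covered by the same citation.
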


\begin{proof}
The spanning statement concerns the linear algebra of the field of rational functions
${\mathbb Q}(q,s_1,s_2,s_3)$. We must prove non-degeneracy of the 
set $\{\gamma_{0}, \gamma_{1}, \gamma_{2}, \dots \}$.
We will require only the leading $q$ term of
$$\mathsf{C}^{\mathsf{P}}_0\Big( 
\tilde{\tau}(\mathsf{p})^\w
,
\ d[\mathbf{P}^1_3]
\Big)_{\eta,\emptyset}^{\mathbf{Y}/\mathbf{Y}_\infty\cup 
\mathbf{D}_\infty,\mathbf{T}}\ .$$
The matter is then an assertion about the 
cohomology of the Hilbert scheme of points of $\mathbf{Y}_\infty$.

After changing the basis of $\eta$ to the 
$\com_1^* \times \com_2^*$-fixed points
of the Hilbert scheme of points of $\mathbf{Y}_\infty$ at $(0,0,\infty) $ and $(\infty,0,\infty) $,
the action of $\tilde{\tau}(\mathsf{p})$
is determined in Section 1.2 of \cite{PPstat}.
The operator ${\tau}_k(\mathsf{p}_0[0,0])$
is diagonal in the fixed point basis with 
eigenvalues given by symmetric functions 
in the weights{\footnote{The weights
depend only on $s_1$ and $s_2$.}} of the structure sheaf of the
fixed point of the Hilbert scheme of $\mathbf{Y}_\infty$ 
at $(0,0,\infty)$.
Modulo lower order symmetric functions,
the eigenvalue of ${\tau}_k(\mathsf{p}_0[0,0])$
is simply the $k^{th}$ power sum.

Since the power sums determine all symmetric functions,
we can find a finite linear combination
$$\tilde{\tau}(\mathsf{p})= \sum_{i=0}^\infty c^0_i \tau_i(\mathsf{p}_0[0,0])
+\sum_{i=0}^\infty c^\infty_i \tau_i(\mathsf{p}_\infty[0,0])
$$ 
with distinct eigenvalues on the 
$\com_1^* \times \com_2^*$-fixed points
of the Hilbert scheme of $\mathbf{Y}_\infty$ at $(0,0,\infty)$ and 
$(\infty,0,\infty)$.
By the Vandermonde determinant, the Lemma is proven.
\end{proof}

Lemma \ref{cakeshop} is similar to Lemma 5.6  of \cite{vir}.
The parallel result for
$$\mathsf{C}^{\mathsf{GW}}_0\Big( 
\overline{\tilde{\tau}(\mathsf{p})^\w}
,
\ d[\mathbf{P}^1_3]
\Big)_{\eta,\emptyset}^{\mathbf{Y}/\mathbf{Y}_\infty\cup 
\mathbf{D}_\infty,\mathbf{T}}\ $$
follows from the correspondence of Theorem \ref{ppp666}.

We have used the full $\mathbf{T}$-action to prove 
Lemma \ref{cakeshop}. However, the weight $s_3$ of third factor $\com^*_3$
of $\mathbf{T}$
is not needed in the argument.
Hence, Lemma \ref{cakeshop} holds for 
$$\gamma_{\w}: {\mathcal P}(d,2) \rarr {\mathbb Q}(q,s_1,s_2), \ \ \
\eta \mapsto  
\mathsf{C}^{\mathsf{P}}_0\Big( 
\tilde{\tau}(\mathsf{p})^\w
,
\ d[\mathbf{P}^1_3]
\Big)_{\eta,\emptyset}^{\mathbf{Y}/\mathbf{Y}_\infty\cup 
\mathbf{D}_\infty,\com^*_1\times \com^*_2} \ . $$

\subsection{Even theory}
We first prove Proposition \ref{bbb999} in case all
the cohomology insertions $\gamma_j$ 
and all the cohomology weights $\omega_i$ are
of even (real) degree.

If the underlying curve
$C$ is $\mathbf{P}^1$, Proposition \ref{bbb999} specializes to
Proposition \ref{aaa999} and is established. Consider a 
fiber $F$ of 
\begin{equation}\label{vppq}
\epsilon: R=\mathbf{P}(\Lambda_0\oplus \Lambda_\infty) \rightarrow \mathbf{P}^1.
\end{equation}
We can degenerate $R$ to the normal cone of $F$,
$$R\ \leadsto \   R\cup_{F} \mathbf{P}^1 \times \mathbf{P}^1\ .$$
We can degenerate the line bundles
\begin{equation} \label{gxcs}
\Lambda_0,\ \Lambda_\infty,\  \epsilon^*L^C_0, \ \epsilon^*L^C_\infty
\end{equation}
so the restrictions to $\mathbf{P}^1\times \mathbf{P}^1$ are
all trivial.
By the restriction of Theorem \ref{ppp666} 
to the subtorus $\com_1^*\times \com_2^*$, Lemma \ref{cakeshop} restricted to
$\com_1^*\times \com_2^*$, and the
compatibility of the descendent correspondence
with the degeneration formula, we conclude
Conjecture \ref{ttt555} holds $\com^*_1\times \com^*_2$-equivariantly
for $\mathbf{P}_R/\mathbf{P}_F\cup R_\infty$.
Repeating the argument for another fiber $F'$ of $\epsilon$ shows
Conjecture \ref{ttt555} holds $\com^*_1\times \com^*_2$-equivariantly
for 
$\mathbf{P}_R/\mathbf{P}_{F\cup F'} \cup R_\infty$.

Next suppose $E$ is a genus $1$ carrying line bundles
$\Lambda_0$, $\Lambda_\infty$, $L_0^E$, and $L_\infty^E$ with
 $$\epsilon: S=\mathbf{P}(\Lambda_0\oplus \Lambda_\infty) \rightarrow  E\ .$$
We can degenerate
$E$ to a nodal rational curve. The line bundles carried by $E$
can be taken to specialize to line bundles on the nodal curve.
Since there is no vanishing even cohomology for the degeneration,
we conclude 
Conjecture \ref{ttt555} holds $\com^*_1\times \com^*_2$-equivariantly
for the genus 1 case $\mathbf{P}_S/S_\infty$ as a consequence
of the genus 0 case $\mathbf{P}_R/\mathbf{P}_{F\cup F'} \cup R_\infty$.

Since we know Conjecture \ref{ttt555} holds 
$\com^*_1\times \com^*_2$-equivariantly
for the genus 1 case $\mathbf{P}_S/S_\infty$, degeneration to
the normal cone to fibers of $\epsilon$
and Lemma \ref{cakeshop} restricted to $\com^*_1\times \com^*_2$
prove Conjecture
\ref{ttt555} holds $\com^*_1\times \com^*_2$-equivariantly
for the genus 1 cases
\begin{equation}\label{fvv3} 
\mathbf{P}_S/\mathbf{P}_F\cup S_\infty\ , \ \
\mathbf{P}_S/\mathbf{P}_{F\cup F'} \cup S_\infty\ .
\end{equation}

Finally, if $C$ is curve of arbitrary genus $g$, we can degenerate
$C$ to a chain of $g$ elliptic curves. Since there
is no vanishing cohomology, we deduce Conjecture
\ref{ttt555} in the even case from the geometries \eqref{fvv3}. \qed
\label{evven}

\subsection{Odd theory}\label{godd}
\subsubsection{Reduction to genus 1}
Suppose $C$ is a genus $g$ curve carrying line bundles
$\Lambda_0$, $\Lambda_\infty$, $L_0^C$, and $L_\infty^C$ with
 $$\epsilon: S=\mathbf{P}(E_0\oplus E_\infty) \rightarrow  C\ .$$
We now consider 
Conjecture \ref{ttt555} equivariantly with respect
to $\com_1^*\times \com_2^*$ for
classes $\gamma_j$ and $\omega_i$ in full generality.

Since the degeneration of
$C$ to a chain of genus 1 curves has no
vanishing cohomology, we may assume $C$ is of genus 1.
By the $\com_1^* \times \com_2^*$-equivariant
methods of Section \ref{evven}, Conjecture \ref{ttt555}
for \eqref{fvv3} follows from  Conjecture \ref{ttt555}
for $\mathbf{P}_S/S_\infty$.

We may further simplify the geometry by degenerating $C$ to
the normal cone to a point $p\in C$,
$$C \ \leadsto \ C \cup_p \mathbf{P}^1\ $$
and requiring the line bundles
$
\Lambda_0,\ \Lambda_\infty,\  \epsilon^*L^C_0, \ \epsilon^*L^C_\infty
$
to be trivial on $C$ in the limit. We then obtain
a degeneration
$$S\ \leadsto \   \mathbf{P}^1 \times C \ \cup_{F}\  R\ ,$$
where $R$ is of the form \eqref{vppq}.
Since Conjecture \ref{ttt555}
has been already proven $\com_1^*\times \com_2^*$-equivariantly
for 
$\mathbf{P}_R/\mathbf{P}_F\cup R_\infty$,
we need only prove Conjecture \ref{ttt555} 
holds $\com_1^*\times \com_2^*$-equivariantly
for the
special case 
\begin{equation}\label{spcase}
\epsilon: S= \mathbf{P}(\mathcal{O}_E \oplus \mathcal{O}_E) \rightarrow E,
\ \ \ L_0^E= \mathcal{O}_E, \ \ L_\infty^E=\mathcal{O}_E, \ \ g(E)=1\ .
\end{equation}
Explicitly, the geometry is 
$$\mathbf{P}_S/S_\infty = 
\mathbf{P}^1\times \mathbf{P}^1 \times E \ / \
\mathbf{P}^1\times \{\infty \} \times E
 .$$

\subsection{Proof of Proposition \ref{bbb999}} \label{togg4}
Consider
the stable pair and Gromov-Witten  theories 
\begin{equation}\label{lssz}
\mathsf{C}^{\mathsf{P}}_0\left(\tau_\alpha(\Gamma_0), \beta\right)_\mu
^{\mathbf{P}_S/S_\infty,\com^*_1\times \com^*_2}\ , \ \ \
\mathsf{C}^{\mathsf{GW}}_0\Big(\overline{\tau_\alpha(\Gamma_0)}, \beta\Big)_\mu
^{\mathbf{P}_S/S_\infty,\com^*_1\times \com^*_2}
\end{equation}
for the genus 1 geometry \eqref{spcase}.
Both 
are uniquely determined from the corresponding
even theories
by the following four properties:
\begin{enumerate}
\item[(i)] Algebraicity of the virtual class,
\item[(ii)] Degeneration formulas for the relative theory in the
presence of odd cohomology,
 \item[(iii)] Monodromy invariance of the relative theory,
\item[(iv)] Elliptic vanishing relations.
\end{enumerate}
The properties (i)-(iv) were used in \cite{vir} to determine the
full relative Gromov-Witten descendents of target curves in terms
of the descendents of even classes.

The results of Section 5 of \cite{vir}  are entirely formal and
apply verbatim to the theories \eqref{lssz}. 
Lemma \ref{cakeshop} replaces Lemma 5.6 of \cite{vir}.
Let 
$$\mathsf{L}_0, \mathsf{L}_\infty \in H^*_{\com_1^*\times \com_2^*}(
 \mathbf{P}^1_1\times \mathbf{P}^1_2 \times E,\mathbb{Q})$$
be the classes of the curves
$$\{0\} \times \{0\} \times E \ \ \ \text{and} \ \ \
 \{\infty\} \times \{0\} \times E
$$
respectively.
For $\gamma \in H^*(E,\mathbb{Q})$, 
let{\footnote{
Here, the coefficients $c_i^0$ and $c_i^\infty$
are
taken so Lemma \ref{cakeshop} is valid.
When $\gamma$ is a class of a point in $E$, we recover
the $\com_1^* \times \com_2^*$ specialization of \eqref{gvv12}.}}
$$\tilde{\tau}(\gamma)= \sum_{i=0}^\infty c^0_i \tau_i(
\mathsf{L}_0\gamma)
+\sum_{i=0}^\infty c^\infty_i \tau_i(\mathsf{L}_\infty\gamma).
$$ 
We start, as in (5.11) of \cite{vir}, by studying the
descendents
\begin{equation}\label{lssz2}
\mathsf{C}^{\mathsf{P}}_0\left(
\tilde{\tau}(\alpha) \tilde{\tau}(\beta)
, d_1[\mathbf{P}^1_1]+d_3[E]\right)_{\eta,\emptyset}
^{\mathbf{P}_S/S_\infty,\com^*_1\times \com^*_2}\ ,
\end{equation}
$$
\mathsf{C}^{\mathsf{GW}}_0\Big(
{\overline{\tilde{\tau}(\alpha) \tilde{\tau}(\beta)}}
, d_1[\mathbf{P}^1_1]+d_3[E]
\Big)
_{\eta,\emptyset}
^{\mathbf{P}_S/S_\infty,\com^*_1\times \com^*_2}$$
for the geometry \eqref{spcase} where
$$\alpha,\beta \in H^1(E,\mathbb{Q}), \ \ \alpha\cup \beta=1\ . $$ 
Exactly following \cite{vir}, the
descendents \eqref{lssz2}
are determined from the even theory. 
Since the relations (i-iv) 
respect the descendent correspondence, we deduce Proposition \ref{bbb999}
from the even case proven in Section \ref{evven}.
Also, the rationality of the even theory implies
the rationality of the full theory.
When the invertibility
of Lemma \ref{cakeshop} is applied here, an induction on
$d_1$ is necessary.

The method developed in Section 5 of \cite{vir} proceeds
to handle all descendent insertions. For the case studied in \cite{vir},
the descendents 
$$\tau_n(\gamma), \ \ \ \gamma\in H^*(E,\mathbb{Q})$$ 
are labelled only by the integer $n$. Here, the insertions
are of the form
$$\tau_n(\mathsf{L}_0\gamma), \ \ \tau_n(\mathsf{L}_\infty \gamma), \ \
(n, \mathsf{L}_0\gamma), \ \ (n,\mathsf{L}_\infty\gamma), \ \ \ \ 
\gamma\in H^*(E,\mathbb{Q}) $$
where the latter two types{\footnote{
The relative conditions
{\em must} be treated on the same footing as the descendent insertions
as the relative weights may also be odd.}}
are
relative conditions of $\mu$. The insertion labelling is
the only difference. The reduction to the even descendents
exactly follows
Section 5 of \cite{vir}.  \qed

\subsection{Products $S\times C$}
Let $S$ be a nonsingular projective toric surface equipped
with the action of a 2-dimensional torus $T$.
Let $C$ be a nonsingular projective curve of genus $g$.
A simpler result than Theorem \ref{xxx999} is the
following.

\begin{Proposition}
\label{yyy999} 
For $\gamma_i \in H^{*}_{T}(S\times C,\mathbb{Q})$,
we have 
$$\ZZ_{\mathsf{P}}\Big(S\times C   ;q\ \Big|\  
{\tau_{\alpha_1-1}(\gamma_1)\cdots
\tau_{\alpha_{\ell}-1}(\gamma_{\ell})} \ 
\Big)^T_\beta\in \mathbb{Q}(q,s_1,s_2)$$
and the correspondence
\begin{multline*}
(-q)^{-d_\beta/2}\ZZ_{\mathsf{P}}\Big(S\times C   ;q\ \Big|  
\ {\tau_{\alpha_1-1}(\gamma_1)\cdots
\tau_{\alpha_{\ell}-1}(\gamma_{\ell})} \ 
\Big)^T_\beta \\ =
(-iu)^{d_\beta}\ZZ'_{\mathsf{GW}}\Big(S\times C
;u\ \Big|   
\ \overline{\tau_{a_1-1}(\gamma_1)\cdots
\tau_{\alpha_{\ell}-1}(\gamma_{\ell})}
\ \Big)^T_\beta 
\end{multline*}
under the variable change $-q=e^{iu}$.
\end{Proposition}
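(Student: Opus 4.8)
The plan is to follow the strategy already used for Proposition \ref{bbb999} and Theorem \ref{xxx999}: reduce the genus of $C$ by degeneration until the base case $C=\Pp$ is reached, and then pass from even to odd cohomology by the formal machinery of \cite{vir}. The essential simplification compared with Theorem \ref{xxx999} is that the base geometry is literally toric. For $C=\Pp$ the product $S\times \Pp$ is a nonsingular projective toric $3$-fold with torus $T\times\com^*$, so the full descendent correspondence for $S\times\Pp$ is an instance of the Toric correspondence of \cite{PPDC}. Restricting to $T$-equivariant classes $\gamma_i$ (carrying trivial weight for the fiberwise $\com^*$) and using compactness of $S\times\Pp$ to set the fiberwise parameter to $0$ yields the $T$-equivariant correspondence of Proposition \ref{yyy999} in genus $0$, together with the asserted rationality in $\mathbb{Q}(q,s_1,s_2)$.

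First I would establish the even case, in which all insertions $\gamma_i$ have even real degree. Since the degeneration formula for the absolute theory of $S\times C$ produces relative invariants, I would simultaneously carry along the relative correspondence for $(S\times C)\,/\,(S\times D)$, where $D\subset C$ is a finite set of points and the relative divisors are the fibers $S\times\{p\}\cong S$. The relative base case $(S\times\Pp)\,/\,(S\times\{p_1,\dots,p_r\})$ follows from Theorems \ref{ttt999} and \ref{ttt9999}, applied with $L_0=L_\infty=\mathcal{O}_S$ so that $\mathbf{P}_S=S\times\Pp$ and $S_0,S_\infty$ are the $\{0\}$ and $\{\infty\}$ fibers, together with the compatibility of the correspondence with the degeneration formula: one degenerates the $\Pp$-factor into a chain of $\Pp$'s so that every piece has at most two relative fibers. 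I would then lower the genus of $C$ by degenerating it to a chain of elliptic curves and each elliptic factor to a nodal rational curve. Because no even cohomology vanishes under these degenerations, the degeneration formula and an invertibility statement — the analogue of Lemma \ref{cakeshop}, here supplied by the leading-term analysis and the fact that the descendent (Chern character) operators generate $H^*_T(\Hilb(S,d))$ after localization — uniquely determine the higher genus invariants from the genus $0$ toric invariants, while respecting the correspondence on both sides.

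With the even theory in hand, I would treat the odd cohomology coming from $H^1(C)$ exactly as in Section \ref{godd} and Section 5 of \cite{vir}. The four formal properties used there — algebraicity of the virtual class, the degeneration formulas in the presence of odd cohomology, monodromy invariance, and the elliptic vanishing relations — apply verbatim to the theories of $S\times C$, and the reduction to the genus $1$ geometry $S\times E$ proceeds as in the proof of Proposition \ref{bbb999}, with the $S\times C$ form of Lemma \ref{cakeshop} furnishing the needed invertibility and an induction on the curve degree. I expect this odd cohomology step to be the main obstacle: it is the only point where degenerating $C$ genuinely loses information, since the vanishing cycles carry odd classes, so one cannot argue by naive degeneration and must instead run the \cite{vir} bootstrap from the even theory. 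Once the odd case is settled, the rationality of the full theory follows from that of the even theory, completing the proof of Proposition \ref{yyy999}.
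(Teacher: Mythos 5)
Your overall architecture (even case by degeneration of $C$, odd case by the bootstrap of \cite{vir}) matches the spirit of the paper, but there is a genuine gap at the base of your induction: an equivariance mismatch. Proposition \ref{yyy999} is a statement equivariant with respect to the $2$-torus $T$ acting on the surface $S$, with values in $\mathbb{Q}(q,s_1,s_2)$; in the equivariant theory the insertions may exceed the virtual dimension, so this contains strictly more information than the non-equivariant specialization $s_1=s_2=0$. Your degeneration of $C$ into chains produces relative geometries $(S\times\Pp)\,/\,(S\times\{p\})$, and to run the degeneration formula $T$-equivariantly you need the $T$-equivariant correspondence for these pieces. But Theorems \ref{ttt999} and \ref{ttt9999}, which you invoke for this, are equivariant only for the fiberwise $\com^*$ (here, the torus acting along the $\Pp=C$ direction) and are \emph{non}-equivariant in the torus acting on $S$: the proof of Proposition \ref{aaa999} explicitly sets those parameters to $0$, and the dimension-based vanishings in Cases I and II of the proof of Proposition \ref{aaa9999} (integrand degree exceeding the virtual dimension, compactness of the geometry) are valid only after that specialization, since equivariant integrals of classes exceeding the virtual dimension over a compact space need not vanish. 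So your argument can at best deliver the $s_1=s_2=0$ specialization of the Proposition, not the statement as claimed.

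The paper avoids this by reversing the order of operations: it first applies $T$-localization on $S$, reducing the $T$-equivariant theory of $S\times C$ to capped local geometries $\com^2\times C$ at the $T$-fixed points of $S$ (joined by $1$-leg capped edges), and only then degenerates $C$. This keeps $s_1,s_2$ alive throughout, at the cost of requiring a new ingredient your proposal does not supply: a tri-relative residue theory for $\com^2\times\Pp\,/\,\com^2\times\{\infty\}$ with caps in the two $\com^2$ directions, constructed in parallel to Section \ref{xxx3} and proved correspondent by combining the arguments of Theorems \ref{ppp444} and \ref{ppp666}. Your route is cleaner where it applies (it never leaves compact geometry and needs no new residue theories), and your treatment of the odd classes via Section \ref{godd} and the $\Hilb(S,d)$ analogue of Lemma \ref{cakeshop} is essentially the paper's; but to prove the Proposition as stated you would either have to upgrade Theorems \ref{ttt999}/\ref{ttt9999} to full $T\times\com^*$-equivariance (which requires reworking their vanishing arguments) or adopt the localize-first strategy.
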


\begin{proof}
Let $p_1,\ldots,p_n$ denote the $T$-fixed points of $S$.
By considering localization for stable pairs and stable maps on
$S\times C$
with respect to the torus $T$,
we can reduce the descendent correspondence to a local result for
$\com^2 \times C$ with caps in the two $\com^2$ directions.
The localization formula is in terms of $n$ such
capped $\com^2\times C$ geometries (connected by
simple capped edge geometries).

Consider the capped geometry $\com^2\times C$. 
If all the descendent
insertions $\gamma_i$  have even (real) cohomological
degree, we can reduce
to the case where $g(C)=0$ by our standard degeneration and
relative arguments. Crucial here is a tri-relative residue theory
for 
\begin{equation}\label{frood}
\com^2 \times \proj^1 \ /\ \com^2\times \{\infty\}
\end{equation}
 defined
by completely parallel constructions to the bi-relative case
considered in Section \ref{xxx3}. The tri-relative
geometry has caps in the two $\com^2$ directions on \eqref{frood}.
The proof of the GW/P descendent correspondence for the
tri-relative cap \eqref{frood} exactly follows the proof
Theorem \ref{ppp444} with the two relative directions corresponding
to $\com^2$ handled as in the proof Theorem \ref{ppp666}.

To control the odd descendents,
we follow the strategy of Section \ref{godd} (and \cite{vir}).
We reduce to the case where $g(C)=1$ and
express the full theory in terms of the even theory.
Lemma \ref{cakeshop} for $T$ still applies.
  
We leave the straightforward details here to the reader.
The capped edges are 1-leg geometries and are easily 
handled.
\end{proof}

\section{Proof of Theorem \ref{qqq111}}
\label{laman}
\subsection{Overview}
We have now proven the GW/P descendent correspondences in
sufficiently many geometries to deduce Theorem \ref{qqq111}.
The idea is to degenerate the complete intersection by factoring
the equations. We present the proof carefully for the
quintic in $\mathbf{P}^4$ following the scheme of \cite{mptop}.
The argument for general Fano and
Calabi-Yau complete intersections in products of projective
spaces
is identical.

\subsection{Simple theories}
A {\em complete intersection pair} $(V,W)$ is a 
nonsingular complete intersection 
$$ V \subset \PP^{n_1} \times \cdots \times \PP^{n_m}\ .$$
together with a nonsingular divisor $W\subset V$
cut out by a hypersurface in $\PP^{n_1} \times \cdots \times \PP^{n_m}$.
In particular, 
$$W \subset 
\PP^{n_1} \times \cdots \times \PP^{n_m}$$
is also a complete intersection.

A class $\gamma\in H^*(V,\Q)$ is {\em simple} if $\gamma$ lies in the image of
the restriction map
$$H^*(\PP^{n_1} \times \cdots \times \PP^{n_m},\Q) \rarr H^*(V,\Q).$$
If $V$ is nonsingular complete intersection of dimension 3,
the simple cohomology of $V$ equals the even cohomology by 
the Lefschetz results.

The  simple Gromov-Witten and stable pairs theories of $V$ consist of 
the integrals of descendents of simple classes. Similarly, the
simple Gromov-Witten and stable pairs theories of the 
relative geometry $V/W$ consists of integrals of
descendents of simple classes with {\em no restrictions on the 
cohomology classes of $W$ in the relative constraints}.

\subsection{Fano and Calabi-Yau hypersurfaces in $\proj^4$}
\label{zzwe}
\subsubsection{Notation}
The following notation for curves, surfaces, and 3-folds will be convenient
for our study:
\begin{enumerate}
\item[(i)] let $C_{d_1,d_2}\subset \proj^3$ be a nonsingular complete intersection
 of type $(d_1,d_2)$,
\item[(ii)] let $S_d \subset \proj^3$ be a nonsingular surface of degree $d$,
\item[(iii)] let $T_d \subset \proj^4$ be a nonsingular 3-fold of degree $d$.
\end{enumerate}
Finally, let $\proj^3[d_1,d_2]$ be the blow-up of $\proj^3$ along $C_{d_1,d_2}$.

\subsubsection{Degeneration for the quintic}
Let $\mathsf{Z}^{\mathsf{P}}(T_5^\star)$ denote the simple stable pairs descendent theory of the Calabi-Yau
quintic 3-fold.{\footnote{Here and below, the superscript
$\star$ will indicate simple theories.}}  Factoring the quintic equation yields a
degeneration:
\begin{equation}\label{mnnv}
T_5 \ \leadsto\ T_4 \cup_{S_4} \proj^3[4,5]
\end{equation}
where $S_4\subset T_4$ is a linear section and
$S_4\subset \proj^3[4,5]$ is the strict transform of
a quartic containing $C_{4,5}\subset \proj^3$.
See Section 0.5.4
 of \cite{mptop} for a detailed construction of the degeneration
\eqref{mnnv}.
The degeneration formula expresses 
$\mathsf{Z}_{\mathsf{P}}(T_5^\star)$ in terms of the
relative theories of the special fibers. We write the
relation schematically as
$$\mathsf{Z}_{\mathsf{P}}(T_5^\star) \ \leadsto \ \
\mathsf{Z}_{\mathsf{P}}(T_4^\star/S_4) \ \text{and} \  
\mathsf{Z}_{\mathsf{P}}(\proj^3[4,5]/S_4)\ .$$
Similarly, in Gromov-Witten theory, we have
the determination
 $$\mathsf{Z}'_{\mathsf{GW}}(T_5^\star) \ \leadsto \ \
\mathsf{Z}'_{\mathsf{GW}}(T_4^\star/S_4), \ \text{and} \
\mathsf{Z}'_{\mathsf{GW}}(\proj^3[4,5]/S_4)\ .$$
By the compatibility of the descendent correspondence with
degeneration, the descendent correspondences for
$T_4^\star/S_4$ and $\proj^3[4,5]/S_4$
imply Theorem \ref{qqq111} for $T_5$.

\subsubsection{Descendent correspondence for
$\proj^3[4,5]/S_4$} \label{bbll}
Let us start with $\proj^3[4,5]/S_4$. Degeneration to the
normal cone of $$S_4\subset \proj^3[4,5]$$ yields the
determination{\footnote{When no superscript appears
on the partition function, the statement is understood
to hold for both stable pairs and Gromov-Witten theory.}}
\begin{equation}\label{dter}
\mathsf{Z}(\proj^3[4,5]) \ \leadsto \ \
\mathsf{Z}(\proj^3[4,5]/S_4) \ \text{and} \  \mathsf{Z}(
\mathbf{P}_{S_4}/S_4)\ .
\end{equation}
We know the
descendent correspondence for all projective bundle
geometries $\proj_{S_4}/S_4$ by Proposition \ref{kk33}.
By the invertibility of Proposition 6 of \cite{PP2},
the determination \eqref{dter} can be reversed,
\begin{equation*}
\mathsf{Z}(\proj^3[4,5]/S_4) \ \leadsto \ \
\mathsf{Z}(\proj^3[4,5]) \ \text{and} \  \mathsf{Z}(
\mathbf{P}_{S_4}/S_4)\ .
\end{equation*}
Hence, the descendent correspondence for 
$\proj^3[4,5]$ implies the descendent correspondence
for $\proj^3[4,5]/S_4$.

An approach to the blow-up $\proj^3[4,5]$ is explained in
Section 3.1 of \cite{mptop}. Let $S_4\subset \proj^3$
contain $C_{4,5}$. Degeneration to the normal cone of
$S_4\subset \proj^3$ yields
\begin{equation}\label{zbbb}
\proj^3 \ \leadsto \ \proj^3 \cup_{S_4} {\proj}_{S_4}
\end{equation}
for the projective bundle geometry 
\begin{equation}\label{fragg}
\pi:{\proj}_{S_4}= \proj(\mathcal{O}_{S_4} \oplus\mathcal{O}_{S_4}(4))\rightarrow S_4, \ \ \ {\proj}_{S_4}/(S_4)_\infty\ .
\end{equation}
The original curve $C_{4,5} \subset \proj^3$
has limit in $(S_4)_0 \subset {\proj}_{S_4}$.
After
blowing up the degeneration \eqref{zbbb} along the moving curve $C_{4,5}$,
we obtain
$$\proj^3[4,5] \ \leadsto \ \proj^3 \cup_{S_4} X, \ \ \ \ 
X= \mathcal{B}_{C_{4,5}}({\mathbf{P}}_{S_4})\ .$$
Here $X$ is the blow-up of
${\mathbf{P}}_{S_4}$ along $C_{4,5} \subset (S_4)_0$.
In order to prove the descendent correspondence for
$\proj^3[4,5]$, we need only prove the descendent
correspondence for $\proj^3/S_4$ and
$X/S_4$.
Using the established descendent
correspondences for the toric variety $\proj^3$ and
projective bundles
over $S_4$ and the invertibility of Proposition 6 of \cite{PP2},
we need only prove the descendent correspondence for
$X$.

To study $X$, we repeat the blow-up construction of the
previous paragraph.
Let $\proj_{S_4}$ be the projective bundle \eqref{fragg}, and
let $$\proj_{C_{4,5}} \subset \proj_{S_4}$$ be the divisor lying
over $C_{4,5}\subset S_4$ via $\pi$.
Degeneration to the normal cone of
$\proj_{C_{4,5}} \subset \proj_{S_4}$
yields
\begin{equation}\label{zbbbb}
\proj_{S_4} \ \leadsto \ \proj_{S_4} \cup_{\proj_{C_{4,5}}} {\proj}_{\proj_{C_{4,5}}}\ .
\end{equation}
for the projective bundle geometry 
\begin{equation}\label{fraggg}
{\proj}_{\proj_{C_{4,5}}}= \proj(\mathcal{O}_{C_{4,5}} \oplus\mathcal{O}_{C_{4,5}}(5))\times_{C_{4,5}}
\proj(\mathcal{O}_{C_{4,5}} \oplus\mathcal{O}_{C_{4,5}}(4))\rightarrow {C_{4,5}}
\end{equation}
relative to 
$${\proj_{C_{4,5}}} = \proj(\mathcal{O}_{C_{4,5}}(5)) 
\times_{C_{4,5}}
\proj(\mathcal{O}_{C_{4,5}} \oplus\mathcal{O}_{C_{4,5}}(4))\ .$$
The original curve $C_{4,5} \subset (S_4)_0\subset \proj_{S_4}$
has limit equal to
$$ C_{4,5}= \proj(\mathcal{O}_{C_{4,5}}) 
\times_{C_{4,5}}
\proj(\mathcal{O}_{C_{4,5}})
\subset \proj(\mathcal{O}_{C_{4,5}} \oplus\mathcal{O}_{C_{4,5}}(5))\times_{C_{4,5}}
\proj(\mathcal{O}_{C_{4,5}} \oplus\mathcal{O}_{C_{4,5}}(4))\ .$$
After
blowing up the degeneration \eqref{zbbb} along the moving curve $C_{4,5}$,
we obtain
$$X \ \leadsto \ \mathbf{P}_{S_4} \cup_{\proj_{C_{4,5}}} Y\ ,$$
$$ 
Y= \mathcal{B}_{C_{4,5}}(
\proj(\mathcal{O}_{C_{4,5}} \oplus\mathcal{O}_{C_{4,5}}(5))\times_{C_{4,5}}
\proj(\mathcal{O}_{C_{4,5}} \oplus\mathcal{O}_{C_{4,5}}(4))
)\ .$$
In order to prove the descendent correspondence for
$Y$, we need only prove the descendent
correspondence for $\mathbf{P}_{S_4} / {\proj_{C_{4,5}}}$ and
$Y/\proj_{C_{4,5}}$.

As we have seen in \eqref{fraggg}, $\proj_{\proj_{C_{4,5}}}$
is a projective bundle over $\proj_{C_{4,5}}$ of the
form required by Theorem \ref{xxx999}. Hence, the
descendent correspondence holds for
$\mathbf{P}_{S_4} / {\proj_{C_{4,5}}}$ by inverting
\begin{equation*}
\mathsf{Z}(\proj_{S_4}) \ \leadsto \ \
\mathsf{Z}(\proj_{S_4}/\proj_{C_{4,5}}) \ \text{and} \  \mathsf{Z}(
\mathbf{P}_{\proj_{C_{4,5}}}/S_4)\ . 
\end{equation*}
The invertibility is possible again
by Proposition 6 of \cite{PP2}.
Similarly, 
the descendent correspondence  for $Y$ implies the
descendent correspondence for 
$Y/\proj_{C_{4,5}}$.

The last step in proving the descendent correspondence
for $\proj^3[4,5]/S_4$ is to prove the descendent
correspondence for $Y$.
The 3-fold $Y$ is a bundle over $C_{4,5}$ with fiber
equal to $\mathcal{B}_p(\proj^1\times \proj^1)$, 
the blow-up of $\proj^1\times \proj^1$ at a point.
By further degeneration arguments{\footnote{We can
degenerate 
$$C_{4,5}\leadsto C_{4,5}\cup \proj^1$$ 
and require all the twisting of $Y$ to lie over $\proj^1$.}},
we can reduce
to the case $$\mathcal{B}_p(\proj^1\times \proj^1) \times C_{4,5}$$
covered by Proposition \ref{yyy999}. \qed

\subsection{Proof of Theorem \ref{qqq111}}
We turn now to $T_4^\star/S_4$. The normal cone degeneration
$$T_4 \ \leadsto \ T_4 \cup_{S_4} \mathbf{P}_{S_4}$$
and invertibility yields the determination
\begin{equation*}
\mathsf{Z}(T_4^\star/S_4) \ \leadsto \ \
\mathsf{Z}(T_4^\star) \ \text{and} \  \mathsf{Z}(
\mathbf{P}_{S_4}/S_4)\ .
\end{equation*}
Hence, the descendent correspondence
for $T_5^\star$ follows from the descendent correspondence for $T_4^\star$. 

By factoring the quartic equation defining $T_4 \subset \proj^3$,
degree reduction can be continued. The full reduction
scheme for the quintic is:

\begin{eqnarray*}
\mathsf{Z}(T_5^\star) & \leadsto &  \mathsf{Z}(T^\star_4/S_4) \
\text{and}\  \mathsf{Z}(\proj^3[4,5]/S_4), \\
\mathsf{Z}(T_4^\star/S_4)&  \leadsto &
 \mathsf{Z}(T_4^\star) \ \text{and} \ \mathsf{Z}(\proj_{S_4}/ S_4),\\
\mathsf{Z}(T_4^\star) & \leadsto &  \mathsf{Z}(T^\star_3/S_3) \
\text{and}\  \mathsf{Z}(\proj^3[3,4]/S_3), \\
\mathsf{Z}(T_3^\star/S_3)&  \leadsto &
 \mathsf{Z}(T_3^\star) \ \text{and} \ \mathsf{Z}(\proj_{S_3}/ S_3),\\
\mathsf{Z}(T_3^\star) & \leadsto &  \mathsf{Z}(T^\star_2/S_2) \
\text{and}\  \mathsf{Z}(\proj^3[2,3]/S_2), \\
\mathsf{Z}(T_2^\star/S_2)&  \leadsto &
 \mathsf{Z}(T_2^\star) \ \text{and} \ \mathsf{Z}(\proj_{S_2}/ S_2),\\
\mathsf{Z}(T_2^\star) & \leadsto &  \mathsf{Z}(T^\star_1/S_1) \
\text{and}\  \mathsf{Z}(\proj^3[1,2]/S_1), \\
\mathsf{Z}(T_1^\star/S_1)&  \leadsto &
 \mathsf{Z}(T_1^\star) \ \text{and} \ \mathsf{Z}(\proj_{S_1}/ S_1)\ .
\end{eqnarray*}

The end points of the scheme are $T_1^*$ (which is toric),
projective bundles over toric and $K3$ surfaces, and
blown-up projective spaces. The descendent correspondence
has been established for all the end points --- the blown-up
projective spaces are handled by the method of Section \ref{bbll}.



We have proven the GW/P descendent correspondence for the
even theories of all Fano hypersurfaces in $\proj^4$. 
We can degenerate all Fano and Calabi-Yau 3-fold complete intersections
by an identical factoring argument. The outcome is a proof of
Theorem \ref{qqq111}.\qed

\subsection{Proof of Corollary \ref{ccc000}}
To prove Corollary \ref{ccc000}, we start with the
descendent correspondence of Theorem \ref{qqq111}.
The initial term results of Theorem 2  of \cite{PPDC}
then imply the Corollary. \qed

\subsection{The Enriques Calabi-Yau}\label{et45}
As a further example,
we prove the GW/P correspondence for the Enriques Calabi-Yau 3-fold
studied in \cite{klemmm,newcal}.

Let $\sigma$ act freely on the product $K3 \times E$ by
an Enriques involution $\sigma_{K3}$ on the $K3$ and by -1
on the elliptic curve.
By definition, the quotient
\begin{equation*}
Q = \left( 
K3 \times E\right) / \langle \sigma 
\rangle
\end{equation*}
is an {\em Enriques Calabi-Yau 3-fold}.
Since $K3\times E$ carries a holomorphic 3-form invariant
under $\sigma$, the canonical class is trivial
$$\omega_Q = \mathcal{O}_Q.$$
By projection on the right,
\begin{equation}\label{ft4}
Q \rightarrow E/\langle -1 \rangle = \mathbf{P}^1
\end{equation}
is a $K3$ fibration with 4 double Enriques fibers.

Let $\text{inv}_{\mathbf{P}^1}$ be an involution of $\mathbf{P}^1$
with 2 fixed points.
Let $\tau$ act freely on the product $K3 \times {\mathbf{P}}^1$ by
$(\sigma_{K3},\text{inv}_{\mathbf{P}^1})$.
Let
$$R = \left( 
K3 \times \mathbf{P}^1 \right) / \langle \tau 
\rangle$$
denote the quotient.
By projecting left,  
\begin{equation}\label{kx33}
R \rightarrow K3/\langle \sigma_{K3} \rangle=S
\end{equation}
is a projective bundle over the Enriques surface $S$.
Two sections of the bundle \eqref{kx33} are obtained from the fixed
points of $\text{inv}_{\mathbf{P}^1}$.
By projecting right,
$$R \rightarrow \mathbf{P}^1/\langle \text{inv}_{\mathbf{P}^1}\rangle$$
is a $K3$ fibration with  2 double Enriques
fibers.

By degenerating the $K3$ fibration \eqref{ft4}, we find a
degeneration of the Enriques Calabi-Yau $Q$,
$$Q \ \leadsto \ R \cup _{K3} R$$
where the intersection $K3$ is a common fiber, see \cite[Section 1.4]
{newcal}.
Hence the GW/P correspondence for $Q$ is reduced
to the GW/P descendent correspondence for $R/K3$. The latter
reduces to the GW/P descendent correspondence for $R$
by degeneration to the normal cone and Proposition \ref{kk33}.

The Enriques surface $S$ degenerates{\footnote{The second homologies
$H_2(S,\mathbb{Z})$ and $H_2(Q,\mathbb{Z})$ have
 2-torsion for the Enriques surface and the Enriques Calabi-Yau.
We specify here the curve class only mod torsion.
Data about the torsion refinement is lost in the degeneration of 
$S$. 
}}
to a union along an elliptic curve
of a $\mathbf{P}^1$-bundle over an elliptic curve
and the rational elliptic surface, see \cite[Section 1.3]{newcal}.
 We use the corresponding degeneration of \eqref{kx33}
to prove the GW/P descendent correspondence for 
$R$.
We obtain the following result.

\begin{Proposition}
\label{ccc111xxx} 
Let $Q$ be the Enriques Calabi-Yau, and let
$\beta \in H_2(Q,\mathbb{Z})/ \mathsf{tor}$ be a curve class.
Then,
$$\ZZ_{\mathsf{P}}\Big(Q;q 
\Big)_\beta \ \in \mathbb{Q}(q)\ ,$$
and we have the correspondence
$$
\ZZ_{\mathsf{P}}\Big(Q;q\Big)_\beta  =
\ZZ'_{\mathsf{GW}}\Big(Q;u\Big)_\beta 
$$
under the variable change $-q=e^{iu}$.
\end{Proposition}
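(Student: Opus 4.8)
The plan is to run the degeneration scheme of \cite{mptop} used for Theorem \ref{qqq111}, replacing the factoring of a defining equation by the geometric degenerations of the Enriques Calabi-Yau recorded in \cite{newcal}. Since $Q$ is Calabi-Yau, $d_\beta=\int_\beta c_1(T_Q)=0$, so both prefactors $(-q)^{-d_\beta/2}$ and $(-iu)^{d_\beta}$ are trivial and the assertion is the unadorned equality $\ZZ_{\mathsf{P}}(Q;q)_\beta=\ZZ'_{\mathsf{GW}}(Q;u)_\beta$; as always the rationality in $q$ will be produced alongside the correspondence. Throughout, the curve class is taken modulo torsion, since the $2$-torsion in $H_2(Q,\mathbb{Z})$ is not tracked by the degenerations (cf.\ the footnote to the statement).

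First I would use the degeneration $Q\leadsto R\cup_{K3}R$ coming from the $K3$ fibration \eqref{ft4}, as in \cite[Section 1.4]{newcal}, with common divisor a smooth $K3$ fiber. By the compatibility of the correspondence with the degeneration formula, this reduces the closed correspondence for $Q$ to the relative \emph{descendent} correspondence for $R/K3$, where the relative insertions now range over arbitrary (in particular odd) cohomology of the $K3$; this is exactly why the relative and descendent refinements of the correspondence were needed. Degeneration to the normal cone of $K3\subset R$ together with the invertibility of Proposition 6 of \cite{PP2} then reverses this determination and reduces the problem to the closed descendent correspondence for $R$ itself.

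By \eqref{kx33} the $3$-fold $R$ is a $\mathbf{P}^1$-bundle over the Enriques surface $S$, so by the discussion of Section \ref{ntss} it suffices to establish the analogue of Propositions \ref{aaa999} and \ref{bbb999} --- the correspondence for the capped contributions over $S_0$, with odd cohomology allowed --- for $S$. Here I would degenerate the Enriques surface as in \cite[Section 1.3]{newcal}, $S\leadsto A\cup_E B$, into a $\mathbf{P}^1$-bundle $A$ over an elliptic curve and a rational elliptic surface $B$ glued along an elliptic curve $E$, and take the induced degeneration of $\mathbf{P}_S=R$. The bi-relative degeneration formula of Section \ref{kdd9} then expresses the capped contributions of $\mathbf{P}_S$ over $S_0$ in terms of the capped bi-relative residue theories of Conjecture \ref{ttt555} for $\mathbf{P}_A/\mathbf{P}_E\cup A_\infty$ and $\mathbf{P}_B/\mathbf{P}_E\cup B_\infty$. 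The first of these is precisely the projective-bundle-over-a-curve case settled in Section \ref{phgc}, with the odd cohomology of $E$ and of the relative weights controlled by the elliptic strategy of \cite{vir} through Lemma \ref{cakeshop}.

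The main obstacle is the rational elliptic surface $B$, which is neither toric, nor $K3$, nor a $\mathbf{P}^1$-bundle over a curve, so the bi-relative correspondence of Conjecture \ref{ttt555} for $\mathbf{P}_B/\mathbf{P}_E$ is not yet available. I would first strip the relative condition along $\mathbf{P}_E$ by degenerating $B$ to the normal cone of $E\subset B$: the normal-cone component is a $\mathbf{P}^1$-bundle over $E$, for which Conjecture \ref{ttt555} is known from Section \ref{phgc}, and invertibility (Proposition 6 of \cite{PP2}) then reduces the bi-relative correspondence for $\mathbf{P}_B/\mathbf{P}_E$ to the correspondence for the capped contributions of $\mathbf{P}_B/B_\infty$ over $B_0$, with no remaining relative condition on $E$. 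Since $B$ is now an unconstrained rational surface, it can be handled exactly as the blown-up projective spaces of Section \ref{bbll}: further normal-cone degenerations unwind its blow-up structure and reduce it to toric surfaces and $\mathbf{P}^1$-bundles over curves, for which the needed (bi-)relative correspondences follow from Theorems \ref{ttt999} and \ref{ttt9999}, the toric correspondence of \cite{PPDC}, and Section \ref{phgc}. Assembling these determinations --- and reducing all odd cohomology carried by the elliptic factors to even classes by the algebraicity, monodromy, and elliptic-vanishing relations of \cite{vir} as in Section \ref{godd} --- gives Conjecture \ref{ttt555} for $\mathbf{P}_B/\mathbf{P}_E$, hence the capped-over-$S_0$ correspondence for the Enriques surface, hence the correspondence for $R$, and finally for $Q$. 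The point needing the most care is this reduction of the rational elliptic surface, where one must arrange the successive degenerations compatibly with the two line bundles defining the ambient $\mathbf{P}^1$-bundle and with the fixed relative divisor $E$.
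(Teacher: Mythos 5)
Your proposal follows the same route as the paper's (much terser) argument in Section \ref{et45}: degenerate $Q\leadsto R\cup_{K3}R$, reduce to the relative theory of $R/K3$ and then to $R$ itself by normal-cone degeneration and the invertibility of Proposition 6 of \cite{PP2}, and finally degenerate the Enriques surface into a $\mathbf{P}^1$-bundle over an elliptic curve union a rational elliptic surface, handling the pieces with the machinery of Sections \ref{xxx3}--\ref{phgc}; your discussion of the rational elliptic surface fills in details the paper leaves implicit but is consistent with the blow-up reductions of Section \ref{laman}. One small slip: a $K3$ surface has no odd cohomology, so the relative weights along the common $K3$ fiber are automatically even --- the odd-cohomology machinery of \cite{vir} is only needed for the elliptic gluing curve appearing in the degeneration of the Enriques surface.
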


\label{ecy}

\baselineskip=13pt

\vspace{+4 pt}
\noindent Departement Mathematik \\
\noindent ETH Z\"urich \hfill  \\
\noindent rahul@math.ethz.ch  \\

\vspace{+2 pt}
\noindent
Department of Mathematics \hfill Department of Mathematics\\ 
Harvard University \hfill  Massachusetts Institute of Technology\\
apixton@math.harvard.edu \hfill apixton@mit.edu

\end{document}